\pdfoutput=1
\documentclass[11pt,a4paper]{amsart}
\usepackage[left=25truemm,right=25truemm,top=30truemm,bottom=30truemm]{geometry}
\usepackage{amsmath, amsfonts, amssymb, amsthm, amscd, mathtools}
\usepackage{ascmac}
\usepackage{comment}
\usepackage{pb-diagram}
\usepackage{bm}
\usepackage{xcolor}
\usepackage{enumitem}  
\usepackage{hyperref}
\usepackage{mathrsfs}
\usepackage{physics}
\usepackage{url}
\providecommand{\doi}[1]{\href{https://doi.org/#1}{\nolinkurl{https://doi.org/#1}}}

\numberwithin{equation}{section}

\newtheorem{Thm}{Theorem}[section]
\newtheorem{Lem}[Thm]{Lemma}
\newtheorem{Prop}[Thm]{Proposition}
\newtheorem{Cor}[Thm]{Corollary}
\theoremstyle{definition}
\newtheorem{Def}[Thm]{Definition}
\newtheorem{Rem}[Thm]{Remark}
\newtheorem{Ex}[Thm]{Example}
\newtheorem{problem}{Problem}
\newtheorem{Question}{Question}

\newlist{romanenum}{enumerate}{2}
\setlist[romanenum,1]{label=(\roman*)}
\setlist[romanenum,2]{label=(\alph*)}

\newcommand{\RR}{\mathbb R}

\newcommand{\RRH}{\mathbb{R}\mathrm{H}}

\newcommand{\rest}{{\mathop{\mathrm{rest}}}}
\newcommand{\Sym}{\mathrm{Sym}}
\newcommand{\No}{\mathcal{N}_0}
\newcommand{\Ad}{\mathrm{Ad}}

\newcommand{\Aut}{\mathrm{Aut}}
\newcommand{\Stat}{\mathrm{Stat}}
\newcommand{\LStat}{\mathcal{L}\mathrm{Stat}}
\newcommand{\MLStat}{\mathcal{ML}\mathrm{Stat}}

\newcommand{\MHSStat}{\mathcal{M}_{\mathop{\mathbf{HS}}}\mathrm{Stat}}
\newcommand{\MStatStat}{\mathcal{M}_{\mathop{\mathbf{Stat}}}\mathrm{Stat}}
\newcommand{\Isom}{\mathrm{Isom}}
\newcommand{\Diff}{\mathrm{Diff}}

\newcommand{\Sect}{\mathrm{Sect}}

\newcommand{\HS}{\mathop{\mathbf{HS}}}
\newcommand{\HMFD}{\mathop{\mathbf{HMFD}}}
\newcommand{\STAT}{\mathop{\mathbf{Stat}}}
\newcommand{\HRiem}{\mathop{\mathbf{HRiem}}}
\newcommand{\Inn}{\mathrm{Inn}}

\newcommand{\Core}{\mathrm{Core}}
\newcommand{\Homog}{\mathrm{Homog}}

\title[Invariant statistical connections on the multivariate centered Gaussian model and their moduli spaces]{Invariant statistical connections on the multivariate centered Gaussian model and their moduli spaces}
\author{Hideyuki Ishi, Hikozo Kobayashi, Takayuki Okuda}
    \subjclass[2020]{
    Primary 53C30,  
    Secondary 53B12, 53C15, 53A15}
    \keywords{statistical manifold; statistical connection; Hessian manifold; multivariate normal distribution; moduli space}
    \thanks{
    The first author was supported by JSPS KAKENHI, Grant Number JP24KK0059. 
    The second author was supported by JST SPRING, Grant Number JPMJSP2132.
    The third author was supported by JSPS KAKENHI, Grant Number JP24K06714.
    This work was partly supported by MEXT Promotion of Distinctive Joint Research Center Program JPMXP0723833165 and Osaka Metropolitan University Strategic Research Promotion Project (Development of International Research Hubs).
    }
\address[H.~Ishi]{%
    Department of Mathematics, Graduate School of Science, 
    Osaka Metropolitan University, 
    3-3-138, Sugimoto, Sumiyoshi-ku, Osaka, 558-8585, Japan
        }
\email{hideyuki-ishi@omu.ac.jp}
\address[H.~Kobayashi]{%
    Mathematics Program, Graduate School of Advanced Science and Engineering, Hiroshima University, 
    1-3-1 Kagamiyama, Higashi-Hiroshima City, Hiroshima, 739-8526, Japan
        }
\email{hikozo-kobayashi@hiroshima-u.ac.jp}
\address[T.~Okuda]{%
    Mathematics Program, Graduate School of Advanced Science and Engineering, Hiroshima University, 
    1-3-1 Kagamiyama, Higashi-Hiroshima City, Hiroshima, 739-8526, Japan
        }
\email{okudatak@hiroshima-u.ac.jp}
\begin{document}

\begin{abstract}  
We study invariant statistical connections on the space $\mathcal{N}_0^n$ of zero-mean multivariate normal distributions (the multivariate centered Gaussian model) equipped with the Fisher metric $g^F$. We introduce moduli spaces of invariant statistical connections on homogeneous Riemannian manifolds via two natural equivalence relations arising from a categorical viewpoint, and apply this framework to $(\mathcal{N}_0^n, g^F)$. We explicitly determine the $GL(n,\mathbb{R})$-invariant and $\mathrm{Isom}(\mathcal{N}_0^n, g^F)$-invariant statistical connections, with particular emphasis on the dually flat case, and describe the corresponding moduli spaces.
\end{abstract}

\maketitle

\tableofcontents

\section{Introduction}\label{sec:Intro}

In differential geometry, the study of geometric structures invariant under group actions on spaces is a fundamental topic. 
In particular, homogeneous spaces and Lie groups serve as natural starting points for such investigations, and many foundational results have been obtained in this setting.

The aim of this paper is to address the following question.

\begin{Question}
Let $(M,g)$ be a Riemannian manifold endowed with a symmetry, namely an isometric smooth action of a Lie group $G$ on $(M,g)$.
How does this symmetry constrain statistical connections on $(M, g)$?
\end{Question}

A \emph{statistical connection} on $(M,g)$ is a torsion-free affine connection $\nabla$ on $M$ such that $\nabla g$ is totally symmetric.
Statistical connections, together with the associated statistical structures $(g,\nabla)$, arise in information geometry, as well as in submanifold theory, affine hypersurface theory, and in Hessian geometry, 
and they constitute an important class of geometric structures (see, for example, \cite{Amari-Nagaoka_2000, Ivanov_1995, Kurose_1994, Matsuzoe_2010, Shima}).
More recently, statistical structures (and statistical connections) have also appeared in the context of geometric analysis; see, for example,~[Fujitani~(2025), arXiv:2507.00385].

In these fields, statistical connections satisfying additional properties are often of particular interest.
For instance, in information geometry, flat statistical connections play a central role (see, e.g.,~\cite{Amari_1985, Amari-Nagaoka_2000, Ay_2017}); a statistical structure $(g,\nabla)$ with $\nabla$ flat is called a \emph{dually flat structure} (or \emph{Hessian structure}).
Other examples of statistical connections with special properties include those satisfying conjugate symmetry, constant curvature, or Einstein-type conditions with respect to the metric $g$.
In various contexts, such statistical connections have been extensively studied.

In this paper, we focus on the case where $M$ is a homogeneous Riemannian manifold and consider the following problem.

\begin{problem}\label{prob:Stat_determine}
Let $G$ be a Lie group acting transitively and isometrically on a Riemannian manifold $(M,g)$.  
For a given property $\mathrm{P}$ of $G$-invariant statistical connections on $(M,g)$ (for example, the dually flat property, etc.), determine the set
\[
\Stat^{G}_{\mathrm{P}}(M,g)
\]
of all $G$-invariant statistical connections on $(M,g)$ satisfying $\mathrm{P}$.
\end{problem}

In this paper, we mainly consider the cases where the property $\mathrm{P}$ is given by the \emph{dually flat property} $\mathrm{DF}$ or \emph{conjugate symmetry} $\mathrm{CS}$ (see Definition~\ref{def:CS-stat} for the definition of conjugate symmetric statistical structure).

Problem~\ref{prob:Stat_determine} provides a useful framework for understanding distinguished classes of statistical structures from the perspective of symmetry and for constructing and characterizing important concrete examples.

As a motivating example, we recall below important previous results related to Problem~\ref{prob:Stat_determine} (see Section~\ref{subsec:N-review} for details).

\begin{Thm}[\cite{FIK} for $n = 1$ and \cite{KO} for $n \ge 2$]
Let $\mathcal{N}^n$ denote the space of $n$-dimensional normal distributions, and let $g^F$ be the Fisher metric on $\mathcal{N}^n$. 
Then the following equalities hold:
\begin{align*}
    \Stat^{\mathrm{Aff}^s(n,\mathbb{R})}_{\mathrm{CS}}(\mathcal{N}^n,g^F) 
    &= \{ \nabla^{A(\alpha)} \mid \alpha \in \mathbb{R} \}, \\ 
    \Stat^{\mathrm{Aff}^s(n,\mathbb{R})}_{\mathrm{DF}}(\mathcal{N}^n,g^F)
    &=
    \{\nabla^{A(+1)},\,\nabla^{A(-1)}\}.
\end{align*}
Here, $\nabla^{A(\alpha)}$ denotes the Amari--Chentsov $\alpha$-connection on $\mathcal{N}^n$.
\end{Thm}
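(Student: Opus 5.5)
The plan is to reduce everything to linear algebra at a base point. $\mathcal{N}^n$ is a homogeneous space $\mathrm{Aff}^s(n,\mathbb{R})/H$, where $\mathrm{Aff}^s(n,\mathbb{R})$ acts by $x\mapsto Ax+b$ and $H$ is the isotropy at the standard normal distribution $N(0,I_n)$. Up to the sign conventions in $\mathrm{Aff}^s$, I take $H\cong O(n)$. Any statistical connection can be written $\nabla=\nabla^{g^F}+\tfrac12 K$, where $K$ is the difference tensor and $C=g^F(K\cdot,\cdot)$ is a totally symmetric cubic form. Invariance of $\nabla$ is then equivalent to invariance of $C$, and an invariant $C$ is determined by its value at the base point. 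This value is an $H$-invariant element of $\mathrm{Sym}^3(T_o^*\mathcal{N}^n)$, where
\[
T_o\mathcal{N}^n\cong \mathbb{R}^n\oplus \mathrm{Sym}(n,\mathbb{R})
\]
(the mean and covariance directions) carries the isotropy representation of $O(n)$.

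\textbf{Step 1: compute the invariants.} Decompose $\mathrm{Sym}(n)=\mathbb{R}I\oplus \mathrm{Sym}_0(n)$. Under $-I\in O(n)$, any cubic form of odd total degree in the $\mathbb{R}^n$-variable vanishes. The surviving terms are:
\begin{itemize}
\item $\langle \mu,\mu\rangle\,\mathrm{tr}\,\Sigma$ and $\mu^{T}\Sigma\mu$, which are quadratic in $\mu$;
\item $(\mathrm{tr}\,\Sigma)^3$, $\mathrm{tr}\,\Sigma\,\mathrm{tr}(\Sigma^2)$ and $\mathrm{tr}(\Sigma^3)$, which are cubic in $\Sigma$.
\end{itemize}
By classical invariant theory for $O(n)$, these span the invariants. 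For small $n$ (for instance $n=1$, where $\mathrm{Sym}_0=0$) some of them coincide, so $n=1$ is treated separately as in \cite{FIK}. This gives a finite-dimensional parameter space, roughly five-dimensional, of invariant statistical connections.

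\textbf{Step 2: impose conjugate symmetry.} Conjugate symmetry is $R^\nabla=R^{\nabla^*}$, which is equivalent to $\nabla^{g^F}C$ being totally symmetric. Since everything is invariant, it suffices to check this at $o$. There I express $\nabla^{g^F}$ through the Nomizu map of the reductive decomposition; $\mathcal{N}^n$ is a solvable Lie group with a left-invariant metric, so the Koszul formula gives explicit Levi-Civita coefficients. Symmetrizing yields linear equations on the five coefficients. I expect them to force $C$ to be a multiple of the Amari--Chentsov tensor $T^{AC}$, so that $\nabla=\nabla^{A(\alpha)}$ with $\nabla^{A(\alpha)}=\nabla^{g^F}-\tfrac{\alpha}{2}g^{-1}T^{AC}$. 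The known Amari--Chentsov connections are conjugate symmetric because $\mathcal{N}^n$ is dually flat and they interpolate linearly, which gives the inclusion $\supseteq$.

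\textbf{Step 3: flatness.} Flatness implies conjugate symmetry, since the dual of a flat statistical connection is flat. Hence $\Stat_{\mathrm{DF}}\subset\{\nabla^{A(\alpha)}\}$. The curvature of $\nabla^{A(\alpha)}$ is a quadratic function of $\alpha$,
\[
R^{(\alpha)}=(1-\alpha^2)\,R^{(0)}\quad\text{(up to normalization)},
\]
by the standard formula for $\alpha$-families when $\nabla^{(\pm1)}$ are flat. The Levi-Civita connection $\nabla^{(0)}$ is not flat, because $(\mathcal{N}^n,g^F)$ has nonzero curvature, so flatness holds exactly when $\alpha=\pm1$.

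The main obstacle is Step 2. The linear system coming from total symmetry of $\nabla C$ involves the Levi-Civita connection of a non-symmetric solvable metric Lie group, and mixed terms between the $\mu$ and $\Sigma$ blocks must be tracked carefully. To keep this manageable, I would first restrict to the subgroups $\mathbb{R}^n\rtimes$ diagonal and to $n=2$, derive the constraints there, and then argue that they persist for general $n$ by $O(n)$-equivariance.
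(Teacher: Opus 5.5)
\textbf{The gap: the isotropy group is trivial, not $O(n)$.} The group $\mathrm{Aff}^s(n,\mathbb{R})=UT^+(n,\mathbb{R})\ltimes\mathbb{R}^n$ acts \emph{simply} transitively on $\mathcal{N}^n$, as the paper states. Indeed, $(A,b)$ fixes $(I_n,0)$ only if $b=0$ and $A\,{}^{t}A=I_n$, and an orthogonal upper triangular matrix with positive diagonal is $I_n$. So the stabilizer $H$ is trivial, and every element of $S^3(T_o^\ast\mathcal{N}^n)$ extends to an $\mathrm{Aff}^s(n,\mathbb{R})$-invariant cubic form.

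Consequently the parameter space has dimension $\binom{N+2}{3}$ with $N=n(n+3)/2$. That is $4$ for $n=1$ and $35$ for $n=2$, not ``roughly five''.

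Neither $-I_n$ nor $O(n)$ lies in $\mathrm{Aff}^s(n,\mathbb{R})$ or normalizes it. So you cannot discard the terms of odd degree in $\mu$. The ``$O(n)$-equivariance'' you invoke at the end to pass from $n=2$ to general $n$ is also unavailable: an $\mathrm{Aff}^s$-invariant cubic form need not be $O(n)$-invariant at $o$, and that it turns out to be is part of the conclusion. Your own Step 2 treats $\mathcal{N}^n$ as a solvable Lie group with a left-invariant metric, which is the simply transitive picture and contradicts $H\cong O(n)$.

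Even with the Step 2 computation completed, your argument proves only the weaker statement for $\mathrm{Aff}(n,\mathbb{R})=GL(n,\mathbb{R})\ltimes\mathbb{R}^n$, whose isotropy really is $O(n)$. The paper records that version as a \emph{consequence} of the $\mathrm{Aff}^s$ result, and the implication does not reverse.

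\textbf{What is missing.} The missing work is exactly the content of \cite{FIK} and \cite{KO}, which the paper cites rather than reproves. You must:
\begin{enumerate}
\item identify $\mathcal{N}^n$ with the solvable group $UT^+(n,\mathbb{R})\ltimes\mathbb{R}^n$;
\item compute $\nabla^{g^F}$ on left-invariant vector fields by the Koszul formula;
\item show that the linear condition ``$\nabla^{g^F}C$ is totally symmetric'' on the full $\binom{N+2}{3}$-dimensional space of left-invariant symmetric cubic forms has solution space exactly $\mathbb{R}\,C^{A(+1)}$.
\end{enumerate}
Small finite symmetries can help, such as conjugation by diagonal sign matrices, which normalize $UT^+(n,\mathbb{R})$. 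But no $O(n)$ is available to collapse the system, and the proposal only states an expectation about its solution without addressing it.

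Your Step 3 is fine. Flatness implies conjugate symmetry. Conjugate symmetry kills the term linear in $\alpha$, and flatness of $\nabla^{A(\pm1)}$ then gives $R^{A(\alpha)}=(1-\alpha^2)R^{g^F}$ with $R^{g^F}\neq 0$. This is how the paper deduces the dually flat equality from the conjugate symmetric one, once the latter is actually established.
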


In the present paper, we investigate Problem~\ref{prob:Stat_determine} for the family of zero-mean multivariate normal distributions
\[
    \mathcal{N}^n_0 := \{ N(x;\Sigma,0) \mid \Sigma \in \Sym^+(n, \RR)\},
\]
which is identified with the space $\Sym^+(n,\mathbb{R})$ of $n \times n$ positive definite symmetric matrices and the Fisher metric $g^F$ on $\No^n$.

We consider two Lie groups acting transitively and isometrically on the Riemannian manifold $(\mathcal{N}^n_0,g^F)$, namely
\[
GL(n,\mathbb{R}) \quad \text{and} \quad \Isom(\mathcal{N}^n_0,g^F).
\]
We focus on three cases for the property $\mathrm{P}$: the unrestricted case, 
the conjugate symmetry~$\mathrm{CS}$ 
and the dually flat property~$\mathrm{DF}$.
One of our main results is the explicit determination of the following sets:
\[
\Stat^{GL(n,\mathbb{R})}(\mathcal{N}^n_0,g^F), \quad 
\Stat^{GL(n,\mathbb{R})}_{\mathrm{CS}}(\mathcal{N}^n_0,g^F), \quad 
\Stat^{GL(n,\mathbb{R})}_{\mathrm{DF}}(\mathcal{N}^n_0,g^F),
\]
\[
\Stat^{\Isom(\mathcal{N}^n_0,g^F)}(\mathcal{N}^n_0,g^F), \quad
\Stat^{\Isom(\mathcal{N}^n_0,g^F)}_{\mathrm{CS}}(\mathcal{N}^n_0,g^F), \quad
\Stat^{\Isom(\mathcal{N}^n_0,g^F)}_{\mathrm{DF}}(\mathcal{N}^n_0,g^F)
\]
(see Theorems~\ref{thm:Main-thm-G=GL+} and \ref{thm:Main-thm-G=Isom} for details).
Note that for the space $(\mathcal{N}_0, g^F)$, every $GL(n,\mathbb{R})$-invariant statistical connection is automatically conjugate symmetric (see Proposition~\ref{prop:GL+-stat_CS} for details). 
That is, the following equalities hold:
\begin{align*}
    \Stat^{GL(n,\mathbb{R})}(\mathcal{N}^n_0,g^F) &=
    \Stat^{GL(n,\mathbb{R})}_{\mathrm{CS}}(\mathcal{N}^n_0,g^F),\\
    \Stat^{\Isom(\mathcal{N}^n_0,g^F)}(\mathcal{N}^n_0,g^F) &=
    \Stat^{\Isom(\mathcal{N}^n_0,g^F)}_{\mathrm{CS}}(\mathcal{N}^n_0,g^F).
\end{align*}

As an illustration of our main results, we explicitly determine all $GL(n,\mathbb{R})$-invariant dually flat statistical connections on $(\mathcal{N}^n_0, g^F)$.

\begin{Thm}[cf.~Theorem~\ref{thm:Main-thm-G=GL+}~\ref{thm:Main-thm-G=GL_Hesse}]\label{thm:Stat-DF-N0_intro}
The following equality holds:
\[
    \Stat^{GL(n, \RR)}_{\mathrm{DF}}(\mathcal{N}_0^n, g^F)
    =
    \begin{cases}
    \{\nabla^{A(+1)},\ \nabla^{A(-1)},\ \nabla',\ (\nabla')^\ast \} & (n \ge 3),\\[2mm]
    \{\nabla^{A(+1)},\ \nabla^{A(-1)}\} & (n = 2),\\[2mm]
    \{\nabla^{A(\alpha)} \mid \alpha \in \RR\} & (n = 1).
    \end{cases}
\]
Here, $\nabla^{A(\alpha)}$ denotes the Amari--Chentsov $\alpha$-connection on $\mathcal{N}_0^n$.
Moreover, the affine connection $\nabla'$ is defined by Equation~\eqref{eq:def-C'} and is different from $\nabla^{A(\alpha)}$ for any $\alpha \in \RR$.
\end{Thm}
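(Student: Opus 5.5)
By Proposition~\ref{prop:GL+-stat_CS} and the general correspondence between invariant connections and isotropy-invariant tensors, I would reduce the statement to linear algebra at the base point $I_n \in \Sym^+(n,\RR)$. There the tangent space is $\Sym(n,\RR)$, the isotropy is $O(n)$ acting by conjugation, and $g^F_{I}(X,Y)=\tfrac12\operatorname{tr}(XY)$. A $GL(n,\RR)$-invariant statistical connection has the form $\nabla=\nabla^{g^F}+K$, where $K$ is determined by a $GL(n,\RR)$-invariant totally symmetric cubic tensor $C(X,Y,Z)=g^F(K_XY,Z)$.

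\textbf{Step 1: parametrize $C$.} Invariant cubic tensors correspond to $O(n)$-invariant symmetric cubic forms on $\Sym(n,\RR)$. By invariant theory for symmetric matrices under conjugation, these are spanned by the polarizations of $\operatorname{tr}(X^3)$, $\operatorname{tr}(X)\operatorname{tr}(X^2)$ and $(\operatorname{tr}X)^3$. So for $n\ge 3$ we get
\[
C=a\,\operatorname{tr}(X^3)+b\,\operatorname{tr}(X)\operatorname{tr}(X^2)+c\,(\operatorname{tr}X)^3 .
\]
For $n=2$ the Cayley--Hamilton relation $\operatorname{tr}X^3=\tfrac32\operatorname{tr}X\operatorname{tr}X^2-\tfrac12(\operatorname{tr}X)^3$ cuts this to two parameters. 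For $n=1$ there is one parameter. I would identify the Amari--Chentsov tensor as a multiple of the polarization of $\operatorname{tr}(XYZ)$, so that $\nabla^{A(\alpha)}$ corresponds to $a\propto\alpha$, $b=c=0$.

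\textbf{Step 2: curvature formula.} Since $(\Sym^+(n,\RR),g^F)$ is a symmetric space, any invariant tensor is $\nabla^{g^F}$-parallel. Hence $\nabla^{g^F}K=0$, and the curvature reduces to
\[
R^\nabla(X,Y)=R^{g^F}(X,Y)+[K_X,K_Y].
\]
Here $R^{g^F}(X,Y)Z=-[[X,Y],Z]$, up to the normalizing constant coming from the factor $\tfrac12$ in $g^F$. Because $R^\nabla$ is invariant, flatness only needs to be checked at $I_n$. I would compute $K_X$ explicitly as an endomorphism of $\Sym(n,\RR)$:
\[
K_XY=\lambda_1(XY+YX)+\lambda_2\bigl(\operatorname{tr}(X)Y+\operatorname{tr}(Y)X+\operatorname{tr}(XY)I\bigr)+\lambda_3\operatorname{tr}(X)\operatorname{tr}(Y)I,
\]
with $\lambda_i$ linear in $(a,b,c)$. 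I would then expand $[K_X,K_Y]Z$ and collect coefficients of the independent invariant expressions, such as $[[X,Y],Z]$, terms of the form $\operatorname{tr}(\cdot)(\ldots)$, and terms proportional to $I$. Setting $R^\nabla=0$ gives a system of polynomial (quadratic) equations in $(\lambda_1,\lambda_2,\lambda_3)$.

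\textbf{Step 3: solve the system.} Only the $\lambda_1^2$ term produces a multiple of $[[X,Y],Z]$, so matching it against $R^{g^F}$ should force $\lambda_1=\pm$ a fixed constant, namely the values corresponding to $\alpha=\pm1$. The remaining equations, from the trace and identity terms, are quadratic in $\lambda_2,\lambda_3$ for each sign of $\lambda_1$. I expect exactly two solutions for each sign: $\lambda_2=\lambda_3=0$, giving $\nabla^{A(\pm1)}$, and one further solution. This yields $\nabla'$ and its dual $(\nabla')^*$; duality corresponds to $C\mapsto -C$, which explains the pairing.

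For $n=2$, the linear dependence among the invariant expressions must be taken into account. I would redo the coefficient comparison in the two-parameter model and expect the extra solutions to coincide with $\nabla^{A(\pm1)}$ or disappear. For $n=1$, $R^{g^F}=0$ and $\dim=1$, so every torsion-free connection is flat, and every $\nabla^{A(\alpha)}$ qualifies; these exhaust the one-parameter family.

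The main obstacle is Step 3. The issue is the careful expansion of $[K_X,K_Y]$ and checking that the invariant expressions used for coefficient comparison are genuinely linearly independent for $n\ge3$ and which relations appear at $n=2$. To control this, I would first test on diagonal and elementary symmetric matrices $X,Y,Z$ to isolate each coefficient before trusting the general comparison.
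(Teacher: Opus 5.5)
Your proposal is correct. It shares the paper's skeleton: the invariant cubic forms are parametrized by $C_1,C_2,C_3$, one uses $R^\nabla=R^{g^F}+[K_X,K_Y]$, and one solves quadratic equations. It differs in how curvature is handled, and in particular in how sufficiency is proved.

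\textbf{How the paper proceeds.} It gets the curvature formula from conjugate symmetry (Proposition~\ref{prop:GL+-stat_CS}, imported from earlier work). It then evaluates only three sectional curvatures on pairs from $\mathcal{B}_F$, which gives necessary conditions only. Flatness of $\pm C'$ is proved separately via the isometry $\sigma_2$, using $\sigma_2^\ast C^{A(+1)}=C'$ and the known flatness of $\nabla^{A(\pm1)}$.

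\textbf{How your route differs.} You obtain $\nabla^{g^F}K=0$ directly. This is correct, but the reason is that $(\mathfrak{gl}(n,\RR),\mathfrak{o}(n))$ is a symmetric pair, so $\nabla^{g^F}$ is the canonical connection of $GL(n,\RR)/O(n)$. Being a symmetric space alone is not enough: tensors invariant only under the simply transitive triangular subgroup need not be parallel. Your full coefficient comparison then gives necessary and sufficient conditions at once.

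\textbf{Your Step~3 expectations hold.} With $K=\nabla-\nabla^{g^F}$ written as in your Step~2 and $\Sym(n,\RR)\simeq T_{I_n}\No$ via $\Xi$, one finds
\[
\begin{aligned}
R^\nabla(X,Y)Z={}&\bigl(\lambda_1^2-\tfrac14\bigr)[[X,Y],Z]+\lambda_2(4\lambda_1+n\lambda_2)\bigl(\tr(YZ)X-\tr(XZ)Y\bigr)\\
&+\bigl(\lambda_2^2+\lambda_3(2\lambda_1+n\lambda_2)\bigr)Q(X,Y,Z),
\end{aligned}
\]
where $Q(X,Y,Z)=\tr(Y)\tr(Z)X-\tr(X)\tr(Z)Y+\bigl(\tr(X)\tr(YZ)-\tr(Y)\tr(XZ)\bigr)I_n$.
\begin{itemize}
\item The constant $\frac14$ comes from $\frac{d}{dt}\big|_{t=0}e^{tX}e^{tX}=2X$, not from the $\frac12$ in $g^F$: rescaling $g$ does not change $R$.
\item If $C^\nabla=a_1C_1+a_2C_2+a_3C_3$ in the paper's normalization, then $(\lambda_1,\lambda_2,\lambda_3)=(-a_1/2,-a_2/3,-a_3)$. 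On the paper's three planes the formula reproduces Lemma~\ref{lem:sec-curve_C_n-geq3}.
\item For $n\ge3$ the three expressions are linearly independent. The triple $E_{12}+E_{21},\,E_{23}+E_{32},\,E_{13}+E_{31}$ isolates $[[X,Y],Z]$, and this is where $n\ge3$ enters.
\item The solutions are $\lambda_1=\pm\frac12$ with $(\lambda_2,\lambda_3)\in\{(0,0),(-4\lambda_1/n,\,8\lambda_1/n^2)\}$, i.e.\ exactly $\pm C_1$ and $\pm C'$.
\item Since $\lambda_2\neq0$ for $C'$, this also proves the ``moreover'' clause, which you should state explicitly.
\end{itemize}

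\textbf{The case $n=2$.} Here $[[X,Y],Z]=2\bigl(\tr(YZ)X-\tr(XZ)Y\bigr)-Q(X,Y,Z)$ and $C_1-\frac32C_2+\frac12C_3=0$. Hence $C'=-C_1$, and the extra solutions collapse onto $\nabla^{A(\mp1)}$, as you predicted. The paper avoids these relations by working in the basis $\{C_1,C_3\}$.

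\textbf{What each route buys.} Yours is self-contained: it needs neither the isometry group of $(\No,g^F)$ nor the flatness of $\nabla^{A(\pm1)}$ as input. The cost is a full expansion and independence checks. The paper's needs only three scalar evaluations, and it explains $\nabla'$ conceptually as $\sigma_2^\ast\nabla^{A(+1)}$, which is what feeds its moduli-space results.
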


We note that Theorem~\ref{thm:Stat-DF-N0_intro} has an interesting consequence in the case $n \geq 3$. 
Namely, besides the Amari--Chentsov connection $\nabla^{A(+1)}$ and its dual $\nabla^{A(-1)}$, 
there exist $GL(n,\RR)$-invariant dually flat statistical connections on $(\No^n,g^F)$, namely $\nabla'$ and its dual $(\nabla')^*$, which are different from them.

As a supplementary remark, we record the canonical divergences of these Hessian structures in the sense of~\cite{Shima}.
It is well known that the canonical divergence of the Hessian structure $(g^F, \nabla^{A(-1)})$ is the Kullback--Leibler divergence (KL-divergence) on $\No$ given by
\[
    D_{\mathrm{KL}}(\Sigma_1, \Sigma_2)
        = \frac{1}{2} \left(
        \tr(\Sigma_1 \Sigma_2^{-1})
        -
        \log\det(\Sigma_1 \Sigma_2^{-1})
        -
        n \right)
\]
(cf.~\cite{Amari_2014, Shima-Hao_2000}). 
On the other hand, the canonical divergence corresponding to $(g^F, (\nabla')^\ast )$ is given by
\[
D'(\Sigma_1, \Sigma_2)
    = \frac{1}{2}  \left(
    \det(\Sigma_1\Sigma_2^{-1})^{-2/n}\,
    \tr(\Sigma_1\Sigma_2^{-1})
    -
    \log\det(\Sigma_1^{-1}\Sigma_2)
    -
    n \right)
\]
(see Proposition~\ref{prop:Amari-open-prob} for details).

These observations naturally lead to the following question.

\begin{Question}\label{q:DF-connections_diff}
Are the following four dually flat statistical connections
\[
    \nabla^{A(+1)}, \quad \nabla^{A(-1)}, \quad \nabla', \quad (\nabla')^\ast
\]
``essentially'' different?
Likewise, are divergences $D_{\mathrm{KL}}$ and $D'$ ``essentially'' different?
\end{Question}

Motivated by this question, we study moduli spaces of invariant statistical connections on homogeneous Riemannian manifolds. 
The point is that, although the connections appearing in Theorem~\ref{thm:Stat-DF-N0_intro} are distinct as affine connections, they may become equivalent once one passes to a suitable notion of geometric equivalence. 
Our aim is to formulate such equivalence relations in a natural way and to determine the resulting moduli spaces.

Let $(M,g)$ be a homogeneous Riemannian manifold endowed with a
transitive isometric action of a Lie group $G$.
Given two $G$-invariant statistical connections $\nabla$ and $\nabla'$ on
$(M,g)$, a natural way to compare them is to ask whether there exists a
statistical isomorphism
\[
    f : (M,g,\nabla) \longrightarrow (M,g,\nabla')
\]
satisfying an additional compatibility condition.
Different choices of this compatibility condition give rise to
different equivalence relations on the space of invariant statistical
connections.

Several such notions may be considered.
One may require $f$ to be strictly $G$-equivariant.
This is the standard viewpoint when studying maps between $G$-manifolds
for a fixed Lie group $G$.
However, in the case of homogeneous spaces, equivariant maps are well known to be highly rigid: once the image of a single point is fixed, the equivariance condition typically determines the map uniquely.
As a consequence, the equivalence relation induced by strict
$G$-equivariance turns out to be too restrictive for our purposes and
does not lead to a meaningful reduction of the space of invariant
statistical connections (see also Remark \ref{Remark:strictequivariant} for details).

In this paper, we consider two types of additional conditions on a statistical isomorphism $f$ in order to define moduli spaces.

The first approach is to impose no additional condition on $f$. In this case, the moduli space classifies $G$-invariant statistical connections on $(M,g)$ satisfying the property $\mathrm{P}$ purely as statistical manifolds. We denote this moduli space by
\[
\MStatStat^G_{\mathrm{P}}(M,g),
\]
and refer to it as the \emph{$\STAT$-moduli space}.

The second approach is to impose a weakened form of $G$-equivariance. More precisely, a statistical isomorphism
\[
f : (M,g,\nabla) \longrightarrow (M,g,\nabla')
\]
is regarded as admissible if there exists an automorphism $\phi$ of $G$ such that
\[
f(h \cdot x) = \phi(h) \cdot f(x)
\qquad (h \in G,\; x \in M).
\]
This condition may be viewed as a relaxed notion of equivariance and expresses compatibility with the homogeneous structure up to automorphisms of the group. The moduli space defined in this way is denoted by
\[
\MHSStat^G_{\mathrm{P}}(M,g),
\]
and we refer to it as the \emph{$\HS$-moduli space}.
There is a natural surjective map
\[
\MHSStat^G_{\mathrm{P}}(M,g) \longrightarrow \MStatStat^G_{\mathrm{P}}(M,g).
\]

From a conceptual point of view, this construction fits naturally into the theory of homogeneous spaces. Indeed, if $M$ and $M'$ are homogeneous spaces of Lie groups $G$ and $G'$, respectively, it is natural to regard a map $f : M \to M'$ as compatible with the homogeneous structures whenever there exists a homomorphism $\phi : G \to G'$ satisfying
\[
f(h \cdot x) = \phi(h) \cdot f(x).
\]
This viewpoint leads to a natural category of homogeneous spaces and structure-compatible maps. In the present setting, we apply this idea to the case $G = G'$, allowing nontrivial automorphisms of $G$.

An important advantage of this equivalence relation is that it provides a richer class of admissible transformations than strict equivariance, while still retaining sufficient rigidity to make the resulting moduli spaces tractable. In fact, the equivalence relation can be described in terms of an action of the automorphism group of $G$ (see Section \ref{subsec:moduli-inv-stat-conn} for details), which makes it significantly more manageable than the equivalence relation defined by arbitrary statistical isomorphisms.

In particular, when $G$ acts simply transitively on $M$, we may identify $M$ with $G$ by fixing a base point. In this situation, $\Stat^G_{\mathrm{P}}(M,g)$ coincides with the set of left-invariant statistical connections on $(G,g)$ satisfying the property $\mathrm{P}$, and the moduli space $\MHSStat^G_{\mathrm{P}}(M,g)$ can be identified with the quotient of $\Stat^G_{\mathrm{P}}(G,g)$ by the natural action of the automorphism group $\Aut(G,g) := \{ \phi \in \Aut(G) \mid \phi \text{ preserves } g \}$ (see Section \ref{subsection:simplytransitive} for details). 
Thus, the $\HS$-moduli space introduced in this paper is based on the same philosophy as the moduli spaces of left-invariant geometric structures on Lie groups studied by Kodama--Takahara--Tamaru \cite{KTT} and related works \cite{
CCR_2025, Cosgaya-Reggiani_2022, Dekic-Babic-Vukmirovic_2022, Hashinaga_2014, Hashinaga-Tamaru_2017, HTT_2016, 
KONDO, KT_2023, KOTT, 
Luis_2022, Luis-T,
Sukilovic-Vukmirovic_2023, Sukilovic-et.al_2025, 
Taketomi_2015, Taketomi_2018, Taketomi-Tamaru_2018, Scott_2017, Torres_2024} (see Section~\ref{subsec:moduli-left-inv-geom-str} for a review of a selection of these studies).
In particular, \cite{KOOT_2025} studies moduli spaces of left-invariant statistical structures on Lie groups, and the $\HS$-moduli space introduced in the present paper follows this perspective.
Furthermore, in Inoguchi~\cite{Inoguchi_2026}, the set of left-invariant statistical connections and the moduli space that are compatible with various geometric structures on certain Lie groups are treated in detail from a more geometric perspective (similar to the $\STAT$-moduli space in this paper).

In this paper, we formalize these notions of equivalence and define the corresponding moduli spaces $\MStatStat^G_{\mathrm{P}}(M,g)$ and $\MHSStat^G_{\mathrm{P}}(M,g)$ of invariant statistical connections satisfying the property $\mathrm{P}$. 
We then apply this framework to several concrete examples, with particular emphasis on invariant statistical connections on $(\mathcal{N}_0^n, g^F)$.
In particular, we focus on the moduli spaces of $GL(n,\RR)$-invariant dually flat statistical connections on $(\mathcal{N}_0^n, g^F)$ and determine them completely. One of the main results of this paper is the following description.

\begin{Thm}[cf.~Theorem~\ref{thm:Main-thm-G=Isom}~\ref{thm:Main-thm-G=GL_MHesse} and Corollary~\ref{cor:HS_Stat-moduli_coincides_N0}]\label{thm:Stat-DF-moduli-N0_intro}
The following equality holds.
\[
   \MStatStat_{\mathrm{DF}}^{GL(n, \RR)}(\mathcal{N}_0^n, g^F) =  \MHSStat_{\mathrm{DF}}^{GL(n, \RR)}(\mathcal{N}_0^n, g^F)
        \simeq
        \begin{cases}
        \{*\} & (n \ge 2),\\[2mm]
        \RR_{\ge 0} & (n = 1).
        \end{cases}
\]
\end{Thm}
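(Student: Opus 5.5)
The plan is to combine the explicit list of Theorem~\ref{thm:Stat-DF-N0_intro} with explicit statistical isomorphisms that are equivariant up to automorphisms of $GL(n,\RR)$. This shows that all listed connections are identified in the $\HS$-moduli space when $n\ge 2$. For $n=1$ a scalar invariant then separates the classes. Since the natural map $\MHSStat^{G}_{\mathrm{DF}}\to\MStatStat^{G}_{\mathrm{DF}}$ is surjective, the case $n\ge 2$ gives the $\STAT$-statement for free. For $n=1$, the separating invariant will be preserved by arbitrary statistical isomorphisms, so the two moduli spaces coincide there as well.

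\emph{Step 1 (inversion).} Consider $\iota(\Sigma)=\Sigma^{-1}$ together with the automorphism $\theta(g)={}^t g^{-1}$. Then $\iota(g\Sigma\,{}^tg)=\theta(g)\,\iota(\Sigma)\,{}^t\theta(g)$. Since $g^F_\Sigma(X,Y)=\frac12\tr(\Sigma^{-1}X\Sigma^{-1}Y)$, a direct check shows $\iota$ is an isometry. It is classical that $\iota$ maps the $e$-coordinates to the $m$-coordinates, so $\iota_*\nabla^{A(\alpha)}=\nabla^{A(-\alpha)}$. Hence $\nabla^{A(+1)}\sim\nabla^{A(-1)}$, and for $n=1$ we get $\alpha\sim-\alpha$.

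\emph{Step 2 (determinant twist, $n\ge 2$).} Set $f(\Sigma)=\det(\Sigma)^{-2/n}\Sigma$. Split $T_\Sigma$ into the trace-free part and the $\Sigma$-direction. On the trace-free part $f$ acts conformally by the factor $\det(\Sigma)^{-2/n}$, which cancels in $g^F$. Along the scale direction, $\log\det f(\Sigma)=(1-2)\log\det\Sigma$ is just a sign flip, which preserves the metric. So $f$ is an isometry. Moreover $f(g\Sigma\,{}^tg)=\phi(g)\,f(\Sigma)\,{}^t\phi(g)$ with $\phi(g)=|\det g|^{-2/n}g$. This $\phi$ is a homomorphism and is bijective, because $|\det\phi(g)|=|\det g|^{-1}$ lets one recover $g$. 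Therefore $f_*$ maps $GL(n,\RR)$-invariant dually flat statistical connections to ones of the same kind, and it induces a permutation of the list in Theorem~\ref{thm:Stat-DF-N0_intro}. I expect $f_*\nabla^{A(-1)}=(\nabla')^{*}$, in view of the factor $\det(\Sigma_1\Sigma_2^{-1})^{-2/n}$ in $D'$. Concretely, $f$ should transport the divergence $D_{\mathrm{KL}}$ to $D'$ up to swapping arguments, and this can be checked by a direct trace and determinant computation. Combined with Step~1 and duality (isometries commute with taking duals), all four connections for $n\ge3$ lie in one class, and both for $n=2$. So the moduli space is a point. The main obstacle is this identification: verifying that $f_*\nabla^{A(\pm1)}$ is exactly $\nabla'$ or $(\nabla')^*$ as defined in \eqref{eq:def-C'}. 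If the divergence computation proves awkward, I would instead compare the cubic forms $C=\nabla g$ at the base point $I$ via $df_I$.

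\emph{Step 3 ($n=1$).} Here $\mathcal N_0^1\cong\RR_{>0}$, and $\nabla^{A(\alpha)}$ has cubic form $C^{(\alpha)}=\alpha\,C_0$ for a fixed nonzero invariant $C_0$. The function $\|C\|_g^2$, computed with $g$, is invariant under every statistical isomorphism. It is constant and equal to $c\,\alpha^2$ with $c>0$. Hence $\nabla^{A(\alpha)}$ and $\nabla^{A(\beta)}$ are $\STAT$-equivalent only if $|\alpha|=|\beta|$, and by Step~1 this condition suffices. Since $\HS$-equivalence implies $\STAT$-equivalence, both moduli spaces are identified with $\RR_{\ge0}$ via $\alpha\mapsto|\alpha|$, and they coincide.
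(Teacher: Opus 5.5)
Your overall strategy is sound, and for $n\ge 2$ it is the paper's argument. Your $\iota$ and $f$ are $\sigma_1,\sigma_2$, realized through the automorphisms $A\mapsto {}^tA^{-1}$ and $A\mapsto |\det A|^{-2/n}A$ (the paper's $\tilde\sigma_1,\tilde\sigma_2$), acting on the list of Proposition~\ref{prop:Hesse-GL+}.

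The one real gap is the step you yourself flag, and for $n\ge 3$ it carries the whole argument. Knowing only that $\sigma_2$ is an $\HS$-automorphism tells you that it permutes the four-element set. That is equally compatible with two orbits $\{\nabla^{A(\pm1)}\}$ and $\{\nabla',(\nabla')^\ast\}$, which would give a two-point moduli space. You must actually prove $\sigma_2^\ast C^{A(+1)}=C'$, which is Lemma~\ref{lem:involutive-isometry}, and your fallback is exactly how the paper does it:
\begin{itemize}
\item Evaluate $C_1|_{I_n}(X,Y,Z)=\tr(XYZ)$ on $(d\sigma_2)_{I_n}X=X-\frac2n\tr(X)I_n$.
\item The terms containing one copy of $I_n$ give $-\frac{2}{n}\bigl(\tr X\tr(YZ)+\tr Y\tr(ZX)+\tr Z\tr(XY)\bigr)$.
\item The terms containing two or three copies of $I_n$ give $(\frac{12}{n^2}-\frac{8}{n^2})\tr X\tr Y\tr Z$.
\item The sum is $C'|_{I_n}$, and since both sides are $GL(n,\RR)$-invariant (Proposition~\ref{Proposition:Aut_preserves_invtensor}), this suffices.
\end{itemize}
Even without computing $C'$, you can exclude $(d\sigma_2)_{I_n}^\ast C_1=\pm C_1$ for $n\ge3$. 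Testing on $X=I_n$ gives $-n$, which rules out $+C_1$. Testing on a trace-free $X$ with $\tr X^3\neq 0$, such as $\mathrm{diag}(1,1,-2,0,\dots,0)$, rules out $-C_1$. Do not lean on $D'$ here: in the paper $D'$ is derived from this very identity. The divergence route is therefore circular unless you separately show that $D'$ is the canonical divergence of $(g^F,(\nabla')^\ast)$.

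Where you genuinely differ from the paper is in comparing the two moduli spaces. The paper proves $\MHSStat^{GL(n,\RR)}_{\mathcal C}(\mathcal N_0^n,g^F)=\MStatStat^{GL(n,\RR)}_{\mathcal C}(\mathcal N_0^n,g^F)$ for every class $\mathcal C$. It does so by showing that $\phi\mapsto f_\phi$ maps $\Aut(GL(n,\RR),O(n),g^F_{I_n})$ onto $\Isom(\mathcal N_0^n,g^F)_{I_n}$ (Lemma~\ref{lem:surj_Aut-Isom_In} with Proposition~\ref{prop:MHSStat-MStatStat_coincides_2}). This rests on the Dolcetti--Pertici description of the full isometry group (Proposition~\ref{prop:full-isometry_N0}). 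For $n=1$ the paper then reads off $\RR_{\ge0}$ from the isotropy group $\Gamma(O(1))\rtimes\langle\sigma_1\rangle$.

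You instead get the $n\ge2$ equality for free, since a singleton surjects onto a singleton. For $n=1$ you use the scalar invariant $\|C\|^2_{g^F}$, which equals $8\alpha^2$ for $C^{A(\alpha)}$ and which no statistical isomorphism can change. Your route is more elementary and needs no classification of isometries, and it is complete for the dually flat statement. The paper's route buys more: the coincidence of the two moduli spaces for every class $\mathcal C$ (Corollary~\ref{cor:HS_Stat-moduli_coincides_N0}), not only for DF.
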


In particular, the above theorem shows that all $GL(n, \mathbb{R})$-invariant dually flat statistical
connections on $(\mathcal{N}^n_0, g^F)$ become equivalent under these notions of equivalence when $n \geq 2$.
As a further consequence, one verifies that the divergences $D'$ and $D_{\mathrm{KL}}$
are related by an isometry of $(\mathcal{N}^n_0, g^F)$ that is compatible with the $GL(n,\mathbb{R})$-action (see Proposition~\ref{prop:Amari-open-prob} for details).
Moreover, when $n\geq2$, the canonical divergences associated with
$GL(n,\RR)$-invariant dually flat statistical connections on
$(\mathcal N_0^n,g^F)$
are all related in the same way.

We note that the main theorems of the present paper,
as well as all the propositions in Section~\ref{subsec:main-results}, remain valid if one replaces \(\mathcal{N}_0^n\) and \(g^F\) by
\(\Sym^+(n,\RR)\) and the
\(GL(n,\RR)\)-invariant Riemannian metric \(g^{(r)}\) on
\(\Sym^+(n,\RR)\), respectively, where \(g^{(r)}\) is defined by
\[
    g^{(r)}_{\Sigma}(X,Y)
    =
    r \cdot \tr(\Sigma^{-1} X \Sigma^{-1} Y)
    \qquad
    (r>0,\ 
    X,Y \in T_{\Sigma}\Sym^+(n,\RR)
    \simeq \Sym(n,\RR)).
\]
In particular, when \(n \geq 3\), the set
\[
    \Stat_{\mathrm{DF}}^{GL(n,\RR)}
    (\Sym^+(n,\RR), g^{(r)})
\]
consists of four elements. Among them, two coincide with the Amari--Chentsov \((+1)\)-connection \(\nabla^{A(+1)}\) and the Amari--Chentsov \((-1)\)-connection \(\nabla^{A(-1)}\) induced from \(\mathcal{N}_0^n\).

Furthermore, let \(\mathcal{W}_\nu^p\) denote the family of Wishart distributions on \(\Sym^+(p,\RR)\) with degrees of freedom \(\nu \ge p\).
Then, its parameter space is identified with \(\Sym^+(p,\RR)\), and the Fisher metric \(g^{F\mathchar`-\mathcal{W}_\nu}\) on \(\mathcal{W}_\nu^p\) is given by
\[
    g^{F\mathchar`-\mathcal{W}_\nu}_{\Sigma}(X,Y)
    =
    \frac{\nu}{2}
    \tr(\Sigma^{-1} X \Sigma^{-1} Y)
\]
(cf.~\cite{Ayadi_2023}).
Therefore, the results of the present paper also apply to such statistical models, including the Wishart model \(\mathcal{W}_\nu^p\) with fixed degrees of freedom \(\nu \ge p\).

This paper is organized as follows: 
In Section~\ref{sec:pre}, we first review the basic notions of statistical manifolds in Section~\ref{subsec:stat-mfd}, and then study in detail the Riemannian manifold $(\No^n, g^F)$ in Section~\ref{subsec:N0}. 
We also review several known formulations of the moduli spaces of left-invariant geometric structures on $G$ in Section~\ref{subsec:moduli-left-inv-geom-str}. 
In Section~\ref{sec:Cat_Stat}, we introduce the category $\STAT$ of statistical manifolds in Section~\ref{subsec:Cat_Stat}, and define the moduli space $\MStatStat(M,g)$ of statistical connections on a Riemannian manifold $(M,g)$ in Section~\ref{subsection:DefStatModuli}. 
We also introduce the moduli space $\MStatStat^G(M,g)$ in this subsection. 
In Section~\ref{sec:mor-HS}, we study equivariant morphisms between homogeneous manifolds, introduce the categories of homogeneous manifolds $\HMFD$ and homogeneous Riemannian manifolds $\HRiem$ (Section~\ref{section:EM}),
and investigate their automorphism groups (Section~\ref{subsec_Autgrp}). 
These results provide a framework for describing equivalence relations compatible with the homogeneous structure, which will be used in the definition of moduli spaces $\MHSStat^G(M,g)$ in Section~\ref{sec:HS-moduli}. 
In Section~\ref{sec:HS-moduli}, we introduce the category $\HS$ of homogeneous statistical manifolds and define the moduli spaces $\MHSStat^G(M,g)$. 
In particular, in Section~\ref{subsec:Stat-HS-moduli_coincide}, we provide sufficient conditions under which the two moduli spaces $\MStatStat^G(M,g)$ and $\MHSStat^G(M,g)$ coincide, and we also discuss the simply transitive case in Section~\ref{subsection:simplytransitive}. 
Section~\ref{sec:main-results} is devoted to our main results.
In Section~\ref{subsec:Pre-for-mainresults}, we introduce notation used in the statements of the main theorems.
In Section~\ref{subsec:main-results}, we present our main theorems (Theorem~\ref{thm:Main-thm-G=GL+} and Theorem~\ref{thm:Main-thm-G=Isom}).
Finally, in Section~\ref{subsec:proof-of-main-thm} and Section~\ref{subsec:proof-of-main-thm-Isom}, we give their proofs.

\section{Preliminaries}\label{sec:pre}
Throughout this paper, $n$ denotes an integer greater than or equal to $1$.
Also, for a matrix $A$, we denote its transpose by ${}^t A$.

\subsection{Statistical manifolds}\label{subsec:stat-mfd}
\begin{Def}
Let $(M,g)$ be a Riemannian manifold.
A torsion-free affine connection $\nabla$ on $M$ is called a \emph{statistical connection}
if the covariant derivative $\nabla g$ is totally symmetric, that is,
\[
(\nabla_X g)(Y,Z) = (\nabla_Y g)(X,Z)
\quad\text{for all } X,Y,Z \in \Gamma(TM).
\]
A triple $(M,g,\nabla)$ is called a \emph{statistical manifold} and a pair $(g, \nabla)$ is called a \emph{statistical structure} on $M$.
\end{Def}

In this paper, we denote by $\nabla^{g}$ the Levi--Civita connection of the Riemannian metric $g$.
It is clear that, for any Riemannian manifold $(M, g)$, the pair $(g, \nabla^{g})$ defines a statistical structure on $M$.
The pair $(g, \nabla^{g})$ is called the \emph{trivial statistical structure}. In the context of information geometry, a particularly important class of statistical structures is the dually flat structure, defined as follows.

\begin{Def}
A statistical structure $(g, \nabla)$ on a manifold $M$ is called \emph{dually flat} if $\nabla$ is flat, that is, if its curvature tensor $R^{\nabla}$, defined by
\[
    R^{\nabla}(X,Y)Z
    = \nabla_X \nabla_Y Z
    - \nabla_Y \nabla_X Z
    - \nabla_{[X,Y]} Z,
\]
vanishes.
In this case, $(g,\nabla)$ is called a \emph{dually flat structure} on $M$,
and $(M,g,\nabla)$ is called a \emph{dually flat statistical manifold}
(or simply a \emph{dually flat manifold}).
Moreover, $\nabla$ is called a \emph{dually flat statistical connection} on the Riemannian manifold $(M,g)$.
\end{Def}

A dually flat structure is equivalent to a Hessian structure (see~\cite{Shima}).

\begin{Def}
Let $(M, g)$ be a Riemannian manifold and $\nabla$ be an affine connection on $M$. 
The affine connection $\nabla^{\ast}$ defined by the following equation is called the \emph{dual connection} of \( \nabla \) with respect to \( g \).
\[
    Xg(Y,Z) = g(\nabla_X Y, Z) + g(Y, \nabla^{\ast}_X Z).
\]
\end{Def}

For a Riemannian metric $g$ and an affine connection $\nabla$ on a manifold $M$, the pair $(g, \nabla)$ is a statistical structure if and only if $(g, \nabla^{\ast})$ is a statistical structure. 
Furthermore, the statistical structure $(g, \nabla)$ is dually flat if and only if the statistical structure $(g, \nabla^{\ast})$ is dually flat (see~\cite{L-SM}).

Throughout this paper, for a Riemannian manifold $(M,g)$,
we denote by $\Stat(M,g)$ the set of all statistical connections on $(M,g)$.
For $\nabla \in \Stat(M,g)$, the symmetric $(0,3)$-tensor field
$C^\nabla$, also denoted by $C^{(g,\nabla)}$, is defined by
\[
    C^\nabla(X,Y,Z)
    =
    C^{(g,\nabla)}(X,Y,Z)
    :=
    -2\,g\bigl(\nabla_XY-\nabla_X^gY,\,Z\bigr)
    \qquad
    (X,Y,Z\in\Gamma(TM)).
\]
This tensor is called the \emph{cubic form} associated with
the statistical manifold $(M,g,\nabla)$.

Let $\Gamma(S^3T^\ast M)$ denote the space of symmetric $(0,3)$-tensor fields on $M$.
Then the following result is classical:

\begin{Thm}[{\cite{L-SM}}]\label{theorem:nablaC}
Let $(M,g)$ be a Riemannian manifold.
The correspondence
\[
\Stat(M,g) \longrightarrow \Gamma(S^3T^\ast M),
\qquad
\nabla \longmapsto C^{\nabla}
\]
is a bijection.
\end{Thm}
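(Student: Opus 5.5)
The plan is to prove the bijection by writing down its inverse explicitly. The map $\nabla \mapsto C^{\nabla}$ will be compared with the map that sends a symmetric $(0,3)$-tensor $C$ to the connection
\[
    \nabla^{C}_X Y := \nabla^g_X Y - \tfrac{1}{2} K^{C}(X,Y),
\]
where $K^C(X,Y)$ is defined by $g(K^C(X,Y),Z) = C(X,Y,Z)$ for all $Z$.

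The first step is to check that $C^\nabla$ really lies in $\Gamma(S^3T^\ast M)$. For a statistical connection $\nabla$, set $K := \nabla - \nabla^g$. This is a $(1,2)$-tensor field, since it is the difference of two connections. Because both connections are torsion-free, $K(X,Y) = K(Y,X)$, so $C^\nabla$ is symmetric in its first two arguments.

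Next I expand $\nabla g$ using $\nabla^g g = 0$. This gives
\[
    (\nabla_X g)(Y,Z) = -g(K(X,Y),Z) - g(Y,K(X,Z)) = \tfrac12\bigl(C^\nabla(X,Y,Z) + C^\nabla(X,Z,Y)\bigr).
\]
By hypothesis $\nabla g$ is symmetric in $X$ and $Y$; by this formula it is also symmetric in $Y$ and $Z$. So $\nabla g$ is totally symmetric.

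Now I use the combinatorial core of the argument. Write $T := C^\nabla$ and $S := \nabla g$. We know $T$ is symmetric in slots $(1,2)$, and $S(X,Y,Z) = \tfrac12\bigl(T(X,Y,Z) + T(X,Z,Y)\bigr)$ is totally symmetric. From these two facts I will solve for $T$ in terms of $S$. The expected outcome is $T = S$, and I will verify it directly.

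The verification works because $T(X,Z,Y) = T(Z,X,Y)$. Hence $T(X,Y,Z) + T(Z,X,Y) = 2S(X,Y,Z)$ for all permutations of the arguments. Labeling $a = T(X,Y,Z)$, $b = T(Y,Z,X)$, $c = T(Z,X,Y)$, these relations read
\[
    a + c = b + a = c + b = 2S.
\]
Therefore $a = b = c = S$, and in particular $T = C^\nabla$ is totally symmetric. This linear-algebra step is the only real obstacle; everything else is bookkeeping.

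For the converse, take $C \in \Gamma(S^3T^\ast M)$. The connection $\nabla^{C}$ is an affine connection because $K^C$ is a tensor, and it is torsion-free because $C$ is symmetric in its first two slots. The same expansion as above gives $(\nabla^C_X g)(Y,Z) = C(X,Y,Z)$, which is totally symmetric. So $\nabla^C \in \Stat(M,g)$, and directly from the definitions $C^{\nabla^C} = C$.

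Finally, $\nabla \mapsto C^\nabla$ is injective: $C^\nabla$ determines $K$ through the nondegeneracy of $g$, and hence determines $\nabla$. Combined with $C^{\nabla^C} = C$, the map is a bijection, and its inverse is $C \mapsto \nabla^C$.
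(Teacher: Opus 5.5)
Your proof is correct and complete. The explicit inverse $C \mapsto \nabla^g - \tfrac12 K^C$ checks out, as does the computation $(\nabla_X g)(Y,Z) = \tfrac12\bigl(C^\nabla(X,Y,Z)+C^\nabla(X,Z,Y)\bigr)$. So does the linear-algebra step showing $C^\nabla = \nabla g$ is totally symmetric, which is exactly what makes the map land in $\Gamma(S^3T^\ast M)$.

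The paper gives no proof of this statement and simply cites Lauritzen. Your argument is the standard direct verification behind that reference.
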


Note that the Levi-Civita connection \(\nabla^g\) corresponds to the origin \(0 \in \Gamma(S^3 T^\ast M)\).
Moreover, for \(\nabla \in \mathrm{Stat}(M, g)\), the relation \(C^{\nabla^{\ast}} = -\,C^\nabla\) holds.

Throughout this paper, for a Riemannian manifold $(M, g)$ and a symmetric
$(0,3)$-tensor field $C$ on $M$, we denote by $\nabla^{(g, C)}$ the statistical connection on $(M, g)$ obtained from $C$ via the one-to-one correspondence in Theorem~\ref{theorem:nablaC}.
Moreover, we also refer to a pair consisting of a Riemannian metric \( g \) and a symmetric \((0,3)\)-tensor field \( C \) on a manifold \( M \), as well as to the triple \( (M, g, C) \), as a statistical structure and a statistical manifold, respectively.

As a class of statistical structures broader than dually flat structures,
there is the class of so-called \emph{conjugate symmetric} statistical structures.

\begin{Def}[\cite{L-SM}]\label{def:CS-stat}
Let $(g, C)$ be a statistical structure on a manifold $M$.
$(g, C)$ is called \textit{conjugate symmetric} if the $(0, 4)$-tensor field $\nabla^g C$ is totally symmetric.
\end{Def}

For a statistical structure $(g, \nabla)$ on a manifold $M$, being conjugate symmetric is equivalent to the condition $R^{\nabla} = R^{\nabla^{\ast}}$ (see~\cite{L-SM}). 
Moreover, every dually flat structure is conjugate symmetric (see~\cite{L-SM}).
The trivial statistical structure $(g, \nabla^{g})$ is an example of a conjugate symmetric statistical structure. 
In Section~\ref{subsec:N0}, we present examples of nontrivial conjugate symmetric statistical structures.

For conjugate symmetric statistical structures, one can define a notion of
``sectional curvature'' as follows.
Let $(g, C)$ be a conjugate symmetric statistical structure on an
$n$-dimensional manifold $M$ with $n \ge 2$.
For linearly independent vectors $v, w \in T_p M$, we define
\begin{equation}\label{eq:sectional-curvature_for-CS}
    \mathrm{Sect}_{g}^C (v, w)
   := \frac{ g_p\big( R(v, w) w,\, v \big) }
           { g_p(v, v)\, g_p(w, w) - g_p(v, w)^2 }.    
\end{equation}
Here, $R$ denotes the curvature tensor of $\nabla^{(g, C)}$.
The value $\mathrm{Sect}_{g}^C (v, w)$ is independent of the choice of basis of the $2$-dimensional plane $\Pi := \operatorname{span}\{v, w\}$ (see~\cite{L-SM}).

\begin{Def}\label{def:CS-sect_basis}
Let $(M,g,C)$ be an $n$-dimensional conjugate symmetric statistical manifold with $n \ge 2$, and let $p \in M$.
Let $\mathcal{B} = \{e_1, \dots, e_n\}$ be an orthonormal basis of $T_p M$ with respect to $g_p$.
For distinct elements $e_i$ and $e_j$ in $\mathcal{B}$, we define
\[
    \Sect^C_g(e_i ,e_j)_{\mathcal{B}}
    := g_p\big( R^g(e_i, e_j)e_j,\, e_i \big)
       + \frac{1}{4}\,[C^{i}, C^{j}]_{ij},
\]
where $R^g$ denotes the curvature tensor of $\nabla^g$.
The matrix $C^i$ is the $n \times n$ matrix whose $(j,k)$-entry is
$C(e_i, e_j, e_k)$, and $[C^{i}, C^{j}]_{ij}$ denotes the $(i,j)$-entry of the commutator $[C^{i}, C^{j}] = C^{i} C^{j} - C^{j} C^{i}$.  
Namely,
\[
    [C^{i}, C^{j}]_{ij}
    = \sum_{k=1}^n C(e_i, e_i, e_k)\, C(e_j, e_j, e_k)
      - C(e_i, e_j, e_k)^2.
\]
\end{Def}

\begin{Prop}\label{prop:curvature-formula}
Under the same setting as in Definition~\ref{def:CS-sect_basis},
the following identity holds:
\[
    \Sect^C_g(e_i ,e_j)_{\mathcal{B}}
    = \Sect^C_g(e_i ,e_j)
    \qquad (i \ne j).
\]
\end{Prop}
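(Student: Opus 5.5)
The plan is to expand the curvature of $\nabla=\nabla^{(g,C)}$ around the Levi-Civita connection, and then evaluate each resulting term on the orthonormal pair $e_i,e_j$. Since $\mathcal B$ is orthonormal, the denominator in \eqref{eq:sectional-curvature_for-CS} equals $1$. So it suffices to show
\[
g_p\bigl(R(e_i,e_j)e_j,e_i\bigr)=g_p\bigl(R^g(e_i,e_j)e_j,e_i\bigr)+\tfrac14[C^i,C^j]_{ij}.
\]

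First write $\nabla_XY=\nabla^g_XY+K_XY$, where by the definition of the cubic form $g(K_XY,Z)=-\tfrac12 C(X,Y,Z)$. Because $C$ is totally symmetric, $K_XY=K_YX$ and each $K_X$ is $g$-self-adjoint. The standard expansion of the curvature of $\nabla^g+K$, using that $\nabla^g$ is torsion-free, gives
\[
R(X,Y)Z=R^g(X,Y)Z+(\nabla^g_XK)_YZ-(\nabla^g_YK)_XZ+[K_X,K_Y]Z .
\]
This is the only formula we need, and verifying it is a routine computation.

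Next we take the inner product with $e_i$ for $X=e_i$, $Y=e_j$, $Z=e_j$. Since $\nabla^g g=0$, we have $g((\nabla^g_XK)_YZ,W)=-\tfrac12(\nabla^g C)(X;Y,Z,W)$. The two derivative terms therefore contribute
\[
-\tfrac12\bigl[(\nabla^g C)(e_i;e_j,e_j,e_i)-(\nabla^g C)(e_j;e_i,e_j,e_i)\bigr].
\]
This vanishes precisely because $(g,C)$ is conjugate symmetric, i.e. $\nabla^gC$ is totally symmetric (Definition~\ref{def:CS-stat}). This is the one place where the hypothesis is used.

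Finally we handle the quadratic term, using self-adjointness. We have $g(K_{e_i}K_{e_j}e_j,e_i)=g(K_{e_j}e_j,K_{e_i}e_i)$. Expanding in the orthonormal basis gives $\tfrac14\sum_k C(e_j,e_j,e_k)C(e_i,e_i,e_k)$. Similarly, $g(K_{e_j}K_{e_i}e_j,e_i)=g(K_{e_i}e_j,K_{e_j}e_i)=\tfrac14\sum_kC(e_i,e_j,e_k)^2$. The difference of these two is exactly $\tfrac14[C^i,C^j]_{ij}$, which proves the identity. The only real obstacle is bookkeeping: getting the signs and the factor $-\tfrac12$ in $K$ right, and checking that the derivative terms pair up into a difference that total symmetry kills. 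No step is conceptually difficult.
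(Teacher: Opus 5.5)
Your proof is correct and follows the paper's argument. The paper likewise writes $R(X,Y)Z=R^g(X,Y)Z+[K_X,K_Y]Z$ with $-2g(K(X,Y),Z)=C(X,Y,Z)$, and computes $g_p([K_{e_i},K_{e_j}]e_j,e_i)=\tfrac14[C^i,C^j]_{ij}$ exactly as you do. The only difference is that the paper cites this curvature formula for conjugate symmetric structures from Opozda. You instead derive the general expansion and eliminate the $\nabla^g K$ terms yourself using the total symmetry of $\nabla^g C$, which makes your version self-contained.
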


\begin{proof}
By \cite[Section 2.2]{Opozda_2019}, the curvature tensor $R$ of $\nabla^{(g, C)}$ can be written as follows.
\[
    R(X,Y)Z = R^{g}(X,Y)Z + [K_X, K_Y](Z) \quad (X,Y,Z \in \Gamma(TM)).
\]
Here, $[K_X, K_Y](Z) := K(X, K(Y,Z)) - K(Y, K(X,Z))$ and $K$ is defined by
\[
    -2g(K(X,Y), Z) = C(X,Y,Z). 
\]
Moreover, we have
\begin{align*}
    g_p([K_{ e_{i} }, K_{ e_{j} } ] e_{j}, e_{i} ) &= g_p (K (e_i,  K ({e_j}, e_j), e_i) - g_p (K({e_j}, K({e_i}, e_j), e_i) \\
    &= -\frac{1}{2} C(e_i, K({e_j}, e_j), e_i) + \frac{1}{2} C(e_j, K({e_i}, e_j), e_i)\\
    &= \frac{1}{4} \cdot \sum_{k = 1}^m  \left\{ C(e_i, C(e_j, e_j, e_k) e_k, e_i ) - C(e_j, C(e_i, e_j, e_k)e_k, e_i) \right\} \\
    &= \frac{1}{4} \cdot \sum_{k = 1}^m  \left\{ C(e_i, e_i, e_k) C(e_j, e_j, e_k) -  C(e_i, e_j, e_k)^2 \right\} \\
    &= \frac{1}{4} \cdot [C^i, C^j]_{ij}.
\end{align*}
This completes the proof.
\end{proof}

\begin{Rem}
Opozda~\cite{Opozda_2015} introduced the notion of sectional $\nabla$-curvature for general statistical structures $(g, \nabla)$. 
The same concept has also been discussed in \cite{Furuhata-Hasegawa_2016} and \cite{IO-2024}, although under different terminology.
When the statistical structure is conjugate symmetric, $\Sect_g^{C}$ coincides with the sectional $\nabla$-curvature (see also~\cite{IO-2024} for details).
\end{Rem}

\subsection{The space of zero-mean multivariate normal distributions}\label{subsec:N0}
Let $\mathcal{N}_0$ denote the space of zero-mean multivariate normal
distributions, namely
\[
    \mathcal{N}_0 = \mathcal{N}_0^n :=
    \left\{
        p(x;\Sigma)
        = \frac{1}{\sqrt{(2\pi)^n \det\Sigma}}
          \exp\!\left( -\frac{1}{2} {}^{t}x \Sigma^{-1} x \right)
        \ \middle|\ 
        \Sigma \in \Sym^+(n,\RR)
    \right\}
    \quad (x \in \RR^n),
\]
where $\Sym^+(n,\RR)$ denotes the space of positive definite symmetric
matrices of size $n$.

On the manifold $\Theta_0 := \Sym^+(n, \RR)$, one can define a Riemannian metric, called the
\emph{Fisher metric on $\No$}, and a symmetric $(0,3)$-tensor field, called the
\emph{Amari--Chentsov $\alpha$-tensor field on $\No$}, as follows:
\begin{align*}
    g^F_{\hat{\theta}}(X, Y)
        &:= \int_{\RR^n}
            \bigl(X \log p(x;\theta)\bigr)
            \bigl(Y \log p(x;\theta)\bigr)
            p(x;\hat{\theta})\, dx, \\
    C^{A(\alpha)}_{\hat{\theta}}(X, Y, Z)
        &:= \alpha
            \int_{\RR^n}
            \bigl(X \log p(x;\theta)\bigr)
            \bigl(Y \log p(x;\theta)\bigr)
            \bigl(Z \log p(x;\theta)\bigr)
            p(x;\hat{\theta})\, dx,
            \qquad \alpha \in \RR.
\end{align*}
Here, $\hat{\theta} \in \Theta_0$ and $X, Y, Z \in T_{\hat{\theta}} \Theta_0$.
Moreover, $dx$ denotes the Lebesgue measure on $\RR^n$.
The affine connection $\nabla^{(g^F, C^{A(\alpha)})}$ is called the
\emph{Amari--Chentsov $\alpha$-connection on $\No$} 
and is denoted
by $\nabla^{A(\alpha)}$.

The space $\mathcal{N}_0$ can be identified with the parameter space
$\Theta_0 = \Sym^+(n,\RR)$ through the correspondence
\[
    \Sym^+(n,\RR) \ni \Sigma \longmapsto p(x;\Sigma) \in \mathcal{N}_0.
\]
Accordingly, throughout this paper, we identify $\mathcal{N}_0$ with the
open submanifold $\Sym^+(n,\RR) = \Theta_0$ of the vector space $\Sym(n,\RR)$ of symmetric matrices.

The Lie group \( GL(n, \mathbb{R}) \) acts smoothly and transitively on \( \mathcal{N}_0 = \Sym^+(n, \RR) \) as follows:
\begin{equation}\label{eq:def-GL-action-N0}
    A \cdot \Sigma = A \Sigma {}^{t}A \qquad (A \in GL(n, \RR), ~ \Sigma \in \Sym^+(n, \RR)).
\end{equation}
The isotropy subgroup of $GL(n, \RR)$ at the identity matrix $I_n\in\mathcal N_0$
is the orthogonal group $O(n)$, that is,
\[
    \left\{A \in GL(n, \RR) ~\middle|~ A \cdot I_n = I_n \right\} = O(n)    
\]
holds.
Thus, $\mathcal{N}_0$ can be identified with the coset manifold $GL(n, \RR)/O(n)$ as $GL(n, \RR)$-homogeneous manifold via the $GL(n, \RR)$-equivariant map $GL(n, \RR)/O(n) \to \No,~ hO(n) \mapsto h {}^{t}h$. 
Moreover, the subgroup
\[
    GL^+(n,\RR) := \{\, A \in GL(n,\RR) \mid \det A > 0 \,\}
\]
also acts transitively on $\mathcal{N}_0$, and hence $\mathcal{N}_0 \simeq GL^+(n,\RR)/SO(n)$ as well.

It is well known that both $g^F$ and $C^{A(\alpha)}$ are $GL(n,\RR)$-invariant on $\mathcal{N}_0$ (see, for example, \cite[Corollary~3.6]{Ay_2015}).
Therefore, in particular, they are also $GL^+(n, \RR)$-invariant.

\begin{Prop}[\cite{Amari_1985, L-SM}]\label{prop:Fisher-Amari_CS-DF}
The statistical structure \((g^F, C^{A(\alpha)})\) is conjugate symmetric for any \(\alpha \in \mathbb{R}\).
Moreover, it is dually flat only when \(\alpha = \pm 1\).
\end{Prop}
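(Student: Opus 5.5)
Since both $g^F$ and $C^{A(\alpha)}$ are $GL(n,\RR)$-invariant and the action is transitive, I will do every computation at the base point $I_n\in\Sym^+(n,\RR)$. Using the tangent identification $T_\Sigma\No\simeq\Sym(n,\RR)$ and the fact that $A\cdot\Sigma=A\Sigma{}^tA$ is linear in $\Sigma$, I expect
\[
g^F_\Sigma(X,Y)=\tfrac12\tr(\Sigma^{-1}X\Sigma^{-1}Y),\qquad
C^{A(\alpha)}_\Sigma(X,Y,Z)=\alpha\tr(\Sigma^{-1}X\Sigma^{-1}Y\Sigma^{-1}Z).
\]
For the first formula, $\partial_X\log p=-\frac12\tr(\Sigma^{-1}X)+\frac12{}^tx\Sigma^{-1}X\Sigma^{-1}x$, and at $\Sigma=I_n$ this is $\frac12({}^txXx-\tr X)$. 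The integrals are then Gaussian moments, computed with Isserlis' theorem: the second moment gives $\frac12\tr(XY)$, and the third centered moment of quadratic forms gives $\tr(XYZ)$ (the symmetrized sum over the $8$ pairings). Both constants only need to be right up to the obvious normalization, and they propagate to every $\Sigma$ by invariance. Conjugate symmetry is then immediate from the case $\alpha=1$ and a scaling argument: $C^{A(\alpha)}=\alpha C^{A(1)}$, and $\nabla^{g^F}$ is linear, so $\nabla^{g^F}C^{A(\alpha)}=\alpha\,\nabla^{g^F}C^{A(1)}$. It therefore suffices to show that $\nabla^{g^F}C^{A(1)}$ is totally symmetric.

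For $\alpha=\pm1$ the key fact is that $\nabla^{A(-1)}$ is the flat connection of the affine coordinates $\Sigma\mapsto\Sigma$, and $\nabla^{A(+1)}$ is the flat connection of $\Sigma\mapsto\Sigma^{-1}$; which sign is which depends on the sign convention $C=-2g(\nabla-\nabla^g,\cdot)$. To verify this, I take the Levi-Civita connection of $g^F$, which is classically $\nabla^{g}_XY=D_XY-\frac12(X\Sigma^{-1}Y+Y\Sigma^{-1}X)$ with $D$ the flat derivative on $\Sym(n,\RR)$. Then $-2g(D_XY-\nabla^g_XY,Z)=-\tr(\Sigma^{-1}X\Sigma^{-1}Y\Sigma^{-1}Z)$, which identifies $D=\nabla^{A(-1)}$. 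For the dual I use the chart $\Sigma\mapsto\Sigma^{-1}$, or simply duality: the dual of a flat connection is flat. Hence $(g^F,C^{A(\pm1)})$ is dually flat. Since dually flat structures are conjugate symmetric, $\nabla^{g}C^{A(1)}$ is totally symmetric, and by the linearity above every $\alpha$ is conjugate symmetric.

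For the ``only when'' direction I use the curvature of the conjugate symmetric family. Following Opozda's formula from the proof of Proposition~\ref{prop:curvature-formula}, $K=\alpha K^{(1)}$, so
\[
R^{A(\alpha)}=R^g+\alpha^2[K^{(1)},K^{(1)}].
\]
From flatness at $\alpha=1$ we get $[K^{(1)},K^{(1)}]=-R^g$, hence $R^{A(\alpha)}=(1-\alpha^2)R^g$. This vanishes iff $\alpha=\pm1$ or $R^g=0$. The main obstacle is the degenerate case $n=1$: there $\dim\No=1$, so every torsion-free connection is flat and every $\alpha$ is dually flat. This matches Theorem~\ref{thm:Stat-DF-N0_intro}, so the ``only when'' clause must be read for $n\ge2$. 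For $n\ge2$ I show $R^g\ne0$ at $I_n$ by computing one sectional curvature of the symmetric space $GL(n,\RR)/O(n)$: for $n=2$, with commuting/noncommuting pairs such as $X=E_{11}$ and $Y=E_{12}+E_{21}$, one gets $R^g(X,Y)Y\propto[[X,Y],Y]\ne0$. Hence $\alpha=\pm1$ exactly.
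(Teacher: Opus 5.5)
The paper does not prove this proposition; it imports it from Amari and Lauritzen. Your argument is correct, and it is essentially the classical exponential-family proof made explicit for $\No$.

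\textbf{How your route relates to the classical one.} The standard argument observes that $\Sigma$ (expectation parameter) and $\Sigma^{-1}$ (natural parameter) are affine coordinates for the $(-1)$- and $(+1)$-connections. You replace that observation with a direct computation: from the Levi-Civita formula you check that $-2g^F(D_XY-\nabla^{g^F}_XY,Z)=-\tr(\Sigma^{-1}X\Sigma^{-1}Y\Sigma^{-1}Z)$, so $D=\nabla^{A(-1)}$. The rest of your chain is also sound:
\begin{enumerate}
\item Duality gives flatness of $\nabla^{A(+1)}$.
\item Linearity in $\alpha$ then gives conjugate symmetry for every $\alpha$.
\item Opozda's formula yields $R^{A(\alpha)}=(1-\alpha^2)R^{g^F}$. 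You apply it legitimately, because you established conjugate symmetry first.
\end{enumerate}
This buys a self-contained verification in which every constant is checked.

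\textbf{Two small remarks on the computation.}
\begin{enumerate}
\item The relative normalization of $C^{A(1)}$ against $g^F$ is not cosmetic, despite your phrase ``only need to be right up to the obvious normalization''. It is exactly what places the flat values at $\alpha=\pm1$. Your constants $\frac12\tr(XY)$ and $\tr(XYZ)$ at $I_n$ are correct, so this does not hurt you.
\item When you identify $D$, you silently use the transposition identity $\tr(\Sigma^{-1}Y\Sigma^{-1}X\Sigma^{-1}Z)=\tr(\Sigma^{-1}X\Sigma^{-1}Y\Sigma^{-1}Z)$. The Koszul formula actually produces the average of these two terms.
\end{enumerate}

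\textbf{On $R^{g^F}\neq0$ for $n\ge2$.} You can read this off Lemma~\ref{prop:Fisher-curvature} directly. Take $X=E_{11}$ and $Y=E_{12}+E_{21}$, placed in the upper-left $2\times2$ block for any $n\ge2$, not just $n=2$. Then $g^F(R^F(X,Y)Y,X)=-\frac14$.

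\textbf{The case $n=1$.} Your remark here is a genuine correction to the statement as written. In dimension one every torsion-free connection is flat, so $(g^F,C^{A(\alpha)})$ is dually flat for every $\alpha$. This agrees with the paper's own Theorem~\ref{thm:Main-thm-G=GL+}~\ref{thm:Main-thm-G=GL_Hesse}, which lists all $\nabla^{A(\alpha)}$ as dually flat when $n=1$. So the \emph{only when} clause holds only for $n\ge2$. This causes no damage downstream: the only place the paper invokes the proposition is the proof of Proposition~\ref{prop:Hesse-GL+}, and there it uses just the \emph{if} direction, for $n\ge2$.
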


Moreover, by \cite[Theorem~2]{Kobayashi-Okuda_2025}, the following holds.

\begin{Prop}\label{prop:GL+-stat_CS}
Every $GL^+(n,\RR)$-invariant statistical structure on $\mathcal{N}_0$
is conjugate symmetric.
\end{Prop}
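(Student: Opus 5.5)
The plan is to reduce the statement to a pointwise algebraic problem at the base point $I_n$ and then check total symmetry directly.

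\emph{Reduction to the base point.} Since $g^F$ is $GL^+(n,\RR)$-invariant, so is its Levi-Civita connection. By Theorem~\ref{theorem:nablaC}, a $GL^+(n,\RR)$-invariant statistical structure $(g^F,\nabla)$ therefore corresponds to a $GL^+(n,\RR)$-invariant symmetric cubic form $C$. Such forms correspond bijectively to $\Ad(SO(n))$-invariant elements of $S^3(\mathfrak p^*)$, where $\mathfrak p = \Sym(n,\RR) \simeq T_{I_n}\No$ and $SO(n)$ acts by $X \mapsto kX{}^tk$.

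\emph{The invariant cubic forms.} Using invariant theory for $SO(n)$ acting on symmetric matrices (diagonalize and apply Weyl's theorem), the invariant cubic polynomials on $\Sym(n,\RR)$ are spanned by
\[
(\tr X)^3,\qquad \tr X\,\tr(X^2),\qquad \tr(X^3).
\]
Polarizing, $C_{I_n}$ is a linear combination of $\tr X\tr Y\tr Z$, the symmetrization of $\tr X\tr(YZ)$, and the symmetrization of $\tr(XYZ)$.

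\emph{Conjugate symmetry.} It suffices to show that $\nabla^g C$ is totally symmetric at $I_n$, because the tensor is $G$-invariant and so symmetry at one point propagates. The space $\No$ is a Riemannian symmetric space: the geodesic symmetry at $I_n$ is $\Sigma\mapsto\Sigma^{-1}$, an isometry. For $G$-invariant tensors on a reductive homogeneous space one has $(\nabla^g_X C)_{I_n} = -\,C\circ\nabla^g_X$-type terms. On a symmetric space with the canonical decomposition, the Levi-Civita connection agrees with the canonical connection. Parallel transport along geodesics from $I_n$ is then given by the transvections $\exp(X)$, $X\in\mathfrak p$, which lie in $GL^+(n,\RR)$. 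Since $C$ is invariant under these transvections, $\nabla^g C = 0$. Thus $\nabla^g C$ vanishes identically and is trivially totally symmetric.

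\emph{Main obstacle.} The key step is justifying $\nabla^g C=0$ for every $G$-invariant tensor on the symmetric space $GL^+(n,\RR)/SO(n)$. I would establish it by noting that $\tau_X = \exp(X/2)\cdot$ maps the geodesic $t\mapsto \exp(tX)$ to itself and realizes parallel transport along it. The standard fact that transvections induce parallel transport then gives $(\nabla^g_X C)_{I_n} = \frac{d}{dt}\big|_{0}\,(\tau_{tX}^* C)_{I_n} = 0$. If one prefers to avoid this, the alternative is a direct computation via Opozda's formula with Proposition~\ref{prop:Fisher-Amari_CS-DF} as a check, but the symmetric-space argument seems cleanest. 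In fact this argument does not even need the classification of invariant cubic forms in the previous step.
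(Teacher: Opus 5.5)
Your argument is correct, and it takes a different route from what the paper relies on. The paper gives no proof of its own here; it imports the statement from \cite[Theorem~2]{Kobayashi-Okuda_2025}. The machinery it builds around that citation is the explicit description of the invariant cubic forms through $C_1,C_2,C_3$ and the Chevalley-type restriction of Lemma~\ref{lem:Chevally-thorem_1}.

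You use only that $(GL^+(n,\RR),SO(n))$ is a symmetric pair. The involution is $A\mapsto {}^tA^{-1}$, its fixed group in $GL^+(n,\RR)$ is exactly $SO(n)$, and $\mathfrak p=\Sym(n,\RR)$. Hence the Levi-Civita connection is the canonical connection, and every $GL^+(n,\RR)$-invariant tensor field is parallel \cite{Kobayashi-Nomizu_II}. This gives $\nabla^{g}C=0$, which is strictly stronger than conjugate symmetry.

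This route buys several things:
\begin{itemize}
\item Your classification step (the invariant cubics $(\tr X)^3$, $\tr X\tr(X^2)$, $\tr(X^3)$) can simply be deleted.
\item The argument works for every $GL^+(n,\RR)$-invariant metric on $\No$, not just $g^F$. The canonical connection is torsion-free and parallelizes every invariant metric, so it is the Levi-Civita connection of each of them. This matters if ``statistical structure'' is read with the metric allowed to vary.
\item It applies verbatim to any Riemannian symmetric space $G/K$ with $G$-invariant data.
\item It makes the formula $R=R^g+[K_X,K_Y]$ behind Proposition~\ref{prop:curvature-formula} immediate for these structures.
\end{itemize}
The explicit basis $C_1,C_2,C_3$ is what the paper genuinely needs later, for the dually flat classification and the moduli spaces, but not for this proposition.

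One sentence should be repaired: ``$(\nabla^g_X C)_{I_n}=-C\circ\nabla^g_X$-type terms'' has no precise meaning. Your transvection argument already suffices without it. If you want an algebraic version of that step, use the Nomizu map $\Lambda\colon\mathfrak p\to\mathfrak{gl}(\mathfrak p)$ of the invariant connection. For an invariant tensor $T$, $(\nabla_X T)_{I_n}$ is obtained by letting $\Lambda(X)$ act on $T_{I_n}$ as a derivation, up to a sign convention. For the Levi-Civita connection of an invariant metric with inner product $g_0$ on $\mathfrak p$, one has $\Lambda(X)Y=\tfrac12[X,Y]_{\mathfrak p}+U(X,Y)$, where $2g_0(U(X,Y),Z)=g_0([Z,X]_{\mathfrak p},Y)+g_0(X,[Z,Y]_{\mathfrak p})$. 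Since $[\mathfrak p,\mathfrak p]\subset\mathfrak{so}(n)$, all these $\mathfrak p$-components vanish. Hence $\Lambda\equiv 0$ and $\nabla^g C=0$.
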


Note that any $GL(n,\RR)$-invariant statistical structure and any
$\Isom(\No, g^F)$-invariant statistical structure on $\No$ are
$GL^+(n,\RR)$-invariant.

\begin{Prop}[\cite{Skovgaard_1984}]\label{prop:Fisher-on-N0}
Under the identification 
\begin{equation}\label{eq:O(n)-equiv-linear-isom}
    \Sym(n, \RR) \ni X \longmapsto 
    \left.\frac{d}{dt}\right|_{t=0}(I_n + tX) \in T_{I_n}\mathcal{N}_0,
\end{equation}
The Fisher metric $g^F_{I_n}$ can be written as follows:
\[
    g^F_{I_n} (X, Y) = \frac{1}{2} \tr (XY).
\]
\end{Prop}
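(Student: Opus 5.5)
We compute $g^F_{I_n}(X,Y)$ directly from the definition, evaluating at $\hat\theta = I_n$ along the curve $t\mapsto I_n+tX$ given by the identification \eqref{eq:O(n)-equiv-linear-isom}.

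First, since $\log p(x;\Sigma) = -\tfrac n2\log(2\pi) - \tfrac12\log\det\Sigma - \tfrac12\,{}^t x\Sigma^{-1}x$, differentiating at $\Sigma=I_n$ in direction $X$ gives
\[
X\log p(x;\theta) = -\tfrac12\tr X + \tfrac12\,{}^t x X x .
\]
Here we use $\frac{d}{dt}\log\det(I_n+tX)|_{t=0}=\tr X$ and $\frac{d}{dt}(I_n+tX)^{-1}|_{t=0}=-X$. Write $x\sim N(0,I_n)$ for the integrating density $p(x;I_n)$. Then
\[
g^F_{I_n}(X,Y)=\tfrac14\,\mathbb{E}\bigl[({}^t xXx-\tr X)({}^t xYx-\tr Y)\bigr]=\tfrac14\,\mathrm{Cov}({}^t xXx,\,{}^t xYx),
\]
since $\mathbb{E}[{}^t xXx]=\tr X$.

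The remaining step is the covariance of two quadratic forms in a standard Gaussian vector. The cleanest route is to use the $O(n)$-equivariance: both sides are $O(n)$-invariant bilinear forms on $\Sym(n,\RR)$, because conjugating by $k\in O(n)$ preserves $N(0,I_n)$ and the trace. It is nevertheless simplest to compute directly via Isserlis' (Wick's) formula. We have
\[
\mathbb{E}[x_ix_jx_kx_l]=\delta_{ij}\delta_{kl}+\delta_{ik}\delta_{jl}+\delta_{il}\delta_{jk}.
\]
Hence
\[
\mathbb{E}[{}^t xXx\,{}^t xYx]=\sum X_{ij}Y_{kl}\,\mathbb{E}[x_ix_jx_kx_l]=\tr X\tr Y+2\tr(XY),
\]
using the symmetry of $X$ and $Y$. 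Subtracting $\tr X\tr Y$ gives a covariance of $2\tr(XY)$, so $g^F_{I_n}(X,Y)=\tfrac12\tr(XY)$.

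The only point needing care is justifying differentiation under the integral sign. This is routine because the integrand is a polynomial times a Gaussian, uniformly dominated for $t$ near $0$. The main "obstacle" is simply keeping the constant factors straight, in particular the factor $2$ from the two nontrivial Wick pairings.
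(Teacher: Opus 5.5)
Your computation is correct. At $I_n$ the score in direction $X$ is $\tfrac12({}^t xXx-\tr(X))$. Isserlis' formula gives $\mathbb{E}[{}^t xXx\,{}^t xYx]=\tr(X)\tr(Y)+2\tr(XY)$ for symmetric $X,Y$, hence $g^F_{I_n}(X,Y)=\tfrac14\cdot 2\tr(XY)=\tfrac12\tr(XY)$.

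The paper gives no argument of its own here and simply cites Skovgaard. What your proposal adds is a self-contained check taken directly from the paper's definition of $g^F$ as an expectation of a product of scores, at the cost of a fourth-moment (Wick) computation.

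Two small points of framing. First, your route needs no interchange of differentiation and integration. The derivatives in the definition of $g^F$ act on $\log p(x;\theta)$ pointwise in $x$ before integrating, so you only need integrability of a quartic polynomial against a Gaussian density. That justification belongs instead to the other standard route,
$g^F_{I_n}(X,Y)=-\mathbb{E}\bigl[\left.\tfrac{\partial^2}{\partial s\,\partial t}\right|_{s=t=0}\log p(x;I_n+sX+tY)\bigr]$,
which comes from differentiating $\int p\,dx=1$ twice under the integral sign. In exchange, that route needs only second moments: the mixed derivative is $\tfrac12\tr(XY)-\tfrac12\,{}^t x(XY+YX)x$, whose expectation is $-\tfrac12\tr(XY)$.

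Second, $O(n)$-invariance alone would not shortcut the computation. For $n\ge 2$ the $O(n)$-invariant symmetric bilinear forms on $\Sym(n,\RR)$ form the two-dimensional space spanned by $\tr(XY)$ and $\tr(X)\tr(Y)$, so two constants would still have to be fixed by a moment computation. Since you do not actually rely on that remark, it does no harm.
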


In \cite{Dolcetti-Pertici_2019}, the isometry group of the \( GL(n, \mathbb{R}) \)-invariant Riemannian metric on \( \Sym^+(n, \mathbb{R}) \) given at the point \( I_n \) by \( g_{I_n} (X, Y) = \tr(XY) \) is investigated.  
From \cite{Dolcetti-Pertici_2019}, we obtain the following.

\begin{Prop}\label{prop:full-isometry_N0}
For \(A \in GL(n,\mathbb{R})\), define a diffeomorphism
\[
\Gamma(A) : \mathcal{N}_0 \longrightarrow \mathcal{N}_0,
\qquad
\Sigma \longmapsto A \Sigma {}^{t}A .
\]
Define diffeomorphisms \(\sigma_1, \sigma_2 : \mathcal{N}_0 \to \mathcal{N}_0\) by
\begin{equation}
\sigma_1(\Sigma) := \Sigma^{-1},
\qquad
\sigma_2(\Sigma) := (\det\Sigma)^{-2/n}\Sigma .
\label{eq:def_sigma_1-2}
\end{equation}
Then \(\sigma_1\) and \(\sigma_2\) commute. 
Let \(\langle \sigma_1,\sigma_2 \rangle\) denote the finite abelian subgroup of
\(\mathrm{Diff}(\mathcal{N}_0)\) generated by \(\sigma_1\) and \(\sigma_2\).
This subgroup normalizes \(\Gamma(GL(n,\mathbb{R}))\). In fact,
\begin{equation}\label{eq:H_normalizer_Gamma}
    \sigma_1 \circ \Gamma(A) \circ \sigma_1
    = \Gamma({}^{t}A^{-1}),
    \quad
    \sigma_2 \circ \Gamma(A) \circ \sigma_2
    = \Gamma( (\det A)^{-2/n}A)
    \quad (A \in GL(n,\mathbb{R})).    
\end{equation}
If \(n \ge 3\), the map
\[
\Gamma(GL(n,\mathbb{R})) \rtimes \langle \sigma_1,\sigma_2\rangle
\longrightarrow
\mathrm{Isom}(\mathcal{N}_0,g^F),
\qquad
(\Gamma(A),h) \longmapsto \Gamma(A)\circ h
\]
is a group isomorphism.
If \(n=1\) or \(2\), the map
\[
\Gamma(GL(n,\mathbb{R})) \rtimes \langle \sigma_1\rangle
\longrightarrow
\mathrm{Isom}(\mathcal{N}_0,g^F),
\qquad
(\Gamma(A),h) \longmapsto \Gamma(A)\circ h
\]
is a group isomorphism.
\end{Prop}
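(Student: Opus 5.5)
The proof has two parts: first the elementary claims about $\sigma_1,\sigma_2$ and the normalizing relations \eqref{eq:H_normalizer_Gamma}, then the identification of the full isometry group, which I would transfer from the result of \cite{Dolcetti-Pertici_2019}.

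\textbf{Elementary claims.} Using $\det(\Sigma^{-1})=(\det\Sigma)^{-1}$, both compositions $\sigma_1\sigma_2$ and $\sigma_2\sigma_1$ send $\Sigma$ to $(\det\Sigma)^{2/n}\Sigma^{-1}$, so $\sigma_1$ and $\sigma_2$ commute. Next, $\det\bigl((\det\Sigma)^{-2/n}\Sigma\bigr)=(\det\Sigma)^{-1}$, hence
\[
\sigma_2^2(\Sigma)=(\det\Sigma)^{2/n}(\det\Sigma)^{-2/n}\Sigma=\Sigma .
\]
Together with $\sigma_1^2=\mathrm{id}$ this shows that $\langle\sigma_1,\sigma_2\rangle$ is a quotient of $(\mathbb{Z}/2)^2$, in particular a finite abelian group. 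The first relation in \eqref{eq:H_normalizer_Gamma} is the identity $(A\Sigma^{-1}{}^tA)^{-1}={}^tA^{-1}\Sigma A^{-1}$. For the second, compute
\[
\sigma_2\bigl(A(\det\Sigma)^{-2/n}\Sigma\,{}^tA\bigr)
=\bigl((\det A)^2(\det\Sigma)^{-1}\bigr)^{-2/n}(\det\Sigma)^{-2/n}A\Sigma{}^tA
=(\det A)^{-4/n}A\Sigma{}^tA ,
\]
which equals $\Gamma\bigl((\det A)^{-2/n}A\bigr)(\Sigma)$. For $\det A<0$ the power $(\det A)^{-2/n}$ is read as $|\det A|^{-2/n}$, and I would note this convention.

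\textbf{Isometries.} By Proposition~\ref{prop:Fisher-on-N0} and $GL(n,\RR)$-invariance, $g^F$ is $\tfrac12$ times the metric studied in \cite{Dolcetti-Pertici_2019}, so the two metrics have the same isometry group. Each $\Gamma(A)$ is an isometry by invariance. The map $\sigma_1$ fixes $I_n$ and its differential there is $-\mathrm{id}$, which preserves $\tr(XY)$; since $\sigma_1$ normalizes $\Gamma(GL(n,\RR))$, it is an isometry everywhere. The map $\sigma_2$ fixes $I_n$ and its differential there is
\[
X\mapsto X-\tfrac{2}{n}(\tr X)I_n .
\]
This is the reflection across the traceless subspace, so it is orthogonal for $\tr(XY)$; for $n=1$ it equals $-\mathrm{id}$, so $\sigma_2=\sigma_1$. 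Because $\sigma_2$ also normalizes $\Gamma(GL(n,\RR))$ by \eqref{eq:H_normalizer_Gamma}, it is an isometry everywhere.

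\textbf{The group isomorphism.}
\begin{itemize}
\item \emph{Injectivity.} First, $\Gamma(A)=\mathrm{id}$ iff $A=\pm I_n$, so $\Gamma(GL(n,\RR))\cong GL(n,\RR)/\{\pm I\}$. Since $\Gamma(GL(n,\RR))$ is normalized, the multiplication map is a homomorphism from the semidirect product, and it is injective iff $\Gamma(GL(n,\RR))\cap\langle\sigma_i\rangle$ is trivial.
\begin{itemize}
\item $\sigma_1$ is not of the form $\Gamma(A)$: such a $\Gamma(A)$ would fix $I_n$, forcing $A\in O(n)$, but then it fixes $2I_n$ while $\sigma_1(2I_n)=\tfrac12 I_n$.
\item For $n\ge3$, $\sigma_2$ and $\sigma_1\sigma_2$ are likewise not of this form. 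Both fix $I_n$, so $A$ would lie in $O(n)$ and $d\Gamma(A)_{I_n}$ would be $X\mapsto AX{}^tA$, which preserves trace. But $d(\sigma_2)_{I_n}$ sends $I_n\mapsto -I_n$, and $d(\sigma_1\sigma_2)_{I_n}$ sends $X\mapsto -X+\tfrac2n(\tr X)I_n$, which is not the identity on traceless $X$ and hence not of the form $X\mapsto AX{}^tA$ when it also fixes $I_n$.
\end{itemize}
For $n=2$, the map $X\mapsto -X+(\tr X)I_2$ is $X\mapsto JX{}^tJ$ for $J$ the rotation by $\pi/2$; this explains why $\sigma_1\sigma_2\in\Gamma(GL)$ there and only $\langle\sigma_1\rangle$ appears.
\item \emph{Surjectivity.} This is the substantive input and is exactly the content of \cite{Dolcetti-Pertici_2019}: every isometry of $\Sym^+(n,\RR)$ with this metric is a composition of a congruence $\Gamma(A)$ with inversion and/or the determinant-rescaling $\sigma_2$.
\end{itemize}

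\textbf{Main obstacle.} I expect the obstacle to be matching conventions with \cite{Dolcetti-Pertici_2019} to get surjectivity. If a self-contained argument were wanted, I would reduce to isometries fixing $I_n$, and I would rely on the result of Dolcetti--Pertici rather than reprove it. The reduction goes through the de Rham splitting
\[
\Sym^+(n,\RR)\cong\RR\times SL(n,\RR)/SO(n),
\]
since an isometry preserves this splitting. The isotropy of $\mathrm{Isom}$ at $I_n$ then acts by $\pm1$ on the $\RR$-factor. On the irreducible symmetric space factor, by Cartan, the isotropy consists of $\mathrm{Aut}(\mathfrak{sl}_n)$ restricted to $\mathfrak{p}$; this is congruence by $O(n)$, together with $X\mapsto -{}^tX$ (outer) for $n\ge3$, while for $n=2$ the outer part is inner.
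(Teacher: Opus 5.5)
Your route is the paper's: surjectivity comes from Dolcetti--Pertici~\cite{Dolcetti-Pertici_2019} (Theorem~4.2), and injectivity of $(\Gamma(A),h)\mapsto\Gamma(A)\circ h$ is reduced to $\Gamma(GL(n,\RR))\cap\langle\sigma_1,\sigma_2\rangle=\{\mathrm{id}\}$. The paper takes this intersection, and the $n=2$ case, from Remark~4.5 of \cite{Dolcetti-Pertici_2019}. You verify the intersection by hand instead, and one step of that verification fails.

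Your arguments for $\sigma_1$ (via $2I_n$) and for $\sigma_2$ (via $I_n\mapsto -I_n$) are fine. For $\sigma_1\sigma_2$ with $n\ge 3$, however, the inference ``$X\mapsto -X+\tfrac{2}{n}\tr(X)I_n$ is not the identity on traceless $X$, hence not of the form $X\mapsto AX{}^tA$'' does not follow. Conjugation by $A\in O(n)$ is in general not the identity on traceless matrices either. Your own $n=2$ remark refutes the inference: there the same map is not the identity on traceless matrices, yet it equals $X\mapsto JX{}^tJ$. The trace test you used for $\sigma_2$ also gives nothing here, because $d(\sigma_1\sigma_2)_{I_n}$ fixes $I_n$ and preserves the trace. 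As written, your argument cannot tell $n\ge 3$ apart from $n=2$.

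What is missing is an $O(n)$-conjugation invariant that separates $X$ from $-X$ on traceless matrices exactly when $n\ge 3$. Take $X=\mathrm{diag}(1,1,-2,0,\dots,0)$. It is traceless, $d(\sigma_1\sigma_2)_{I_n}(X)=-X$, and $-2$ is an eigenvalue of $X$ but not of $-X$. Since $X\mapsto AX{}^tA=AXA^{-1}$ preserves spectra, $d(\sigma_1\sigma_2)_{I_n}\neq d\Gamma(A)_{I_n}$ for every $A\in O(n)$. Any $\Gamma(A)$ fixing $I_n$ has $A\in O(n)$, so $\sigma_1\sigma_2\notin\Gamma(GL(n,\RR))$. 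For $n=2$, every traceless symmetric matrix has spectrum $\{\lambda,-\lambda\}$, which is why $\sigma_1\sigma_2=\Gamma(J)$ there; directly, $\sigma_1\sigma_2(\Sigma)=(\det\Sigma)\Sigma^{-1}=J\Sigma{}^tJ$. With this repair, or by citing Remark~4.5 as the paper does, your proof is complete.

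One small further point concerns $n=1$. The paper does not use Dolcetti--Pertici there. It uses the fact that $(\RR_{>0},\tfrac{1}{2x^2}dx^2)$ is isometric to a line, whose isometry group is $\RR\rtimes O(1)$. In that picture $\Gamma(GL(1,\RR))$ supplies all translations and $\sigma_1$ the reflection. You should add this line rather than assume the cited theorem covers $n=1$.
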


\begin{proof}
It is clear that \(\sigma_1\) and \(\sigma_2\) commute.
Moreover, Equation~\eqref{eq:H_normalizer_Gamma} follows from a direct computation.
Let us prove the two group isomorphisms.
By \cite[Theorem 4.2]{Dolcetti-Pertici_2019}, the following map $\Psi$ is surjective:
\[
   \Psi : \Gamma(GL(n, \RR)) \rtimes \langle \sigma_1, \sigma_2 \rangle \longrightarrow \mathrm{Isom}(\mathcal{N}_0, g^F), \quad (\Gamma(A), h) \longmapsto \Gamma(A) \circ h.
\]
Moreover, one can see that $\Psi$ is a group homomorphism.
If $n \geq 3$, $\Gamma(GL(n, \RR)) \cap H = \{ \mathrm{id} \}$ (see~\cite[4.5 Remark]{Dolcetti-Pertici_2019}).
Therefore, $\Psi$ is an isomorphism.
If $n = 2$, then by \cite[4.5 Remark]{Dolcetti-Pertici_2019},
\[
    \Psi' : \Gamma(GL(n, \RR)) \rtimes \langle \sigma_1 \rangle \longrightarrow \mathrm{Isom}(\mathcal{N}_0^n, g^F), \quad (\Gamma(A), h) \longmapsto \Gamma(A) \circ h
\]
is an isomorphism.
For the case $n = 1$, it follows from Proposition~\ref{prop:Fisher-on-N0} that the Riemannian manifold $(\mathcal{N}_0, g^F)$ coincides with the $1$-dimensional Riemannian manifold $(\RR_{>0}, \tfrac{1}{2x^2} dx^2)$.
As is well known, the full isometry group of the Riemannian manifold $(\RR_{>0}, \tfrac{1}{x^2} dx^2) \simeq (\RR, dx^2)$ is isomorphic to $\RR \rtimes O(1)$.
Therefore, in the case $n = 1$, the map $\Psi'$ is also a group isomorphism.
\end{proof}

From Proposition~\ref{prop:full-isometry_N0}, we obtain the following, which will be applied in Section~\ref{subsec:proof-of-main-thm}.

\begin{Cor}\label{cor:full-isometry_N0_In}
Let $\Isom(\No, g^F)_{I_n}$ be the isotropy subgroup of $\Isom(\No, g^F)$ at $I_n \in \No$, that is,
\[
    \Isom(\No, g^F)_{I_n} := \left\{ f \in \Isom(\No, g^F) ~\middle|~ f(I_n) = I_n \right\}.
\]
For \( n \geq 3 \), the following map is a group isomorphism:
\[
\Gamma(O(n)) \rtimes \langle \sigma_1, \sigma_2 \rangle \longrightarrow 
\mathrm{Isom}(\mathcal{N}_0, g^F)_{I_n}, \quad (\Gamma(A), h) \longmapsto \Gamma(A) \circ h.
\]
For $n = 2$ or $1$, the following map is a group isomorphism:
\[
\Gamma(O(n)) \rtimes \langle \sigma_1 \rangle \longrightarrow 
\mathrm{Isom}(\mathcal{N}_0, g^F)_{I_n}, \quad (\Gamma(A), h) \longmapsto \Gamma(A) \circ h.
\]
\end{Cor}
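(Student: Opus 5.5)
The plan is to obtain the corollary from Proposition~\ref{prop:full-isometry_N0} by restricting the isomorphism $\Psi$ (resp.\ $\Psi'$ for $n=1,2$) to the preimage of the isotropy subgroup at $I_n$. Since $\Psi$ is a group isomorphism onto $\Isom(\No,g^F)$, its restriction to the subgroup $\Psi^{-1}(\Isom(\No,g^F)_{I_n})$ is an isomorphism onto $\Isom(\No,g^F)_{I_n}$. It therefore suffices to identify this preimage with $\Gamma(O(n))\rtimes\langle\sigma_1,\sigma_2\rangle$, respectively $\Gamma(O(n))\rtimes\langle\sigma_1\rangle$.

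First, I check that every generator $h$ of the finite group fixes $I_n$. Indeed, $\sigma_1(I_n)=I_n^{-1}=I_n$ and $\sigma_2(I_n)=(\det I_n)^{-2/n}I_n=I_n$. Hence every $h\in\langle\sigma_1,\sigma_2\rangle$ satisfies $h(I_n)=I_n$. Consequently, for a pair $(\Gamma(A),h)$ we have
\[
\Psi(\Gamma(A),h)(I_n)=\Gamma(A)(h(I_n))=A\,{}^tA .
\]
This equals $I_n$ if and only if $A\in O(n)$. So the preimage consists exactly of the pairs with $\Gamma(A)\in\Gamma(O(n))$ and $h$ arbitrary.

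Next, I verify that this set is the subgroup $\Gamma(O(n))\rtimes\langle\sigma_1,\sigma_2\rangle$, i.e.\ that $\langle\sigma_1,\sigma_2\rangle$ normalizes $\Gamma(O(n))$, so the semidirect product structure restricts. By \eqref{eq:H_normalizer_Gamma}, conjugation by $\sigma_1$ sends $\Gamma(A)$ to $\Gamma({}^tA^{-1})$, and ${}^tA^{-1}=A$ for $A\in O(n)$. Conjugation by $\sigma_2$ sends $\Gamma(A)$ to $\Gamma((\det A)^{-2/n}A)$. For $A\in O(n)$ we have $\det A=\pm1$, so $(\det A)^{-2/n}=\bigl((\det A)^2\bigr)^{-1/n}=1$. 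Thus both conjugations act trivially on $\Gamma(O(n))$, and the restriction is an honest (in fact direct) product.

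Combining these steps gives the claim for $n\ge3$, and the same argument with $\Psi'$ gives the claim for $n=1,2$. There is no real obstacle. The only point needing care is the interpretation of $(\det A)^{-2/n}$ for $\det A=-1$ in the conjugation formula; reading it as $\bigl((\det A)^2\bigr)^{-1/n}$, which is the only sensible meaning for a positive scalar, makes it equal to $1$.
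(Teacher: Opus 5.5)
Your proposal is correct and is essentially the argument the paper leaves implicit when it says the corollary follows from Proposition~\ref{prop:full-isometry_N0}: you restrict the isomorphism to the preimage of the isotropy subgroup, using $\sigma_1(I_n)=\sigma_2(I_n)=I_n$ and $\Gamma(A)(I_n)=I_n \iff A\in O(n)$. Your extra check that $\sigma_1,\sigma_2$ commute with $\Gamma(O(n))$ is also right, reading $(\det A)^{-2/n}$ as $((\det A)^2)^{-1/n}$ as in the paper's definition of $\tilde{\sigma}_2$; it shows the restricted semidirect product is in fact direct.
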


The following proposition will also be used in Section~\ref{subsec:proof-of-main-thm}.

\begin{Prop}\label{prop:diff_sigma_1-2}
Through the linear isomorphism~\eqref{eq:O(n)-equiv-linear-isom}, the differentials of $\sigma_1$ and $\sigma_2$ at $I_n$, denoted by $(d\sigma_1)_{I_n}$ and $(d\sigma_2)_{I_n}$, can be expressed as follows on $\Sym(n, \RR)$.
\begin{align*}
    (d\sigma_1)_{I_n} &: \Sym(n, \RR) \longrightarrow \Sym(n, \RR), \quad X \longmapsto -X, \\
    (d\sigma_2)_{I_n} &: \Sym(n, \RR) \longrightarrow \Sym(n, \RR), \quad X \longmapsto X - \tfrac{2}{n} \tr(X) I_n. 
\end{align*}
\end{Prop}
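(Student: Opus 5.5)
The plan is to compute each differential directly from its definition, using the identification \eqref{eq:O(n)-equiv-linear-isom}. Under this identification, $X \in \Sym(n,\RR)$ corresponds to the velocity at $t=0$ of the curve $c_X(t) = I_n + tX$. For $|t|$ small this curve stays in $\Sym^+(n,\RR)$. For $i=1,2$ we then have
\[
    (d\sigma_i)_{I_n}(X) = \left.\frac{d}{dt}\right|_{t=0} \sigma_i(I_n + tX),
\]
and the task reduces to differentiating two explicit matrix-valued functions of $t$ at $t=0$. Both $\sigma_1$ and $\sigma_2$ fix $I_n$, so the resulting vectors lie in $T_{I_n}\No$. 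Since $\No$ is an open subset of the vector space $\Sym(n,\RR)$, we may read these vectors back through the same identification.

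For $\sigma_1$, we use the Neumann expansion $(I_n + tX)^{-1} = I_n - tX + O(t^2)$, valid for $|t|$ small. Differentiating at $t=0$ gives $(d\sigma_1)_{I_n}(X) = -X$. One could also differentiate the identity $\Sigma(t)\Sigma(t)^{-1} = I_n$ and get the same result.

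For $\sigma_2$, write $\sigma_2(I_n+tX) = f(t)\,(I_n + tX)$ with $f(t) = \det(I_n+tX)^{-2/n}$. By Jacobi's formula, $\det(I_n + tX) = 1 + t\tr X + O(t^2)$. Hence $f(0) = 1$ and
\[
    f'(0) = -\tfrac{2}{n}\,\det(I_n)^{-2/n-1}\tr X = -\tfrac{2}{n}\tr X.
\]
The product rule then gives
\[
    (d\sigma_2)_{I_n}(X) = f'(0)\,I_n + f(0)\,X = X - \tfrac{2}{n}\tr(X)\,I_n.
\]

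Neither step is a real obstacle. The only points needing care are the derivative of the determinant at the identity, which is the trace, and remembering the chain-rule factor $-2/n$ from the exponent. As a consistency check, both maps are involutions fixing $I_n$, so their differentials must square to the identity. This is immediate for $-X$. For $\sigma_2$, the trace of the image is $\tr X - 2\tr X = -\tr X$, and applying the map twice returns $X$. Both maps are also isometries of $g^F$ (Proposition~\ref{prop:full-isometry_N0}), so by Proposition~\ref{prop:Fisher-on-N0} their differentials must preserve $\tfrac12\tr(XY)$. This can be checked directly: it is immediate for $-X$, and for the second map $\tr\bigl((X - \tfrac{2}{n}\tr(X)I_n)(Y - \tfrac{2}{n}\tr(Y)I_n)\bigr) = \tr(XY)$.
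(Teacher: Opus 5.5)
Your proposal is correct: differentiating $\sigma_i(I_n+tX)$ at $t=0$, with the Neumann expansion for $\sigma_1$ and Jacobi's formula plus the product rule for $\sigma_2$, gives exactly the stated formulas. The paper gives no written proof, treating the result as a routine computation, and your argument is precisely that computation; your involution and isometry consistency checks also hold.
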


\begin{Rem}
It is well known that $(\No, g^F)$ is a Riemannian (globally) symmetric space
in the sense of Kobayashi--Nomizu~\cite[Chapter~XI, Section~6]{Kobayashi-Nomizu_II}.
In particular, the symmetry at $I_n \in \No$ is given by $\sigma_1$.
We next explain the geometric meaning of $\sigma_2$.
Let $\Sym^{+}_{1}(n,\mathbb{R})$ denote the space of positive definite
symmetric matrices with determinant $1$.
Consider the diffeomorphism
\[
    \Sym^{+}(n,\mathbb{R})
    \xrightarrow{\;\sim\;}
    \Sym^{+}_{1}(n,\mathbb{R}) \times \mathbb{R}_{>0},
    \qquad
    \Sigma
    \longmapsto
    \left(
    (\det\Sigma)^{-1/n}\Sigma,\,
    (\det\Sigma)^{1/n}
    \right)
\]
and denote it by $\pi \times p$.
Let $\tau_{\mathbb{R}_{>0}} : \mathbb{R}_{>0} \to \mathbb{R}_{>0}$ be the
involutive diffeomorphism defined by $\tau_{\mathbb{R}_{>0}}(r)=r^{-1}$.
Then $\sigma_{2}$ can be written as
\[
\sigma_{2}
=
(\pi \times p)^{-1}
\circ
(\mathrm{id} \times \tau_{\mathbb{R}_{>0}})
\circ
(\pi \times p).
\]
In other words, $\sigma_{2}$ is the isometry induced by the decomposition of the Riemannian symmetric space
\[
    GL^{+}(n,\mathbb{R})/SO(n) \simeq SL(n,\mathbb{R})/SO(n) \times \mathbb{R}_{>0}
\]
(see, for example,~\cite{Dolcetti-Pertici_2019}).
\end{Rem}

At the end of this subsection, we present the formula for the curvature tensor of the Fisher metric $g^F$ at $I_n$. 
This fact will also be used in Section~\ref{subsec:proof-of-main-thm}.

\begin{Lem}[\cite{Skovgaard_1984}]\label{prop:Fisher-curvature}
Under the identification 
\[
    \Sym(n, \RR) \ni X \longmapsto 
    \left.\frac{d}{dt}\right|_{t=0}(I_n + tX) \in T_{I_n}\mathcal{N}_0,
\]
the curvature tensor $R^F$ of $g^F$ at $I_n$ can be computed as follows:
\[
    g^F_{I_n}(R^F_{I_n}(X,Y)Z ,W) = \frac{1}{4} \left( \tr(YXZW) - \tr(XYZW) \right).
\]
\end{Lem}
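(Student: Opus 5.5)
The plan is to compute the Levi-Civita connection of $g^F$ explicitly on the open set $\Sym^+(n,\RR)\subset\Sym(n,\RR)$, and then insert it into the coordinate formula for the curvature at $I_n$. First I would extend Proposition~\ref{prop:Fisher-on-N0} to arbitrary points by $GL(n,\RR)$-invariance. Take $\Sigma = A{}^tA$. The differential of $\Gamma(A)$ sends $X$ to $AX{}^tA$, so invariance of $g^F$ gives
\[
g^F_\Sigma(X,Y)=\tfrac12\tr(\Sigma^{-1}X\Sigma^{-1}Y)\qquad(X,Y\in\Sym(n,\RR)).
\]

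Next, for constant vector fields $X,Y$ on $\Sym(n,\RR)$ I would verify, via the Koszul formula, that
\[
(\nabla^{F}_XY)_\Sigma=-\tfrac12\bigl(X\Sigma^{-1}Y+Y\Sigma^{-1}X\bigr).
\]
The check uses $d(\Sigma^{-1})(X)=-\Sigma^{-1}X\Sigma^{-1}$. With constant fields all Lie brackets vanish, and $X g(Y,Z)$ equals $-\tfrac12\tr(\Sigma^{-1}X\Sigma^{-1}Y\Sigma^{-1}Z)-\tfrac12\tr(\Sigma^{-1}Y\Sigma^{-1}X\Sigma^{-1}Z)$. Combining the three Koszul terms and using the cyclicity of the trace together with symmetry of the matrices gives $2g(\nabla_XY,Z)=-\tfrac12\tr(\Sigma^{-1}X\Sigma^{-1}Y\Sigma^{-1}Z+\Sigma^{-1}Y\Sigma^{-1}X\Sigma^{-1}Z)$. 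This matches the claimed formula, which I would confirm by pairing it against $Z$.

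The curvature step comes next. For constant fields, $R(X,Y)Z=\nabla_X\nabla_YZ-\nabla_Y\nabla_XZ$, where $\nabla_X(\nabla_YZ)$ is $D_X(\nabla_YZ)+\Gamma(X,\nabla_YZ)$ and $\Gamma(U,V)=-\tfrac12(U\Sigma^{-1}V+V\Sigma^{-1}U)$. Evaluating at $\Sigma=I_n$, the derivative term is
\[
D_X\Gamma(Y,Z)=\tfrac12(YXZ+ZXY).
\]
The quadratic term is $\tfrac14\bigl(X(YZ+ZY)+(YZ+ZY)X\bigr)$. Antisymmetrizing in $X,Y$, the derivative terms contribute $\tfrac12(YXZ+ZXY-XYZ-ZYX)$. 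The quadratic terms contribute $\tfrac14(XYZ+XZY+YZX+ZYX-YXZ-YZX-XZX'\ldots)$; written out exactly, they are $\tfrac14(XYZ+XZY+YZX+ZYX-YXZ-YZX-XZY-ZXY)=\tfrac14(XYZ+ZYX-YXZ-ZXY)$. The sum is
\[
R(X,Y)Z=\tfrac14\bigl(YXZ+ZXY-XYZ-ZYX\bigr)=-\tfrac14\bigl[[X,Y],Z\bigr].
\]

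Finally I would pair with $W$ using $g_{I_n}=\tfrac12\tr$:
\[
g(R(X,Y)Z,W)=\tfrac18\tr\bigl((YXZ+ZXY-XYZ-ZYX)W\bigr).
\]
By cyclicity and transposition invariance (all matrices are symmetric), $\tr(ZXYW)=\tr(YXZW)$, since the transpose of $ZXYW$ is $WYXZ$, which is cyclically equivalent to $YXZW$. Similarly $\tr(ZYXW)=\tr(XYZW)$. This yields $\tfrac14(\tr(YXZW)-\tr(XYZW))$, as claimed. The main obstacle is purely bookkeeping: getting the sign convention of $R$ and the factor $\tfrac12$ in the metric right.
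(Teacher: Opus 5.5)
Your computation is correct, and it uses the paper's sign convention for $R$. The Koszul step gives $g^F_\Sigma(\nabla_XY,Z)=-\tfrac12\tr(\Sigma^{-1}X\Sigma^{-1}Y\Sigma^{-1}Z)$, and this trilinear form is fully symmetric for symmetric matrices. That is exactly what your Christoffel tensor $-\tfrac12(X\Sigma^{-1}Y+Y\Sigma^{-1}X)$ produces. The antisymmetrization then correctly yields $R^F_{I_n}(X,Y)Z=-\tfrac14[[X,Y],Z]$. As a consistency check, take $X=\sqrt2E_{11}$ and $Y=E_{12}+E_{21}$. Then $g^F_{I_n}(R^F(X,Y)Y,X)=\tfrac14\bigl(\tr((XY)^2)-\tr(X^2Y^2)\bigr)=-\tfrac12$, which matches the $a_1=a_3=0$ case of Lemma~\ref{lem:sec-curve_C_n=2}.

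The paper gives no proof of this lemma; it simply cites Skovgaard. Your argument is therefore a self-contained substitute rather than a variant of something in the paper.

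The other natural route goes through the symmetric-space structure $\No\simeq GL(n,\RR)/O(n)$, with Cartan decomposition $\mathfrak{gl}(n,\RR)=\mathfrak{o}(n)\oplus\Sym(n,\RR)$. There $R(H_1,H_2)H_3=-[[H_1,H_2],H_3]$ on $\mathfrak{p}=\Sym(n,\RR)$. Since $\exp(tH)\cdot I_n=\exp(2tH)$, the tangent vector $X$ corresponds to $H=X/2$, and the factor $\tfrac14$ appears at once. That route is shorter, but it relies on the general curvature formula for symmetric spaces and on tracking the normalization of the identification. Yours needs only the $GL(n,\RR)$-invariance of $g^F$ and the Koszul formula.

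Two cosmetic points. Delete the garbled partial expression containing $XZX'$ that precedes the exact one. Also avoid writing $\Gamma$ for the Christoffel tensor, since the paper uses $\Gamma(A)$ for the $GL(n,\RR)$-action.
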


\begin{Cor}\label{cor:Fisher-flatness}
The Riemannian manifold $(\mathcal{N}_0^n, g^F)$ is flat only in the case $n = 1$.
\end{Cor}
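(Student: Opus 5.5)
Since $(\No^n,g^F)$ is homogeneous under $GL(n,\RR)$ acting by isometries, its sectional curvatures are identically zero if and only if they vanish at the single point $I_n$. Flatness of $g^F$ is therefore equivalent to $R^F_{I_n}=0$, and I will test this directly with the formula of Lemma~\ref{prop:Fisher-curvature}. Under the identification \eqref{eq:O(n)-equiv-linear-isom}, the metric at $I_n$ is $g^F_{I_n}(X,Y)=\tfrac12\tr(XY)$ on $\Sym(n,\RR)$ by Proposition~\ref{prop:Fisher-on-N0}, so it is nondegenerate there. It follows that $R^F_{I_n}=0$ if and only if
\[
    \tr(YXZW)-\tr(XYZW)=\tr\bigl([Y,X]ZW\bigr)=0
    \qquad\text{for all } X,Y,Z,W\in\Sym(n,\RR).
\]

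For $n=1$ the space $\Sym(1,\RR)=\RR$ is commutative, so $[Y,X]=0$ and the curvature vanishes. Alternatively, any one-dimensional Riemannian manifold is flat, and the proof of Proposition~\ref{prop:full-isometry_N0} already identifies $(\No^1,g^F)$ with $(\RR_{>0},\tfrac1{2x^2}dx^2)$.

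For $n\ge 2$ I will exhibit explicit symmetric matrices with nonzero curvature, working in the upper-left $2\times2$ block and padding with zeros. Take $X=E_{11}$ and $Y=E_{12}+E_{21}$. Then $[Y,X]=E_{21}-E_{12}$ is a nonzero skew-symmetric matrix. Choose $Z=Y$ and $W=X$. A direct computation gives
\[
    [Y,X]\,Z=(E_{21}-E_{12})(E_{12}+E_{21})=E_{22}-E_{11},
\]
and hence $\tr\bigl([Y,X]ZW\bigr)=\tr\bigl((E_{22}-E_{11})E_{11}\bigr)=-1\neq0$. So $g^F_{I_n}(R^F(X,Y)Y,X)\neq0$ and $R^F_{I_n}\neq0$. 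As a sanity check, the denominator in \eqref{eq:sectional-curvature_for-CS} (with $C=0$) is positive because $X$ and $Y$ are linearly independent, so the plane spanned by $X,Y$ has sectional curvature $-\tfrac14\cdot\tfrac{1}{(1/2)(1)}$, which is nonzero (and negative, as expected for a noncompact symmetric space).

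The only point needing care is the reduction from flatness of the whole manifold to the single point $I_n$, which rests on $GL(n,\RR)$-invariance of $g^F$. Apart from that, the argument is a one-line trace computation, so I do not expect a genuine obstacle.
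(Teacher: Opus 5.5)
Your proposal is correct and follows the paper's own route: the corollary is read off from Skovgaard's curvature formula at $I_n$ (Lemma~\ref{prop:Fisher-curvature}), and your witness $X=E_{11}$, $Y=E_{12}+E_{21}$ gives sectional curvature $-\tfrac12$, which agrees with Lemma~\ref{lem:sec-curve_C_n=2} at $a_1=a_3=0$. Homogeneity is only needed in the $n=1$ direction, and that case is trivial for a one-dimensional manifold; for $n\ge 2$, nonvanishing of $R^F_{I_n}$ alone already rules out flatness.
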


\subsection{The space of multivariate normal distributions}\label{subsec:N-review}
In this subsection, we review preceding works related to Problem~\ref{prob:Stat_determine} (cf.~\cite{FIK, KO}), which have served as one of the motivations for the present study.
For more detailed discussions, refer to the original works~\cite{FIK, KO}.

Let $\mathcal{N}^n = \{ N(x; \Sigma, \mu) \}$ denote the family of $n$-dimensional normal distributions.
$\mathcal{N}^n$ can be identified with the parameter space $\Sym^+(n,\mathbb{R}) \times \mathbb{R}^n$, where $\Sym^+(n,\mathbb{R})$ denotes the space of $n \times n$ positive definite symmetric matrices. 
Let $g^F$ denote the Fisher metric on $\mathcal{N}^n$.
Then the pair $(\mathcal{N}^n, g^F)$ forms a Riemannian manifold.

Let $UT^+(n,\mathbb{R})$ be the group of upper triangular matrices with positive diagonal entries, and define the Lie group
\[
    \mathrm{Aff}^s(n,\mathbb{R}) := UT^+(n,\mathbb{R}) \ltimes \mathbb{R}^n.
\]
This group acts on $\mathcal{N}^n$ by
\begin{equation}\label{eq:group-action_N_e}
(A,b) \cdot (\Sigma,\mu) := (A \Sigma {}^tA,\, A\mu + b),
\end{equation}
for $(A,b) \in \mathrm{Aff}^s(n,\mathbb{R})$ and $(\Sigma,\mu) \in \mathcal{N}^n$. 
This smooth action is isometric with respect to the Fisher metric $g^F$ and is simply transitive on $\mathcal{N}^n$.

If the property~$\mathrm{P}$ is conjugate symmetry~$\mathrm{CS}$, then the space $\Stat^{\mathrm{Aff}^s(n,\mathbb{R})}_{\mathrm{CS}}(\mathcal{N}^n,g^F)$ of
$\mathrm{Aff}^s(n,\mathbb{R})$-invariant statistical connections on
$(\mathcal{N}^n, g^F)$ satisfying~$\mathrm{CS}$ is given by
\[
    \Stat^{\mathrm{Aff}^s(n,\mathbb{R})}_{\mathrm{CS}}(\mathcal{N}^n,g^F)
    =
    \{\nabla^{A(\alpha)}\mid \alpha\in\mathbb R\}.
\]
Here,
$\nabla^{A(\alpha)}$ denotes the Amari--Chentsov $\alpha$-connection on $\mathcal N^n$.
The case \(n=1\) was treated by Furuhata--Inoguchi--Kobayashi~\cite{FIK},
while the case \(n\ge2\) was established by Kobayashi--Ohno~\cite{KO}.
Furthermore, these results imply that, if $\mathrm{P}$ is taken to be the dually flat property~$\mathrm{DF}$, then for any $n \geq 1$,
\[
    \Stat^{\mathrm{Aff}^s(n,\mathbb{R})}_{\mathrm{DF}}(\mathcal{N}^n,g^F)
    =
    \{\nabla^{A(+1)},\,\nabla^{A(-1)}\}.
\]
Indeed, the $\alpha$-connection $\nabla^{A(\alpha)}$ is flat if and only if
$\alpha=\pm1$ (see, for example,~\cite{Burbea_1986}).

\begin{Rem}
The $n$-dimensional affine group $\mathrm{Aff}(n,\RR) := GL(n,\RR) \ltimes \RR^n$ also acts on the space $\mathcal{N}^n$ in the form~\eqref{eq:group-action_N_e}. 
Then, the Fisher metric $g^F$ and the Amari--Chentsov $\alpha$-connection $\nabla^{A(\alpha)}$ are invariant under the $\mathrm{Aff}(n,\RR)$-action.
This follows from the fact that this action is induced by transformations of the sample space $\RR^n$, together with the invariance of the Amari--Chentsov structure (in the sense of~\cite{Ay_2015}) under such transformations (see~\cite[Corollary~3.6]{Ay_2015}). 
Consequently, the following equalities also hold:
\begin{align*}
    \Stat^{\mathrm{Aff}(n,\mathbb{R})}_{\mathrm{CS}}(\mathcal{N}^n,g^F) 
    &= \{ \nabla^{A(\alpha)} \mid \alpha \in \mathbb{R} \}, \\ 
    \Stat^{\mathrm{Aff}(n,\mathbb{R})}_{\mathrm{DF}}(\mathcal{N}^n,g^F)
    &=
    \{\nabla^{A(+1)},\,\nabla^{A(-1)}\}.
\end{align*}
\end{Rem}

\subsection{Moduli spaces of left-invariant geometric structures on Lie groups}\label{subsec:moduli-left-inv-geom-str}
In this subsection, let $G$ be a Lie group.
The Lie group $G$ acts simply transitively on the manifold $G$ by left multiplication.
In this subsection, we review several known formulations of the moduli spaces of geometric structures on $G$ that are invariant under this action.

In what follows, we denote by $\mathcal{T}^{(s,k)}(G)$ the space of all $(s,k)$-tensor fields on $G$, 
and by $\mathcal{T}^{(s,k)}(G)^G$ the vector space consisting of all left-invariant ones. 
We denote by $\mathfrak{g}$ the Lie algebra of $G$, and by $T^{(s,k)}(\mathfrak{g})$ the $(s,k)$-tensor space of the vector space $\mathfrak{g}$.
Then $\mathcal{T}^{(s,k)}(G)^G$ and $T^{(s,k)}(\mathfrak{g})$ are linearly isomorphic. 
In particular, $\mathcal{T}^{(s,k)}(G)^G$ is a finite-dimensional vector space and hence carries a unique natural Hausdorff topology. 
The group $\Aut(G)$ of automorphisms of $G$ and the multiplicative group $\RR_{>0}$ act on $\mathcal{T}^{(s,k)}(G)^G$ as follows:
\[
    \phi \cdot T = (\phi^{-1})^\ast T, \quad r \cdot T = rT 
    \quad (\phi \in \Aut(G),~ r \in \RR_{>0},~ T \in \mathcal{T}^{(s,k)}(G)^G).
\]
These actions commute with each other, and hence define an action of the direct product group $\RR_{>0} \times \Aut(G)$.

\begin{Def}
\begin{description}
    \item[(Pseudo-)Riemannian metrics (\cite{KTT, KOTT})] 
    Let $n := \dim G$, and let $p,q \geq 0$ be nonnegative integers such that $p + q = n$. 
    We denote by $\mathfrak{M}_{(p,q)}(G)$ the set of all left-invariant pseudo-Riemannian metrics on $G$ of signature $(p,q)$. 
    Then $\mathfrak{M}_{(p,q)}(G)$ is a subset of the topological vector space $\mathcal{T}^{(0,2)}(G)^G$, 
    and we regard it as a topological space with the relative topology. 
    Moreover, $\mathfrak{M}_{(p,q)}(G)$ is stable under the action of $\RR_{>0} \times \Aut(G)$. 
    The quotient space
    \[
        \mathfrak{PM}_{(p,q)}(G) := (\RR_{>0} \times \Aut(G)) \backslash \mathfrak{M}_{(p,q)}(G)
    \]
    is called the \emph{moduli space of left-invariant pseudo-Riemannian metrics of signature $(p,q)$ on $G$}.
    \item[Statistical structures (\cite{KOOT_2025})] 
    Let $n := \dim G$. 
    We denote by $\LStat(G)$ the set of all left-invariant statistical structures on $G$. 
    We write $\Gamma(S^3 T^\ast G)^G \subset \mathcal{T}^{(0,3)}(G)^G$ for the finite-dimensional vector space consisting of all left-invariant symmetric $(0,3)$-tensor fields on $G$.
    By Theorem~\ref{theorem:nablaC}, $\LStat(G)$ can be identified with the product space $\mathfrak{M}_{(n,0)}(G) \times \Gamma(S^3 T^\ast G)^G$ via the map
    \[
        \LStat(G) \longrightarrow \mathfrak{M}_{(n,0)}(G) \times \Gamma(S^3 T^\ast G)^G, 
        \qquad (g, \nabla) \longmapsto (g, C^{(g, \nabla)}).
    \]
    In particular, under this identification, $\LStat(G)$ can be regarded as a trivial vector bundle over $\mathfrak{M}_{(n,0)}(G)$ with fiber $\Gamma(S^3 T^\ast G)^G$.
    The product space $\mathfrak{M}_{(n,0)}(G) \times \Gamma(S^3 T^\ast G)^G$ is stable under the diagonal action of $\RR_{>0} \times \Aut(G)$. 
    Via the above identification, this induces an action of $\RR_{>0} \times \Aut(G)$ on $\LStat(G)$ given by
    \[
        (r, \phi) \cdot (g, \nabla) 
        = \bigl(r (\phi^{-1})^\ast g,\, (\phi^{-1})^\ast \nabla\bigr).
    \]
    The quotient space
    \[
        \MLStat(G) := (\RR_{>0} \times \Aut(G)) \backslash \LStat(G)
    \]
    is called the \emph{moduli space of left-invariant statistical structures on $G$}.
\end{description}
\end{Def}

\begin{Rem}
As a definition of the moduli space of left-invariant pseudo-Riemannian metrics, we have introduced
\[
    \mathfrak{PM}_{(p,q)}(G) := (\RR_{>0} \times \Aut(G)) \backslash \mathfrak{M}_{(p,q)}(G).
\]
However, in the original papers \cite{KTT, KOTT}, it is defined by
\[
    \mathfrak{PM}_{(p,q)}(G) := (\RR^\times \times \Aut(G)) \backslash \mathfrak{M}_{(p,q)}(G).
\]
Here, the group $\RR^\times$ acts on $\mathcal{T}^{(0,2)}(G)^G$ by
\[
    c \cdot \bigl(T(\cdot, \cdot)\bigr) := T(c^{-1}\cdot, c^{-1}\cdot) = c^{-2} T(\cdot, \cdot).
\]
Since the orbit decomposition of $\mathfrak{M}_{(p,q)}(G)$ under this action coincides with that under the action of $\RR_{>0}$ described above, the quotient spaces are the same in either case.
\end{Rem}

\begin{Rem}
Moduli spaces of left-invariant geometric structures such as symplectic, complex, and K\"{a}hler structures can be defined in a similar manner. 
For instance, in Moscoso--Tamaru~\cite{Luis-T}, the authors introduce the moduli space of non-degenerate left-invariant $2$-forms on a Lie group $G$, and by explicitly determining this space, they obtain a classification of symplectic structures within it. 
Moreover, related approaches to moduli spaces of left-invariant symplectic and complex structures on Lie groups have also been studied in other works (see, for example, [Kato (2025), arXiv:2510.14610] and Salamon~\cite{Salamon_2001}).
\end{Rem}

\section{Category of statistical manifolds and $\STAT$-moduli spaces of statistical connections}\label{sec:Cat_Stat}

\subsection{The category of statistical manifolds and statistical immersions}\label{subsec:Cat_Stat}
We begin by fixing our terminology for statistical immersions between statistical manifolds.

\begin{Def}[cf.~\cite{Furuhata_2009} and {\cite[Section~4.5.1]{Ay_2017}}]\label{Definition:DAI}
Let $(M,g,\nabla)$ and $(M',g',\nabla')$ be statistical manifolds.
A smooth map $f : M \rightarrow M'$ is called a \emph{statistical immersion} if the following conditions hold:
\begin{enumerate}[label=(\arabic*)]
    \item $f$ is isometric, i.e.\ $f^\ast g' = g$ (in particular, $f$ is an immersion).
    \item For any vector fields $X,Y,Z$ on $M$, one has
    \[
        g(\nabla_X Y, Z)
        =
        g'\bigl((f^\ast \nabla')_X f_\ast Y,\, f_\ast Z\bigr).
    \]
    Here $f^\ast \nabla'$ denotes the connection induced by $f$ on the pullback bundle $f^\ast TM' \to M$.
\end{enumerate}
\end{Def}

\begin{Prop}
Let $C := C^\nabla$ and $C' := C^{\nabla'}$ denote the cubic forms associated with
$(M,g,\nabla)$ and $(M',g',\nabla')$, respectively
(see Section~\ref{subsec:stat-mfd} for the definition of the cubic form of a statistical manifold).
For an isometric map $f : M \to M'$, the following conditions are equivalent:
\begin{enumerate}[label=(\roman*)]
    \item $f$ satisfies condition {\rm(2)} in Definition~\ref{Definition:DAI}.
    \item $f^\ast C' = C$.
\end{enumerate}
\end{Prop}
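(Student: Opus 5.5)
The plan is to reduce both conditions to a single identity relating the two difference tensors, using that an isometric map intertwines the Levi-Civita connections. Since $f^\ast g' = g$, the Levi-Civita connection is natural under isometric immersions in the following tangential sense:
\[
    g\bigl(\nabla^{g}_X Y, Z\bigr)
    =
    g'\bigl((f^\ast \nabla^{g'})_X f_\ast Y,\, f_\ast Z\bigr)
    \qquad (X,Y,Z \in \Gamma(TM)).
\]
I will justify this via the Koszul formula. Both sides define connections on $M$ once we project with $g$; the right-hand side gives the tangential part of the pullback Levi-Civita connection. The map
\[
    (X,Y,Z) \mapsto g'\bigl((f^\ast\nabla^{g'})_X f_\ast Y, f_\ast Z\bigr)
\]
is compatible with $g = f^\ast g'$, since $\nabla^{g'}$ is metric. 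It is also symmetric in $X,Y$ modulo $g([X,Y],Z)$, since $\nabla^{g'}$ is torsion-free and $(f^\ast\nabla^{g'})_X f_\ast Y - (f^\ast\nabla^{g'})_Y f_\ast X = f_\ast[X,Y]$. Uniqueness in the Koszul formula then gives the identity.

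Next I subtract this identity from the expressions in condition (2). By definition of the cubic forms, $C(X,Y,Z) = -2g(\nabla_XY - \nabla^g_XY, Z)$. Also, the difference tensor $K' = \nabla' - \nabla^{g'}$ is tensorial, so $(f^\ast\nabla')_X f_\ast Y - (f^\ast\nabla^{g'})_X f_\ast Y = K'(f_\ast X, f_\ast Y)$. Hence
\[
    -2g'\bigl((f^\ast \nabla')_X f_\ast Y - (f^\ast\nabla^{g'})_X f_\ast Y,\, f_\ast Z\bigr) = C'(f_\ast X, f_\ast Y, f_\ast Z) = (f^\ast C')(X,Y,Z).
\]
Combining these, condition (2) holds if and only if
\[
    g(\nabla_XY - \nabla^g_XY, Z) = g'\bigl(K'(f_\ast X,f_\ast Y), f_\ast Z\bigr)
\]
for all $X,Y,Z$. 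This is exactly $C = f^\ast C'$, which proves both directions at once.

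The main point requiring care is the first step. We need the tangential compatibility of Levi-Civita connections for an isometric immersion that need not be a diffeomorphism, so the pullback connection lives on $f^\ast TM'$ rather than on $TM$. I would handle this by working locally, where $f$ is an embedding, and applying the Koszul argument described above to the trilinear form. Alternatively, one can cite the Gauss formula: the normal component is $g'$-orthogonal to $f_\ast Z$ and therefore drops out.
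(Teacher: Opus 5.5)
Your proof is correct and complete. The paper gives no proof of this proposition; it treats it as immediate from the definition of the cubic form.

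Your argument supplies the one non-formal ingredient that this tacitly relies on: for an isometric immersion, the tangential part of $f^\ast\nabla^{g'}$ is $\nabla^{g}$ (the Gauss formula, which you obtain via Koszul). After that, tensoriality of $K'=\nabla'-\nabla^{g'}$ and subtracting the Levi-Civita identity give $C=f^\ast C'$ in both directions at once.

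Your closing caveat is unnecessary. The Koszul argument already works globally on $M$, because two facts hold for any smooth map $f$: $f^\ast\nabla^{g'}$ is compatible with $f^\ast g'=g$, and $(f^\ast\nabla^{g'})_X f_\ast Y-(f^\ast\nabla^{g'})_Y f_\ast X=f_\ast[X,Y]$. So you never need to restrict to a local embedding or extend vector fields to $M'$.
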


It is straightforward to verify that statistical manifolds and statistical immersions form a category.
In this paper, we denote this category by $\mathrm{Stat}_{\mathrm{StatImm}}$, or simply by $\STAT$.
A statistical immersion $f$ between $(M,g,\nabla)$ and $(M',g',\nabla')$ is an isomorphism in $\STAT$
if and only if $f : M \to M'$ is a diffeomorphism.

Recall that for a category $\mathcal{D}$, a full subcategory $\mathcal{C} \subset \mathcal{D}$
is called \emph{replete} if whenever two objects $M$ and $M'$ of $\mathcal{D}$ are isomorphic in $\mathcal{D}$
and $M$ belongs to $\mathcal{C}$, then $M'$ also belongs to $\mathcal{C}$.

\begin{Def}\label{Definition:class}
In this paper, any full replete subcategory $\mathcal{C}$ of $\STAT$ will be called
a \emph{class of statistical manifolds}.
An object $(M,g,\nabla)$ of $\mathcal{C}$ will be referred to as
a statistical manifold of class $\mathcal{C}$.
\end{Def}

For instance, the category consisting of dually flat statistical manifolds and statistical immersions
forms a class of statistical manifolds.

\subsection{Definition of $\STAT$-moduli spaces of statistical connections}\label{subsection:DefStatModuli}

Let $(M,g)$ be a Riemannian manifold.
We denote by $\Stat(M,g)$ the set of all statistical connections on $(M,g)$.
For each $\nabla \in \Stat(M,g)$, the triple $(M,g,\nabla)$ defines
a statistical manifold, which is therefore an object of the category $\STAT$.

We introduce an equivalence relation $\sim_{\STAT}$ on $\Stat(M,g)$ as follows.
For $\nabla,\nabla' \in \Stat(M,g)$, we write
\[
\nabla \sim_{\STAT} \nabla'
\]
if there exists a statistical isomorphism (an isomorphism in the category $\STAT$)
\[
f : (M,g,\nabla) \longrightarrow (M,g,\nabla').
\]

This defines an equivalence relation on $\Stat(M,g)$.
The quotient set
\[
\MStatStat(M,g) := \Stat(M,g) / {\sim_{\STAT}}
\]
will be called the \emph{$\STAT$-moduli space} of statistical connections on $(M,g)$.
For $\nabla \in \Stat(M,g)$, we denote by $[\nabla]_{\STAT}$ the equivalence class of $\nabla$.

This equivalence relation may also be interpreted as the orbit decomposition
associated with the action of the isometry group of $(M,g)$.
Indeed, if we denote the isometry group by $\Isom(M,g)$,
then $\Isom(M,g)$ acts on $\Stat(M,g)$ by 
\[
    f \cdot \nabla := (f^{-1})^* \nabla
\]
for $f \in \Isom(M,g)$ and $\nabla \in \Stat(M,g)$ (see Definition \ref{Definition:DAI} for the notation $(f^{-1})^* \nabla$).
It is immediate that the orbit space of this action coincides with
$\MStatStat(M,g)$.
In particular, we may write
\[
\MStatStat(M,g)
=
\Isom(M,g) \backslash \Stat(M,g).
\]

More generally, for a subset $\mathcal{S} \subset \Stat(M,g)$,
we define a subset of $\MStatStat(M,g)$ by
\[
\mathcal{M}_{\STAT}\mathcal{S}
:=
\{\, [\nabla]_{\STAT} \mid \nabla \in \mathcal{S} \,\}
\subset
\MStatStat(M,g).
\]

If $\mathcal{S}$ is stable under the action of $\Isom(M,g)$,
then $\mathcal{M}_{\STAT}\mathcal{S}$ coincides with the orbit space
$\Isom(M,g) \backslash \mathcal{S}$.

For example, let $\mathcal{C}$ be a class of statistical manifolds
(see Definition~\ref{Definition:class}), and define
\[
\Stat_{\mathcal{C}}(M,g)
:=
\{
\nabla \in \Stat(M,g)
\mid
(M,g,\nabla) \in \mathcal{C}
\}
\subset
\Stat(M,g).
\]
Then $\Stat_{\mathcal{C}}(M,g)$ is stable under the action of $\Isom(M,g)$,
and hence
\[
\MStatStat_{\mathcal{C}}(M,g)
=
\Isom(M,g) \backslash \Stat_{\mathcal{C}}(M,g).
\]
In later sections, we will also consider the situation where
a Lie group $G$ acts isometrically on $(M,g)$. 
In this case, we denote by $\Stat^G(M,g)$ the subset of $\Stat(M,g)$
consisting of $G$-invariant statistical connections. 
Moreover, for a class $\mathcal{C}$ of statistical manifolds, we denote by $\Stat^G_{\mathcal{C}}(M,g)$ the subset of $\Stat^G(M,g)$ consisting of those belonging to $\mathcal{C}$.
In general, $\Stat^G(M,g)$ and $\Stat^G_{\mathcal{C}}(M,g)$ are not stable under the action of $\Isom(M,g)$. 
Consequently, $\MStatStat^G(M,g)$ and $\MStatStat^G_{\mathcal{C}}(M,g)$ are merely quotient sets of $\Stat^G(M,g)$ 
and should not be regarded as orbit spaces arising from a group action.

\section{Equivariant morphisms between homogeneous manifolds}\label{sec:mor-HS}

\subsection{Equivariant morphisms}\label{section:EM}
Throughout this paper, a triple $(M,G,\sigma)$ is called a \emph{homogeneous manifold}
if $M$ is a smooth manifold, $G$ is a Lie group, and
$\sigma : G \times M \to M$ is a smooth transitive action of $G$ on $M$.
For $h \in G$ and $x \in M$, we write
$h \cdot_\sigma x = \sigma_h(x) = \sigma(h,x)$.

Let $(M,G,\sigma)$ and $(M',G',\sigma')$ be homogeneous manifolds.
A pair $(\phi,f)$ is called an \emph{equivariant morphism}
from $(M,G,\sigma)$ to $(M',G',\sigma')$
if $\phi : G \to G'$ is a Lie group homomorphism and
$f : M \to M'$ is a smooth map satisfying
\[
f(h \cdot_\sigma x) = \phi(h) \cdot_{\sigma'} f(x)
\]
for all $h \in G$ and $x \in M$.

\begin{Rem}
A typical example arises when the acting group is fixed.
Let $G$ be a Lie group, and let $M$ and $M'$ be manifolds equipped with
smooth transitive $G$-actions $\sigma$ and $\sigma'$, respectively.
A smooth map $f : M \to M'$ is said to be \emph{$G$-equivariant} if
\[
f(h \cdot_\sigma x) = h \cdot_{\sigma'} f(x)
\]
for all $h \in G$ and $x \in M$.
In this case, the pair $(\mathrm{id}_G,f)$ defines an equivariant morphism
from $(M,G,\sigma)$ to $(M',G,\sigma')$.
In general, however, an equivariant morphism between
$(M,G,\sigma)$ and $(M',G,\sigma')$ need not be of this form.
\end{Rem}

Homogeneous manifolds together with equivariant morphisms form a category,
denoted by $\HMFD$,
with composition given by
\[
(\phi_2,f_2) \circ (\phi_1,f_1)
:= (\phi_2 \circ \phi_1,\, f_2 \circ f_1).
\]
The identity morphism on $(M,G,\sigma)$ is $(\mathrm{id}_G,\mathrm{id}_M)$.
An equivariant morphism $(\phi,f)$ is an isomorphism in $\HMFD$
if and only if $\phi : G \to G'$ is a Lie group isomorphism
and $f : M \to M'$ is a diffeomorphism.

Throughout this paper, a quadruple $(M,g,G,\sigma)$ is called a
\emph{homogeneous Riemannian manifold}
if $(M,G,\sigma)$ is a homogeneous manifold and
$g$ is a Riemannian metric on $M$ invariant under the $G$-action $\sigma$.

\begin{Ex}\label{ex:N0_homog-Riem}
The quadruple
$(\mathcal{N}_0, g^F, GL(n,\mathbb{R}), \Gamma)$
is a homogeneous Riemannian manifold,
where the action $\Gamma$ is given by
\[
    \Gamma(A,\Sigma) := A \Sigma {}^{t}A .
\]
(See Section~\ref{subsec:N0} for details.)
\end{Ex}

Let $(M,g,G,\sigma)$ and $(M',g',G',\sigma')$ be homogeneous Riemannian manifolds.
An equivariant morphism $(\phi,f)$
from $(M,G,\sigma)$ to $(M',G',\sigma')$
is called an \emph{equivariant isometric morphism} between $(M,g,G,\sigma)$ and $(M',g',G',\sigma')$
if the smooth map $f : M \to M'$ is an isometry with respect to $g$ and $g'$, that is,
$f^* g' = g$.
We note that for such $f$, since $g$ is non-degenerate, $f$ should be an immersion from $M$ to $M'$.

Homogeneous Riemannian manifolds together with equivariant isometric morphisms
form a category, denoted by $\HRiem$,
with composition and identity morphisms defined in the same manner as in $\HMFD$.

\subsection{Automorphism groups in the categories $\HMFD$ and $\HRiem$}\label{subsec_Autgrp}
Let $(M,g,G,\sigma)$ be a homogeneous Riemannian manifold. 
The automorphism group of $(M,g,G,\sigma)$ in the category $\HRiem$, which is denoted by $\Aut_{\HRiem}(M,g,G,\sigma)$, will play a key role in this paper.
In this subsection, we study the group $\Aut_{\HRiem}(M,g,G,\sigma)$.

First, let us fix a homogeneous manifold $(M,G,\sigma)$ and study the automorphism group $\Aut_{\HMFD}(M,G,\sigma)$ in the category $\HMFD$ as follows:
For each $l \in G$, we write 
\[
\mathrm{Inn}_l : G \longrightarrow G, \qquad h \longmapsto l h l^{-1}.
\]
for the conjugation on $G$ by $l$. 
Then the pair $\theta_l := (\mathrm{Inn}_l,\sigma_l)$ is an automorphism on $(M,G,\sigma)$ in the category $\HMFD$. 
Furthermore, 
\[
\theta : G \longrightarrow \Aut_{\HMFD}(M,G,\sigma), \qquad l \longmapsto \theta_l
\]
is a group homomorphism.
The subgroup $\theta(G)$ of $\Aut_{\HMFD}(M,G,\sigma)$ is normal, in fact, for each $l \in G$ and $(\phi,f) \in \Aut_{\HMFD}(M,G,\sigma)$, 
one can see that 
\begin{align*}
    (\phi,f) \circ \theta_l \circ (\phi,f)^{-1} = (\phi \circ \mathrm{Inn}_l \circ \phi^{-1},f \circ \sigma_l \circ f^{-1}) = (\mathrm{Inn}_{\phi(l)}, \sigma_{\phi(l)}) = \theta_{\phi(l)}.
\end{align*}

We shall fix a point $p_0 \in M$.
Let us define the subgroup $\Aut_{\HMFD}(M,G,\sigma)_{p_0}$ of $\Aut_{\HMFD}(M,G,\sigma)$ by 
\[
\Aut_{\HMFD}(M,G,\sigma)_{p_0} := \{ (\phi,f) \in \Aut_{\HMFD}(M,G,\sigma) \mid f(p_0) = p_0 \}.
\]
The semidirect product group of $\Aut_{\HMFD}(M,G,\sigma)_{p_0}$ and $\theta(G)$ will be denoted by $\Aut_{\HMFD}(M,G,\sigma)_{p_0} \ltimes \theta(G)$, 
that is, we define the group structure on the direct product set $\Aut_{\HMFD}(M,G,\sigma)_{p_0} \times \theta(G)$ by putting 
\[
(\alpha,\beta) \cdot (\alpha',\beta') = (\alpha \circ \alpha', \beta \circ (\alpha \circ  \beta' \circ \alpha^{-1}))
\]
for $(\alpha,\beta), (\alpha',\beta') \in \Aut_{\HMFD}(M,G,\sigma)_{p_0} \times \theta(G)$.

The isotropy subgroup of $G$ at $p_0$ will be denoted by $K := \{ k \in G \mid k \cdot_\sigma p_0 = p_0 \}$.
Then the following holds:

\begin{Prop}\label{Proposition:AutHMFD_p0}
The group homomorphism 
\[
\Psi : \Aut_{\HMFD}(M,G,\sigma)_{p_0} \ltimes \theta(G) \longrightarrow \Aut_{\HMFD}(M,G,\sigma), \qquad (\alpha,\beta) \longmapsto \beta \circ \alpha
\]
is surjective, and the kernel is given by 
\[
\ker \Psi = \{ (\theta_{k^{-1}},\theta_k) \mid k \in K \}.
\]
\end{Prop}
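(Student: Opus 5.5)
The plan is to verify three things in turn: that $\Psi$ is a homomorphism, that it is surjective, and what its kernel is.

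\textbf{Homomorphism.} The product in $\Aut_{\HMFD}(M,G,\sigma)_{p_0} \ltimes \theta(G)$ is the abstract realization of composition. For $(\alpha,\beta),(\alpha',\beta')$ we compute
\[
\Psi\bigl((\alpha,\beta)\cdot(\alpha',\beta')\bigr)=\beta\circ\alpha\circ\beta'\circ\alpha^{-1}\circ\alpha\circ\alpha'=(\beta\circ\alpha)\circ(\beta'\circ\alpha').
\]
This uses only that $\theta(G)$ is normal, as shown just before the statement, so that $\alpha\circ\beta'\circ\alpha^{-1}\in\theta(G)$ and the semidirect product is well defined.

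\textbf{Surjectivity.} Take $(\phi,f)\in\Aut_{\HMFD}(M,G,\sigma)$. By transitivity of $\sigma$, choose $l\in G$ with $l\cdot_\sigma p_0=f(p_0)$. Set $\alpha:=\theta_{l^{-1}}\circ(\phi,f)=(\mathrm{Inn}_{l^{-1}}\circ\phi,\ \sigma_{l^{-1}}\circ f)$. It is an automorphism in $\HMFD$, being a composite of such. It fixes $p_0$, since $\sigma_{l^{-1}}(f(p_0))=p_0$. Then $(\phi,f)=\theta_l\circ\alpha=\Psi(\alpha,\theta_l)$.

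\textbf{Kernel, containment $\supseteq$.} For $k\in K$, the element $\theta_{k^{-1}}=(\mathrm{Inn}_{k^{-1}},\sigma_{k^{-1}})$ fixes $p_0$, so it lies in $\Aut_{\HMFD}(M,G,\sigma)_{p_0}$. Moreover $\Psi(\theta_{k^{-1}},\theta_k)=\theta_k\circ\theta_{k^{-1}}=\mathrm{id}$, because $\theta$ is a homomorphism.

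\textbf{Kernel, containment $\subseteq$.} Suppose $\beta\circ\alpha=\mathrm{id}$ with $\beta=\theta_l$. Then $\alpha=\theta_l^{-1}=\theta_{l^{-1}}$. Since $\alpha$ fixes $p_0$, we get $\sigma_{l^{-1}}(p_0)=p_0$, so $l^{-1}\in K$ and hence $l\in K$. Writing $k:=l$ gives $(\alpha,\beta)=(\theta_{k^{-1}},\theta_k)$.

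The only point requiring care is that elements of the second factor are elements $\beta\in\theta(G)$, not elements of $G$. So the kernel description must be read in terms of $\theta_k$, and there is no need for $\theta$ to be injective: any $l$ with $\beta=\theta_l$ works in the argument above. No step is a real obstacle. The mild subtlety is checking that $\alpha$ in the surjectivity step genuinely lies in the stabilizer subgroup, and that $l\in K$ follows from $\alpha=\theta_{l^{-1}}$ fixing $p_0$. Both are handled by the displayed formulas.
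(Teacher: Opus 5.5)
Your proof is correct and is essentially the paper's argument: surjectivity by choosing $l$ with $\sigma_l(p_0)=f(p_0)$ and composing with $\theta_{l}^{-1}$, and the kernel by writing $\beta=\theta_l$, $\alpha=\theta_{l^{-1}}$ and reading off $l\in K$ from $\alpha$ fixing $p_0$. The only addition is your explicit check that $\Psi$ is a homomorphism, which the paper takes for granted; your check is correct.
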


\begin{proof}
First, we shall prove that $\Psi$ is surjective.
Take any $(\phi,f) \in \Aut_{\HMFD}(M,G,\sigma)$. 
Our goal is to find $l \in G$ such that $(\Inn_{l},\sigma_l)^{-1} \circ (\phi,f) \in \Aut_{\HMFD}(M,G,\sigma)_{p_0}$.
Put $q_0 := f(p_0)$.
Since the $G$-action on $M$ is transitive, 
one can find an element $l \in G$ such that $\sigma_l(p_0) = q_0$.
For such an $l \in G$, 
we have $\sigma_{l^{-1}}(f(p_0)) = \sigma_{l^{-1}}(q_0) = p_0$, and hence $(\Inn_{l},\sigma_l)^{-1} \circ (\phi,f) \in \Aut_{\HMFD}(M,G,\sigma)_{p_0}$.

Next, let us prove that $\ker \Psi = \{ (\Inn_{k^{-1}},\sigma_{k^{-1}}),(\Inn_k,\sigma_k)) \mid k \in K \}$.
The inclusion $\ker \Psi \supset \{ (\theta_{k^{-1}}, \theta_k) \mid k \in K \}$ is trivial.
Take any $(\alpha,\beta) \in  \ker \Psi$.
Then $\alpha = \beta^{-1}$.
We write $\beta = \theta_l$ for some $l \in G$.
Then $\alpha = \beta^{-1} = \theta_{l^{-1}} = (\Inn_{l^{-1}},\sigma_{l^{-1}})$.
Since $\alpha \in \Aut_{\HMFD}(M,G,\sigma)_{p_0}$, we have $l^{-1} \in K$ and $l \in K$.
This proves the converse inclusion $\ker \Psi \subset \{ (\theta_{k^{-1}}, \theta_k) \mid k \in K \}$
\end{proof}

Let us also study $\Aut_{\HMFD}(M,G,\sigma)_{p_0}$.
We define the subgroup $\Aut(G,K)$ of the automorphism group $\Aut(G)$ of the Lie group $G$ by 
\[
\Aut(G,K) := \{ \phi \in \Aut(G) \mid \phi(K) = K \}.
\]
For each $\phi \in \Aut(G,K)$, we shall define 
\begin{equation}\label{eq:def-f_phi}
    f_{\phi} : M \longrightarrow M, 
    \qquad
    x = h_x \cdot_{\sigma} p_0
    \longmapsto
    \phi(h_x)\cdot_{\sigma} p_0.
\end{equation}
One can see that $f_{\phi}$ is well-defined, smooth and $(\phi,f_\phi) \in \Aut_{\HMFD}(M,G,\sigma)_{p_0}$.

\begin{Thm} The correspondence $(\phi,f) \mapsto \phi$ gives a group isomorphism between $\Aut_{\HMFD}(M,G,\sigma)_{p_0}$ and $\Aut(G,K)$, 
and the inverse map is given as 
\[
\Aut(G,K) \longrightarrow \Aut_{\HMFD}(M,G,\sigma)_{p_0}, \qquad \phi \longmapsto (\phi,f_{\phi}).
\]
\end{Thm}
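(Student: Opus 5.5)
The proof has three parts: show that the forward map $\Phi:(\phi,f)\mapsto\phi$ is a group homomorphism with values in $\Aut(G,K)$, show it is injective, and show that $\phi\mapsto(\phi,f_\phi)$ is a right inverse.

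\emph{Well-definedness of $\Phi$.} Take $(\phi,f)\in\Aut_{\HMFD}(M,G,\sigma)_{p_0}$. Then $\phi$ is a Lie group automorphism of $G$, so the only thing to check is $\phi(K)=K$. For $k\in K$, equivariance and $f(p_0)=p_0$ give
\[
\phi(k)\cdot_\sigma p_0=\phi(k)\cdot_\sigma f(p_0)=f(k\cdot_\sigma p_0)=f(p_0)=p_0,
\]
so $\phi(K)\subset K$. The inverse $(\phi,f)^{-1}=(\phi^{-1},f^{-1})$ also lies in $\Aut_{\HMFD}(M,G,\sigma)_{p_0}$, so the same argument gives $\phi^{-1}(K)\subset K$, hence $\phi(K)=K$. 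Since composition in $\HMFD$ is componentwise, $\Phi$ is a group homomorphism.

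\emph{Injectivity.} Suppose $(\phi,f)$ and $(\phi,f')$ both lie in $\Aut_{\HMFD}(M,G,\sigma)_{p_0}$. By transitivity, every $x\in M$ can be written as $x=h_x\cdot_\sigma p_0$. Then
\[
f(x)=\phi(h_x)\cdot_\sigma f(p_0)=\phi(h_x)\cdot_\sigma p_0=f'(x).
\]
This shows more: every $(\phi,f)$ in the group satisfies $f=f_\phi$.

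\emph{Surjectivity and the inverse map.} This is the step carrying the real content; the paper asserted it just before the statement, but I would verify it here.
\begin{itemize}
\item Well-definedness: if $h\cdot_\sigma p_0=h'\cdot_\sigma p_0$, then $h^{-1}h'\in K$, so $\phi(h)^{-1}\phi(h')\in K$, and $f_\phi$ does not depend on the choice of $h_x$.
\item Smoothness: $M\cong G/K$ via the orbit map, which is a diffeomorphism for a transitive smooth action by the standard homogeneous-space theorem. Under this identification $f_\phi$ is the map $G/K\to G/K$ induced by the smooth map $h\mapsto\phi(h)$, and a map induced by a smooth map through a quotient submersion is smooth.
\item Equivariance: $f_\phi(l\cdot_\sigma x)=\phi(lh_x)\cdot_\sigma p_0=\phi(l)\cdot_\sigma f_\phi(x)$.
\item Bijectivity: $f_{\phi^{-1}}$ is a two-sided inverse of $f_\phi$, so $f_\phi$ is a diffeomorphism.
\item Base point: $f_\phi(p_0)=p_0$, taking $h_{p_0}=e$.
\end{itemize}
Thus $(\phi,f_\phi)\in\Aut_{\HMFD}(M,G,\sigma)_{p_0}$ and $\Phi(\phi,f_\phi)=\phi$. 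Combined with $f=f_\phi$ from the injectivity step, the two maps are mutually inverse.

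The only genuinely nontrivial input is the smoothness of $f_\phi$, which rests on the identification $M\cong G/K$ as manifolds.
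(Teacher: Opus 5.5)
Your proof is correct and follows the paper's argument. You use the same computation $\phi(k)\cdot_\sigma p_0 = f(k\cdot_\sigma p_0) = p_0$, applied also to $(\phi^{-1},f^{-1})$, to get $\phi(K)=K$, and the same equivariance computation to force $f=f_\phi$. The one difference is that you explicitly check the well-definedness, smoothness (via $M\cong G/K$), equivariance and invertibility of $f_\phi$, which the paper only asserts (``one can see'') just before the statement.
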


\begin{proof}
First, let us prove that $\phi \in \Aut(G,K)$ for each $(\phi,f) \in \Aut_{\HMFD}(M,G,\sigma)_{p_0}$.
Take any $k \in K$.
Then $p_0 = f(p_0) = f(k \cdot_{\sigma} p_0) = \phi(k) \cdot_{\sigma} f(p_0) = \phi(k) \cdot_{\sigma} p_0$, and hence $\phi(k) \in K$.
This proves $\phi(K) \subset K$.
Since $(\phi^{-1},f^{-1}) \in \Aut_{\HMFD}(M,G,\sigma)_{p_0}$, we also have $\phi^{-1}(K) \subset K$, 
and thus $\phi(K) = K$.
We obtain a group homomorphism 
\[
\Aut_{\HMFD}(M,G,\sigma)_{p_0} \longrightarrow \Aut(G,K), \qquad (\phi,f) \longmapsto \phi.
\]
It remains to show that $f = f_{\phi}$ for each $(\phi,f) \in \Aut_{\HMFD}(M,G,\sigma)_{p_0}$.
Take any $x = h \cdot_{\sigma} p_0$.
Then 
\[
f(x) = f(h \cdot_{\sigma} p_0) = \phi(h) f(p_0) = \phi(h) \cdot_{\sigma} p_0 = f_{\phi}(x).
\]
This completes the proof.
\end{proof}

Under the group isomorphism $\Aut_{\HMFD}(M,G,\sigma)_{p_0} \simeq \Aut(G,K)$, 
we obtain the semidirect product group $\Aut(G,K) \ltimes \theta(G)$ and the surjective group homomorphism 
\[
\Aut(G,K) \ltimes \theta(G) \longrightarrow \Aut_{\HMFD}(M,G,\sigma), \qquad (\phi,\theta_{l}) \longmapsto (\Inn_{l} \circ \phi, \sigma_l \circ f_\phi), 
\]
with its kernel is given by 
\[
\{ (\Inn_{k^{-1}},\theta_k) \mid k \in K \}, 
\]
where the group structure on $\Aut(G,K) \ltimes \theta(G)$ can be written as 
\[
(\phi,\theta_{l}) \cdot (\phi',\theta_{l'}) = (\phi \circ \phi', \theta_l \circ \theta_{\phi(l')})
\]
for $(\phi,\theta_l), (\phi',\theta_{l'}) \in \Aut(G,K) \ltimes \theta(G)$.

We now turn to the study of automorphism groups in the category $\HRiem$.
Before proceeding further, we record the following auxiliary result,
which will be used later.
\begin{Prop}\label{Proposition:Aut_preserves_invtensor}
Let $T$ be a $G$-invariant tensor field on the smooth manifold $M$.
Then, for each $(\phi,f) \in \Aut_{\HMFD}(M,G,\sigma)$,
the pullback tensor $f^* T$ is also $G$-invariant.
\end{Prop}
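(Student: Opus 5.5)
The plan is to unwind the definitions and use the equivariance identity $f \circ \sigma_h = \sigma_{\phi(h)} \circ f$ for all $h \in G$, which holds because $(\phi,f)$ is a morphism in $\HMFD$. We must show that $\sigma_h^*(f^*T) = f^*T$ for every $h \in G$. Since $(\phi,f)$ is an isomorphism in $\HMFD$, $f$ is a diffeomorphism, so the pullback $f^*T$ of an arbitrary (possibly mixed) tensor field is well defined, using $f_*$ on covariant slots and $(f^{-1})^*$ on contravariant slots.

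The key step is functoriality of pullback under composition of diffeomorphisms, $(F_1 \circ F_2)^* = F_2^* \circ F_1^*$. Using this,
\[
\sigma_h^*(f^*T) = (f \circ \sigma_h)^* T = (\sigma_{\phi(h)} \circ f)^* T = f^*\bigl(\sigma_{\phi(h)}^* T\bigr) = f^* T,
\]
where the last equality is the $G$-invariance of $T$ applied to the element $\phi(h) \in G$. This holds for every $h\in G$, so $f^*T$ is $G$-invariant.

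There is essentially no obstacle. The only point needing care is that equivariance is needed in the form $f\circ\sigma_h=\sigma_{\phi(h)}\circ f$ as maps. This follows directly from $f(h\cdot_\sigma x)=\phi(h)\cdot_\sigma f(x)$ for all $x \in M$. Surjectivity of $\phi$ is not even required for this direction; we only use that $\phi(h)\in G$.
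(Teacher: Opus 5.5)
Your proof is correct and is essentially the paper's argument: the paper phrases it as the normality of $\theta(G)$ in $\Aut_{\HMFD}(M,G,\sigma)$, via $(\phi,f)\circ\theta_l\circ(\phi,f)^{-1}=\theta_{\phi(l)}$, for the action $(\phi,f)\cdot T=(f^{-1})^*T$ on tensor fields. Your chain $\sigma_h^*(f^*T)=(\sigma_{\phi(h)}\circ f)^*T=f^*T$ is that conjugation identity written out directly, and you are right that only $\phi(h)\in G$ is used.
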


\begin{proof}
Let us write $\mathcal{T}(M)$ for the set of all tensor fields on $M$.
Then the correspondence $((\phi,f),T) \mapsto (f^{-1})^\ast T$ defines an $\Aut_{\HMFD}(M,G,\sigma)$-action on $\mathcal{T}(M)$.
Note that a tensor field $T \in \mathcal{T}(M)$ is $G$-invariant if and only if it is invariant by $\theta(G) \subset \Aut_{\HMFD}(M, G, \sigma)$.
Since $\theta(G)$ is a normal subgroup of $\Aut_{\HMFD}(M,G,\sigma)$, 
for each $\theta(G)$-invariant tensor field $T$ and each $(\phi,f) \in \Aut_{\HMFD}(M,G,\sigma)$, 
the tensor field $f^* T$ should be $\theta(G)$-invariant.
\end{proof}

Let us fix a $G$-invariant Riemannian metric $g$ on $M$ and consider the homogeneous Riemannian manifold $(M,g,G,\sigma)$.
We note that 
\[
\Aut_{\HRiem}(M,g,G,\sigma) = \{ (\phi,f) \in \Aut_{\HMFD}(M,G,\sigma) \mid f^*g = g \}.
\]

Since the metric $g$ is $G$-invariant,
it follows that $\theta_l \in \Aut_{\HRiem}(M,g,G,\sigma)$
for each $l \in G$.
Then by putting 
\[
\Aut_{\HRiem}(M,g,G,\sigma)_{p_0} := \{ (\phi,f) \in \Aut_{\HRiem}(M,g,G,\sigma) \mid f(p_0) = p_0 \}, 
\]
we obtain the semidirect product group $\Aut_{\HRiem}(M,g,G,\sigma)_{p_0} \ltimes \theta(G)$.
Similarly to Proposition \ref{Proposition:AutHMFD_p0}, we also obtain the following:

\begin{Prop}
The group homomorphism
\[
\Aut_{\HRiem}(M,g,G,\sigma)_{p_0} \ltimes \theta(G) \longrightarrow \Aut_{\HRiem}(M,g,G,\sigma), \qquad (\alpha,\beta) \longmapsto \beta \circ \alpha
\]
is surjective and the kernel is given by 
\[
\{ (\theta_{k^{-1}}, \theta_k) \mid k \in K \}.
\]
\end{Prop}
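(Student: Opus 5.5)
The plan is to repeat the argument of Proposition~\ref{Proposition:AutHMFD_p0}, checking at each step that every element we build or use lies in $\Aut_{\HRiem}(M,g,G,\sigma)$ and not only in $\Aut_{\HMFD}(M,G,\sigma)$.

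\textbf{Well-definedness.} First I check that the semidirect product makes sense inside $\Aut_{\HRiem}(M,g,G,\sigma)$.
\begin{itemize}
\item Since $g$ is $G$-invariant, $\sigma_l^\ast g = g$, so $\theta(G) \subset \Aut_{\HRiem}(M,g,G,\sigma)$.
\item $\Aut_{\HRiem}(M,g,G,\sigma)$ is a subgroup of $\Aut_{\HMFD}(M,G,\sigma)$, because compositions and inverses of isometries are isometries.
\item $\theta(G)$ is normal in $\Aut_{\HMFD}(M,G,\sigma)$, hence also in the smaller group $\Aut_{\HRiem}(M,g,G,\sigma)$.
\end{itemize}
Consequently conjugation by $\alpha \in \Aut_{\HRiem}(M,g,G,\sigma)_{p_0}$ preserves $\theta(G)$. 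The product law $(\alpha,\beta)(\alpha',\beta') = (\alpha\alpha', \beta\,\alpha\beta'\alpha^{-1})$ is therefore well defined.

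\textbf{Homomorphism.} The map $(\alpha,\beta)\mapsto \beta\circ\alpha$ is a homomorphism by the identity
\[
\beta\alpha\beta'\alpha' = \beta(\alpha\beta'\alpha^{-1})\alpha\alpha'.
\]
Its image lies in $\Aut_{\HRiem}(M,g,G,\sigma)$ because both factors do.

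\textbf{Surjectivity.} Take $(\phi,f)\in\Aut_{\HRiem}(M,g,G,\sigma)$.
\begin{itemize}
\item By transitivity, choose $l\in G$ with $\sigma_l(p_0)=f(p_0)$.
\item Set $\alpha := \theta_l^{-1}\circ(\phi,f) = (\Inn_{l^{-1}}\circ\phi,\ \sigma_{l^{-1}}\circ f)$.
\item Its second component is a composition of isometries, so $\alpha$ is an isometric automorphism. It fixes $p_0$, so $\alpha\in\Aut_{\HRiem}(M,g,G,\sigma)_{p_0}$.
\end{itemize}
Then $(\phi,f)=\Psi(\alpha,\theta_l)$.

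\textbf{Kernel.} Suppose $\beta\circ\alpha=\mathrm{id}$ with $\beta=\theta_l$. Then $\alpha=\theta_{l^{-1}}$. Since $\alpha$ fixes $p_0$, we get $\sigma_{l^{-1}}(p_0)=p_0$, so $l\in K$. Conversely, for $k\in K$ the pair $(\theta_{k^{-1}},\theta_k)$ lies in the domain, since $\theta_{k^{-1}}$ is isometric and fixes $p_0$, and it maps to the identity.

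\textbf{Main obstacle.} No step is genuinely hard. The only point needing care is that the first factor in the surjectivity step is isometric, and this follows from $G$-invariance of $g$.
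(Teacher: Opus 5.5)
Your proposal is correct and is the paper's approach. The paper gives no separate proof here; it says the statement follows ``similarly to'' Proposition~\ref{Proposition:AutHMFD_p0}, and you run that same argument: translate $f(p_0)$ back to $p_0$ by a suitable $\theta_l$, and show that kernel elements force $l\in K$. You also add the routine checks, all coming from $G$-invariance of $g$, that $\theta(G)\subset\Aut_{\HRiem}(M,g,G,\sigma)$ is normal and that every pair you construct is isometric.
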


Recall that $\Aut_{\HMFD}(M,G,\sigma)_{p_0}$ is isomorphic to $\Aut(G,K)$.
Let us study the subgroup of $\Aut(G,K)$ corresponding to the subgroup $\Aut_{\HRiem}(M,g,G,\sigma)_{p_0}$ of $\Aut_{\HMFD}(M,G,\sigma)_{p_0}$.

We write $\mathfrak{g}$ and $\mathfrak{k}$ for the Lie algebras of $G$ and $K$.
Then the quotient vector space $\mathfrak{g}/{\mathfrak{k}}$ can be identified with the tangent space $T_{p_0}M$ via the map 
\[
\Upsilon : \mathfrak{g}/\mathfrak{k} \longrightarrow T_{p_0}M,
\qquad
[X]_{\mathfrak{k}} \longmapsto
\left.\frac{d}{dt}\right|_{t=0}
\bigl(\exp(-tX)\cdot_{\sigma} p_0\bigr), 
\]
where we write $[X]_{\mathfrak{k}} := X + \mathfrak{k} \in \mathfrak{g}/{\mathfrak{k}}$ for each $X \in \mathfrak{g}$.
In particular, the metric tensor $g$ on $M$ induces an inner product $g_0$ on $\mathfrak{g}/{\mathfrak{k}} \simeq T_{p_0} M$.
Let us define a linear representation of the group $\Aut(G,K)$ on $\mathfrak{g}/{\mathfrak{k}}$ by putting 
\begin{equation}\label{eq:def-phi_gk}
    \phi_{\mathfrak{g}/\mathfrak{k}}
    :
    \mathfrak{g}/\mathfrak{k}
    \longrightarrow
    \mathfrak{g}/\mathfrak{k},
    \qquad
    [X]_{\mathfrak{k}}
    \longmapsto
    [\phi_\ast(X)]_{\mathfrak{k}}.
\end{equation}
for each $\phi \in \Aut(G,K)$, 
where $\phi_* : \mathfrak{g} \rightarrow \mathfrak{g}$ denotes the automorphism of the Lie algebra $\mathfrak{g}$ induced by $\phi$. 

\begin{Prop}\label{proposition:fphistarphigk}
Under the identifications $\Aut(G,K) \simeq \Aut_{\HMFD}(M,G,\sigma)_{p_0}$ and $\mathfrak{g}/{\mathfrak{k}} \simeq T_{p_0}M$, 
the linear isomorphism 
$
\phi_{\mathfrak{g}/{\mathfrak{k}}} : \mathfrak{g}/{\mathfrak{k}} \rightarrow \mathfrak{g}/{\mathfrak{k}}
$
corresponds to the derivation 
\[
(f_{\phi})_* : T_{p_0} M \longrightarrow T_{p_0} M
\]
of $f_{\phi}$ at $p_0$.    
\end{Prop}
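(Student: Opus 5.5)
The plan is to compute $(f_\phi)_*$ on curves of the form appearing in the definition of $\Upsilon$. Since $\Upsilon$ is a linear isomorphism, it suffices to show that
\[
(f_\phi)_*\bigl(\Upsilon([X]_{\mathfrak{k}})\bigr) = \Upsilon\bigl(\phi_{\mathfrak{g}/\mathfrak{k}}([X]_{\mathfrak{k}})\bigr)
\qquad (X \in \mathfrak{g}).
\]
Before this, I would briefly record that $\phi_{\mathfrak{g}/\mathfrak{k}}$ is well defined. Because $\phi(K)=K$, the differential $\phi_*$ preserves $\mathfrak{k}$, so $[\phi_*X]_{\mathfrak{k}}$ depends only on $[X]_{\mathfrak{k}}$. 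I would also recall that $\Upsilon$ is well defined and bijective, which is the standard identification $\mathfrak{g}/\mathfrak{k}\simeq T_{p_0}M$ for $M \simeq G/K$.

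For the main computation, fix $X\in\mathfrak{g}$ and consider the curve $c(t)=\exp(-tX)\cdot_\sigma p_0$, whose velocity at $t=0$ is $\Upsilon([X]_{\mathfrak{k}})$. By the definition \eqref{eq:def-f_phi} with $h_x=\exp(-tX)$,
\[
f_\phi(c(t)) = \phi(\exp(-tX))\cdot_\sigma p_0 = \exp(-t\,\phi_*X)\cdot_\sigma p_0,
\]
using the naturality $\phi\circ\exp=\exp\circ\phi_*$ for Lie group homomorphisms. Differentiating at $t=0$ gives
\[
(f_\phi)_*\Upsilon([X]_{\mathfrak{k}})=\Upsilon([\phi_*X]_{\mathfrak{k}})=\Upsilon(\phi_{\mathfrak{g}/\mathfrak{k}}[X]_{\mathfrak{k}}),
\]
which is the claimed correspondence.

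I expect no real obstacle here. The only points requiring care are that $f_\phi$ is smooth, which the paper already asserts, so that $(f_\phi)_*$ may be computed along curves, and the well-definedness of $\phi_{\mathfrak{g}/\mathfrak{k}}$ noted above. As a consequence, $(\phi,f_\phi)$ lies in $\Aut_{\HRiem}(M,g,G,\sigma)_{p_0}$ if and only if $\phi_{\mathfrak{g}/\mathfrak{k}}$ preserves $g_0$. This characterization is presumably the intended use of the statement.
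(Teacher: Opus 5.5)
Your proposal is correct and is essentially the paper's own proof. Both verify $(f_\phi)_*\circ\Upsilon=\Upsilon\circ\phi_{\mathfrak{g}/\mathfrak{k}}$ by pushing the curve $\exp(-tX)\cdot_\sigma p_0$ through $f_\phi$ and using $\phi(\exp(-tX))=\exp(-t\,\phi_*X)$; your check that $\phi_*(\mathfrak{k})\subset\mathfrak{k}$, so that $\phi_{\mathfrak{g}/\mathfrak{k}}$ is well defined, is a detail the paper leaves implicit.
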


\begin{proof}
Let us fix $\phi \in \Aut(G,K)$.
Our goal is to show the commutativity 
$(f_{\phi})_* \circ \Upsilon = \Upsilon \circ \phi_{\mathfrak{g}/{\mathfrak{k}}}$.
Fix any $[X]_{\mathfrak{k}} \in \mathfrak{g}/{\mathfrak{k}}$.
Then 
\begin{align*}
    ((f_{\phi})_* \circ \Upsilon)([X]_{\mathfrak{k}}) 
        &= \left.\frac{d}{dt} \right|_{t=0}f_{\phi}(\exp(-t X) \cdot_\sigma p_0) \\
        &= \left.\frac{d}{dt} \right|_{t=0} \phi(\exp(-tX)) \cdot_{\sigma} p_0 \\
        &= \left.\frac{d}{dt} \right|_{t=0} (\exp(-t \phi_*(X))) \cdot_{\sigma} p_0 \\
        &= (\Upsilon \circ \phi_{\mathfrak{g}/{\mathfrak{k}}})([X]_{\mathfrak{k}}).
\end{align*}
\end{proof}

We define the subgroup of $\Aut(G,K,g_0)$ of $\Aut(G,K)$ by 
\[
\Aut(G,K,g_0) := \{ \phi \in \Aut(G,K) \mid \phi_{\mathfrak{g}/\mathfrak{k}} \text{ preserves } g_0 \}.
\]

\begin{Prop}\label{Proposition:AutGKg0AutHRiemp0}
The following statements hold:
\begin{enumerate}[label=(\arabic*)]
    \item For each $\phi \in \Aut(G,K)$, the two conditions below are equivalent:
    \begin{enumerate}[label=(\alph*)]
        \item \label{item:AutGKg0AutHRiemp0:f*g} $f_\phi^* g = g$. 
        \item \label{item:AutGKg0AutHRiemp0:phigk} $\phi_{\mathfrak{g}/\mathfrak{k}}$ preserves the inner product $g_0$.
    \end{enumerate}
    \item Under the identification $\Aut_{\HMFD}(M,G,\sigma)_{p_0} \simeq \Aut(G,K)$, 
    the subgroup $\Aut_{\HRiem}(M,g,G,\sigma)_{p_0}$ corresponds to $\Aut(G,K,g_0)$.
\end{enumerate}
\end{Prop}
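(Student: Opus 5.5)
The proof is a direct consequence of $G$-invariance together with the computation of $(f_\phi)_*$ at $p_0$ from Proposition~\ref{proposition:fphistarphigk}. Assertion~(2) then follows formally from~(1) and the identification $\Aut_{\HMFD}(M,G,\sigma)_{p_0} \simeq \Aut(G,K)$, $(\phi,f)\leftrightarrow\phi$. Recall that $\Aut_{\HRiem}(M,g,G,\sigma)_{p_0}$ consists of those $(\phi,f)$ in $\Aut_{\HMFD}(M,G,\sigma)_{p_0}$ with $f^*g=g$. Since any such $f$ equals $f_\phi$, this subgroup corresponds exactly to $\{\phi\in\Aut(G,K) \mid f_\phi^*g=g\}$. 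By~(1), that set is $\Aut(G,K,g_0)$.

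For~(1), the direction (a)$\Rightarrow$(b) is immediate. If $f_\phi^*g=g$, then in particular $(f_\phi)_*:T_{p_0}M\to T_{p_0}M$ preserves $g_{p_0}$, since $f_\phi(p_0)=p_0$. Transporting through $\Upsilon$, whose pullback of $g_{p_0}$ is $g_0$ by definition, and applying Proposition~\ref{proposition:fphistarphigk}, we get that $\phi_{\mathfrak{g}/\mathfrak{k}}$ preserves $g_0$.

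For (b)$\Rightarrow$(a), the plan is to reduce the pointwise identity at an arbitrary point to the identity at $p_0$.

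\textbf{Step 1.} By Proposition~\ref{Proposition:Aut_preserves_invtensor}, the tensor $f_\phi^*g$ is $G$-invariant, because $(\phi,f_\phi)\in\Aut_{\HMFD}(M,G,\sigma)$ and $g$ is $G$-invariant.

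\textbf{Step 2.} Both $g$ and $f_\phi^*g$ are $G$-invariant, and $G$ acts transitively. Hence the two tensors coincide everywhere as soon as they coincide at $p_0$: for $x=h\cdot_\sigma p_0$ one has $T_x=(\sigma_{h^{-1}})^*T_{p_0}$ for any $G$-invariant $T$.

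\textbf{Step 3.} It remains to check the identity at $p_0$, namely $(f_\phi^*g)_{p_0}=g_{p_0}\bigl((f_\phi)_*\cdot,(f_\phi)_*\cdot\bigr)$. Under $\Upsilon$ this equals $g_0(\phi_{\mathfrak{g}/\mathfrak{k}}\cdot,\phi_{\mathfrak{g}/\mathfrak{k}}\cdot)$ by Proposition~\ref{proposition:fphistarphigk}. By (b) this is $g_0$, which corresponds to $g_{p_0}$.

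The only mildly delicate point is Step 2, making precise that a $G$-invariant tensor is determined by its value at one point. This is just the formula $T_{h\cdot p_0}(\sigma_{h*}u,\sigma_{h*}v)=T_{p_0}(u,v)$ combined with surjectivity of $\sigma_{h*}:T_{p_0}M\to T_{h\cdot p_0}M$. No real obstacle is expected.
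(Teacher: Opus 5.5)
Your proposal is correct and follows essentially the same route as the paper's proof: you use Proposition~\ref{Proposition:Aut_preserves_invtensor} to see that $f_\phi^*g$ is $G$-invariant, reduce (a) to the identity at $p_0$, and then translate it into condition (b) via Proposition~\ref{proposition:fphistarphigk}. Part (2) follows from (1) together with the identification $\Aut_{\HMFD}(M,G,\sigma)_{p_0}\simeq\Aut(G,K)$, exactly as in the paper; you also spell out the transitivity step (your Step~2), which the paper leaves implicit.
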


\begin{proof}
The second claim follows immediately from the first claim.
Therefore, we only need to show the first claim.
Let us fix $\phi \in \Aut(G,K)$.
By Proposition \ref{Proposition:Aut_preserves_invtensor}, 
$f_{\phi}^* g$ is also $G$-invariant.
Thus the condition \ref{item:AutGKg0AutHRiemp0:f*g} is equivalent to the condition $(f_{\phi}^* g)_{p_0} = g_{p_0}$ on $T_{p_0}M$.
Since via the identification $T_{p_0}M \simeq \mathfrak{g}/{\mathfrak{k}}$, 
the derivation $(f_{\phi})_* : T_{p_0}M \rightarrow T_{p_0}M$ corresponds to $\phi_{\mathfrak{g}/{\mathfrak{k}}} : \mathfrak{g}/{\mathfrak{k}} \rightarrow \mathfrak{g}/{\mathfrak{k}}$ (by Proposition \ref{proposition:fphistarphigk}), 
thus the condition $(f_{\phi}^* g)_{p_0} = g_{p_0}$ is equivalent to the condition \ref{item:AutGKg0AutHRiemp0:phigk}.
\end{proof}

Via the group isomorphism
$\Aut_{\HRiem}(M,g,G,\sigma)_{p_0} \simeq \Aut(G,K,g_0)$,
the automorphism group $\Aut_{\HRiem}(M,g,G,\sigma)$
admits a description in terms of the semidirect product
$\Aut(G,K,g_0) \ltimes \theta(G)$.
More precisely, we obtain the following theorem:

\begin{Thm}\label{theorem:AutGKthetaontoAutHRiem}
The map 
\[
\Aut(G,K,g_0) \ltimes \theta(G)
\longrightarrow \Aut_{\HRiem}(M,g,G,\sigma),
\qquad
(\phi,\theta_l) \longmapsto (\Inn_l \circ \phi, \sigma_l \circ f_\phi)
\]
gives a surjective group homomorphism 
whose kernel is given by
\[
\{ (\Inn_{k^{-1}}, \theta_k) \mid k \in K \}.
\]    
\end{Thm}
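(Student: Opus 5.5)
The plan is to obtain the theorem by restricting the $\HMFD$-level statement (Proposition~\ref{Proposition:AutHMFD_p0}, combined with the isomorphism $\Aut_{\HMFD}(M,G,\sigma)_{p_0}\simeq\Aut(G,K)$) to the subgroups preserving $g$. That combined statement gives a surjective homomorphism
\[
\Aut(G,K)\ltimes\theta(G)\longrightarrow \Aut_{\HMFD}(M,G,\sigma),\qquad (\phi,\theta_l)\longmapsto(\Inn_l\circ\phi,\ \sigma_l\circ f_\phi),
\]
with kernel $\{(\Inn_{k^{-1}},\theta_k)\mid k\in K\}$.

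First, I check that the map in the statement is well defined and lands in $\Aut_{\HRiem}(M,g,G,\sigma)$.
\begin{itemize}
\item $\Aut(G,K,g_0)\ltimes\theta(G)$ is a subgroup of $\Aut(G,K)\ltimes\theta(G)$. The product rule $(\phi,\theta_l)(\phi',\theta_{l'})=(\phi\phi',\theta_l\theta_{\phi(l')})$ only involves the first factor through composition in $\Aut(G,K,g_0)$, which is a subgroup because $\phi\mapsto\phi_{\mathfrak g/\mathfrak k}$ is a representation.
\item So the map is simply the restriction of the $\HMFD$ homomorphism, hence a group homomorphism.
\item For $\phi\in\Aut(G,K,g_0)$, Proposition~\ref{Proposition:AutGKg0AutHRiemp0} gives $f_\phi^*g=g$.
\item $\sigma_l$ is an isometry since $g$ is $G$-invariant.
\item Therefore $(\sigma_l\circ f_\phi)^*g=f_\phi^*\sigma_l^*g=g$, and the image lies in $\Aut_{\HRiem}$.
\end{itemize}

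Next, I prove surjectivity. Take $(\psi,f)\in\Aut_{\HRiem}(M,g,G,\sigma)$.
\begin{itemize}
\item By transitivity, choose $l$ with $\sigma_l(p_0)=f(p_0)$.
\item Then $\theta_l^{-1}\circ(\psi,f)$ fixes $p_0$.
\item It is still isometric, since $\theta_l^{-1}$ is.
\item Hence it lies in $\Aut_{\HRiem}(M,g,G,\sigma)_{p_0}$.
\item By Proposition~\ref{Proposition:AutGKg0AutHRiemp0}(2), it equals $(\phi,f_\phi)$ for some $\phi\in\Aut(G,K,g_0)$.
\item Thus $(\psi,f)=\theta_l\circ(\phi,f_\phi)=(\Inn_l\circ\phi,\ \sigma_l\circ f_\phi)$, which is the image of $(\phi,\theta_l)$.
\end{itemize}

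Finally, I compute the kernel. The kernel of the restricted map is the $\HMFD$ kernel intersected with the subgroup, so it suffices to check that each $(\Inn_{k^{-1}},\theta_k)$ with $k\in K$ already belongs to $\Aut(G,K,g_0)\ltimes\theta(G)$.
\begin{itemize}
\item $\Inn_{k^{-1}}$ preserves $K$.
\item By Proposition~\ref{proposition:fphistarphigk}, $(\Inn_{k^{-1}})_{\mathfrak g/\mathfrak k}$ corresponds to $(f_{\Inn_{k^{-1}}})_*$ at $p_0$.
\item $f_{\Inn_{k^{-1}}}=\sigma_{k^{-1}}$, because $f_{\Inn_{k^{-1}}}(h\cdot p_0)=k^{-1}hk\cdot p_0=k^{-1}h\cdot p_0$.
\item $\sigma_{k^{-1}}$ is an isometry fixing $p_0$, so $(\Inn_{k^{-1}})_{\mathfrak g/\mathfrak k}$ preserves $g_0$.
\end{itemize}
Hence the kernel is exactly $\{(\Inn_{k^{-1}},\theta_k)\mid k\in K\}$.

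I expect no real obstacle. The only delicate point is the last one: the kernel elements must lie in the restricted group, which rests on identifying $f_{\Inn_{k^{-1}}}$ with $\sigma_{k^{-1}}$.
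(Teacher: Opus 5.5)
Your proof is correct and follows essentially the same route as the paper. The paper obtains the theorem from the $\HRiem$-analogue of Proposition~\ref{Proposition:AutHMFD_p0} (translate by some $\theta_l$ to fix $p_0$) combined with the identification $\Aut_{\HRiem}(M,g,G,\sigma)_{p_0}\simeq\Aut(G,K,g_0)$ of Proposition~\ref{Proposition:AutGKg0AutHRiemp0}, and your explicit check that $\Inn_{k^{-1}}\in\Aut(G,K,g_0)$ via $f_{\Inn_{k^{-1}}}=\sigma_{k^{-1}}$ is exactly the point the paper leaves implicit when it says the kernel description carries over ``similarly''.
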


\subsection{Effective cases}
In this subsection, 
we give a characterization of the group $\Aut_{\HRiem}(M,g,G,\sigma)$ as a normalizer of $G$ in $\Isom(M,g)$,
for the cases where the $G$-action on $M$ is effective and $M$ has only finitely many connected components.

\begin{Prop}\label{Proposition:EffectiveNormalizer}
Let $(M,G,\sigma)$ be a homogeneous manifold.
We shall consider the group homomorphism 
\[
\Xi : \Aut_{\HMFD}(M,G,\sigma) \longrightarrow \Diff(M), \qquad (\phi,f) \longmapsto f.
\]
\begin{enumerate}[label=(\arabic*)]
    \item \label{item:prop:normalize:HMFD} 
The subgroup $\Xi(\Aut_{\HMFD}(M,G,\sigma))$ normalize the subgroup $\sigma(G)$ in $\Diff(M)$.
    \item \label{item:prop:effective:HMFD} 
Assume that the $G$-action $\sigma$ on $M$ is effective.
Then the group homomorphism $\Xi$ is injective.
Furthermore, $\Aut_{\HMFD}(M,G,\sigma)$ normalizes $G$ if we consider $\Aut_{\HMFD}(M,G,\sigma)$ and $G$ as subgroups of $\Diff(M)$.
\item \label{item:prop:effective:HRiem} Let $(M,g,G,\sigma)$ be a homogeneous Riemannian manifold such that the $G$-action $\sigma$ on $M$ is effective and $M$ has only finitely many connected components.
    We consider $\Aut_{\HRiem}(M,g,G,\sigma)$ as a subgroup of $\Isom(M,g)$ via the injective group homomorphism $\Xi$ defined above.
    Then the subgroup $\Aut_{\HRiem}(M,g,G,\sigma)$ of $\Isom(M,g)$ coincides with the normalizer of the subgroup $G$ in $\Isom(M,g)$.
\end{enumerate}
\end{Prop}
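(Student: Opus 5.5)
For part (1), I would take $(\phi,f)\in\Aut_{\HMFD}(M,G,\sigma)$ and any $h\in G$, and use the equivariance $f\circ\sigma_h=\sigma_{\phi(h)}\circ f$. This gives $f\circ\sigma_h\circ f^{-1}=\sigma_{\phi(h)}\in\sigma(G)$. Applying the same identity to the inverse $(\phi^{-1},f^{-1})$ gives the reverse inclusion, so $\Xi(\Aut_{\HMFD}(M,G,\sigma))$ normalizes $\sigma(G)$.

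For part (2), effectiveness means that $\sigma:G\to\Diff(M)$ is injective, so we may regard $G=\sigma(G)$ inside $\Diff(M)$. Suppose $\Xi(\phi,f)=\mathrm{id}_M$. Then $\sigma_h=\sigma_{\phi(h)}$ for every $h\in G$. Injectivity of $\sigma$ then forces $\phi(h)=h$, hence $\phi=\mathrm{id}_G$, and $\Xi$ is injective. The normalizing statement is just part (1) read through this identification.

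For part (3), the inclusion of $\Aut_{\HRiem}(M,g,G,\sigma)$ into the normalizer $N_{\Isom(M,g)}(G)$ follows from (1) and (2), since $f$ is an isometry. For the converse, I take $f\in\Isom(M,g)$ with $fGf^{-1}=G$. I define $\phi(h):=\sigma^{-1}(f\sigma_h f^{-1})$, which is well-defined by effectiveness. This $\phi$ is an abstract group automorphism of $G$ with inverse $h\mapsto\sigma^{-1}(f^{-1}\sigma_hf)$, and $(\phi,f)$ satisfies the equivariance $f(h\cdot x)=\phi(h)\cdot f(x)$ by construction. Since $f$ is an isometry, $f^*g=g$. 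So everything reduces to showing that $\phi$ is smooth, i.e. a Lie group automorphism.

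The smoothness of $\phi$ is the main obstacle. My approach would use the Myers--Steenrod theorem: $\Isom(M,g)$ is a Lie group acting smoothly on $M$, and the finiteness of connected components is what ensures this for non-connected $M$. I would then argue in three steps.

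\begin{enumerate}[label=(\roman*)]
\item The map $\sigma:G\to\Isom(M,g)$ is a continuous injective homomorphism, hence smooth, so its image is an immersed Lie subgroup.
\item Conjugation $c_f$ by $f$ is a Lie group automorphism of $\Isom(M,g)$ preserving $\sigma(G)$.
\item Hence $\phi=\sigma^{-1}\circ c_f\circ\sigma$ is a bijective abstract homomorphism $G\to G$, and it suffices to prove it is continuous, since continuous homomorphisms of Lie groups are smooth.
\end{enumerate}

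For continuity in step (iii), I would use the transitive action. Fix $p_0$, and note that $x\mapsto f(x)$ and the orbit maps are smooth. For $h$ near $e$, the point $\phi(h)\cdot f(p_0)=f(h\cdot p_0)$ depends continuously on $h$. This pins $\phi(h)$ down modulo the isotropy subgroup at $f(p_0)$. To control the isotropy part, I would use that an isometry is determined by its $1$-jet at a point, combined with the continuity of $h\mapsto (f\sigma_hf^{-1})_{*,f(p_0)}$.

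Alternatively, one can observe that $\phi$ is a measurable (indeed Borel) homomorphism, because it is the composition of $c_f$ with the inverse of a continuous injection of second countable Lie groups; Borel homomorphisms between Lie groups are continuous. I would present this Borel argument as the cleanest route.
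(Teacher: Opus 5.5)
Your proposal is correct. For (1), (2) and the algebraic half of (3) it matches the paper; the paper phrases (1) as normality of $\theta(G)$ in $\Aut_{\HMFD}(M,G,\sigma)$, which amounts to your identity $f\circ\sigma_h\circ f^{-1}=\sigma_{\phi(h)}$.

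\textbf{Where you differ: smoothness of $\phi$.} The paper never passes through continuity.
\begin{itemize}
\item Once $\sigma\colon G\to\Isom(M,g)$ is a smooth injective homomorphism, $\sigma(G)$ is a Lie subgroup and therefore weakly embedded \cite[Proposition~7.17 and Theorem~19.25]{Lee_Manifold_2013}. The paper justifies continuity of $\sigma$ via the compact-open topology and local compactness of $M$; you only assert it.
\item The map $c_f\circ\sigma$ is smooth into $\Isom(M,g)$ and has image in $\sigma(G)$. By weak embeddedness it is smooth as a map into $G$, and that map is exactly $\phi$. The same argument applies to $\phi^{-1}$.
\end{itemize}
You already recorded the needed fact in your step (i), so this route finishes the proof in one line.

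Your Borel route is also valid: Lusin--Souslin makes $\sigma^{-1}$ Borel on the Borel set $\sigma(G)$, Borel homomorphisms between second countable locally compact groups are automatically continuous, and continuous homomorphisms of Lie groups are smooth. But it brings in descriptive set theory to recover what weak embeddedness gives directly. Its only gain is that it would also work in a purely topological-group setting.

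\textbf{Drop the $1$-jet alternative.} As sketched it has a gap, because $\sigma(G)$ need not be closed in $\Isom(M,g)$. For example, take $\mathbb{C}^2\rtimes\mathbb{R}$ acting effectively and transitively on $\mathbb{R}^4$ by translations and the rotations $\mathrm{diag}(e^{it},e^{i\alpha t})$ with $\alpha$ irrational. There the isotropy subgroup $\mathbb{R}$ of $G$ maps onto a dense, non-closed line in the compact isotropy group. So continuity of $h\mapsto (f\sigma_hf^{-1})_{*,f(p_0)}$ gives no control of continuity in the Lie group topology of $G$. This non-embeddedness is exactly what weak embeddedness, or the Borel argument, is needed to handle.
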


\begin{proof}
First, let us prove that \ref{item:prop:normalize:HMFD}.
By the arguments in Section~\ref{subsec_Autgrp}, 
\[
\theta(G) = \{ \theta_l= (\Inn_l,\sigma_l) \mid l \in G \}
\]
is a normal subgroup in $\Aut_{\HMFD}(M,G,\sigma)$.
Thus the image $\sigma(G) = \Xi(\theta(G))$ is a normal subgroup of $\Xi(\Aut_{\HMFD}(M,G,\sigma))$.

Next, we shall prove \ref{item:prop:effective:HMFD}.
In order to prove the injectivity of the map 
\[
\Xi : \Aut_{\HMFD}(M,G,\sigma) \longrightarrow \Diff(M), \qquad (\phi,f) \longmapsto f, 
\]
take $(\phi_1,f),(\phi_2,f) \in \Aut_{\HMFD}(M,G,\sigma)$.
We shall prove $\phi_1 = \phi_2$.
Fix $h \in G$ and $x \in M$.
Since the $G$-action $\sigma$ on $M$ is effective, 
we only need to show that $\phi_1(h) \cdot_\sigma x = \phi_2(h) \cdot_\sigma x$.
Recall that $f \in \Diff(M)$, and thus $f$ has the inverse map.
We put $y := f^{-1}(x) \in M$.
Then 
\begin{align*}
\phi_1(h) \cdot_\sigma x 
    &= \phi_1(h) \cdot_\sigma f(y) \\
    &= f(h \cdot_{\sigma} y) \\
    &= \phi_2(h) \cdot_\sigma f(y) \\
    &= \phi_2(h) \cdot_\sigma x.
\end{align*}
This completes the injectivity of the map. 
The last half claim of 
\ref{item:prop:effective:HMFD} 
follows from the claim \ref{item:prop:normalize:HMFD} proved above.

Finally, let us show \ref{item:prop:effective:HRiem}.
Let us consider $G$ and $\Aut_{\HRiem}(M,g,G,\sigma)$ as subgroups of $\Isom(M,g)$.
We denote by $N_{\Isom(M)}(G)$ the normalizer of $G$ in $\Isom(M,g)$.
Then by the claim \ref{item:prop:effective:HMFD} proved above, we have $\Aut_{\HRiem}(M,g,G,\sigma) \subset N_{\Isom(M)}(G)$.
We only need to show the converse inclusion.
Take any $f \in N_{\Isom(M)}(G)$.
Then our goal is to fine $\phi_f \in \Aut(G)$ such that $(\phi_f,f)$ is an equivariant endomorphism on $(M,G,\sigma)$.
Let us define the map $\phi_f$ by 
\[
\phi_f : G \longrightarrow G, \qquad h \longmapsto f \circ h \circ f^{-1}.
\]
Then $\phi_f$ is well-defined (since $f \in N_{\Isom(M)}(G)$), a group automorphism on $G$ and satisfying 
\[
f(h \cdot_\sigma x) = \phi_f(h) \cdot_\sigma f(x)
\]
for any $h \in G$ and any $x \in M$.
It remains to show that the bijective map $\phi_f : G \rightarrow G$ is a diffeomorphism.
Since $M$ has only finitely many connected components, the isometry group $\Isom(M,g)$ is a Lie group with respect to the compact-open topology (cf.~\cite[Theorem 3.4 in Chapter VI]{Kobayashi-Nomizu_I}), and the map 
\[
    \sigma : G \longrightarrow \Isom(M,g), \qquad h \longmapsto \sigma_h
\]
is a Lie group homomorphism.
In fact, since the action of $G$ on $M$ is smooth, it is continuous. 
As $M$ is locally compact, the induced homomorphism
\[
    \sigma: G \longrightarrow C(M,M), \qquad a \longmapsto \sigma_a,
\]
is continuous with respect to the compact-open topology.
Since $\sigma_a\in \Isom(M,g)$ for all $a\in G$ and $\Isom(M,g)$ is a Lie group
with respect to the compact-open topology, the map
\[
    \sigma : G \longrightarrow \Isom(M,g)
\]
is a continuous homomorphism of Lie groups. 
Hence, it is smooth (cf.~\cite[Theorem 3.39]{Warner_1983}).
By the injectivity of $G \rightarrow \Isom(M,g)$, one sees that $G \simeq \sigma(G)$ is a weakly embedded submanifold of $\Isom(M,g)$ (see \cite[Proposition 7.17 and Theorem 19.25]{Lee_Manifold_2013}).
Therefore, the map $\phi_f : G \rightarrow G$ is smooth since 
\[
\sigma \circ \phi_f : G \longrightarrow \Isom(M,g), \qquad h \longmapsto f \circ h \circ f^{-1}
\]
is smooth.
Similarly, one also sees that $(\phi_f)^{-1}$ is smooth.
This completes the proof of \ref{item:prop:effective:HRiem}.
\end{proof}

\section{A category of homogeneous statistical manifolds and $\HS$-moduli spaces of invariant statistical connections}\label{sec:HS-moduli}

\subsection{A category of homogeneous statistical manifolds and equivariant statistical immersions}\label{section:homogStat}
Throughout this paper, 
a data $(M,g,\nabla,G,\sigma)$ is said to be a \emph{homogeneous statistical manifold} 
if $(M,g,\nabla)$ is a statistical manifold, 
$G$ is a Lie group, 
$\sigma$ is a smooth transitive $G$-action on $M$ such that $g$ and $\nabla$ are both $G$-invariant.

\begin{Ex}\label{ex:N0_homog-stat}
For each $\alpha \in \RR$, the quintuple
$(\mathcal{N}_0, g^F, \nabla^{A(\alpha)}, GL(n,\mathbb{R}), \Gamma)$ is a homogeneous statistical manifold.
(See Section~\ref{subsec:N0} and Example~\ref{ex:N0_homog-Riem} for details.)
\end{Ex}

\begin{Def}
Let $(M,g,\nabla,G,\sigma)$ and $(M',g',\nabla',G',\sigma')$ be homogeneous statistical manifolds.
In this paper, a pair $(\phi,f)$ is called an \emph{equivariant statistical immersion} between $(M,g,\nabla,G,\sigma)$ and $(M',g',\nabla',G',\sigma')$ if
$\phi:G\to G'$ is a Lie group homomorphism 
and $f:M\to M'$ is a statistical immersion between $(M,g,\nabla)$ and $(M',g',\nabla')$ (in the sense of Definition~\ref{Definition:DAI})
such that $(\phi,f)$ is an equivariant morphism
between homogeneous manifolds
$(M,G,\sigma)$ and $(M',G',\sigma')$
in the sense of Section~\ref{section:EM}; namely,
\[
f(h\cdot_\sigma x)
=
\phi(h)\cdot_{\sigma'}f(x)
\]
for each $h\in G$ and $x\in M$.
\end{Def}

One can see that homogeneous statistical manifolds and equivariant statistical immersions form a category 
denoted by $\HS = \mathrm{HomogStat}_{\mathrm{EquivStat}}$,
with composition given by
\[
(\phi_2,f_2) \circ (\phi_1,f_1)
:= (\phi_2 \circ \phi_1,\, f_2 \circ f_1).
\]
The identity on $(M,g,\nabla,G,\sigma)$ is $(\mathrm{id}_G,\mathrm{id}_M)$.
An equivariant statistical immersion $(\phi,f)$ is an isomorphism in $\HS$
if and only if $\phi : G \to G'$ is a Lie group isomorphism
and $f : M \to M'$ is a diffeomorphism.

\begin{Def}
Let $\mathcal{C}$ be a class of statistical manifolds in the sense of Definition \ref{Definition:class}.
A homogeneous statistical manifold $(M,g,\nabla,G,\sigma)$ is said to be \emph{of class $\mathcal{C}$} if $(M,g,\nabla)$ is an object of $\mathcal{C}$.
Then homogeneous statistical manifolds of class $\mathcal{C}$ and equivariant statistical immersions form a full replete subcategory of $\HS$, 
which will be denoted by $\HS_{\mathcal{C}}$.    
\end{Def}

\subsection{The space of invariant statistical connections}
Let 
$(M,g,G,\sigma)$ be a homogeneous Riemannian manifold in the sense of Section \ref{section:EM}, 
that is, 
$(M,g)$ is a Riemannian manifold, 
$G$ is a Lie group, 
and $\sigma$ is a smooth transitive isometric $G$-action on $(M,g)$.

As introduced in Section~\ref{subsection:DefStatModuli}, 
we define the subset $\Stat^G(M,g)$ of $\Stat(M,g)$ by
\[
\Stat^G(M,g)
:=
\{\, \nabla \in \Stat(M,g) \mid \nabla \text{ is } G\text{-invariant} \,\}.
\]

Note that for each $\nabla \in \Stat^G(M,g)$, the system $(M,g,\nabla,G,\sigma)$ form a homogeneous statistical manifold in the sense of Section \ref{section:homogStat}.

Let us set
\[
\Gamma(S^3 T^\ast M)^G
:=
\{ C\in\Gamma(S^3T^\ast M)\mid C \text{ is } G\text{-invariant}\}.
\]
Recall that $\Gamma(S^3T^\ast M)$ denotes the space of symmetric $(0,3)$-tensor fields on $M$.

By Theorem~\ref{theorem:nablaC},
we have a natural bijection
\[
\Stat(M,g) \;\longrightarrow\; \Gamma(S^3 T^\ast M),
\]
which restricts to a bijection
\[
\Stat^G(M,g) \;\longrightarrow\; \Gamma(S^3 T^\ast M)^G.
\]

Moreover, it is well known that the space $\Gamma(S^3 T^\ast M)^G$ is finite-dimensional (see, for instance, Proposition~\ref{proposition:StatG-S3-isom} below). 
Since every finite-dimensional real vector space carries a unique Hausdorff topology compatible with its vector space structure, the space $\Gamma(S^3 T^\ast M)^G$ admits a canonical Hausdorff vector space topology.
We therefore endow $\Stat^G(M,g)$ with the Hausdorff topology transported from $\Gamma(S^3 T^\ast M)^G$ via the above bijection.

Fix a base point $p_0\in M$, and let 
\[
K :=G_{p_0}=\{ k \in G\mid k \cdot_\sigma p_0 = p_0\}
\]
be the isotropy subgroup at $p_0$. 
As is well known, the homogeneous space $M$ can be identified with the coset manifold $G/K$ via
\[
G/K \longrightarrow M,\qquad hK \longmapsto h \cdot_\sigma p_0.
\]
Let $\mathfrak{g}$ and $\mathfrak{k}$ denote the Lie algebras of $G$ and $K$, respectively. 
Then the tangent space $T_{p_0}M$ can be identified with the quotient vector space $\mathfrak{g}/\mathfrak{k}$ by
\[
\mathfrak{g}/\mathfrak{k} \longrightarrow T_{p_0}M,\qquad
X+\mathfrak{k} \longmapsto
\left.\frac{d}{dt}\right|_{t=0}\exp(-tX)\cdot_\sigma p_0.
\]
Recall that $T_{p_0}M$ is a $K$-representation, called the isotropy representation defined by
\[
    k \cdot v := (d \sigma_k)_{p_0} (v) = (d ( k \cdot_{\sigma} - ) )_{p_0} (v) 
\]
where $k \in K$ and $v \in T_{p_0} M$.
Since the adjoint representation of $G$ on $\mathfrak{g}$ induces a representation of $K$ on $\mathfrak{g}/\mathfrak{k}$, 
the above correspondence yields an isomorphism of $K$-representations
\[
T_{p_0}M \simeq \mathfrak{g}/\mathfrak{k}.
\]
By a slight abuse of notation, we will also denote the induced $K$--representation on $\mathfrak{g}/\mathfrak{k}$ by $\Ad$.

Let $S^3(T^\ast_{p_0}M)^K \simeq S^3((\mathfrak{g}/\mathfrak{k})^\ast)^K$ denote the spaces of all $K$--invariant symmetric $3$--tensors on the vector spaces $T^\ast_{p_0}M \simeq (\mathfrak{g}/\mathfrak{k})^\ast$. 
Then, by the general theory of invariant sections of equivariant vector bundles over homogeneous spaces, the correspondence
\[ 
\Gamma(S^3 T^\ast M)^G \longrightarrow S^3(T^\ast_{p_0}M)^K, \qquad C \longmapsto C_{p_0}
\]
defines a homeomorphism.
In particular, we obtain the following identifications.

\begin{Prop}\label{proposition:StatG-S3-isom}
There are natural isomorphisms
\[
\Stat^G(M,g)
\simeq \Gamma(S^3T^\ast M)^G
\simeq S^3(T^\ast_{p_0} M)^K
\simeq S^3((\mathfrak{g}/\mathfrak{k})^\ast)^K.
\]
\end{Prop}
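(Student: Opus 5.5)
The chain of isomorphisms in Proposition~\ref{proposition:StatG-S3-isom} will be established one link at a time, composing three identifications.

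The first map is the bijection $\nabla \mapsto C^{\nabla}$ of Theorem~\ref{theorem:nablaC}. To see that it restricts to $\Stat^G(M,g)\to\Gamma(S^3T^\ast M)^G$, let $h\in G$. Since $\sigma_h$ is an isometry, $\sigma_h^\ast\nabla^g=\nabla^g$. Also $C^{\nabla}=-2g(\nabla-\nabla^g,\cdot)$ is built naturally from $(g,\nabla)$, so
\[
\sigma_h^\ast C^{\nabla}=C^{(\sigma_h^\ast g,\sigma_h^\ast\nabla)}=C^{(g,\sigma_h^\ast\nabla)} .
\]
Hence $\nabla$ is $G$-invariant if and only if $C^\nabla$ is, by injectivity of the correspondence. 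By construction this bijection is a homeomorphism, because the topology on $\Stat^G(M,g)$ was transported from $\Gamma(S^3T^\ast M)^G$.

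The second map is evaluation $C\mapsto C_{p_0}$. It lands in $S^3(T^\ast_{p_0}M)^K$: for $k\in K$ we have $\sigma_k(p_0)=p_0$ and $\sigma_k^\ast C=C$, so $C_{p_0}$ is invariant under the isotropy representation. It is injective because $C_{h\cdot p_0}$ is determined by $C_{p_0}$ via $C_{h\cdot p_0}=((d\sigma_{h^{-1}})_{h\cdot p_0})^\ast C_{p_0}$, and the action is transitive.

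For surjectivity, given $c\in S^3(T^\ast_{p_0}M)^K$, define
\[
C_{x}:=((d\sigma_{h^{-1}})_{x})^\ast c, \qquad x=h\cdot p_0 .
\]
This is well defined because if $h'=hk$ with $k\in K$, then the two choices differ by $(d\sigma_k)_{p_0}$, which preserves $c$. The main obstacle is smoothness of $C$. I would handle it via a local smooth section $s:U\to G$ of the submersion $G\to G/K\simeq M$, which exists since $G\to G/K$ is a principal $K$-bundle. On $U$ one then has $C_x=((d\sigma_{s(x)^{-1}})_x)^\ast c$, and this depends smoothly on $x$ because $\sigma$ is smooth. The resulting $C$ is clearly $G$-invariant and symmetric. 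All spaces involved are finite-dimensional, with $\dim\Gamma(S^3T^\ast M)^G\le\dim S^3T^\ast_{p_0}M$ by injectivity, so this linear bijection is automatically a homeomorphism.

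The last isomorphism is induced by the $K$-equivariant linear isomorphism $\Upsilon:\mathfrak g/\mathfrak k\to T_{p_0}M$. Equivariance, $\Upsilon\circ\Ad(k)=(d\sigma_k)_{p_0}\circ\Upsilon$, comes from $k\exp(-tX)\cdot p_0=\exp(-t\Ad(k)X)k\cdot p_0=\exp(-t\Ad(k)X)\cdot p_0$. Pulling back by $\Upsilon$ therefore gives a linear isomorphism $S^3(T^\ast_{p_0}M)^K\simeq S^3((\mathfrak g/\mathfrak k)^\ast)^K$.
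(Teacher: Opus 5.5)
Your proof is correct and follows the same three-step route as the paper. You restrict the bijection $\nabla\mapsto C^{\nabla}$ to $G$-invariant objects, evaluate at $p_0$, and pull back along the $K$-equivariant isomorphism $\mathfrak{g}/\mathfrak{k}\simeq T_{p_0}M$. For the middle step the paper only cites the general theory of invariant sections of homogeneous vector bundles, whereas you check explicitly well-definedness, smoothness (via local sections of $G\to G/K$), injectivity, and the homeomorphism property.
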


\subsection{Definition of $\HS$-moduli spaces of invariant statistical connections}\label{subsec:moduli-inv-stat-conn}
As in the same setting of the previous subsection, 
let $(M,g,G,\sigma)$ be a homogeneous Riemannian manifold in the sense of Section \ref{section:EM}.
We introduce an equivalence relation on the space $\Stat^G(M,g)$ in terms of the category $\HS$ defined in Section~\ref{section:homogStat}.

\begin{Def}
Let $\nabla, \nabla' \in \Stat^G(M,g)$.
We write $\nabla \sim_{\HS} \nabla'$ if the homogeneous statistical manifolds
$(M,g,\nabla,G,\sigma)$ and $(M,g,\nabla',G,\sigma)$
are isomorphic in the category $\HS$ defined in Section~\ref{section:homogStat}.
Then $\sim_{\HS}$ defines an equivalence relation on $\Stat^G(M,g)$.
\end{Def}

For $\nabla \in \Stat^G(M,g)$, we denote by $[\nabla]_{\HS}$ the equivalence class of $\nabla$.

\begin{Def}\label{def:MH-Stat^G}
The \emph{$\HS$-moduli space} of invariant statistical connections on $(M,g,G,\sigma)$ is defined as the quotient space of $\Stat^G(M,g)$ by $\sim_{\HS}$. Such the space will be denoted by $\MHSStat^G(M,g)$. 
\end{Def}

Since each isomorphism in $\HS$ is an isomorphism in $\STAT$, 
we have canonical surjection $\MHSStat^G(M,g)$ onto $\MStatStat^G(M,g)$.

The following proposition characterizes the relation $\nabla \sim_{\HS} \nabla'$ 
for $\nabla, \nabla' \in \Stat^G(M,g)$ in terms of a group action.

\begin{Prop}\label{proposition:sim_AutHRiem}
Let $\nabla, \nabla' \in \Stat^G(M,g)$.
Then $\nabla \sim_{\HS} \nabla'$ if and only if there exists $(\phi,f) \in \Aut_{\HRiem}(M,g,G,\sigma)$ such that $f^* \nabla' = \nabla$ (see Definition \ref{Definition:DAI} for the notation $f^* \nabla'$), 
or equivalently, 
$f^*C' = C$, 
where we put $C := C^{\nabla}$ and $C' := C^{\nabla'}$ for the cubic forms associated with $(M,g,\nabla)$ and $(M,g,\nabla')$, respectively (see Section~\ref{subsec:stat-mfd} for the definition of cubic forms associated with statistical manifolds).
\end{Prop}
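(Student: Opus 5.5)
The plan is to unwind the definitions on both sides and observe that they describe the same objects. By definition, $\nabla \sim_{\HS} \nabla'$ means that there is an isomorphism $(\phi,f)$ in $\HS$ from $(M,g,\nabla,G,\sigma)$ to $(M,g,\nabla',G,\sigma)$. As noted after the definition of $\HS$, such an isomorphism consists of three pieces of data:
\begin{itemize}
\item a Lie group automorphism $\phi$ of $G$;
\item a diffeomorphism $f$ of $M$ with $f(h\cdot_\sigma x)=\phi(h)\cdot_\sigma f(x)$;
\item the requirement that $f$ be a statistical immersion from $(M,g,\nabla)$ to $(M,g,\nabla')$.
\end{itemize}

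For the forward direction, I will split the statistical immersion condition into its two parts from Definition~\ref{Definition:DAI}. Condition (1) is $f^*g=g$. Together with the equivariance and bijectivity, this says exactly that $(\phi,f)$ is an automorphism of $(M,g,G,\sigma)$ in $\HRiem$, i.e.\ $(\phi,f)\in\Aut_{\HRiem}(M,g,G,\sigma)$. Condition (2) is the identity
\[
g(\nabla_XY,Z)=g\bigl((f^*\nabla')_X f_*Y,f_*Z\bigr).
\]
Since $f$ is an isometric diffeomorphism, this identity is the statement that the pulled-back connection $f^*\nabla'$ on $TM$ (identified with $f^*TM$ via $f_*$) coincides with $\nabla$. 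By the Proposition following Definition~\ref{Definition:DAI}, it is also equivalent to $f^*C'=C$.

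For the converse, take $(\phi,f)\in\Aut_{\HRiem}(M,g,G,\sigma)$ with $f^*\nabla'=\nabla$, or equivalently $f^*C'=C$. Then $\phi$ is an automorphism, $f$ is an isometric diffeomorphism that is $\phi$-equivariant, and condition (2) holds. Hence $(\phi,f)$ is an equivariant statistical immersion whose two components are invertible, so it is an isomorphism in $\HS$. For completeness I will check that the inverse $(\phi^{-1},f^{-1})$ is again an equivariant statistical immersion; this follows from $(f^{-1})^*C=C'$.

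I do not expect a real obstacle here. The only point needing care is the identification of condition (2) with $f^*\nabla'=\nabla$ when $f$ is a diffeomorphism. For that I will rely directly on the already-stated equivalence between condition (2) and $f^*C'=C$, combined with the bijection of Theorem~\ref{theorem:nablaC} applied to the fixed metric $g=f^*g$, which identifies connections with their cubic forms.
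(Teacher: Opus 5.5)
Your proposal is correct and follows essentially the same route as the paper, which likewise unwinds an isomorphism in $\HS$ into an automorphism $\phi$ of $G$ and a $\phi$-equivariant isometric diffeomorphism $f$ (i.e.\ an element of $\Aut_{\HRiem}(M,g,G,\sigma)$) satisfying condition (2), equivalently $f^*C'=C$. Your explicit check that $(\phi^{-1},f^{-1})$ is again an equivariant statistical immersion is a detail the paper leaves implicit.
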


\begin{proof}
By the definition of the category $\HS$,
$\nabla \sim_{\HS} \nabla'$ if and only if 
there exists a Lie group automorphism $\phi : G \rightarrow G$ and a diffeomorphism $f : M \rightarrow M$ such that $f$ is a statistical immersion between $(M,g,\nabla)$ and $(M,g,\nabla')$ and $(\phi,f)$ is an equivariant endomorphism on $(M,G,\sigma)$.
Such $(\phi,f)$ is nothing but an automorphism on $(M,g,G,\sigma)$ in the category $\HRiem$ with $f^*C' = C$.
\end{proof}

Let us define the action of $\Aut_{\HRiem}(M,g,G,\sigma)$ on the space $\Stat^G(M,g)$ by putting 
\[
    (\phi, f) \cdot \nabla := (f^{-1})^\ast \nabla
\]
for $\nabla \in \Stat^G(M,g)$ and $(\phi,f) \in \Aut_{\HRiem}(M,g,G,\sigma)$.
Then by Proposition \ref{proposition:sim_AutHRiem}, 
we have 
\[
\MHSStat^G(M,g) := \Stat^G(M,g)/{\sim_{\HS}} = \Aut_{\HRiem}(M,g,G,\sigma) \backslash \Stat^G(M,g).
\]

Recall that $\Aut_{\HRiem}(M,g,G,\sigma)$ is studied in Section~\ref{subsec_Autgrp}.
Especially, by fixing a base point $p_0 \in M$, 
we have a surjective group homomorphism 
\[
\Aut(G,K,g_0) \ltimes \theta(G) \longrightarrow \Aut_{\HRiem}(M,g,G,\sigma),
\qquad
(\phi,\theta_l) \longmapsto (\Inn_l \circ \phi, \sigma_l \circ f_\phi)
\]
as in Theorem \ref{theorem:AutGKthetaontoAutHRiem}.
Via the surjection, the semi-direct product group $\Aut(G,K,g_0) \ltimes \theta(G)$ acts on $\Stat^G(M,g)$.
One can easily see that the normal subgroup $\theta(G)$ acts on $\Stat^G(M,g)$ trivially.
Therefore 
we have the following.

\begin{Thm}\label{thm:MHS-Stat_Aut}
The equality
\begin{equation}
    \MHSStat^G(M,g) = \Aut(G,K,g_0) \backslash \Stat^G(M,g)
\end{equation}
holds, where $\Aut(G,K,g_0)$ acts on $\Stat^G(M,g)$ by
\[
\phi \cdot \nabla := (f_\phi^{-1})^\ast \nabla
\]
for $\nabla \in \Stat^G(M,g)$ and $\phi \in \Aut(G,K,g_0)$
(see~\eqref{eq:def-f_phi} for the definition of $f_\phi$).
\end{Thm}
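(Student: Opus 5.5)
The plan is to start from the orbit description already obtained just before the statement,
\[
\MHSStat^G(M,g)=\Aut_{\HRiem}(M,g,G,\sigma)\backslash \Stat^G(M,g),
\]
and to pull the action back along the surjective homomorphism of Theorem~\ref{theorem:AutGKthetaontoAutHRiem}, namely $\Aut(G,K,g_0)\ltimes\theta(G)\to\Aut_{\HRiem}(M,g,G,\sigma)$, $(\phi,\theta_l)\mapsto(\Inn_l\circ\phi,\sigma_l\circ f_\phi)$. Because this map is surjective, the $\Aut_{\HRiem}$-orbits in $\Stat^G(M,g)$ coincide with the orbits of the induced action of $\Aut(G,K,g_0)\ltimes\theta(G)$. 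That action is given by $(\phi,\theta_l)\cdot\nabla=((\sigma_l\circ f_\phi)^{-1})^\ast\nabla$.

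The first step is to verify that the normal subgroup $\theta(G)$ acts trivially. This is immediate: $\theta_l$ acts by $(\sigma_l^{-1})^\ast\nabla=\nabla$, since every $\nabla\in\Stat^G(M,g)$ is $G$-invariant. Equivalently, one can use the cubic form, because $(\sigma_l^{-1})^\ast C=C$ for $C\in\Gamma(S^3T^\ast M)^G$.

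The second step is to compute the action of a general element. We have
\[
((\sigma_l\circ f_\phi)^{-1})^\ast\nabla=(\sigma_l^{-1})^\ast (f_\phi^{-1})^\ast\nabla .
\]
By Proposition~\ref{Proposition:Aut_preserves_invtensor}, applied to $(\phi^{-1},f_\phi^{-1})=(\phi,f_\phi)^{-1}\in\Aut_{\HMFD}(M,G,\sigma)$, the pullback $(f_\phi^{-1})^\ast C$ is again $G$-invariant. For the connection itself, one checks $G$-invariance directly from the equivariance relation $f_\phi\circ\sigma_h=\sigma_{\phi(h)}\circ f_\phi$. This gives $(\sigma_h^{-1})^\ast(f_\phi^{-1})^\ast\nabla=(f_\phi^{-1})^\ast(\sigma_{\phi^{-1}(h)}^{-1})^\ast\nabla=(f_\phi^{-1})^\ast\nabla$. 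Hence $(f_\phi^{-1})^\ast\nabla\in\Stat^G(M,g)$, and the $\sigma_l$-factor drops out. Therefore the orbit of $\nabla$ under the full group is exactly $\{(f_\phi^{-1})^\ast\nabla\mid \phi\in\Aut(G,K,g_0)\}$.

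The third step is to confirm that $\phi\mapsto(f_\phi^{-1})^\ast$ is a genuine left action of $\Aut(G,K,g_0)$ on $\Stat^G(M,g)$. This uses $f_{\phi\circ\phi'}=f_\phi\circ f_{\phi'}$, which follows from the group isomorphism $\Aut(G,K)\simeq\Aut_{\HMFD}(M,G,\sigma)_{p_0}$. We also need $f_\phi$ to be an isometry, which holds by Proposition~\ref{Proposition:AutGKg0AutHRiemp0}, so statistical connections are sent to statistical connections.

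Combining these steps, the two orbit decompositions agree, which yields the stated equality. I expect no real obstacle here. The only point needing care is the bookkeeping in the semidirect product: one must make sure the action is a left action and that the $\theta(G)$-part is absorbed, both of which reduce to the $G$-invariance argument in the second step.
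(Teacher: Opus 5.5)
Your proposal is correct and is the paper's own argument: pull the $\Aut_{\HRiem}(M,g,G,\sigma)$-action on $\Stat^G(M,g)$ back along the surjection $\Aut(G,K,g_0)\ltimes\theta(G)\to\Aut_{\HRiem}(M,g,G,\sigma)$, then observe that $\theta(G)$ acts trivially, so the orbits reduce to $\Aut(G,K,g_0)$-orbits. You also fill in two details the paper leaves as ``easily seen'': that $(f_\phi^{-1})^\ast\nabla$ is again $G$-invariant, so the $\sigma_l$-factor drops out, and that $\phi\mapsto(f_\phi^{-1})^\ast$ is a left action.
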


Recall that $\Stat^G(M,g)$ can be identified with $S^3((\mathfrak{g}/\mathfrak{k})^\ast)^K$.
For each $\phi \in \Aut(G,K,g_0)$ and $C \in S^3((\mathfrak{g}/\mathfrak{k})^\ast)^K$, 
we define $(\phi^{-1})^\ast C \in S^3((\mathfrak{g}/\mathfrak{k})^\ast)$ by putting 
\[
((\phi^{-1})^*C)([X_1]_{\mathfrak{k}},[X_2]_{\mathfrak{k}},[X_3]_{\mathfrak{k}}) := C(\phi_{\mathfrak{g}/{\mathfrak{k}}}^{-1}[X_1]_{\mathfrak{k}},\phi_{\mathfrak{g}/{\mathfrak{k}}}^{-1}[X_2]_{\mathfrak{k}},\phi_{\mathfrak{g}/{\mathfrak{k}}}^{-1}[X_3]_{\mathfrak{k}})
\]
(see~\eqref{eq:def-phi_gk} for the definition of $\phi_{\mathfrak{g}/\mathfrak{k}}$).
Since 
\[
\Ad(k) \circ \phi_{\mathfrak{g}/{\mathfrak{k}}}^{-1} = \phi_{\mathfrak{g}/{\mathfrak{k}}}^{-1} \circ \Ad(\phi^{-1}(k))
\]
holds for each $k \in K$,
we see that $(\phi^{-1})^*C$ is $K$-invariant.
Thus we have a linear representation
of $\Aut(G,K,g_0)$ on $S^3((\mathfrak{g}/\mathfrak{k})^\ast)^K$ by defining 
\[
\phi \cdot C := (\phi^{-1})^*C
\]
for each $\phi \in \Aut(G,K,g_0)$ and $C \in S^3((\mathfrak{g}/\mathfrak{k})^\ast)^K$.

By Proposition \ref{proposition:fphistarphigk}, the linear isomorphism
$(\phi^{-1})^*$ on 
$S^3((\mathfrak{g}/\mathfrak{k})^\ast)^K$ corresponds to the linear isomorphism
$(f_{\phi}^{-1})^*$ on $\Stat^G(M,g)$.
Thus, we obtain the following.
\begin{Thm}\label{theorem:orbitdescription}
The homeomorphisms $\Stat^G(M,g) \simeq S^3( T^\ast_{p_0}M )^K \simeq S^3((\mathfrak{g}/\mathfrak{k})^\ast)^K$ stated above induce homeomorphisms 
$\MHSStat^G(M,g) \simeq \Aut(G,K,g_0) \backslash S^3( T^\ast_{p_0}M )^K \simeq \Aut(G,K,g_0) \backslash S^3((\mathfrak{g}/\mathfrak{k})^\ast)^K$.
\end{Thm}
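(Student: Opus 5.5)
The plan is to build the statement on Theorem~\ref{thm:MHS-Stat_Aut}, which already gives $\MHSStat^G(M,g) = \Aut(G,K,g_0)\backslash \Stat^G(M,g)$ with $\phi$ acting by $(f_\phi^{-1})^\ast$. We then transport this action along the chain of homeomorphisms of Proposition~\ref{proposition:StatG-S3-isom}. Concretely, write $\Phi_1:\Stat^G(M,g)\to S^3(T^\ast_{p_0}M)^K$ for $\nabla\mapsto C^\nabla_{p_0}$. Write $\Phi_2:S^3(T^\ast_{p_0}M)^K\to S^3((\mathfrak g/\mathfrak k)^\ast)^K$ for pullback by $\Upsilon$. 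We show both maps are $\Aut(G,K,g_0)$-equivariant. Here the action on the middle space is $C_{p_0}\mapsto ((f_\phi^{-1})_\ast)^\ast$-pullback, namely $C_{p_0}\circ((df_\phi)_{p_0}^{-1})^{\times 3}$, and the action on the last space is the representation $\phi\cdot C=(\phi^{-1})^\ast C$ just defined.

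For $\Phi_1$ we use three facts. First, $f_\phi$ is an isometry fixing $p_0$, by Proposition~\ref{Proposition:AutGKg0AutHRiemp0}. Second, $C^{(f^{-1})^\ast\nabla}=(f^{-1})^\ast C^\nabla$ for an isometry $f$, since the Levi-Civita connection is natural under isometries and $C$ is defined from $\nabla-\nabla^g$ and $g$. Third, $(f_\phi^{-1})^\ast C^\nabla$ is again $G$-invariant, by Proposition~\ref{Proposition:Aut_preserves_invtensor}. Evaluating at $p_0=f_\phi^{-1}(p_0)$ then gives $((f_\phi^{-1})^\ast C)_{p_0}=C_{p_0}\circ((df_\phi)^{-1}_{p_0})^{\times3}$, which is the claimed action.

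For $\Phi_2$ the key input is Proposition~\ref{proposition:fphistarphigk}: $(df_\phi)_{p_0}\circ\Upsilon=\Upsilon\circ\phi_{\mathfrak g/\mathfrak k}$. Hence $(df_\phi)^{-1}_{p_0}\circ\Upsilon=\Upsilon\circ\phi^{-1}_{\mathfrak g/\mathfrak k}$. Pulling back by $\Upsilon$ turns the middle action into $C\mapsto C\circ(\phi_{\mathfrak g/\mathfrak k}^{-1})^{\times3}=(\phi^{-1})^\ast C$, as required.

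Equivariant bijections induce bijections of orbit sets. Since the $\Phi_i$ are homeomorphisms, they also induce homeomorphisms for the quotient topologies: an equivariant homeomorphism maps saturated open sets to saturated open sets. Combining this with Theorem~\ref{thm:MHS-Stat_Aut} yields the statement. The only step needing care is the naturality identity for the cubic form under isometries together with the sign and inverse conventions ($\phi$ versus $\phi^{-1}$, and the $\exp(-tX)$ in $\Upsilon$). The $\exp(-tX)$ convention is harmless, because Proposition~\ref{proposition:fphistarphigk} was proved with the same $\Upsilon$. I expect no real obstacle beyond bookkeeping these conventions consistently.
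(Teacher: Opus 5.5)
Your proposal is correct and follows the paper's route. The paper likewise starts from Theorem~\ref{thm:MHS-Stat_Aut} and uses Proposition~\ref{proposition:fphistarphigk} to see that $(f_\phi^{-1})^\ast$ on $\Stat^G(M,g)$ corresponds to $(\phi^{-1})^\ast$ on $S^3((\mathfrak{g}/\mathfrak{k})^\ast)^K$. You additionally make explicit the bookkeeping the paper leaves implicit: naturality of the cubic form under isometries, evaluation at the fixed point $p_0=f_\phi(p_0)$, and the fact that equivariant homeomorphisms descend to homeomorphisms of the orbit spaces.
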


\subsection{$\HS$-moduli spaces for classes of statistical manifolds}\label{subsec:class-Stat^G}
Let $\mathcal{C}$ be a class of statistical manifolds in the sense of
Definition~\ref{Definition:class}.

\begin{Def}
For each homogeneous Riemannian manifold $(M,g,G,\sigma)$, 
We define
\[
\Stat_{\mathcal{C}}^G(M,g)
:= \bigl\{
\nabla \in \Stat^G(M,g)
\;\big|\;
(M,g,\nabla) \text{ is of class } \mathcal{C}
\bigr\},
\]
which is a subspace of $\Stat^G(M,g)$.    
\end{Def}
    
Via the identifications in Proposition~\ref{proposition:StatG-S3-isom},
the subspace $\Stat_{\mathcal{C}}^G(M,g)$ corresponds to subspaces
\[
\Gamma(S^3T^\ast M)^G_{g\mathchar`-\mathcal{C}},
\qquad
S^3(T^\ast_{p_0} M)^K_{g\mathchar`-\mathcal{C}},
\qquad
S^3((\mathfrak{g}/\mathfrak{k})^\ast)^K_{g\mathchar`-\mathcal{C}},
\]
of
$\Gamma(S^3T^\ast M)^G$,
$S^3(T^\ast_{p_0} M)^K$, and
$S^3((\mathfrak{g}/\mathfrak{k})^\ast)^K$, respectively.

We note that by the definition of 
classes of statistical manifold in the sense of Definition \ref{Definition:class}, the following holds:

\begin{Prop}\label{prop:sim-close_for-C}
For each class $\mathcal{C}$ of statistical manifolds, 
the subset $\Stat_{\mathcal{C}}^G(M,g)$ is closed under the equivalence relation $\sim_{\HS}$.
\end{Prop}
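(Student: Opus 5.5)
The plan is to unwind the definitions and reduce the claim to the repleteness of $\mathcal{C}$. Take $\nabla \in \Stat_{\mathcal{C}}^G(M,g)$ and $\nabla' \in \Stat^G(M,g)$ with $\nabla \sim_{\HS} \nabla'$. The goal is to show $\nabla' \in \Stat_{\mathcal{C}}^G(M,g)$, that is, that $(M,g,\nabla')$ is an object of $\mathcal{C}$. Since $\sim_{\HS}$ is symmetric, this gives closedness in both directions.

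First I would use Proposition~\ref{proposition:sim_AutHRiem}. Alternatively, I would apply the definition of $\sim_{\HS}$ directly, which gives an isomorphism $(\phi,f)$ in $\HS$ from $(M,g,\nabla,G,\sigma)$ to $(M,g,\nabla',G,\sigma)$. By the description of isomorphisms in $\HS$, the map $f : M \to M$ is a diffeomorphism and a statistical immersion from $(M,g,\nabla)$ to $(M,g,\nabla')$. By the remark after the definition of $\STAT$, a statistical immersion that is a diffeomorphism is an isomorphism in $\STAT$. The inverse $f^{-1}$ is again isometric and satisfies $(f^{-1})^\ast C = C'$, which follows from $f^\ast C' = C$. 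Hence $(M,g,\nabla)$ and $(M,g,\nabla')$ are isomorphic in $\STAT$.

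Second, since $\mathcal{C}$ is a full replete subcategory of $\STAT$ (Definition~\ref{Definition:class}) and $(M,g,\nabla)$ belongs to $\mathcal{C}$, repleteness forces $(M,g,\nabla')$ to belong to $\mathcal{C}$. Invariance of $\nabla'$ under $G$ holds by assumption, so $\nabla' \in \Stat^G_{\mathcal{C}}(M,g)$.

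There is no serious obstacle. The only point needing care is that the $\HS$-isomorphism really yields a $\STAT$-isomorphism, namely that the inverse of a bijective statistical immersion is again a statistical immersion. This is immediate from the equivalent cubic-form condition $f^\ast C' = C$ together with isometry.
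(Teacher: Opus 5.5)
Your proof is correct and matches the paper's reasoning. The paper simply notes that the claim follows from the definition of a class of statistical manifolds: every isomorphism in $\HS$ is in particular an isomorphism in $\STAT$, and $\mathcal{C}$ is replete. You spell out exactly these two steps.
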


\begin{Def}\label{def:MH-Stat_class-C}
The \emph{$\HS$-moduli space of invariant statistical connections of class $\mathcal C$} on $(M,g,G,\sigma)$ is defined as the quotient space of $\Stat_{\mathcal C}^G(M,g)$ by $\sim_{\HS}$. This space is denoted by $\MHSStat_{\mathcal C}^G(M,g)$.
\end{Def}

By Proposition~\ref{prop:sim-close_for-C}, we have the following.
\begin{Thm}\label{theorem:orbitdescription_classC}
The following homeomorphisms hold.
\begin{align*}
    \MHSStat_{\mathcal{C}}^G(M, g) 
    &\simeq \Aut(G,K,g_0) \backslash \Stat_{\mathcal{C}}^G(M,g)
    \simeq \Aut(G,K,g_0) \backslash \Gamma(S^3T^\ast M)^G_{g\mathchar`-\mathcal{C}}
    \\ &\simeq \Aut(G,K,g_0) \backslash S^3((\mathfrak{g}/\mathfrak{k})^\ast)^K_{g\mathchar`-\mathcal{C}}
    \simeq \Aut(G,K,g_0) \backslash S^3(T^\ast_{p_0} M)^K_{g\mathchar`-\mathcal{C}}
\end{align*}
\end{Thm}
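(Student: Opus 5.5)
The plan is to derive Theorem~\ref{theorem:orbitdescription_classC} from Theorem~\ref{thm:MHS-Stat_Aut} and Theorem~\ref{theorem:orbitdescription}. We restrict the orbit descriptions there to the subset of class $\mathcal{C}$.

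\textbf{Step 1: the subset is invariant.} We show that $\Stat_{\mathcal C}^G(M,g)$ is stable under the action of $\Aut(G,K,g_0)$ given by $\phi\cdot\nabla=(f_\phi^{-1})^\ast\nabla$. By Proposition~\ref{Proposition:AutGKg0AutHRiemp0}, $f_\phi$ is an isometry, and $(f_\phi^{-1})^\ast\nabla$ is again $G$-invariant by Proposition~\ref{Proposition:Aut_preserves_invtensor}. Moreover, $f_\phi^{-1}$ is a statistical isomorphism
\[
(M,g,\nabla)\to(M,g,(f_\phi^{-1})^\ast\nabla).
\]
Since $\mathcal{C}$ is a full replete subcategory, the target again lies in $\mathcal{C}$. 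This is the content of Proposition~\ref{prop:sim-close_for-C}.

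\textbf{Step 2: the classes are unchanged.} Step~1 shows that each $\sim_{\HS}$-class meeting $\Stat_{\mathcal C}^G(M,g)$ lies entirely inside it. Hence the quotient $\Stat_{\mathcal C}^G(M,g)/{\sim_{\HS}}$ is a union of classes of $\MHSStat^G(M,g)$. By Theorem~\ref{thm:MHS-Stat_Aut}, these classes are exactly the $\Aut(G,K,g_0)$-orbits, which gives the first identification. For the topology, the quotient topology on the invariant subspace agrees with the orbit-space topology of the restricted action, since both are quotient topologies of the same map on the same space.

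\textbf{Step 3: transport to the other models.} The homeomorphisms of Proposition~\ref{proposition:StatG-S3-isom} are $\Aut(G,K,g_0)$-equivariant, by Proposition~\ref{proposition:fphistarphigk} as used in Theorem~\ref{theorem:orbitdescription}. They carry $\Stat_{\mathcal C}^G(M,g)$ onto the corresponding subsets $\Gamma(S^3T^\ast M)^G_{g\mathchar`-\mathcal{C}}$, $S^3((\mathfrak{g}/\mathfrak{k})^\ast)^K_{g\mathchar`-\mathcal{C}}$ and $S^3(T^\ast_{p_0}M)^K_{g\mathchar`-\mathcal{C}}$; indeed, these subsets are defined as those images. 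An equivariant homeomorphism restricts to an equivariant homeomorphism of invariant subspaces, and therefore induces homeomorphisms of the orbit spaces.

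\textbf{Main obstacle.} The only point needing care is the topological bookkeeping: the quotient of the subspace topology must be checked against the subspace of the quotient. Defining everything as the quotient topology of the restricted spaces avoids this issue. The remaining steps are formal.
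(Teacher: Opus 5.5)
Your proposal is correct and follows the paper's route. The paper derives this theorem in one line from Proposition~\ref{prop:sim-close_for-C} (closure of $\Stat^G_{\mathcal{C}}(M,g)$ under $\sim_{\HS}$), combined with the orbit descriptions of Theorems~\ref{thm:MHS-Stat_Aut} and~\ref{theorem:orbitdescription}, and your Steps~1--3 spell this out. Two slips are cosmetic only: it is $f_\phi$ (not $f_\phi^{-1}$) that is a statistical isomorphism $(M,g,\nabla)\to(M,g,(f_\phi^{-1})^\ast\nabla)$, and Proposition~\ref{Proposition:Aut_preserves_invtensor} should be applied to the cubic form rather than to the connection itself.
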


\subsection{A sufficient condition for the $\STAT$-moduli space and the $\HS$-moduli space to coincide}\label{subsec:Stat-HS-moduli_coincide}
As in the previous subsection, let $(M,g,G,\sigma)$ be a homogeneous Riemannian manifold.
In general, the $\STAT$-moduli space $\MStatStat^G(M,g)$ can be regarded as a quotient of the $\HS$-moduli space $\MHSStat^G(M,g)$.
The purpose of this subsection is to provide sufficient conditions under which these two moduli spaces coincide.

\begin{Prop}\label{proposition:SurjStatmodHSmod}
Let us assume that  
\[
\Aut_{\HRiem}(M,g,G,\sigma) \longrightarrow \Isom(M,g), \qquad (\phi,f) \longmapsto f
\]
is surjective.
Then $\MHSStat^G(M,g) = \MStatStat^G(M,g)$.    
\end{Prop}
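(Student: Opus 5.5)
We have to show that the canonical surjection $\MHSStat^G(M,g) \to \MStatStat^G(M,g)$ is also injective. That is, for $\nabla,\nabla' \in \Stat^G(M,g)$, we must prove that $\nabla \sim_{\STAT} \nabla'$ implies $\nabla \sim_{\HS} \nabla'$. The reverse implication is already recorded, since every isomorphism in $\HS$ is an isomorphism in $\STAT$. The plan is to compare the two relations through the group descriptions already established: $\sim_{\STAT}$ is realized by isometries, and $\sim_{\HS}$ is realized by the action of $\Aut_{\HRiem}(M,g,G,\sigma)$.

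First, take $\nabla \sim_{\STAT} \nabla'$. By definition there is a statistical isomorphism $f : (M,g,\nabla) \to (M,g,\nabla')$. Such an $f$ is a diffeomorphism with $f^* g = g$, so $f \in \Isom(M,g)$, and it satisfies $f^*\nabla' = \nabla$, equivalently $f^* C^{\nabla'} = C^{\nabla}$.

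Second, apply the surjectivity hypothesis. It gives a Lie group automorphism $\phi$ with $(\phi,f) \in \Aut_{\HRiem}(M,g,G,\sigma)$; here $f$ is the same isometry, since the map in the hypothesis is just $(\phi,f)\mapsto f$.

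Third, invoke Proposition~\ref{proposition:sim_AutHRiem}. Since $(\phi,f) \in \Aut_{\HRiem}(M,g,G,\sigma)$ and $f^*\nabla' = \nabla$, that proposition yields $\nabla \sim_{\HS} \nabla'$. Hence the two equivalence relations on $\Stat^G(M,g)$ coincide, and so do the quotient sets. If the equality is meant as topological spaces with the quotient topologies from $\Stat^G(M,g)$, it follows immediately, because both are quotients of the same space by the same relation.

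There is no real obstacle here. The only point needing care is that the automorphism supplied by the hypothesis pairs with the \emph{same} diffeomorphism $f$, so that the condition $f^*\nabla'=\nabla$ transfers unchanged; this is exactly what surjectivity of $(\phi,f)\mapsto f$ provides. One might also note that $G$-invariance of $f^*\nabla'$ is automatic by Proposition~\ref{Proposition:Aut_preserves_invtensor}, but it is not needed, since $\nabla$ is already $G$-invariant.
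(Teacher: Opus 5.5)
Your proof is correct and is essentially the paper's argument. The paper phrases it through orbit spaces: surjectivity makes $\Stat^G(M,g)$ stable under $\Isom(M,g)$, hence $\Aut_{\HRiem}(M,g,G,\sigma)\backslash \Stat^G(M,g) = \Isom(M,g)\backslash \Stat^G(M,g)$. You instead lift the linking isometry $f$ to $(\phi,f)$ and apply Proposition~\ref{proposition:sim_AutHRiem} element by element, which rests on the same key step.
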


\begin{proof}
Note that the $\Aut_{\HRiem}(M,g,G,\sigma)$-action on $\Stat^G(M,g)$ factors the map $\Aut_{\HRiem}(M,g,G,\sigma) \rightarrow \Isom(M,g)$.
Thus the subset $\Stat^G(M,g)$ of $\Stat(M,g)$ is stable by $\Isom(M,g)$-action  
since the map $\Aut_{\HRiem}(M,g,G,\sigma) \rightarrow \Isom(M,g)$ is surjective.
Therefore, we have 
\begin{align*}
\MHSStat(M,g)
    &= \Aut_{\HRiem}(M,g,G,\sigma) \backslash \Stat^G(M,g) \\
    &= \Isom(M,g) \backslash \Stat^G(M,g) \\
    &= \MStatStat^G(M,g).
\end{align*}
\end{proof}

\begin{Prop}\label{prop:MHSStat-MStatStat_coincides_2}
Let $p_0 \in M$, and let $K$ be the isotropy subgroup of $G$ at $p_0$.
Moreover, denote by $\Isom(M,g)_{p_0}$ the isotropy subgroup of $\Isom(M,g)$ at $p_0$, and let $g_0$ be the inner product induced by $g$ at $p_0$.
If the group homomorphism
\[
    \Aut(G, K, g_0) \longrightarrow \Isom(M ,g)_{p_0},
    \qquad \phi \longmapsto f_{\phi}
\]
is surjective (see Section~\ref{subsec_Autgrp} for the definition of $\Aut(G,K,g_0)$), then
\[
    \MHSStat^G (M, g) = \MStatStat^G(M, g).
\]
\end{Prop}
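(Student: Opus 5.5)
We reduce the statement to Proposition~\ref{proposition:SurjStatmodHSmod}. That is, we show that the homomorphism
\[
\Xi : \Aut_{\HRiem}(M,g,G,\sigma) \longrightarrow \Isom(M,g), \qquad (\phi,f) \longmapsto f
\]
is surjective. Once this holds, Proposition~\ref{proposition:SurjStatmodHSmod} gives $\MHSStat^G(M,g) = \MStatStat^G(M,g)$ directly.

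To prove surjectivity, take an arbitrary $F \in \Isom(M,g)$. Put $q_0 := F(p_0)$. Since $\sigma$ is transitive, choose $l \in G$ with $\sigma_l(p_0) = q_0$. Then $F' := \sigma_{l}^{-1} \circ F = \sigma_{l^{-1}} \circ F$ is an isometry of $(M,g)$, because $g$ is $G$-invariant. Moreover $F'(p_0) = p_0$, so $F' \in \Isom(M,g)_{p_0}$. By the hypothesis there is $\phi \in \Aut(G,K,g_0)$ with $f_\phi = F'$, and hence $F = \sigma_l \circ f_\phi$.

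By Theorem~\ref{theorem:AutGKthetaontoAutHRiem}, or more directly by Proposition~\ref{Proposition:AutGKg0AutHRiemp0}(2) together with $\theta_l \in \Aut_{\HRiem}(M,g,G,\sigma)$, the pair $(\Inn_l \circ \phi,\ \sigma_l \circ f_\phi)$ lies in $\Aut_{\HRiem}(M,g,G,\sigma)$. Its image under $\Xi$ is $\sigma_l \circ f_\phi = F$, so $\Xi$ is surjective.

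There is no serious obstacle here. The one point to check is that $(\phi, f_\phi)$ for $\phi \in \Aut(G,K,g_0)$ really satisfies $f_\phi^*g = g$. This is exactly Proposition~\ref{Proposition:AutGKg0AutHRiemp0}(1), which relies on $G$-invariance of $f_\phi^* g$ (Proposition~\ref{Proposition:Aut_preserves_invtensor}). The rest is the composition rule in $\HRiem$.
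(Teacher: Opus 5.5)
Your proof is correct, but it is organized differently from the paper's.

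The paper works directly with orbit spaces. It starts from Theorem~\ref{thm:MHS-Stat_Aut}, $\MHSStat^G(M,g)=\Aut(G,K,g_0)\backslash\Stat^G(M,g)$, and uses the hypothesis to replace $\Aut(G,K,g_0)$ by $\Isom(M,g)_{p_0}$. It then asserts in one line that the $\Isom(M,g)_{p_0}$-orbits on $\Stat^G(M,g)$ are exactly the $\sim_{\STAT}$-classes. That step implicitly uses the same factorization $F=\sigma_l\circ F'$ that you wrote down, together with $\sigma_l^*\nabla'=\nabla'$ for $G$-invariant $\nabla'$.

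You instead use this factorization to lift the hypothesis to surjectivity of $\Aut_{\HRiem}(M,g,G,\sigma)\to\Isom(M,g)$, and then quote Proposition~\ref{proposition:SurjStatmodHSmod}. Your route buys two things:
\begin{itemize}
\item It makes explicit the step the paper leaves terse.
\item It shows that this proposition is really a special case of Proposition~\ref{proposition:SurjStatmodHSmod}. In fact the two hypotheses are equivalent: any $(\psi,F')\in\Aut_{\HRiem}(M,g,G,\sigma)$ with $F'(p_0)=p_0$ lies in $\Aut_{\HRiem}(M,g,G,\sigma)_{p_0}$, so $\psi\in\Aut(G,K,g_0)$ and $F'=f_\psi$.
\end{itemize}
The paper's route, for its part, yields directly the isotropy-level description $\MHSStat^G(M,g)\simeq\Isom(M,g)_{p_0}\backslash S^3(T^*_{p_0}M)^K$. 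That is the form actually used later for $(\No,g^F)$.
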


\begin{proof}
By Theorem~\ref{thm:MHS-Stat_Aut}, we have
\[
    \MHSStat^G(M ,g) = \Aut(G,K,g_0) \backslash \Stat^G(M,g).
\]
Moreover, by assumption, we also have
\[
    \Aut(G,K,g_0) \backslash \Stat^G(M,g)
    = \Isom(M,g)_{p_0} \backslash \Stat^G(M,g).
\]
Since $(M,g)$ is a $G$-homogeneous Riemannian manifold and the statistical connections are $G$-invariant, we obtain
\[
    \Isom(M,g)_{p_0} \backslash \Stat^G(M,g)
    = \MStatStat^G(M,g).
\]
Therefore, we conclude that $\MHSStat^G (M, g) = \MStatStat^G(M, g)$.
\end{proof}

The following provides a sufficient condition under which Proposition~\ref{proposition:SurjStatmodHSmod} applies:

\begin{Prop}\label{prop:two-moduli-coincide_sufficient}
Assume that the isometric $G$-action $\sigma$ on $(M,g)$ is effective and $M$ has only finitely many connected components.
Let us consider $G$ as a subgroup of $\Isom(M,g)$.
If $G$ is a normal subgroup of $\Isom(M,g)$, 
the map 
\[
\Aut_{\HRiem}(M,g,G,\sigma) \longrightarrow \Isom(M,g), \qquad (\phi,f) \longmapsto f
\]
gives a group isomorphism, and in particular, $\MHSStat^G(M,g) = \MStatStat^G(M,g)$.    
\end{Prop}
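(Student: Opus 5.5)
The plan is to combine Proposition~\ref{Proposition:EffectiveNormalizer}~\ref{item:prop:effective:HRiem} with Proposition~\ref{proposition:SurjStatmodHSmod}. Regard $G$ as a subgroup of $\Isom(M,g)$ via $\sigma$; this is legitimate because the action is effective, so $\sigma : G \to \Isom(M,g)$ is injective. The two hypotheses of Proposition~\ref{Proposition:EffectiveNormalizer}~\ref{item:prop:effective:HRiem} hold as stated: the action is effective and $M$ has finitely many connected components.

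Step 1 (injectivity). By Proposition~\ref{Proposition:EffectiveNormalizer}~\ref{item:prop:effective:HMFD}, effectiveness makes the homomorphism $\Xi:(\phi,f)\mapsto f$ injective on $\Aut_{\HMFD}(M,G,\sigma)$. It is therefore injective on the subgroup $\Aut_{\HRiem}(M,g,G,\sigma)$. Its image lies in $\Isom(M,g)$, since $f^*g=g$ for every $(\phi,f)\in\Aut_{\HRiem}(M,g,G,\sigma)$. Moreover, $\Xi$ is a group homomorphism because composition in $\HRiem$ is componentwise.

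Step 2 (image). By Proposition~\ref{Proposition:EffectiveNormalizer}~\ref{item:prop:effective:HRiem}, the image $\Xi(\Aut_{\HRiem}(M,g,G,\sigma))$ is exactly the normalizer $N_{\Isom(M,g)}(G)$. If $G$ is normal in $\Isom(M,g)$, this normalizer is all of $\Isom(M,g)$. Hence $\Xi$ is surjective onto $\Isom(M,g)$, and together with Step 1 it is a group isomorphism.

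The one point worth checking is inside the cited proposition: for $f\in N_{\Isom(M,g)}(G)$, the conjugation $\phi_f(h)=f\circ h\circ f^{-1}$ must be smooth. This is exactly what the proof of Proposition~\ref{Proposition:EffectiveNormalizer}~\ref{item:prop:effective:HRiem} establishes, using that $\Isom(M,g)$ is a Lie group and that $G$ is a weakly embedded subgroup of it. So no new argument is needed here.

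Step 3 (moduli spaces). Surjectivity of $\Aut_{\HRiem}(M,g,G,\sigma)\to\Isom(M,g)$ is precisely the hypothesis of Proposition~\ref{proposition:SurjStatmodHSmod}, which yields $\MHSStat^G(M,g)=\MStatStat^G(M,g)$.
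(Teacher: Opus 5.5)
Your proof is correct and follows the paper's own argument. The paper likewise obtains the group isomorphism from Proposition~\ref{Proposition:EffectiveNormalizer}: injectivity comes from effectiveness, and the image is the normalizer $N_{\Isom(M,g)}(G)$, which equals all of $\Isom(M,g)$ by normality. It then derives $\MHSStat^G(M,g)=\MStatStat^G(M,g)$ directly from Proposition~\ref{proposition:SurjStatmodHSmod}.
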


\begin{proof}
The first claim comes immediately from Proposition \ref{Proposition:EffectiveNormalizer}.
The latter is the claim of Proposition \ref{proposition:SurjStatmodHSmod}.
\end{proof}

We next present another proposition (cf. Proposition~\ref{prop:two-moduli-coincide_sufficient_2}), which can be regarded as a generalization of Proposition~\ref{prop:two-moduli-coincide_sufficient}.
Let $p_0 \in M$, and let $K$ be the isotropy subgroup of $G$ at $p_0$.
Define
\[
    \overline{G} := G / \ker \bigl(G \to \Isom(M,g)\bigr), 
    \qquad
    \overline{K} := K / \ker \bigl(G \to \Isom(M,g)\bigr).
\]
Then $(M,g)$ admits the structure of a $\overline{G}$-homogeneous Riemannian manifold with effective $\overline{G}$-action.
In particular, we have
\[
    M \simeq \overline{G} / \overline{K}.
\]
Moreover, the kernel $\ker (G \to \Isom(M,g))$ coincides with $\ker (G \to \Diff(M))$ and, furthermore, with
\[
    \Core_{G}(K) := \bigcap_{g \in G} g K g^{-1}.
\]
In particular, $\Core_{G}(K)$ is a closed normal subgroup of $G$ contained in $K$.
In this setting, the following holds.

\begin{Prop}
The map
\[
    \Aut(G,K) \longrightarrow \Aut(\overline{G}, \overline{K}),
    \qquad
    \phi \longmapsto \overline{\phi},
\]
defined by
\[
    \overline{\phi}(h \Core_{G}(K)) := \phi(h)\Core_{G}(K),
\]
is well-defined and is a group homomorphism.
\end{Prop}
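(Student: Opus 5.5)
The plan has three steps: show that $\phi$ preserves $N := \Core_{G}(K)$, check that $\overline{\phi}$ is a Lie group automorphism of $\overline{G}$ mapping $\overline{K}$ onto itself, and verify the homomorphism property directly.

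\textbf{Step 1: $\phi(N) = N$.} Let $\phi \in \Aut(G,K)$. Since $\phi$ is a group automorphism with $\phi(K) = K$, for each $l \in G$ we have
\[
\phi(lKl^{-1}) = \phi(l)\,\phi(K)\,\phi(l)^{-1} = \phi(l) K \phi(l)^{-1}.
\]
As $\phi$ is a bijection, it commutes with intersections, and $\phi(l)$ runs over all of $G$ as $l$ does. Hence
\[
\phi\Bigl(\bigcap_{l \in G} lKl^{-1}\Bigr) = \bigcap_{l\in G} \phi(l)K\phi(l)^{-1} = \Core_G(K).
\]
This gives $\phi(N) = N$, and therefore $\phi^{-1}(N) = N$ as well.

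\textbf{Step 2: $\overline{\phi}$ is well defined and lies in $\Aut(\overline{G},\overline{K})$.} Well-definedness follows from Step 1: if $hN = h'N$, then $h^{-1}h' \in N$, so $\phi(h)^{-1}\phi(h') = \phi(h^{-1}h') \in N$. Thus $\overline{\phi}$ is a well-defined group homomorphism $\overline{G} \to \overline{G}$. The same construction applied to $\phi^{-1}$ produces $\overline{\phi^{-1}}$, and this is a two-sided inverse of $\overline{\phi}$.

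Smoothness uses the quotient structure. The paper notes that $N$ is a closed normal subgroup, so $\overline{G} = G/N$ is a Lie group and the projection $\pi\colon G \to \overline{G}$ is a smooth submersion. Since $\overline{\phi}\circ\pi = \pi\circ\phi$ is smooth and $\pi$ is a surjective submersion, $\overline{\phi}$ is smooth; the same argument applies to its inverse. Hence $\overline{\phi} \in \Aut(\overline{G})$. Finally, since $N \subset K$ and $\phi(K) = K$,
\[
\overline{\phi}(\overline{K}) = \phi(K)/N = K/N = \overline{K},
\]
so $\overline{\phi} \in \Aut(\overline{G},\overline{K})$.

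\textbf{Step 3: the homomorphism property.} For $\phi,\psi \in \Aut(G,K)$ and $h \in G$,
\[
\overline{\phi\circ\psi}(hN) = \phi(\psi(h))N = \overline{\phi}\bigl(\overline{\psi}(hN)\bigr),
\]
and $\overline{\mathrm{id}_G} = \mathrm{id}_{\overline{G}}$.

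The main point needing care is the smoothness in Step 2. It relies on $N$ being closed, which makes $G/N$ a Lie group with a submersive projection; the paper has already recorded this closedness.
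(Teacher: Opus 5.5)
Your proof is correct and follows the same route as the paper: you show $\phi(\Core_{G}(K))=\Core_{G}(K)$ via $\phi(lKl^{-1})=\phi(l)K\phi(l)^{-1}$, descend $\phi$ to $\overline{G}$, check $\overline{\phi}(\overline{K})=\overline{K}$, and verify compatibility with composition. Your version is slightly more careful than the paper's, which simply calls it immediate that $\overline{\phi}$ is an automorphism; you justify invertibility (via $\overline{\phi^{-1}}$) and smoothness (via the surjective submersion $G\to G/\Core_{G}(K)$) explicitly.
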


\begin{proof}
Let $\phi \in \Aut(G,K)$. 
First, we show that $\phi(\Core_{G}(K)) = \Core_{G}(K)$.
Recall that
\[
    \Core_{G}(K) = \bigcap_{g \in G} gKg^{-1}.
\]
Since $\phi(K)=K$,
\[
    \phi(hKh^{-1}) = \phi(h)K\phi(h)^{-1}
\]
for any $h\in G$.
Therefore, we have $\phi(\Core_{G}(K)) = \Core_{G}(K)$.
It follows that $\phi$ induces a map
\[
    \overline{\phi} : \overline{G} \longrightarrow \overline{G}, \qquad
    h \Core_{G}(K) \longmapsto \phi(h)\Core_{G}(K).
\]
Since $\phi$ is a group automorphism, it follows immediately that $\overline{\phi}$ is a group automorphism of $\overline{G}$.
Moreover, since $\phi(K)=K$, we have
\[
    \overline{\phi}(\overline{K})
    = \overline{\phi}(K / \Core_{G}(K))
    = \phi(K) / \Core_{G}(K)
    = K / \Core_{G}(K)
    = \overline{K},
\]
and hence $\overline{\phi} \in \Aut(\overline{G}, \overline{K})$.
Finally, for $\phi_1, \phi_2 \in \Aut(G,K)$, one can verify that
\[
    \overline{\phi_1 \circ \phi_2}
    =
    \overline{\phi_1}\circ\overline{\phi_2},
\]
and hence the assignment $\phi \mapsto \overline{\phi}$ defines a group homomorphism.
\end{proof}

\begin{Prop}\label{prop:two-moduli-coincide_sufficient_2}
Assume that $M$ has finitely many connected components.
Suppose that $\Isom(M,g)$ normalizes $\sigma(G)$, that is, for any
$f\in\Isom(M,g)$ and $h\in G$,
\[
    f\circ\sigma(h)\circ f^{-1}
    \in
    \sigma(G).
\]
Then the following equality holds:
\[
    \MHSStat^{\overline G}(M,g)
    =
    \MStatStat^G(M,g).
\]
Furthermore, if the actions of
$\Aut(G,K,g_0)$ and
$\Aut(\overline G,\overline K,g_0)$
on $\Stat^G(M,g)$ induce the same orbit decomposition, then
\[
    \MHSStat^G(M,g)
    =
    \MStatStat^G(M,g).
\]
\end{Prop}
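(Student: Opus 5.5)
The plan is to reduce the first equality to Proposition~\ref{prop:two-moduli-coincide_sufficient}, applied to the effective homogeneous Riemannian manifold $(M,g,\overline{G},\overline{\sigma})$. The second equality will then follow by comparing orbit decompositions through Theorem~\ref{thm:MHS-Stat_Aut}.

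First I will observe that $\Stat^G(M,g)=\Stat^{\overline G}(M,g)$. A connection is $G$-invariant if and only if it is invariant under every $\sigma_h$, and these diffeomorphisms are exactly the elements of $\overline{\sigma}(\overline G)=\sigma(G)\subset\Isom(M,g)$. Next, $\overline{\sigma}$ is effective and transitive and $M$ has finitely many components, so $\overline G$ can be regarded as the subgroup $\sigma(G)$ of $\Isom(M,g)$. The hypothesis then says precisely that this subgroup is normal in $\Isom(M,g)$. (Here one uses the Lie group structure on $\overline G$ inherited as a quotient of $G$ by the closed normal subgroup $\Core_G(K)$, and, as in the proof of Proposition~\ref{Proposition:EffectiveNormalizer}, that it matches the structure relevant for smoothness of conjugation.)

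Applying Proposition~\ref{prop:two-moduli-coincide_sufficient} to $\overline G$ then gives
\[
\MHSStat^{\overline G}(M,g)=\MStatStat^{\overline G}(M,g).
\]
The right-hand side is $\Stat^{\overline G}(M,g)$ modulo statistical isomorphisms, which is the same as $\Stat^{G}(M,g)$ modulo statistical isomorphisms, namely $\MStatStat^G(M,g)$.

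For the second claim, Theorem~\ref{thm:MHS-Stat_Aut} identifies $\MHSStat^G(M,g)$ with $\Aut(G,K,g_0)\backslash\Stat^G(M,g)$. The same theorem applied to $\overline G$ identifies $\MHSStat^{\overline G}(M,g)$ with $\Aut(\overline G,\overline K,g_0)\backslash\Stat^{\overline G}(M,g)$. Both quotients are taken of the same set, since $\Stat^G(M,g)=\Stat^{\overline G}(M,g)$. By hypothesis the two orbit decompositions agree, so the two moduli spaces coincide. Combining this with the first part yields $\MHSStat^G(M,g)=\MStatStat^G(M,g)$.

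The main obstacle is the smoothness subtlety inside Proposition~\ref{prop:two-moduli-coincide_sufficient} for $\overline G$. One must ensure that $\overline G$, with its quotient Lie group structure, injects into $\Isom(M,g)$ as a weakly embedded Lie subgroup. This is needed so that the conjugation $\phi_f(h)=f\circ h\circ f^{-1}$ is a Lie group automorphism of $\overline G$. I expect the argument of Proposition~\ref{Proposition:EffectiveNormalizer}\ref{item:prop:effective:HRiem} to carry over verbatim once $\overline{\sigma}$ is checked to be a smooth effective action, which holds because $\sigma$ is smooth and $G\to\overline G$ is a submersion.
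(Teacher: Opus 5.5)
Your proposal is correct and follows the route the paper indicates. The paper states this proposition without a written proof and frames it as a generalization of Proposition~\ref{prop:two-moduli-coincide_sufficient} through the effective quotient $\overline G$; that is exactly your argument of applying that proposition to $(M,g,\overline G,\overline\sigma)$, using $\Stat^G(M,g)=\Stat^{\overline G}(M,g)$, and comparing orbit decompositions via Theorem~\ref{thm:MHS-Stat_Aut}. The smoothness concern you flag is already handled inside Proposition~\ref{Proposition:EffectiveNormalizer}, which applies to any effective smooth isometric action, so you can simply cite it once $\overline\sigma$ is known to be smooth and effective.
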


\begin{Prop}
If the natural homomorphism
\[
    \Aut(G, K, g_0) \longrightarrow \Aut(\overline{G}, \overline{K}, g_0), \qquad \phi \longmapsto \overline{\phi}
\]
is surjective, then the actions of\/ $\Aut(G, K, g_0)$ and
$\Aut(\overline{G}, \overline{K}, g_0)$ on $\Stat^G(M,g)$
induce the same orbit decomposition.
In particular, if the natural homomorphism
\[
    \Aut(G, K) \longrightarrow \Aut(\overline{G}, \overline{K})
\]
is surjective, then so is
\[
    \Aut(G, K, g_0) \longrightarrow \Aut(\overline{G}, \overline{K}, g_0).
\]
\end{Prop}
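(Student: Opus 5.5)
The last proposition has two claims. For the first, I would compare the two actions on $\Stat^G(M,g)\simeq S^3(T^\ast_{p_0}M)^K$, using the linear description in Theorem~\ref{theorem:orbitdescription}. That theorem identifies both moduli spaces as orbit spaces of these linear actions, once both are read on the same space $\Stat^G(M,g)=\Stat^{\overline G}(M,g)$. This identification holds because $\Core_G(K)$ acts trivially on $M$, so $G$-invariance and $\overline G$-invariance coincide.

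Step one is to check that the homomorphism $\phi\mapsto\overline\phi$ restricts to $\Aut(G,K,g_0)\to\Aut(\overline G,\overline K,g_0)$, so the second group makes sense. For this I identify $\mathfrak g/\mathfrak k$ with $\overline{\mathfrak g}/\overline{\mathfrak k}$; the projection $G\to\overline G$ induces an isomorphism here, since the Lie algebra of $\Core_G(K)$ lies in $\mathfrak k$. Under this identification, $\overline\phi_{\overline{\mathfrak g}/\overline{\mathfrak k}}=\phi_{\mathfrak g/\mathfrak k}$, because $\overline\phi_\ast$ is the map induced by $\phi_\ast$ on $\mathfrak g/\mathrm{Lie}(\Core_G(K))$. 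So if $\phi_{\mathfrak g/\mathfrak k}$ preserves $g_0$, then so does $\overline\phi_{\overline{\mathfrak g}/\overline{\mathfrak k}}$.

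Step two is to check that the action factors. By Proposition~\ref{proposition:fphistarphigk}, $\phi$ acts on $S^3((\mathfrak g/\mathfrak k)^\ast)^K$ through $C\mapsto C\circ(\phi_{\mathfrak g/\mathfrak k}^{-1})^{\times3}$. Likewise, $\overline\phi$ acts through the same formula with $\overline\phi_{\overline{\mathfrak g}/\overline{\mathfrak k}}$. Since these two linear maps agree, $\phi\cdot C=\overline\phi\cdot C$. Alternatively, one can check directly that $f_\phi=f_{\overline\phi}$ on $M\simeq G/K\simeq\overline G/\overline K$, since $f_\phi(hp_0)=\phi(h)p_0$. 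Either way, the $\Aut(G,K,g_0)$-action is the pullback of the $\Aut(\overline G,\overline K,g_0)$-action along the homomorphism. Therefore, when the homomorphism is surjective, the two groups act through the same set of transformations of $\Stat^G(M,g)$, and their orbits coincide.

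For the \emph{in particular} claim, take $\psi\in\Aut(\overline G,\overline K,g_0)$. By hypothesis, $\psi=\overline\phi$ for some $\phi\in\Aut(G,K)$. From step one, $\phi_{\mathfrak g/\mathfrak k}=\overline\phi_{\overline{\mathfrak g}/\overline{\mathfrak k}}=\psi_{\overline{\mathfrak g}/\overline{\mathfrak k}}$, which preserves $g_0$, so $\phi\in\Aut(G,K,g_0)$. The main point needing care is the identification $\mathfrak g/\mathfrak k\cong\overline{\mathfrak g}/\overline{\mathfrak k}$ together with the compatibility of $g_0$ and of the map $\Upsilon$ under it. Both follow because $\exp(-tX)\cdot p_0$ depends only on the image of $X$ in $\overline{\mathfrak g}$.
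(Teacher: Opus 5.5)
Your proposal is correct. There is no argument to compare it with: the paper states this proposition without proof and moves directly to the subsection on simply transitive cases, so your argument supplies the omitted details.

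The two facts you isolate are exactly what is needed:
\begin{itemize}
\item $f_\phi = f_{\overline{\phi}}$ on $M \simeq G/K \simeq \overline{G}/\overline{K}$;
\item $\phi_{\mathfrak{g}/\mathfrak{k}} = \overline{\phi}_{\overline{\mathfrak{g}}/\overline{\mathfrak{k}}}$, under the identification $\mathfrak{g}/\mathfrak{k} \cong \overline{\mathfrak{g}}/\overline{\mathfrak{k}}$ coming from $\mathrm{Lie}(\Core_G(K)) \subset \mathfrak{k}$.
\end{itemize}
Together with Proposition~\ref{Proposition:AutGKg0AutHRiemp0}, they show two things. First, $\Aut(G,K,g_0)$ is precisely the preimage of $\Aut(\overline{G},\overline{K},g_0)$ under $\phi \mapsto \overline{\phi}$. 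Second, the two actions on $\Stat^G(M,g) = \Stat^{\overline{G}}(M,g)$ agree along this homomorphism. Both claims of the proposition follow at once from these.

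Two small points would make the write-up airtight:
\begin{itemize}
\item $\overline{\phi}$ is a Lie group automorphism, not merely a group automorphism. This holds because $\overline{\phi}\circ\pi = \pi\circ\phi$ with $\pi\colon G \to \overline{G}$ a surjective submersion. The paper's preceding proposition checks only the group-theoretic part.
\item The isotropy subgroup of $\overline{G}$ at $p_0$ is exactly $\overline{K}$. Hence $f_{\overline{\phi}}$ is formed with the same base point, and the comparison $f_\phi = f_{\overline{\phi}}$ is meaningful.
\end{itemize}
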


\subsection{Simply transitive cases}\label{subsection:simplytransitive}
For homogeneous Riemannian manifolds with simply transitive actions, studying invariant statistical connections and their moduli spaces is equivalent to studying the moduli spaces of left-invariant statistical connections on the corresponding metric Lie groups. 
In this subsection, we explain this in detail.

Let $(M,G,\sigma)$ be a homogeneous manifold in the sense of Section~\ref {section:EM} such that the $G$-action $\sigma$ on $M$ is simply transitive.
We fix a base point $p_0$.
Then one has the diffeomorphism 
\[
    \pi : G \longrightarrow M, \qquad h \longmapsto h \cdot_{\sigma} p_0. 
\]
We denote by $\sigma^L$ the action of a Lie group $G$ on the manifold $G$ by left multiplication, that is,
\[
    h \cdot_{\sigma^L} x := hx \qquad (h \in G,\; x \in G).
\]
Then $(G,G,\sigma^L)$ is a homogeneous manifold in the sense of Section~\ref{section:EM}, and $(\mathrm{id}_G,\pi)$ gives an isomorphism in the category $\HMFD$ between $(M,G,\sigma)$ and $(G,G,\sigma^L)$. 
In particular, the set of $G$-invariant Riemannian metrics on $(M,G,\sigma)$ is in one-to-one correspondence with the set of left-invariant Riemannian metrics on $G$.

Fix a $G$-invariant Riemannian metric $g$ on $M$, and use the same symbol $g$ to denote the corresponding left-invariant Riemannian metric on $G$. 
Then $(M,g,G,\sigma)$ and $(G,g,G,\sigma^L)$ are homogeneous Riemannian manifolds in the sense of Section~\ref{section:EM}, 
and are isomorphic in the category $\HRiem$.
In particular, $\Stat^G(M,g)$ and $\Stat^G(G,g)$ are homeomorphic to each other. 
Moreover, we have
\[
    \MHSStat^G(M,g) \simeq \MHSStat^G(G,g), \quad 
    \MStatStat^G(M,g) \simeq \MStatStat^G(G,g)
\]
(see Section~\ref{subsection:DefStatModuli} for $\MStatStat^G(M,g)$ and Definition~\ref{def:MH-Stat^G} for $\MHSStat^G(M,g)$).
Furthermore, if we fix a class $\mathcal{C}$ of statistical manifolds (see Definition~\ref{Definition:class}), then
\[
    \MHSStat^G_{\mathcal{C}}(M,g) \simeq \MHSStat^G_{\mathcal{C}}(G,g), \quad 
    \MStatStat^G_{\mathcal{C}}(M,g) \simeq \MStatStat^G_{\mathcal{C}}(G,g)
\]
(see Section~\ref{subsection:DefStatModuli} for $\MStatStat^G_{\mathcal{C}}(M,g)$ and Definition~\ref{def:MH-Stat_class-C} for $\MHSStat^G_{\mathcal{C}}(M,g)$).

For the action of a Lie group $G$ on itself by left multiplication, the isotropy subgroup $K$ at the identity is trivial. 
In particular, we have $\Aut(G,K) = \Aut(G)$. 
Moreover, $\mathfrak{g}/\mathfrak{k} \simeq \mathfrak{g} \simeq T_e G$, and a left-invariant Riemannian metric $g$ on $G$ induces an inner product on $\mathfrak{g}$, which we denote by $g_0$. 
For each $\phi \in \Aut(G)$, we denote by $\phi_{\mathfrak{g}}$ the induced automorphism on $\mathfrak{g}$, and define
\[
    \Aut(G,g_0) := \{ \phi \in \Aut(G) \mid \phi \text{ preserves } g \} = \{ \phi \in \Aut(G) \mid \phi_{\mathfrak{g}} \text{ preserves } g_0 \}.
\]
Then $\Aut(G,K,g_0) = \Aut(G,g_0)$. 
It follows that
\[
\begin{aligned}
\MHSStat^G_{\mathcal{C}}(M,g)
&\simeq
\MHSStat^G_{\mathcal{C}}(G,g)
\\
&=
\Aut(G,g_0)
\backslash
\Stat^G_{\mathcal{C}}(G,g)
\\
&\simeq
\Aut(G,g_0)
\backslash
S^3(\mathfrak{g}^{\ast})_{g\mathchar`-\mathcal{C}}.
\end{aligned}
\]
(see Section~\ref{subsec:class-Stat^G} for $S^3(\mathfrak{g}^\ast)_{g\mathchar`-\mathcal{C}}$).
This observation shows that the $\HS$-moduli space introduced in this paper follows the same philosophy as the moduli spaces of left-invariant geometric structures on Lie groups established by Kodama--Takahara--Tamaru \cite{KTT} and others.
These previous studies are reviewed in Section~\ref{subsec:moduli-left-inv-geom-str}.

In \cite[Section 5,6,7]{KOOT_2025}, for several Lie groups $G$, left-invariant Riemannian metrics $g$ on them, and for the classes of statistical manifolds $\mathcal{C}$ given by conjugate symmetry~$\mathrm{CS}$, constant curvature~$\mathrm{CC}$ and dually flat~$\mathrm{DF}$, the explicit forms of
\[
    \Aut(G,g_0) \backslash S^3(\mathfrak{g}^\ast)_{g\mathchar`-\mathcal{C}}
\]
are computed.
For the definition of statistical manifolds of constant curvature, see~\cite{Kurose-1990}.
In particular, dually flat structures have constant curvature, and constant curvature implies conjugate symmetry. 
Combining those results with the above discussion, we obtain the following.

\begin{Thm}\label{thm:KOOT_results}
Let $(M,g,G,\sigma)$ be a homogeneous Riemannian manifold with simply transitive action $\sigma$. 
We denote by $\Homog_k(x_1, \dots, x_n)$ the real vector space of homogeneous polynomials of degree $k$ in $n$ variables over $\RR$.
\begin{enumerate}[label=(\arabic*)]
    \item If $G \simeq \RR^n$ (the $n$-dimensional simply connected abelian Lie group), then
    \begin{align*}
        \MHSStat^G_{\mathrm{CS}}(M,g) 
        &\simeq O(n)\backslash \Homog_3(x_1, \dots, x_n), \\
        \MHSStat^G_{\mathrm{DF}}(M,g)  
        &\simeq \{ (\lambda_1, \dots, \lambda_n) \in \RR^n \mid \lambda_1 \geq \dots \geq \lambda_n \}.
    \end{align*}
    In particular,
    \[
        \MHSStat^G(M,g) = \MHSStat^G_{\mathrm{CS}}(M,g).
    \]

    \item If $G \simeq G_{\RR \mathrm{H}^n}$ $(n \geq 2)$ (the \emph{Lie group of $n$-dimensional real hyperbolic space $\RR \mathrm{H}^n$}, namely, the solvable part of the Iwasawa decomposition of the identity component of $SO(n,1)$), then
    \begin{align*}
        \MHSStat^G_{\mathrm{CS}}(M,g) = \MHSStat^G_{\mathrm{CC}}(M,g)
        &\simeq \RR, \\
        \MHSStat^G_{\mathrm{DF}}(M,g)  
        &\simeq \{\pm 1\}.
    \end{align*}

    \item If $G \simeq H^3 \times \RR^{n-3}$ $(n \geq 4)$ (where $H^3$ is the $3$-dimensional Heisenberg group and $H^3 \times \RR^{n-3}$ denotes the direct product Lie group of $H^3$ and $\RR^{n-3}$), then
    \begin{align*}
        \MHSStat^G_{\mathrm{CS}}(M,g)  
        &\simeq O(n-3) \backslash \bigl( \Homog_1(x_{4}, \dots, x_n) \oplus \Homog_3(x_{4}, \dots, x_n) \bigr), \\
        \MHSStat^G_{\mathrm{DF}}(M,g)   
        &= \emptyset.
    \end{align*}
    If $G \simeq H^3$, then
    \begin{align*}
        \MHSStat^G_{\mathrm{CS}}(M,g)  
        &\simeq \{\ast\}, \\
        \MHSStat^G_{\mathrm{CC}}(M,g)  
        &= \emptyset, \\
        \MHSStat^G_{\mathrm{DF}}(M,g)  
        &= \emptyset.
    \end{align*}
    See also~\cite{IO-2024}.
\end{enumerate}
\end{Thm}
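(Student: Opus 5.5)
The theorem follows by combining the general reduction of the simply transitive case, established just above, with the explicit orbit computations of \cite[Sections~5--7]{KOOT_2025}.

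\emph{Step 1 (reduction).} Since $\sigma$ is simply transitive, $(\mathrm{id}_G,\pi)$ is an isomorphism in $\HRiem$ between $(M,g,G,\sigma)$ and $(G,g,G,\sigma^L)$. An isomorphism in $\HRiem$ intertwines the actions of the two automorphism groups on the respective spaces of invariant statistical connections. It also preserves every class $\mathcal{C}$, because classes are replete. Hence $\MHSStat^G_{\mathcal{C}}(M,g)\simeq\MHSStat^G_{\mathcal{C}}(G,g)$. Theorem~\ref{theorem:orbitdescription_classC} with $K=\{e\}$, so that $\Aut(G,K,g_0)=\Aut(G,g_0)$, then identifies this with
\[
\Aut(G,g_0)\backslash S^3(\mathfrak{g}^\ast)_{g\mathchar`-\mathcal{C}}
\]
for $\mathcal{C}=\mathrm{CS},\mathrm{CC},\mathrm{DF}$. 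This is exactly the quotient computed in \cite{KOOT_2025}, up to the scaling factor $\RR_{>0}$ there. That factor acts only on the metric, and we fix the metric, so we must use the fiberwise version of their computation over a fixed $g_0$.

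\emph{Step 2 (case by case).} For $G=\RR^n$ we have $\Aut(G)=GL(n,\RR)$, hence $\Aut(G,g_0)=O(n)$. Every left-invariant connection on an abelian group has parallel $C$ and zero Levi-Civita curvature, so every $C$ is conjugate symmetric. Identifying $S^3(\RR^{n\ast})$ with $\Homog_3$ gives the first two claims for CS. For DF, flatness reduces via Proposition~\ref{prop:curvature-formula} to $[K_X,K_Y]=0$, i.e.\ the $K_X$ are commuting symmetric operators. They are then simultaneously diagonalizable, so $C=\sum\lambda_i (e^i)^3$ in some orthonormal basis, and the $O(n)$-orbit is recorded by the sorted $(\lambda_i)$.

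For $G_{\RRH^n}$ and $H^3\times\RR^{n-3}$, I would take an orthonormal basis adapted to the Lie bracket and solve the linear conditions that $\nabla^gC$ be totally symmetric. Next I would impose the CC and DF curvature conditions using Proposition~\ref{prop:curvature-formula}. Finally I would quotient by $\Aut(G,g_0)$:
\begin{itemize}
\item for $G_{\RRH^n}$ this group is $O(n-1)$ acting on $\RR^{n-1}$;
\item for the Heisenberg product it contains $O(2)$ on the contact plane, the sign on the centre, and $O(n-3)$ on the abelian factor.
\end{itemize}
The expected outcomes are as follows.
\begin{itemize}
\item For $G_{\RRH^n}$: a one-parameter family $cC_0$, with DF forcing $c=\pm1$ and CS equal to CC. No automorphism identifies $c$ with $-c$, since the relevant automorphisms fix the distinguished non-unimodular direction.
\item For the Heisenberg product: the CS tensors are forced to depend only on the abelian factor, up to a linear term. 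For $H^3$ alone only $C=0$ survives, and it is neither CC nor DF because its Ricci curvature is not constant.
\end{itemize}

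\emph{Main obstacle.} Step 1 is formal. The substance lies in the CS computations for the nonabelian groups, especially showing that $\Aut(G,g_0)$ acts with exactly the stated orbits. Rather than redo this, I would cite \cite{KOOT_2025} directly and check only the compatibility of normalizations: their moduli space quotients additionally by $\RR_{>0}$, while here the metric is fixed. For this I would verify that the stabilizer of the metric class in $\RR_{>0}\times\Aut(G)$ acts on the fiber over $g$ as $\Aut(G,g_0)$ does, up to a homothety $\phi$ with $\phi^\ast g=r^{-1}g$. Such a homothety could in principle rescale $C$ and merge orbits, so this compatibility is the point to check carefully.
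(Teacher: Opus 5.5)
Your Step~1, together with importing the orbit computations from \cite{KOOT_2025}, is exactly the paper's argument. The paper reduces to $\Aut(G,g_0)\backslash S^3(\mathfrak{g}^\ast)_{g\mathchar`-\mathcal{C}}$ via the simply transitive discussion. It then simply asserts that \cite[Sections~5--7]{KOOT_2025} computes these fixed-metric quotients. Your worry about the $\RR_{>0}$-factor is legitimate: for $\RR^n$ the homotheties $c\,\mathrm{id}$ are automorphisms and would identify $C$ with $tC$ for $t>0$. The paper does not address this point either.

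The real problem is in the one case you check yourself. For $G=\RR^n$ and $\mathcal{C}=\mathrm{DF}$ you claim that the $O(n)$-orbit of $C=\sum_i\lambda_i(e^i)^3$ is recorded by the sorted tuple $(\lambda_i)$. It is not.
\begin{itemize}
\item The reflection $e_1\mapsto -e_1$ lies in $O(n)=\Aut(\RR^n,g_0)$ and sends $\lambda_1(e^1)^3$ to $-\lambda_1(e^1)^3$, so the signs of the $\lambda_i$ are not invariants.
\item The lines $\RR e_i$ with $\lambda_i\neq0$ are intrinsic, since the projections $e_i\otimes e^i$ with $\lambda_i\neq 0$ are the minimal idempotents of the algebra spanned by the $K_X$. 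Their orientations, however, are not intrinsic.
\item Hence the complete invariant is the sorted tuple $(|\lambda_1|,\dots,|\lambda_n|)$, and the orbit space is $\{\lambda_1\ge\dots\ge\lambda_n\ge0\}$.
\end{itemize}
Already for $n=1$ this gives $O(1)\backslash\RR\simeq\RR_{\ge0}$, not $\RR$. This agrees with the paper's own Theorem~\ref{thm:Main-thm-G=GL+}~\ref{thm:Main-thm-G=GL_MHesse} for $n=1$. There $\overline{GL(1,\RR)}\simeq\RR$ acts simply transitively, and $\sigma_1$ acts by $-\mathrm{id}$.

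So your argument, carried out correctly, does not prove the displayed DF formula in (1). It shows instead that the formula needs the extra condition $\lambda_n\ge0$. Citing \cite{KOOT_2025} cannot rescue the version with arbitrary signs, since no $O(n)$-orbit computation can separate $\lambda$ from $-\lambda$.

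A minor point: on the Heisenberg factor, the action on the centre is $Z\mapsto\det(M)\,Z$, where $M$ is the $O(2)$-part. It is not an independent sign.
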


\begin{Rem}
In the case $G \simeq \RR^n$, left-invariant statistical connections with constant curvature (in the sense of Kurose~\cite{Kurose-1990}) can be read off from the results of Opozda~\cite{O-2016}.
It would be interesting to investigate the structure of the corresponding moduli spaces, which appear not to have been explicitly described in the literature.
\end{Rem}

In particular, if \(G \simeq G_{\RRH^n}\), then the results in~\cite{KOOT_2025} imply the following.

\begin{Prop}
Let \((M,g,G,\sigma)\) be a homogeneous Riemannian manifold with simply transitive action \(\sigma\).
If \(G \simeq G_{\RRH^n}\), then
\begin{align}
    \MStatStat^G_{\mathrm{CS}}(M,g)
    &=
    \MHSStat^G_{\mathrm{CS}}(M,g),
    \label{eq:CS-HS-coincide}
    \\
    \MStatStat^G_{\mathrm{DF}}(M,g)
    &=
    \MHSStat^G_{\mathrm{DF}}(M,g).
    \label{eq:DF-HS-coincide}
\end{align}
\end{Prop}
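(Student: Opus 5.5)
The two equalities \eqref{eq:CS-HS-coincide} and \eqref{eq:DF-HS-coincide} both follow once we show that the $\Isom(M,g)$-equivalence and the $\Aut(G,g_0)$-equivalence induce the same partition of $\Stat^G(M,g)$. We always have the surjection $\MHSStat^G_{\mathcal C}(M,g)\to\MStatStat^G_{\mathcal C}(M,g)$, so it suffices to prove the following: if $\nabla,\nabla'\in\Stat^G(M,g)$ and $f\in\Isom(M,g)$ satisfy $f^*\nabla'=\nabla$, then $\nabla\sim_{\HS}\nabla'$.

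\emph{Normalization.} Since $G$ acts transitively, we may compose $f$ with an element of $\sigma(G)$, which preserves $\nabla'$. This lets us assume $f(p_0)=p_0$, so $f\in\Isom(M,g)_{p_0}$. By Proposition~\ref{prop:MHSStat-MStatStat_coincides_2}, it would be enough to show that $\Aut(G,g_0)\to\Isom(M,g)_{p_0}$, $\phi\mapsto f_\phi$, is surjective. This is false in general: $(M,g)\simeq\RRH^n$, so $\Isom(M,g)_{p_0}\simeq O(n)$, whereas $\Aut(G_{\RRH^n},g_0)$ is only $O(n-1)$. We therefore argue orbitwise instead, using the explicit classifications of \cite{KOOT_2025} recalled in Theorem~\ref{thm:KOOT_results}.

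\emph{Orbitwise comparison via invariants.} In \cite{KOOT_2025}, $\Stat^G_{\mathrm{CS}}(M,g)$ is computed explicitly, and $\Aut(G,g_0)\backslash\Stat^G_{\mathrm{CS}}(M,g)\simeq\RR$. The plan is to find a real parameter $c$ labelling these orbits that is invariant under \emph{all} isometries. A natural choice is a quantity built from the cubic form $C$ and $g$ alone, for instance $|C|_g^2$ together with the sign of a cubic invariant such as $\sum_{i,j,k} C(e_i,e_i,e_j)\,C(e_j,e_k,e_k)$ or the trace form $\tau(X)=\tr_g C(X,\cdot,\cdot)$. Any pullback $f^*C$ by an isometry has the same values of these invariants, because they are built by complete contractions with $g$.

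Concretely, we take the normal form $C_c$ from \cite{KOOT_2025}, representing the orbit labelled $c\in\RR$, and compute two invariants: $|C_c|^2_g$ and, for example, $g(\tau^\sharp,\tau^\sharp)$ or $\tau$ evaluated against $C_c(\tau^\sharp,\tau^\sharp,\tau^\sharp)$. The goal is to show that $c$ can be recovered from these. Then $f^*\nabla'=\nabla$ with $\nabla\in[C_c]$ and $\nabla'\in[C_{c'}]$ forces $c=c'$, hence $\nabla\sim_{\HS}\nabla'$. This proves \eqref{eq:CS-HS-coincide}.

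For \eqref{eq:DF-HS-coincide}, $\Stat^G_{\mathrm{DF}}$ is a union of two $\Aut(G,g_0)$-orbits, labelled $\pm1$, presumably corresponding to the $\pm$ classes $C$ and $-C$. It suffices to separate these two orbits by an isometry invariant that is odd in $C$; a cubic contraction such as $C(\tau^\sharp,\tau^\sharp,\tau^\sharp)$ or $|\tau|$ against a fixed orientation-free quantity should work. If the two dually flat classes are instead interchanged by some isometry, the statement would fail, so the explicit formula must exhibit a nonzero odd invariant.

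\emph{Main obstacle.} The hard step is checking that the chosen contracted invariants are injective on the one-parameter family $C_c$, in particular that the sign of $c$ is detected by an odd-degree invariant. The plan is to compute $\tau$ for the normal forms in \cite{KOOT_2025}, which is expected to be a multiple of $c$ times the dual of the $\RR$-direction of $\mathfrak g$. If $\tau$ happens to vanish, we would fall back to the sectional curvature $\Sect^C_g$ via Proposition~\ref{prop:curvature-formula}: it is an isometry invariant, and for these structures it is expected to be constant and to depend on $c$ through $c^2$, combined with an odd invariant to fix the sign.
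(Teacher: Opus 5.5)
\emph{There is a genuine gap: your invariants cannot distinguish $C$ from $-C$, and that distinction is exactly what the statement requires.}

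Your reduction is sound. Composing $f$ with an element of $\sigma(G)$ lets you assume $f(p_0)=p_0$, and you are right that $\Aut(G_{\RRH^n},g_0)\simeq O(n-1)$ does not surject onto $\Isom(M,g)_{p_0}\simeq O(n)$.

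The problem is that every scalar built pointwise from $g_p$ and $C_p$ is even in $C$. A complete contraction of $k$ copies of a $3$-tensor needs $3k$ even. More intrinsically, $-\mathrm{id}_{T_pM}\in O(T_pM)$ pulls $C_p$ back to $-C_p$. Hence all of the following take the same value on $C$ and on $-C$:
\begin{itemize}
\item $|C|_g^2$;
\item $\sum_{i,j,k}C(e_i,e_i,e_j)C(e_j,e_k,e_k)=|\tau|_g^2$, which is quadratic in $C$, not cubic;
\item $C(\tau^\sharp,\tau^\sharp,\tau^\sharp)$, which is quartic in $C$, not odd;
\item your fallback $\Sect^C_g$, which is quadratic in $C$ by Proposition~\ref{prop:curvature-formula}.
\end{itemize}
The ``sign of $c$'' in $\tau=c\,(\cdots)\,e_0^\ast$ is not an isometry invariant either, since $\Isom(M,g)_{p_0}$ contains elements sending $e_0$ to $-e_0$.

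So these invariants recover at most $|c|$. They can never separate the two dually flat orbits, which is the whole content of \eqref{eq:DF-HS-coincide} and the remaining content of \eqref{eq:CS-HS-coincide}. This is not a technicality. The geodesic symmetry $s_{p_0}$ realizes $-\mathrm{id}$ at $p_0$, so $C$ and $-C$ are always pointwise isometric; $s_{p_0}^\ast C$ agrees with $-C$ at $p_0$ but is not $G$-invariant. Whether $\nabla^{1}\sim_{\STAT}\nabla^{-1}$ is therefore a global question, and your proposal gives no argument for it.

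The paper settles this without computing invariants. It uses four facts:
\begin{itemize}
\item Flatness is preserved by statistical isomorphisms, so among the representatives $\nabla^\alpha$ of \cite[Theorem~6.1]{KOOT_2025} only $\nabla^{\pm1}$ are dually flat \cite{Takano-2006}.
\item $\nabla^0=\nabla^g$ is isolated.
\item $\sim_{\STAT}$ is compatible with rescaling $C\mapsto rC$.
\item $\nabla^{1}\nsim_{\STAT}\nabla^{-1}$, which is a global fact (\cite{Shima}, \cite[Remark~6.2]{KOOT_2025}).
\end{itemize}
The first three reduce any coincidence $\nabla^\alpha\sim_{\STAT}\nabla^{\alpha'}$ with $\alpha\neq\alpha'$ to $\nabla^{1}\sim_{\STAT}\nabla^{-1}$. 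That is excluded because a statistical isomorphism would be an affine equivalence between the developing images of the two flat structures, and these are not affinely equivalent (the interior of a paraboloid versus a half-space).

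If you want to stay with invariants, you need one that involves derivatives of $C$. A natural choice is the constant $\operatorname{div}_g(\tau^\sharp)$. It is natural under isometries and linear, hence odd, in $C$. For left-invariant data it equals $-\tr(\operatorname{ad}_{\tau^\sharp})$, a nonzero multiple of $\tau(e_0)$ with $e_0\perp[\mathfrak g,\mathfrak g]$. You would still have to check from the normal form that $\tau(e_0)\neq 0$ for the dually flat $C^{1}$. Since the conjugate symmetric structures form the line $\RR\,C^{1}$, that single invariant would then separate all the $\nabla^\alpha$ at once.
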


\begin{proof}
It suffices to prove Equation~\eqref{eq:CS-HS-coincide}. 
Moreover, without loss of generality, we may fix a left-invariant Riemannian metric on \(G_{\RRH^n}\) and work with it (cf.~\cite{KOOT_2025}). 
Let \(g\) denote the left-invariant Riemannian metric on \(G_{\RRH^n}\) defined in \cite[Section~6]{KOOT_2025}. 
For each \(\alpha \in \RR\), let $ \nabla^\alpha := \nabla^{(g,C^\alpha)}$,
where \(C^\alpha\) is defined in \cite[Theorem~6.1]{KOOT_2025}.
Then, by the same theorem,
\[
    \{ \nabla^\alpha \mid \alpha \in \RR \}
\]
forms a complete set of representatives of \(\MHSStat^G_{\mathrm{CS}}(M,g)\).
Therefore, it suffices to show that $\nabla^\alpha \nsim_{\STAT} \nabla^{\alpha'}$ for any distinct \(\alpha, \alpha' \in \RR\).

First, it is clear that $\nabla^0 \nsim_{\STAT} \nabla^\alpha$ for every \(\alpha \neq 0\), since \(\nabla^0 = \nabla^g\).
Moreover, by \cite[Theorem 4.2]{Takano-2006}, among the connections \(\nabla^\alpha\), only \(\nabla^{\pm1}\) are dually flat. 
Hence, $\nabla^{\pm1} \nsim_{\STAT} \nabla^\alpha$ for all \(\alpha \neq \pm1\).
Furthermore, we have $\nabla^1 \nsim_{\STAT} \nabla^{-1}$ (cf.~\cite{Shima} and \cite[Remark 6.2]{KOOT_2025}).
On the other hand, one easily verifies that, in general, if statistical manifolds \((N,h,C)\) and \((N',h',C')\) are isomorphic in \(\STAT\), then
\((N,h,rC)\) and \((N',h',rC')\) are also isomorphic in \(\STAT\) for every \(r \neq 0\).
Combining these facts, it follows that the statistical connections
\(\nabla^\alpha\) are mutually non-isomorphic with respect to \(\sim_{\STAT}\).
This completes the proof.
\end{proof}

\section{Main results}\label{sec:main-results}
In this section, we present the main theorems (Theorems~\ref{thm:Main-thm-G=GL+} and \ref{thm:Main-thm-G=Isom}) and provide their proofs.

\subsection{Preliminaries for the main theorems}\label{subsec:Pre-for-mainresults}
In this subsection, we introduce some notation that will be used to state the main theorems.
Let $D$ denote the $n$-dimensional subspace of $\Sym(n,\RR)$ consisting of all diagonal matrices.
We write $S^3(D^\ast)$ for the space of third-order symmetric tensors on the dual space $D^\ast$ of $D$, and $\mathcal{SP}_n^3$ for the space of all $n$-variable homogeneous cubic symmetric polynomials over $\RR$, including the zero polynomial.

The symmetric group $\mathfrak{S}_n$ acts naturally on $D$ by permuting the components. 
We denote by $S^3(D^\ast)^{\mathfrak{S}_n}$ the subspace of
$S^3(D^\ast)$ consisting of all elements fixed under this action.
Both linear spaces $S^3(D^\ast)^{\mathfrak{S}_n}$ and $\mathcal{SP}^3_n$ can be identified with the linear space $\mathcal{P}^3(D)^{\mathfrak{S}_n}$ (the space of
$\mathfrak{S}_n$-invariant homogeneous cubic polynomial functions on $D$)
via the following correspondences:
\[
    S^3(D^\ast)^{\mathfrak{S}_n} \ni C
    \longmapsto q(C) \in \mathcal{P}^3(D)^{\mathfrak{S}_n},
    \quad
    q(C)(d) := C(d,d,d),
\]
\[
    \mathcal{SP}^3_n \ni P(x_1, \dots, x_n)
    \longmapsto f_P \in \mathcal{P}^3(D)^{\mathfrak{S}_n},
    \quad
    f_P(\mathrm{diag}(\lambda_1, \dots, \lambda_n))
        := P(\lambda_1, \dots, \lambda_n).
\]
Throughout this paper, we identify $S^3(D^\ast)^{\mathfrak{S}_n}$ with $\mathcal{SP}^3_n$ via the linear isomorphisms described above.
By the theory of symmetric polynomials, we have the following.
\begin{Lem}\label{lem:SP3n-generator}
$\mathcal{SP}^3_n$ is spanned by the following three homogeneous cubic symmetric polynomials:
\begin{equation}\label{eq:generating-set_SP^3_n}
    f_1 := \sum_{i = 1}^n x_i^3, \qquad f_2 := \sum_{i, j = 1}^n x_i^2 x_j, \qquad f_3 := \sum_{i, j, k = 1}^n x_i x_j x_k. 
\end{equation}
\end{Lem}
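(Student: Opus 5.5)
The plan is to use the monomial symmetric functions of degree $3$. Write $m_{(3)} = \sum_i x_i^3$, $m_{(2,1)} = \sum_{i \neq j} x_i^2 x_j$ and $m_{(1,1,1)} = \sum_{i<j<k} x_i x_j x_k$, with the convention that a monomial symmetric function is zero when its partition has more parts than $n$.

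\textbf{Step 1 (monomial basis).} Every homogeneous cubic symmetric polynomial $P$ is a linear combination of $m_{(3)}$, $m_{(2,1)}$ and $m_{(1,1,1)}$. To see this, note that $\mathfrak{S}_n$ permutes the degree-$3$ monomials, and invariance of $P$ forces coefficients to be constant on each orbit. The orbits correspond to the exponent patterns $(3)$, $(2,1)$ and $(1,1,1)$, since these are the partitions of $3$. Hence $\mathcal{SP}^3_n$ is spanned by the $m_\lambda$.

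\textbf{Step 2 (expanding the $f_i$).} Expand the three given polynomials directly:
\[
f_1 = m_{(3)}, \qquad f_2 = m_{(3)} + m_{(2,1)},
\]
where the term $m_{(3)}$ in $f_2$ comes from $i = j$. Expanding $(\sum_i x_i)^3$ gives
\[
f_3 = m_{(3)} + 3\,m_{(2,1)} + 6\,m_{(1,1,1)}.
\]
The resulting coefficient matrix is upper triangular with diagonal entries $1, 1, 6$, so it is invertible.

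\textbf{Step 3 (solving back).} Invert the triangular system:
\[
m_{(3)} = f_1, \qquad m_{(2,1)} = f_2 - f_1, \qquad m_{(1,1,1)} = \tfrac{1}{6}\bigl(f_3 - 3f_2 + 2f_1\bigr).
\]
Each $m_\lambda$ therefore lies in the span of $f_1, f_2, f_3$. Combined with Step 1, this shows that $f_1, f_2, f_3$ span $\mathcal{SP}^3_n$.

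\textbf{Small $n$.} These identities remain valid for $n = 1$ and $n = 2$, where the vanishing $m_\lambda$ are simply zero. There the $f_i$ still span, but they become linearly dependent. This is consistent with the statement, which asserts only spanning and not a basis.

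The main point requiring care is the coefficient bookkeeping in $f_3$, namely the multinomial coefficients $3$ and $6$. There is no genuine obstacle.
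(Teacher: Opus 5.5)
Your argument is correct. The expansions $f_1 = m_{(3)}$, $f_2 = m_{(3)} + m_{(2,1)}$ and $f_3 = m_{(3)} + 3m_{(2,1)} + 6m_{(1,1,1)}$ are right, their triangular inversion is right, and both remain valid for $n = 1, 2$ under your vanishing convention.

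The paper gives no argument beyond an appeal to ``the theory of symmetric polynomials'', in effect the fact that the power-sum products $p_3 = f_1$, $p_2p_1 = f_2$, $p_1^3 = f_3$ span the cubic symmetric polynomials over $\RR$. Your change of basis from the monomial symmetric functions is the standard verification of exactly that fact, so the approaches agree.
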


Recall that $\mathcal{N}_0$ denotes the space of zero-mean multivariate normal distributions and can be identified with the space $\Sym^+(n,\RR)$
(see Section~\ref{subsec:N0} for details).
Let $C_1$, $C_2$, and $C_3$ be the $(0,3)$-tensor fields on $\mathcal{N}_0$ defined by
\begin{align}
    C_1|_{\Sigma}(X,Y,Z)
        &:=
        \tr(\Sigma^{-1}X\Sigma^{-1}Y\Sigma^{-1}Z),
        \label{eq:def-C_1}
        \\
    C_2|_{\Sigma}(X,Y,Z)
        &:=
        \frac13
        \begin{aligned}[t]
        \bigl(
            &\tr(\Sigma^{-1}X\Sigma^{-1}Y)\tr(\Sigma^{-1}Z)
            \\
            &+\tr(\Sigma^{-1}Y\Sigma^{-1}Z)\tr(\Sigma^{-1}X)
            \\
            &+\tr(\Sigma^{-1}Z\Sigma^{-1}X)\tr(\Sigma^{-1}Y)
        \bigr),
        \end{aligned}
        \label{eq:def-C_2}
        \\
    C_3|_{\Sigma}(X,Y,Z)
        &:=
        \tr(\Sigma^{-1}X)\,
        \tr(\Sigma^{-1}Y)\,
        \tr(\Sigma^{-1}Z).
        \label{eq:def-C_3}
\end{align}
Here we use the natural identification
\[
    \Sym(n,\RR) \ni X
    \longmapsto 
    \left.\frac{d}{dt}\right|_{t=0}(\Sigma + tX)
    \in T_{\Sigma}\mathcal{N}_0 .
\]
It is easy to verify that $C_1$, $C_2$, and $C_3$ are $GL(n,\RR)$-invariant symmetric $(0,3)$-tensor fields on $\mathcal{N}_0$ with respect to the transitive $GL(n,\RR)$-action~\eqref{eq:def-GL-action-N0}.

\begin{Rem}\label{rem:C_1-C^A(+1)_coincide}
The tensor field \(C_1\) coincides with the Amari--Chentsov \((+1)\)-tensor field \(C^{A(+1)}\) on \(\mathcal{N}_0\), and \(\alpha \cdot C_1\) coincides with \(C^{A(\alpha)}\) (see~\cite{Mitchell_1989}). 
In particular, it follows that \(\nabla^{(g^F, \alpha \cdot C_1)} = \nabla^{A(\alpha)}\).
\end{Rem}

\subsection{Main theorems}\label{subsec:main-results}
In this subsection, we present the main theorems (Theorems~\ref{thm:Main-thm-G=GL+} and \ref{thm:Main-thm-G=Isom}).

Recall that the quadruple
$(\mathcal{N}_0, g^F, GL(n, \RR), \Gamma)$
is a homogeneous Riemannian manifold, and that for each $\alpha \in \RR$
the quintuple $(\mathcal{N}_0, g^F, \nabla^{A(\alpha)}, GL(n, \RR), \Gamma)$ is a homogeneous statistical manifold
(see Examples~\ref{ex:N0_homog-Riem} and~\ref{ex:N0_homog-stat}).

Our first main theorem is as follows.

\begin{Thm}\label{thm:Main-thm-G=GL+}
Let $n$ be an integer with $n \ge 1$.
Then the following statements hold.
\begin{enumerate}[label=(\arabic*)]
    \item \label{thm:Main-thm-G=GL_Stat}
    The map
    \[
        \Stat^{GL(n,\RR)}(\mathcal{N}_0^n, g^F)
        \longrightarrow \Gamma (S^3 T^\ast \mathcal{N}_0^n)^{GL(n,\RR)},
        \qquad
        \nabla \longmapsto C^{\nabla},
    \]
    is a homeomorphism.
    Furthermore, there exists a linear isomorphism
    \[
        \Psi : \Gamma( S^3 T^\ast \mathcal{N}_0^n)^{GL(n,\RR)} \longrightarrow \mathcal{SP}_n^3
    \]
    satisfying
    \begin{equation}\label{eq:Psi_C-f}
        \Psi(C^{A(+1)}) = \Psi(C_1) = f_1,
        \qquad
        \Psi(C_2) = f_2,
        \qquad
        \Psi(C_3) = f_3,
    \end{equation}
    where $f_1, f_2, f_3 \in \mathcal{SP}_n^3$ are defined in
    \eqref{eq:generating-set_SP^3_n}.
    In addition, we have
    \begin{equation}\label{eq:S3-GL_classify}
        \Gamma( S^3 T^\ast \mathcal{N}^n_0)^{GL(n,\RR)}
        =
        \begin{cases}
        \mathrm{span}_{\RR}\{C_1, C_2, C_3\} & (n \ge 3),\\[2mm]
        \mathrm{span}_{\RR}\{C_1, C_3\} & (n = 2),\\[2mm]
        \mathrm{span}_{\RR}\{C_1\}
            = \{\, C^{A(\alpha)} \mid \alpha \in \RR \,\}
            & (n = 1).
        \end{cases}
    \end{equation}

    \item \label{thm:Main-thm-G=GL_Hesse}
    \[
    \Stat^{GL(n, \RR)}_{\mathrm{DF}}(\mathcal{N}_0^n, g^F)
        =
        \begin{cases}
        \{\nabla^{A(+1)},\ \nabla^{A(-1)},\ 
          \nabla',\ (\nabla')^\ast \} & (n \ge 3),\\[2mm]
        \{\nabla^{A(+1)},\ \nabla^{A(-1)}\} & (n = 2),\\[2mm]
        \{\nabla^{A(\alpha)} \mid \alpha \in \RR\} & (n = 1).
        \end{cases}
    \]
    Here, $\nabla' := \nabla^{(g^F, C')}$, where $C'$ is a $GL(n, \RR)$-invariant symmetric $(0,3)$-tensor field on $\mathcal{N}_0^n$ defined by
    \begin{equation}\label{eq:def-C'}
        C' = C_1 - \frac{6}{n} C_2 + \frac{4}{n^2} C_3.
    \end{equation}
    See Equations~\eqref{eq:def-C_1}--\eqref{eq:def-C_3} for the definitions of $C_1$, $C_2$, and $C_3$, and Section~\ref{subsec:stat-mfd} for the notation $\nabla^{(g,C)}$.
    Note that $(\nabla')^\ast = \nabla^{(g^F, -C')}$.

    \item \label{thm:Main-thm-G=GL_MStat}
    Let $\eta_1, \eta_2 \in GL(D)$ be defined by
    \begin{align*}
        \eta_1
        &:=
        -\,\mathrm{id}_{D},
        \\
        \eta_2(X)
        &:=
        X
        -
        \frac{2}{n}\tr(X)\,I_n
        \qquad (X\in D).
    \end{align*}
    If $n \ge 3$, then 
    \[
        \MHSStat^{GL(n, \RR)}(\mathcal{N}_0^n, g^F)
        \simeq
        \langle \eta_1, \eta_2 \rangle \backslash S^3(D^\ast)^{\mathfrak{S}_n}
        \simeq
        \langle \eta_1, \eta_2 \rangle \backslash \mathcal{SP}_n^3.
    \]
    If $n = 1$ or $n = 2$, then
    \[
        \MHSStat^{GL(n, \RR)}(\mathcal{N}_0^n, g^F)
        \simeq
        \langle \eta_1 \rangle \backslash S^3(D^\ast)^{\mathfrak{S}_n}
        \simeq
        \langle \eta_1 \rangle \backslash \mathcal{SP}_n^3.
    \]

    \item \label{thm:Main-thm-G=GL_MHesse}
    The following identities hold:
    \begin{gather}
        \sigma_1^\ast \nabla^{A(+1)} = \nabla^{A(-1)}, \qquad 
        \sigma_1^\ast \nabla^{(g^F, C')} = \nabla^{(g^F, -C')}, \label{eq:sigma_1-pullback} \\
        \sigma_2^\ast \nabla^{A(+1)} = \nabla^{(g^F, C')}, \qquad 
        \sigma_2^\ast \nabla^{A(-1)} = \nabla^{(g^F, -C')}. \label{eq:sigma_2-pullback}
    \end{gather}
    Here, $\sigma_1$ and $\sigma_2$ are the isometries of $(\mathcal{N}_0^n, g^F)$ introduced in Proposition~\ref{prop:full-isometry_N0}.
    Moreover, we have
    \begin{equation}\label{eq:Main-thm-G=GL+_MHesse}
    \MHSStat_{\mathrm{DF}}^{GL(n, \RR)}(\mathcal{N}_0^n, g^F)
        \simeq
        \begin{cases}
        \{*\} & (n \ge 2),\\[2mm]
        \RR_{\ge 0} & (n = 1).
        \end{cases}
    \end{equation}
\end{enumerate}
\end{Thm}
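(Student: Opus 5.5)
The plan is to reduce everything to the base point $I_n$ and then to diagonal matrices. By Proposition~\ref{proposition:StatG-S3-isom}, $\Stat^{GL(n,\RR)}(\No^n,g^F)\simeq\Gamma(S^3T^\ast\No^n)^{GL(n,\RR)}\simeq S^3(\Sym(n,\RR)^\ast)^{O(n)}$, where $O(n)$ acts on $\Sym(n,\RR)$ by conjugation; this gives the homeomorphism in \ref{thm:Main-thm-G=GL_Stat}. An $O(n)$-invariant symmetric cubic form $C$ is determined by the cubic polynomial $X\mapsto C(X,X,X)$. Every symmetric matrix is orthogonally conjugate to a diagonal one, and the normalizer of $D$ acts on $D$ through $\mathfrak S_n$. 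So the restriction map $C\mapsto C|_{D}$ lands in $S^3(D^\ast)^{\mathfrak S_n}\simeq\mathcal{SP}^3_n$ and is injective; call it $\Psi$. This is Chevalley restriction for the symmetric pair $(GL(n,\RR),O(n))$.

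For surjectivity I evaluate at $\Sigma=I_n$ and $X=\mathrm{diag}(\lambda)$. This gives $C_1\mapsto\sum\lambda_i^3=f_1$, $C_2\mapsto\tr(X^2)\tr X=f_2$ and $C_3\mapsto(\tr X)^3=f_3$, which proves \eqref{eq:Psi_C-f}. Lemma~\ref{lem:SP3n-generator} then shows $\Psi$ is onto, hence a linear isomorphism. The dimension of $\mathcal{SP}^3_n$ (the number of partitions of $3$ into at most $n$ parts) is $3,2,1$ for $n\ge3$, $n=2$, $n=1$. For $n=2$ there is the relation $f_2=\tfrac12(3f_1+f_3)\cdot$(adjusted appropriately); a Newton-identity check gives that $f_1,f_3$ are independent, so $\{C_1,C_3\}$ spans. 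For $n=1$ all three are proportional, and Remark~\ref{rem:C_1-C^A(+1)_coincide} gives $\{C^{A(\alpha)}\}$. This completes \eqref{eq:S3-GL_classify}.

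For \ref{thm:Main-thm-G=GL_Hesse}, write $C=aC_1+bC_2+cC_3$. Proposition~\ref{prop:GL+-stat_CS} says the structure is conjugate symmetric, so its curvature is determined by the sectional curvatures. By $GL$-invariance it suffices to impose $\Sect^C_{g}=0$ at $I_n$ for all planes. Using Proposition~\ref{prop:curvature-formula} with an orthonormal basis of $\Sym(n,\RR)$ built from $\sqrt2 E_{ii}$ and $(E_{ij}+E_{ji})$, together with Lemma~\ref{prop:Fisher-curvature}, I get polynomial equations in $(a,b,c)$. The relevant basis pairs are diagonal–diagonal, diagonal–offdiagonal with a shared index, and offdiagonal pairs. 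Flatness of $\nabla$ is equivalent to $R^\nabla=0$, and for conjugate symmetric structures this is equivalent to the vanishing of all sectional $\nabla$-curvatures, since $R$ then has Riemannian-type symmetries. I expect this system to be the main obstacle: it is a finite but messy computation with the $\tfrac14[C^i,C^j]_{ij}$ terms. I would organize it by first using the offdiagonal/diagonal pairs, which should force $a^2=1$ (matching the curvature $-\tfrac14$-type contribution of $R^F$), and then the diagonal pairs, which should force $(b,c)\in\{(0,0),(-6a/n,4a/n^2)\}$. For $n=2$ the second solution should coincide with a multiple of $C_1$ after using the relation, and for $n=1$ every $C$ is flat.

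The identities in \ref{thm:Main-thm-G=GL_MHesse} offer a shortcut that avoids part of the computation. By Proposition~\ref{prop:diff_sigma_1-2}, $\sigma_1$ acts by $-1$ and $\sigma_2$ acts by $\eta_2$ on $D$ at $I_n$. Pulling back $f_1$ by $\eta_2$ gives $\sum(\lambda_i-\tfrac2n s)^3=f_1-\tfrac6nf_2+\tfrac{4\cdot3}{n^2}\cdots$. The expansion should give exactly $\Psi(C')$; note the $f_2,f_3$ conventions need care, and here $f_3=s^3$ and $f_2=s\sum\lambda_i^2$. Since $\sigma_1,\sigma_2$ are isometries normalizing $\Gamma(GL)$, they preserve $\Stat^{GL}_{\mathrm{DF}}$, which yields the four connections and \eqref{eq:sigma_1-pullback}--\eqref{eq:sigma_2-pullback}.

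For \ref{thm:Main-thm-G=GL_MStat}, I use Theorem~\ref{theorem:orbitdescription}. The group $\Aut(GL(n,\RR),O(n),g_0)$ acts on $S^3(\Sym^\ast)^{O(n)}$. By Propositions~\ref{proposition:fphistarphigk} and~\ref{prop:MHSStat-MStatStat_coincides_2}, combined with Corollary~\ref{cor:full-isometry_N0_In}, its image in the isotropy is $\Gamma(O(n))\rtimes\langle\sigma_1,\sigma_2\rangle$ (or $\langle\sigma_1\rangle$ for $n\le2$). This uses the automorphisms $A\mapsto{}^tA^{-1}$ and $A\mapsto(\det A)^{-2/n}A$ via \eqref{eq:H_normalizer_Gamma}. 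Conversely, every automorphism preserving $O(n)$ and $g_0$ induces an isometry fixing $I_n$. The $O(n)$ part acts trivially, so the quotient is $\langle\eta_1,\eta_2\rangle\backslash\mathcal{SP}^3_n$. Finally, \eqref{eq:Main-thm-G=GL+_MHesse} follows: for $n\ge2$ the DF set is a single orbit by the pullback identities, and for $n=1$ the quotient $\{\pm1\}\backslash\RR$ is $\RR_{\ge0}$.
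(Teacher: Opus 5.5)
Your proposal follows the paper's proof essentially step for step: restriction to $I_n$ and then to $D$ for (1); necessity in (2) from the basis-pair sectional curvature equations, with sufficiency in (2) and the identities in (4) from the pullback $\eta_2^\ast f_1=f_1-\tfrac6n f_2+\tfrac4{n^2}f_3$; and, for (3), the automorphisms $A\mapsto {}^tA^{-1}$ and $A\mapsto |\det A|^{-2/n}A$, which make $\Aut(GL(n,\RR),O(n),g^F_{I_n})$ surject onto $\Isom(\No,g^F)_{I_n}$. Your direct proof that restriction to $D$ is an isomorphism (injective by orthogonal diagonalization, surjective because $C_1,C_2,C_3\mapsto f_1,f_2,f_3$) is a neat self-contained substitute for Lemma~\ref{lem:Chevally-thorem_1}. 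The remaining loose ends are computational. The curvature system is predicted rather than derived; the paper supplies it in Lemmas~\ref{lem:sec-curve_C_n=2} and~\ref{lem:sec-curve_C_n-geq3}, and for $n=2$ it has to be solved separately in the basis $\{C_1,C_3\}$. Also, for $n=2$ the correct relation is $f_2=\tfrac13(2f_1+f_3)$, which gives $C'=-C_1$ as you anticipated.
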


We present several supplementary propositions and remarks concerning Theorem~\ref{thm:Main-thm-G=GL+}.
First, the following holds.

\begin{Prop}\label{prop:N0_HS-Stat-moduli-coincides}
The following equality holds.
\[
    \MHSStat^{GL(n, \RR)}(\No, g^F) = \MStatStat^{GL(n, \RR)}(\No, g^F).
\]
\end{Prop}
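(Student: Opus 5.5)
The plan is to apply Proposition~\ref{prop:two-moduli-coincide_sufficient}, or more robustly Proposition~\ref{proposition:SurjStatmodHSmod}. That is, we show that every isometry $f$ of $(\No,g^F)$ underlies some automorphism $(\phi,f)\in\Aut_{\HRiem}(\No,g^F,GL(n,\RR),\Gamma)$.

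By Proposition~\ref{prop:full-isometry_N0}, $\Isom(\No,g^F)$ is generated by $\Gamma(GL(n,\RR))$ together with $\sigma_1$ and $\sigma_2$; for $n\le 2$ only $\sigma_1$ is needed. The set of $f$ that admit such a $\phi$ is a subgroup of $\Isom(\No,g^F)$, since it is the image of a group homomorphism. It therefore suffices to exhibit a suitable $\phi$ for each generator.

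\begin{itemize}
\item For $f=\Gamma(A)$, take $\phi=\Inn_A$, so that $f$ comes from $\theta_A$.
\item For $\sigma_1$, use \eqref{eq:H_normalizer_Gamma}: $\sigma_1\circ\Gamma(A)\circ\sigma_1^{-1}=\Gamma({}^tA^{-1})$. Put $\phi_1(A):={}^tA^{-1}$. This is a Lie group automorphism of $GL(n,\RR)$, and it satisfies $\sigma_1(A\cdot\Sigma)=\phi_1(A)\cdot\sigma_1(\Sigma)$.
\item For $\sigma_2$ (with $n\ge3$), put $\phi_2(A):=(\det A)^{-2/n}A$. Again by \eqref{eq:H_normalizer_Gamma}, $\sigma_2(A\cdot\Sigma)=\phi_2(A)\cdot\sigma_2(\Sigma)$.
\end{itemize}

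The step requiring care is checking that $\phi_2$ is a Lie group automorphism of $GL(n,\RR)$, and not merely a map realizing the conjugation. Here $(\det A)^{-2/n}$ means $|\det A|^{-2/n}$, a positive real number. This is forced by the definition of $\sigma_2$ on $\Sym^+$, where $\det\Sigma>0$; for $A\cdot\Sigma$ we have $\det(A\Sigma{}^tA)=(\det A)^2\det\Sigma$, so only $|\det A|$ enters.

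\begin{itemize}
\item Homomorphism: $\phi_2(AB)=|\det A\det B|^{-2/n}AB=\phi_2(A)\phi_2(B)$, because scalars are central.
\item Smoothness: $\phi_2$ is clearly smooth.
\item Involution: $\det\phi_2(A)=|\det A|^{-2}\det A$, so $|\det\phi_2(A)|=|\det A|^{-1}$. Hence $\phi_2(\phi_2(A))=|\det A|^{2/n}|\det A|^{-2/n}A=A$, so $\phi_2$ is bijective.
\end{itemize}

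Smoothness of $\phi_1$ is immediate. Both $\phi_1$ and $\phi_2$ therefore lie in $\Aut(GL(n,\RR))$.

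Alternatively, one can argue as follows. The action $\Gamma$ of $GL(n,\RR)$ has kernel $\{\pm I_n\}$, so it is not effective and Proposition~\ref{prop:two-moduli-coincide_sufficient} does not apply verbatim. Still, the identities \eqref{eq:H_normalizer_Gamma} show that $\Gamma(GL(n,\RR))$ is normal in $\Isom(\No,g^F)$. The explicit lifts $\phi_1$ and $\phi_2$ then bypass the effectiveness issue, giving the direct verification above.

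Hence the map $\Aut_{\HRiem}(\No,g^F,GL(n,\RR),\Gamma)\to\Isom(\No,g^F)$, $(\phi,f)\mapsto f$, is surjective. Proposition~\ref{proposition:SurjStatmodHSmod} then yields $\MHSStat^{GL(n,\RR)}(\No,g^F)=\MStatStat^{GL(n,\RR)}(\No,g^F)$.
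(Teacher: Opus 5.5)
Your proof is correct and is essentially the paper's argument: the paper also lifts $\sigma_1$ and $\sigma_2$ via the automorphisms $A\mapsto {}^tA^{-1}$ and $A\mapsto ((\det A)^2)^{-1/n}A$ (Lemma~\ref{lem:surj_Aut-Isom_In}). The only difference is that the paper works at the base point, showing that $\Aut(GL(n,\RR),O(n),g^F_{I_n})\to\Isom(\No,g^F)_{I_n}$ is surjective and then invoking Proposition~\ref{prop:MHSStat-MStatStat_coincides_2}, whereas you prove surjectivity onto the full isometry group (lifting $\Gamma(A)$ to $\theta_A$) and invoke Proposition~\ref{proposition:SurjStatmodHSmod}.
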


The proof of Proposition~\ref{prop:N0_HS-Stat-moduli-coincides} follows immediately from Lemma~\ref{lem:surj_Aut-Isom_In}, which will be stated and proved later, together with Proposition~\ref{prop:MHSStat-MStatStat_coincides_2}.
From Proposition~\ref{prop:N0_HS-Stat-moduli-coincides}, the following corollary follows.

\begin{Cor}\label{cor:HS_Stat-moduli_coincides_N0}
The equality
\[
    \MHSStat^{GL(n, \RR)}_{\mathcal{C}}(\No, g^F) = \MStatStat^{GL(n, \RR)}_{\mathcal{C}}(\No, g^F)   
\]
holds for each class $\mathcal{C}$ of statistical manifolds (see Section~\ref{subsec:class-Stat^G} for the definition of the left-hand side and Section~\ref{subsection:DefStatModuli} for that of the right-hand side).
\end{Cor}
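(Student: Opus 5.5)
The plan is to derive the corollary from Proposition~\ref{prop:N0_HS-Stat-moduli-coincides}. The only additional ingredient is that the subset $\Stat^{GL(n,\RR)}_{\mathcal{C}}(\No,g^F)$ is saturated with respect to both equivalence relations $\sim_{\HS}$ and $\sim_{\STAT}$.

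First, recall what each side is. By Definition~\ref{def:MH-Stat_class-C}, $\MHSStat^{GL(n,\RR)}_{\mathcal{C}}(\No,g^F)$ is the quotient of $\Stat^{GL(n,\RR)}_{\mathcal{C}}(\No,g^F)$ by $\sim_{\HS}$. By Section~\ref{subsection:DefStatModuli}, $\MStatStat^{GL(n,\RR)}_{\mathcal{C}}(\No,g^F)$ is the image of this same set in $\MStatStat(\No,g^F)$, i.e.\ its quotient by $\sim_{\STAT}$. So it suffices to show that two elements $\nabla,\nabla' \in \Stat^{GL(n,\RR)}_{\mathcal{C}}(\No,g^F)$ satisfy $\nabla\sim_{\HS}\nabla'$ if and only if $\nabla\sim_{\STAT}\nabla'$.

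The forward implication is immediate, since every isomorphism in $\HS$ is in particular an isomorphism in $\STAT$. For the converse, suppose $\nabla\sim_{\STAT}\nabla'$. Both connections lie in $\Stat^{GL(n,\RR)}(\No,g^F)$. Proposition~\ref{prop:N0_HS-Stat-moduli-coincides} says that on this set the equivalence relations $\sim_{\HS}$ and $\sim_{\STAT}$ coincide: its content is that the canonical surjection $\MHSStat^{GL(n,\RR)}\to\MStatStat^{GL(n,\RR)}$ is a bijection. Hence $\nabla\sim_{\HS}\nabla'$.

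Finally, the equivalence classes involved stay inside the class-$\mathcal{C}$ subset. For $\sim_{\HS}$ this is Proposition~\ref{prop:sim-close_for-C}. For $\sim_{\STAT}$ it follows because $\mathcal{C}$ is replete, so an object $\STAT$-isomorphic to one in $\mathcal{C}$ is again in $\mathcal{C}$. Therefore the two quotients of $\Stat^{GL(n,\RR)}_{\mathcal{C}}(\No,g^F)$ are the same set, which is the corollary.

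There is no real obstacle once Proposition~\ref{prop:N0_HS-Stat-moduli-coincides} is granted. The only point needing care is to read that proposition as the equality of the two equivalence relations on $\Stat^{GL(n,\RR)}(\No,g^F)$, and not merely as an abstract bijection between the quotient spaces.
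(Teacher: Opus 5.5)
Your proof is correct and follows the paper's route: the paper simply deduces the corollary from Proposition~\ref{prop:N0_HS-Stat-moduli-coincides}, i.e.\ from the fact that $\sim_{\HS}$ and $\sim_{\STAT}$ coincide on $\Stat^{GL(n,\RR)}(\No,g^F)$, and then restricts to $\Stat^{GL(n,\RR)}_{\mathcal{C}}(\No,g^F)$. Your saturation paragraph is harmless but not needed, since two equivalence relations that agree on a set automatically agree on any subset of it.
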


\begin{Rem}
The Riemannian manifold $(\No, g^F)$ is a $\overline{GL(n,\RR)}$-homogeneous Riemannian manifold.
In particular, $\overline{GL(n, \RR)}$ acts effectively on $\No$ (see Section~\ref{subsec:Stat-HS-moduli_coincide} for the definition of $\overline{G}$).
Theorem~\ref{thm:Main-thm-G=GL+} remains valid if $GL(n,\RR)$ is replaced by $\overline{GL(n,\RR)}$ throughout.
\end{Rem}

Recall that $GL(n,\RR)$-homogeneous Riemannian manifold $(\No, g^F)$ is also a $GL^+(n,\RR)$-homogeneous Riemannian manifold (see Section~\ref{subsec:N0}).
Theorem~\ref{thm:Main-thm-G=GL+} remains valid if $GL(n,\RR)$ is replaced by $GL^+(n,\RR)$ throughout.
That is, the following holds.

\begin{Prop}
For each class $\mathcal{C}$ of statistical manifolds, the following equalities hold:
\begin{align}
    \label{eq:Stat_GL+-GL_coincide}
    \Stat^{GL^+(n, \RR)}(\mathcal{N}_0^n, g^F)
    &= \Stat^{GL(n,\RR)}(\mathcal{N}_0^n, g^F),\\
    \label{eq:StatDF_GL+-GL_coincide}
    \Stat_{\mathcal{C}}^{GL^+(n, \RR)}(\mathcal{N}_0^n, g^F)
    &= \Stat_{\mathcal{C}}^{GL(n,\RR)}(\mathcal{N}_0^n, g^F),\\
    \label{eq:MStat_GL+-GL_coincide}
    \MHSStat^{GL^+(n, \RR)}(\mathcal{N}_0^n, g^F)
    &= \MHSStat^{GL(n,\RR)}(\mathcal{N}_0^n, g^F),\\
    \label{eq:MStatDF_GL+-GL_coincide}
    \MHSStat_{\mathcal{C}}^{GL^+(n, \RR)}(\mathcal{N}_0^n, g^F)
    &= \MHSStat_{\mathcal{C}}^{GL(n,\RR)}(\mathcal{N}_0^n, g^F).
\end{align}
\end{Prop}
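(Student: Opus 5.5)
The plan is to reduce everything to the first equality \eqref{eq:Stat_GL+-GL_coincide}, and then derive the other three from it formally.

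For \eqref{eq:Stat_GL+-GL_coincide}, the inclusion $\supset$ is trivial, since $GL^+(n,\RR)\subset GL(n,\RR)$. For the converse, fix the base point $I_n$. By Proposition~\ref{proposition:StatG-S3-isom}, we have $\Stat^{GL^+(n,\RR)}(\No,g^F)\simeq S^3(T^\ast_{I_n}\No)^{SO(n)}$ and $\Stat^{GL(n,\RR)}(\No,g^F)\simeq S^3(T^\ast_{I_n}\No)^{O(n)}$. The isotropy action is $k\cdot X=kX{}^tk$ on $\Sym(n,\RR)$. For odd $n$ we have $O(n)=SO(n)\times\{\pm I_n\}$, and $-I_n$ acts trivially on $\Sym(n,\RR)$, so the two invariant spaces coincide at once.

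For even $n$ we must show that an $SO(n)$-invariant cubic form $C$ on $\Sym(n,\RR)$ is also invariant under a reflection $r=\mathrm{diag}(-1,1,\dots,1)$. I would proceed as follows. Any $X\in\Sym(n,\RR)$ is $SO(n)$-conjugate to a diagonal matrix $d\in D$. Since $r$ fixes $D$ pointwise, we get
\[
C(rXr,rXr,rXr)=C(d',d',d')=C(X,X,X),
\]
where $rXr=(rk)d({}^t k\,r)$ and, after adjusting the sign of a column of $rk$ (which commutes with $d$), this is an $SO(n)$-conjugate of $d$. By polarization, $C$ is then $r$-invariant. The argument is uniform in $n$, so one could even skip the parity split.

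The only delicate point is the column-sign adjustment: for diagonal $d$ we have $d=sds$ with $s=\mathrm{diag}(-1,1,\dots,1)$. Hence $rk\,d\,{}^t(rk)=(rks)d\,{}^t(rks)$ with $rks\in SO(n)$. This is the key step, and it is elementary.

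Once \eqref{eq:Stat_GL+-GL_coincide} holds, \eqref{eq:StatDF_GL+-GL_coincide} follows by intersecting both sides with the set of connections of class $\mathcal C$.

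For \eqref{eq:MStat_GL+-GL_coincide}, I would use Proposition~\ref{prop:N0_HS-Stat-moduli-coincides} on the $GL$ side. This gives $\MHSStat^{GL(n,\RR)}=\MStatStat^{GL(n,\RR)}$, which is the quotient of $\Stat^{GL}$ by arbitrary isometries. On the $GL^+$ side, I would show that the same quotient arises. One direction is general: $\HS$-equivalence implies $\STAT$-equivalence, so the $GL^+$ $\HS$-classes are finer than the $\STAT$-classes of the common set $\Stat^{GL^+}=\Stat^{GL}$. For the other direction, I would show that every isometry normalizes $\Gamma(GL^+(n,\RR))$. Indeed, \eqref{eq:H_normalizer_Gamma} shows that $\sigma_1,\sigma_2$ send $\Gamma(A)$ to $\Gamma({}^tA^{-1})$ and $\Gamma((\det A)^{-2/n}A)$, and both preserve positivity of the determinant. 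Moreover, $\Gamma(GL(n,\RR))$ normalizes $\Gamma(GL^+(n,\RR))$. By Proposition~\ref{prop:full-isometry_N0}, every isometry $f$ therefore normalizes $\sigma(GL^+(n,\RR))$.

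Conjugation by $f$ then defines $\phi_f\in\Aut(GL^+(n,\RR))$ with $(\phi_f,f)\in\Aut_{\HRiem}$. To lift this to $GL^+(n,\RR)$ itself, note that the kernel of the action is $\{\pm I_n\}\cap GL^+(n,\RR)$. So I would either apply Proposition~\ref{prop:two-moduli-coincide_sufficient_2}, or lift explicitly using the formulas $A\mapsto{}^tA^{-1}$ and $A\mapsto(\det A)^{-2/n}A$, together with conjugation by $GL(n,\RR)$. These are honest automorphisms of $GL^+(n,\RR)$ and preserve $SO(n)$, so Proposition~\ref{proposition:SurjStatmodHSmod} applies.

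Consequently both $\HS$-moduli spaces equal $\Isom\backslash\Stat^{GL}$, which gives \eqref{eq:MStat_GL+-GL_coincide}. Restricting to classes via Proposition~\ref{prop:sim-close_for-C} gives \eqref{eq:MStatDF_GL+-GL_coincide}.
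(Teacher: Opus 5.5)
Your proposal is correct and follows essentially the same route as the paper. The first equality reduces to $S^3(\Sym(n,\RR)^\ast)^{SO(n)}=S^3(\Sym(n,\RR)^\ast)^{O(n)}$ via $SO(n)$-diagonalization and polarization; your column-sign adjustment makes explicit a step the paper leaves implicit when it asserts that $SO(n)$- and $O(n)$-invariant functions on $\Sym(n,\RR)$ coincide. The third equality follows, as in the paper, from showing that the $\HS$- and $\STAT$-moduli coincide for $GL^+(n,\RR)$ via the lifts $A\mapsto{}^tA^{-1}$ and $A\mapsto(\det A)^{-2/n}A$, and then combining this with the first equality and Proposition~\ref{prop:N0_HS-Stat-moduli-coincides}.

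The only difference is cosmetic. You lift all of $\Isom(\No,g^F)$ into $\Aut_{\HRiem}$ and invoke Proposition~\ref{proposition:SurjStatmodHSmod}. The paper lifts only the isotropy group $\Isom(\No,g^F)_{I_n}$ into $\Aut(GL^+(n,\RR),SO(n),g^F_{I_n})$ and invokes Proposition~\ref{prop:MHSStat-MStatStat_coincides_2}. In your version, conjugation by a non-orthogonal $A$ does not preserve $SO(n)$, contrary to what you state, but your route does not need that property.
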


\begin{proof}
We first prove Equation~\eqref{eq:Stat_GL+-GL_coincide}.
By Proposition~\ref{proposition:StatG-S3-isom}, it suffices to show that
\begin{equation*}
    S^3(T^{\ast}_{I_n}\mathcal{N}_0)^{SO(n)}
    \;=\;
    S^3(T^{\ast}_{I_n}\mathcal{N}_0)^{O(n)}.
\end{equation*}
By the $O(n)$-equivariant linear isomorphism
\[
    \Sym(n,\RR) \longrightarrow T_{I_n}\mathcal{N}_0,
    \qquad
    X \longmapsto \left.\frac{d}{dt}\right|_{t=0}(I_n + tX),
\]
it further suffices to show that
\[
    S^3(V^\ast)^{SO(n)} = S^3(V^\ast)^{O(n)}, 
    \qquad (V := \Sym(n, \RR)),
\]
where the action of $A \in O(n)$ on $X \in \Sym(n,\RR)$ is given by $A \cdot X := A X {}^{t}A$.
Let $C(V)^{SO(n)}$ (resp.~$C(V)^{O(n)}$) denote the space of $SO(n)$- (resp.~$O(n)$-) invariant $\RR$-valued continuous functions on $V$.
Then the map
\[
    S^3(V^\ast)^{SO(n)} \longrightarrow C(V)^{SO(n)}, 
    \qquad C \longmapsto q_C
\]
is linear and injective, where $q_C(X) := C(X,X,X)$.
Since any symmetric matrix is diagonalizable by an element of $SO(n)$, we have
\[
    C(V)^{SO(n)} = C(V)^{O(n)}.
\]
Therefore,
\[
    S^3(V^\ast)^{SO(n)} = S^3(V^\ast)^{O(n)}.
\]
This proves Equation~\eqref{eq:Stat_GL+-GL_coincide}.
Equation~\eqref{eq:StatDF_GL+-GL_coincide} follows immediately from Equation~\eqref{eq:Stat_GL+-GL_coincide}.

Next, we prove Equation~\eqref{eq:MStat_GL+-GL_coincide}.
By Proposition~\ref{prop:Aut-GL+_Isom_surj}, which will be stated later, and Proposition~\ref{prop:MHSStat-MStatStat_coincides_2}, we obtain
\[
    \MHSStat^{GL^+(n, \RR)}(\No, g^F)
    = \MStatStat^{GL^+(n, \RR)}(\No, g^F).
\]
Moreover, by Equation~\eqref{eq:Stat_GL+-GL_coincide}, we have
\[
    \MHSStat^{GL^+(n, \RR)}(\No, g^F)
    = \MStatStat^{GL(n, \RR)}(\No, g^F).
\]
Hence, Equation~\eqref{eq:MStat_GL+-GL_coincide} follows from Proposition~\ref{prop:N0_HS-Stat-moduli-coincides}. Equation~\eqref{eq:MStatDF_GL+-GL_coincide} follows from Equation~\eqref{eq:MStat_GL+-GL_coincide}.
\end{proof}

The canonical divergences of the Hessian structures described in Theorem~\ref{thm:Main-thm-G=GL+}~\ref{thm:Main-thm-G=GL_Hesse} can be written explicitly as follows.

\begin{Prop}\label{prop:Amari-open-prob}
By Theorem~\ref{thm:Main-thm-G=GL+}~\ref{thm:Main-thm-G=GL_MHesse}, the canonical divergence in the sense of~\cite{Shima} of the Hessian structure $(g^F, \nabla^{(g^F,-C')})$ coincides with $\sigma_2^\ast D_{\mathrm{KL}}$. 
Here, $D_{\mathrm{KL}}$ denotes the Kullback--Leibler divergence (KL-divergence)
\[
    D_{\mathrm{KL}}(\Sigma_1, \Sigma_2) = \frac{1}{2} \left( \tr(\Sigma_1 \Sigma_2^{-1})
    -
    \log\det( \Sigma_1 \Sigma_2^{-1})
    -
    n \right),
\]
which is the canonical divergence of $(g^F, \nabla^{A(-1)})$. 
Writing $D'$ explicitly, we obtain
\[
    D'(\Sigma_1,\Sigma_2) = \frac{1}{2} \left( \det(\Sigma_1 \Sigma_2^{-1})^{-2/n}\,
        \tr(\Sigma_1 \Sigma_2^{-1})
        - \log \det(\Sigma_1^{-1} \Sigma_2)
        - n \right).
\]
In particular, $\sigma_2$ induces an isomorphism in the category $\HS$ between the two homogeneous Hessian manifolds
\[
(\No, g^F, \nabla^{A(-1)}, GL(n,\RR), \Gamma)
\quad \text{and} \quad
(\No, g^F, \nabla^{(g^F,-C')}, GL(n,\RR), \Gamma)
\]
Indeed, this follows from Corollary~\ref{cor:full-isometry_N0_In}, Lemma~\ref{lem:surj_Aut-Isom_In}, and the arguments in Section~\ref{subsec:moduli-inv-stat-conn}.
Therefore, it can be said that the divergences $D'$ and $D_{\mathrm{KL}}$ are related by an isometry of $(\mathcal{N}^n_0, g^F)$ that is compatible with the $GL(n,\mathbb{R})$-action.
\end{Prop}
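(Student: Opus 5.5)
The plan is to deduce everything from the pullback identity \eqref{eq:sigma_2-pullback} in Theorem~\ref{thm:Main-thm-G=GL+}~\ref{thm:Main-thm-G=GL_MHesse}, namely $\sigma_2^\ast \nabla^{A(-1)} = \nabla^{(g^F,-C')}$, combined with the naturality of canonical divergences under statistical isomorphisms.

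\emph{Naturality.} Let $f : (M,g,\nabla) \to (M',g',\nabla')$ be a diffeomorphism with $f^\ast g' = g$ and $f^\ast\nabla' = \nabla$, between dually flat manifolds. Then $f^\ast\nabla'^{\ast} = \nabla^\ast$ as well. We show that the canonical divergence $D$ of $(g,\nabla)$ and $D'$ of $(g',\nabla')$ satisfy $D = D'\circ(f\times f)$.
\begin{itemize}
  \item If $x^i$ are $\nabla'$-affine coordinates and $\psi$ is a potential with $g' = \nabla' d\psi$, then $x^i\circ f$ are $\nabla$-affine coordinates and $\psi\circ f$ is a potential for $g$.
  \item The Legendre dual coordinates $y_i = \partial\psi/\partial x^i$ and the dual potential $\varphi$ pull back in the same way.
  \item Shima's formula $D(p,q) = \psi(q) + \varphi(p) - \sum_i x^i(q)\,y_i(p)$ then transforms correctly.
  \item The result is independent of the choice of affine coordinates, since different choices differ by affine changes which leave $D$ unchanged.
\end{itemize}
If one prefers an intrinsic formulation, the canonical divergence is uniquely characterized by $g$ and $\nabla$, for instance via the expression along $\nabla$-geodesics. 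Naturality then holds because $f$ preserves all data entering that characterization.

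Applying this with $f=\sigma_2$, $\nabla'=\nabla^{A(-1)}$ and $\nabla=\nabla^{(g^F,-C')}$, and using the well-known fact that $D_{\mathrm{KL}}$ is the canonical divergence of $(g^F,\nabla^{A(-1)})$, gives $D' = \sigma_2^\ast D_{\mathrm{KL}}$.

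\emph{Explicit formula.} This is a direct computation with $\sigma_2(\Sigma)=(\det\Sigma)^{-2/n}\Sigma$. First,
\[
\tr\bigl(\sigma_2(\Sigma_1)\sigma_2(\Sigma_2)^{-1}\bigr) = \det(\Sigma_1\Sigma_2^{-1})^{-2/n}\tr(\Sigma_1\Sigma_2^{-1}).
\]
Second, since scalars contribute an $n$-th power to the determinant,
\[
\log\det\bigl(\sigma_2(\Sigma_1)\sigma_2(\Sigma_2)^{-1}\bigr) = \log\det(\Sigma_1\Sigma_2^{-1}) - 2\log\det(\Sigma_1\Sigma_2^{-1}) = \log\det(\Sigma_1^{-1}\Sigma_2).
\]
Substituting both into $D_{\mathrm{KL}}$ yields the stated expression for $D'$.

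\emph{The $\HS$-isomorphism.} We exhibit an automorphism $\phi$ of $GL(n,\RR)$ with $\sigma_2(A\cdot\Sigma)=\phi(A)\cdot\sigma_2(\Sigma)$.
\begin{itemize}
  \item By \eqref{eq:H_normalizer_Gamma}, $\sigma_2$ normalizes $\Gamma(GL(n,\RR))$. This suggests taking $\phi(A)=|\det A|^{-2/n}A$, where the absolute value is harmless because $\Gamma(cA)=\Gamma(-cA)$.
  \item We must check that $\phi$ is a smooth automorphism of $GL(n,\RR)$. It is clearly a smooth homomorphism. Injectivity needs care: $\det\phi(A) = |\det A|^{-2}\det A$, so $|\det\phi(A)| = |\det A|^{-1}$, and $\phi$ is inverted by $B\mapsto |\det B|^{-2/n}B$. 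This is the same formula, so $\phi$ is an involution. It also preserves $O(n)$.
  \item Alternatively, we invoke Corollary~\ref{cor:full-isometry_N0_In} together with Lemma~\ref{lem:surj_Aut-Isom_In} (surjectivity of $\Aut(G,K,g_0)\to\Isom(\No,g^F)_{I_n}$). Since $\sigma_2$ fixes $I_n$, this lemma produces $\phi\in\Aut(GL(n,\RR),O(n),g_0)$ with $f_\phi=\sigma_2$.
  \item Proposition~\ref{proposition:sim_AutHRiem} and Theorem~\ref{thm:MHS-Stat_Aut} then show that $(\phi,\sigma_2)$ is an isomorphism in $\HS$ between the two homogeneous Hessian manifolds.
\end{itemize}

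The main obstacle is the naturality step. Its cleanest justification depends on which definition of canonical divergence from~\cite{Shima} is used, and independence from the choice of affine coordinates and potentials must be made explicit. Everything else is computation or a citation of earlier results.
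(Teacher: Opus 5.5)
Your proposal is correct and follows essentially the same route as the paper. The divergence statement comes from transporting $D_{\mathrm{KL}}$ along the isometry $\sigma_2$ via $\sigma_2^\ast\nabla^{A(-1)}=\nabla^{(g^F,-C')}$; the paper leaves the naturality of the canonical divergence implicit, and you spell it out. The $\HS$-isomorphism comes from the automorphism $A\mapsto |\det A|^{-2/n}A$, which is exactly the $\tilde{\sigma}_2$ used in the proof of Lemma~\ref{lem:surj_Aut-Isom_In}, and your explicit computation of $D'$ and check that this map is an involutive automorphism fixing $O(n)$ are also correct.
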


\begin{Rem}
Every \(GL(n,\RR)\)-invariant dually flat statistical connection on \((\No^n,g^F)\) induces a Jordan algebra structure on the tangent space \(T_{I_n}\No^n \simeq \Sym(n,\RR)\) (see~\cite[Section 9.3]{Shima} for details).
In particular, as a consequence of
Theorem~\ref{thm:Main-thm-G=GL+}~\ref{thm:Main-thm-G=GL_MHesse},
all such Jordan algebra structures are mutually isomorphic
when \(n \geq 2\).
Moreover, the isomorphisms can be chosen to preserve the inner product
\(g^F_{I_n}\).
\end{Rem}

We now present the second main theorem of this paper.

\begin{Thm}\label{thm:Main-thm-G=Isom}
Let $n$ be an integer with $n \ge 1$, and set $G := \mathrm{Isom}(\mathcal{N}_0^n, g^F)$.
Then the following equalities hold:
\begin{align}
    \Stat^{G}(\mathcal{N}_0^n, g^F)
        &= \{\, \nabla^{g^F} \,\},
        \label{eq:Main-thm-G=Isom_Stat}
    \\
    \Stat_{\mathrm{DF}}^{G}(\mathcal{N}_0^n, g^F)
        &=
        \begin{cases}
        \emptyset & (n \ge 2),\\[2mm]
        \{\, \nabla^{g^F} \,\} & (n = 1),
        \end{cases}
        \label{eq:Main-thm-G=Isom_Hesse}
    \\
    \MHSStat^{G}(\mathcal{N}_0^n, g^F)
        &= \{\, [\nabla^{g^F}] \,\},
        \label{eq:Main-thm-G=Isom_MStat}
    \\
    \MHSStat_{\mathrm{DF}}^{G}(\mathcal{N}_0^n, g^F)
        &=
        \begin{cases}
        \emptyset & (n \ge 2),\\[2mm]
        \{\, [\nabla^{g^F}] \,\} & (n = 1),
        \end{cases}
        \label{eq:Main-thm-G=Isom_MHesse}
\end{align}
Here, $\nabla^{g^F}$ denotes the Levi-Civita connection with respect to $g^F$.
\end{Thm}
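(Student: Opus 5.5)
The plan is to reduce everything to the isotropy representation at $I_n$ and use the geodesic symmetry $\sigma_1$. First, $G=\Isom(\mathcal{N}_0^n,g^F)$ contains $\Gamma(GL(n,\RR))$, so it acts transitively and isometrically on $\mathcal{N}_0^n$. It is a Lie group because $\mathcal{N}_0^n$ is connected. Hence $(\mathcal{N}_0^n,g^F,G,\mathrm{id})$ is a homogeneous Riemannian manifold, and Proposition~\ref{proposition:StatG-S3-isom} applies. It identifies $\Stat^G(\mathcal{N}_0^n,g^F)$ with $S^3(T^\ast_{I_n}\mathcal{N}_0)^{K}$, where $K=\Isom(\mathcal{N}_0,g^F)_{I_n}$ and $K$ acts through the differentials at $I_n$. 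Concretely, since $g^F$ is $G$-invariant, $f^\ast\nabla^{(g^F,C)}=\nabla^{(g^F,f^\ast C)}$ for every $f\in G$. So $\nabla$ is $G$-invariant if and only if $C^\nabla$ is, and by homogeneity this holds if and only if $C^\nabla_{I_n}$ is $K$-invariant.

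Next I use Corollary~\ref{cor:full-isometry_N0_In}. For every $n\ge1$, $\sigma_1\in K$. By Proposition~\ref{prop:diff_sigma_1-2}, $(d\sigma_1)_{I_n}=-\mathrm{id}$ on $\Sym(n,\RR)\simeq T_{I_n}\mathcal{N}_0$. A cubic form $C_{I_n}$ invariant under $-\mathrm{id}$ satisfies $C_{I_n}(X,Y,Z)=(-1)^3C_{I_n}(X,Y,Z)$, so $C_{I_n}=0$. Conversely, $0$ is trivially $K$-invariant. Therefore $\Gamma(S^3T^\ast\mathcal{N}_0)^G=\{0\}$, and by Theorem~\ref{theorem:nablaC} this gives $\Stat^G(\mathcal{N}_0^n,g^F)=\{\nabla^{g^F}\}$, which is~\eqref{eq:Main-thm-G=Isom_Stat}. (Alternatively, one could use Theorem~\ref{thm:Main-thm-G=GL+}\ref{thm:Main-thm-G=GL_Stat} and impose $\sigma_1$-invariance on $\mathrm{span}\{C_1,C_2,C_3\}$. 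The direct argument is shorter and independent of $n$.)

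For~\eqref{eq:Main-thm-G=Isom_Hesse}, it remains to decide when $\nabla^{g^F}$ is flat. By Corollary~\ref{cor:Fisher-flatness}, this happens exactly when $n=1$, which gives the two cases. Finally, since $\Stat^G$ and $\Stat^G_{\mathrm{DF}}$ are either empty or the singleton $\{\nabla^{g^F}\}$, their quotients by $\sim_{\HS}$ (Definitions~\ref{def:MH-Stat^G} and~\ref{def:MH-Stat_class-C}) are respectively empty or the single class $[\nabla^{g^F}]$. This yields~\eqref{eq:Main-thm-G=Isom_MStat} and~\eqref{eq:Main-thm-G=Isom_MHesse}.

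There is no real obstacle; the only point requiring care is the first step. One must verify the identification of $G$-invariant statistical connections with $K$-invariant cubic forms at $I_n$ for the non-connected group $G$, including the component containing $\sigma_1$. This follows from the naturality $f^\ast\nabla^{(g,C)}=\nabla^{(f^\ast g,f^\ast C)}$ together with transitivity, exactly as in Proposition~\ref{proposition:StatG-S3-isom}.
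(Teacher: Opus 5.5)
Your proof is correct and follows essentially the same route as the paper. The paper proves a general lemma: on a Riemannian symmetric space, the geodesic symmetry $s_p$ with $(ds_p)_p=-\mathrm{id}$ forces any isometry-invariant cubic form to vanish at every point $p$, and it then invokes Corollary~\ref{cor:Fisher-flatness} for the dually flat case. The only cosmetic difference is that you use $\sigma_1$ at the single point $I_n$ and propagate by transitivity, whereas the paper applies the symmetry at each point directly.
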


\begin{Rem}\label{Remark:strictequivariant}
As mentioned in the Introduction (Section~\ref{sec:Intro}), one may attempt to define a moduli space of $G$-invariant statistical connections using only strictly $G$-equivariant isometries. We explain here that such a definition leads to a trivial equivalence relation.

In this paper, as one of the formulations of the moduli space, we adopt the equivalence relation induced by the action of the group $\Aut_{\HRiem}(M,g,G,\sigma)$ (see Section~\ref{subsec:moduli-inv-stat-conn}).
On the other hand, consider the subgroup of $G$-equivariant isometries of $(M,g)$ defined by
\[
\Isom^G(M, g) := \{ f \in \Isom(M,g) \mid f(h \cdot x) = h \cdot f(x) \text{ for all } h \in G, x \in M \}.
\]
Via the correspondence $f \mapsto (\mathrm{id}_G, f)$, this group can be regarded as a subgroup of $\Aut_{\HRiem}(M,g,G,\sigma)$.
We investigate what happens if one defines the moduli space using the action of $\Isom^G(M,g)$. 

Consider the homogeneous Riemannian manifold
\[
(M,g,G,\sigma) = (\mathcal{N}_0^n, g^F, GL(n,\RR), \Gamma), \qquad n \geq 1
\]
(see Example~\ref{ex:N0_homog-Riem}).
For each $r > 0$, define a map
\[
f_r : \mathcal{N}_0^n \longrightarrow \mathcal{N}_0^n, \qquad \Sigma \longmapsto r\Sigma.
\]
Then $f_r \in \Isom^G(M,g)$. In fact, one can show that
\[
\Isom^G(M,g) = \{ f_r \mid r > 0 \}.
\]
Moreover, since $\sqrt{r} I_n \in GL^+(n,\RR) \subset GL(n,\RR)$ and $f_r = \sigma_{\sqrt{r} I_n}$, it follows that each $f_r$ acts trivially on any $G$-invariant tensor field on $M$. In particular, the induced action of $\Isom^G(M,g)$ on $\Stat^G(M,g)$ is trivial.
Consequently, the equivalence relation induced by $\Isom^G(M,g)$ is trivial, and no nontrivial identifications occur in the corresponding moduli space.  
For example, in the case $n \geq 3$, the four elements of $\Stat^G_{\mathrm{DF}}(M,g)$ that appear in Theorem~\ref{thm:Main-thm-G=GL+}~\ref{thm:Main-thm-G=GL_Hesse} remain distinct under this equivalence.
Similarly, when $n = 2$, the two elements of $\Stat^G_{\mathrm{DF}}(M,g)$ are not identified.

This shows that restricting to strictly $G$-equivariant isometries is too rigid to yield a meaningful moduli space.
\end{Rem}

\subsection{Proof of Theorem~\ref{thm:Main-thm-G=GL+}}\label{subsec:proof-of-main-thm}
In this subsection, we give a proof of Theorem~\ref{thm:Main-thm-G=GL+}.

Throughout this subsection, let $\Xi$ denote the linear isomorphism defined in \eqref{eq:O(n)-equiv-linear-isom}. Namely,
\begin{equation}\label{eq:def-of-Xi}
    \Xi \colon \Sym(n,\RR) \longrightarrow T_{I_n}\mathcal{N}_0,
    \qquad
    X \longmapsto \left.\frac{d}{dt}\right|_{t=0}(I_n + tX).
\end{equation}
Note that the linear isomorphism $\Xi$ is $O(n)$-equivariant, where the action of $A \in O(n)$ on $X \in \Sym(n,\RR)$ is given by
\[
    A \cdot X = A  X {}^{t}A.
\]

We begin with \ref{thm:Main-thm-G=GL_Stat} in Theorem~\ref{thm:Main-thm-G=GL+}.
For its proof, we prepare the following lemma.

\begin{Lem}\label{lem:Chevally-thorem_1}
Let $\iota : D \hookrightarrow \Sym(n,\mathbb{R})$ denote the inclusion of the
subspace of diagonal matrices.
The restriction map
\[
     S^3(\Sym(n,\mathbb{R})^\ast) \longrightarrow S^3(D^\ast),
     \qquad C \longmapsto \iota^{\ast} C,
\]
induces the following linear isomorphism:
\[
     S^3(\Sym(n,\mathbb{R})^\ast)^{O(n)}
     \xrightarrow{\;\sim\;}
     S^3(D^\ast)^{\mathfrak{S}_n}.
\]
\end{Lem}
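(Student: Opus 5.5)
This is a Chevalley-type restriction statement for the $O(n)$-action $X\mapsto AX{}^tA$ on $\Sym(n,\RR)$. The plan is to pass from symmetric $3$-tensors to cubic polynomials and then argue in three steps: the map is well defined into $\mathfrak{S}_n$-invariants, it is injective, and it is surjective.

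\textbf{Reduction to polynomials.} By polarization, $C\mapsto q_C$ with $q_C(X):=C(X,X,X)$ identifies $S^3(V^\ast)$ with the space $\mathcal P^3(V)$ of homogeneous cubic polynomial functions on a real vector space $V$. This identification commutes with linear maps. Under it, $\iota^\ast C$ corresponds to the restriction $q_C|_D$, and $O(n)$-invariant tensors correspond to $O(n)$-invariant cubics. So it suffices to show that restriction $\mathcal P^3(\Sym(n,\RR))^{O(n)}\to\mathcal P^3(D)^{\mathfrak S_n}$ is a linear isomorphism.

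\textbf{Well-definedness and injectivity.} Permutation matrices lie in $O(n)$ and act on $D$ by permuting the diagonal entries. Hence the restriction of an $O(n)$-invariant polynomial is $\mathfrak S_n$-invariant, and the map lands in $\mathcal P^3(D)^{\mathfrak S_n}$. For injectivity, recall that every symmetric matrix is orthogonally diagonalizable, so $O(n)\cdot D=\Sym(n,\RR)$. An invariant $q$ with $q|_D=0$ therefore vanishes identically. The same argument was used in the proof of Equation~\eqref{eq:Stat_GL+-GL_coincide}.

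\textbf{Surjectivity.} This is the only step that needs an explicit construction. By Lemma~\ref{lem:SP3n-generator}, $\mathcal P^3(D)^{\mathfrak S_n}$ is spanned by $f_1,f_2,f_3$, so it suffices to produce $O(n)$-invariant cubics restricting to each of them. On $D$ we have $p_k(\lambda)=\sum_i\lambda_i^k=\tr(X^k)$ for $X=\mathrm{diag}(\lambda_1,\dots,\lambda_n)$. Moreover $f_1=p_3$, $f_2=p_2p_1$ and $f_3=p_1^3$. The polynomials
\[
\tr(X^3),\qquad \tr(X^2)\tr(X),\qquad \tr(X)^3
\]
are cubic, are invariant under $X\mapsto AX{}^tA=AXA^{-1}$ for $A\in O(n)$, and restrict on $D$ to $f_1,f_2,f_3$ respectively. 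Their polarizations are exactly the values of $C_1,C_2,C_3$ at $I_n$ (up to the identification $\Xi$), which is consistent with Theorem~\ref{thm:Main-thm-G=GL+}. Combined with injectivity, this gives the claimed linear isomorphism.
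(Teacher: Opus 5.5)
Your proof is correct and complete. The paper does not actually prove this lemma in the text. It presents it as a form of Chevalley's restriction theorem (for $\mathfrak{gl}(n,\RR)$ with $\mathfrak{p}=\Sym(n,\RR)$, $K=O(n)$, $\mathfrak{a}=D$ and Weyl group $\mathfrak{S}_n$), remarks that no reference matches the exact statement, and defers to the earlier paper of Kobayashi--Okuda.

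Your route is a self-contained, elementary replacement for that citation. Well-definedness via permutation matrices in $O(n)$ and injectivity via orthogonal diagonalization are the easy half of any Chevalley-type statement. For surjectivity you avoid general theory by exhibiting the invariant cubics $\tr(X^3)$, $\tr(X^2)\tr(X)$ and $\tr(X)^3$. Their restrictions $p_3$, $p_2p_1$, $p_1^3$ are exactly $f_1,f_2,f_3$, which span $\mathcal{SP}^3_n$ by Lemma~\ref{lem:SP3n-generator}. Only spanning is needed, so the degenerate cases $n=1,2$ cause no trouble.

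This avoids having to match a general formulation from the literature, where issues such as $K=O(n)$ being disconnected or $\mathfrak{gl}(n,\RR)$ being only reductive would need care. As a bonus, your construction already gives $\rest_D(\gamma(C_i|_{I_n}))=f_i$, so it proves the lemma and the description \eqref{eq:S3-GL_classify} together. There is no circularity, because your remark about consistency with Theorem~\ref{thm:Main-thm-G=GL+} is never used.

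The appeal to the general theorem buys uniformity in the degree. Your argument extends to any degree $k$ just as easily, since in characteristic $0$ every symmetric polynomial is a polynomial in the power sums $p_j=\tr(X^j)|_D$.
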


Lemma~\ref{lem:Chevally-thorem_1} may be regarded as a version of what is often called the
Chevalley restriction theorem (see, for example, \cite[Theorem 2.1.5.1]{Warner_1972}). However, we were unable to find a statement in the literature that matches the exact form of Lemma~\ref{lem:Chevally-thorem_1}.
For the reader's convenience, we remark that an essentially complete proof was provided in our earlier paper by the second and third authors~\cite{Kobayashi-Okuda_2025}, to which we refer the interested reader.

We now give the proof of \ref{thm:Main-thm-G=GL_Stat} in
Theorem~\ref{thm:Main-thm-G=GL+}.

\begin{proof}[Proof of \ref{thm:Main-thm-G=GL_Stat} in Theorem~\ref{thm:Main-thm-G=GL+}]
The map
\[
    \Stat^{GL(n,\RR)}(\mathcal{N}_0, g^F)
        \longrightarrow \Gamma( S^3 T^\ast \mathcal{N}_0)^{GL(n,\RR)}, 
        \qquad
        \nabla \longmapsto C^{\nabla},
\]
is a homeomorphism by Proposition~\ref{proposition:StatG-S3-isom}.
Next, we construct a linear isomorphism
\[
    \Psi : \Gamma( S^3 T^\ast \mathcal{N}_0)^{GL(n,\RR)}
        \longrightarrow \mathcal{SP}_n^3
\]
that satisfies
\[
    \Psi(C^{A(+1)}) = \Psi(C_1) = f_1, \qquad
    \Psi(C_2) = f_2, \qquad
    \Psi(C_3) = f_3.
\]
By Proposition~\ref{proposition:StatG-S3-isom}, the restriction map
\[
    |_{I_n} :
    \Gamma( S^3 T^\ast \mathcal{N}_0)^{GL(n,\RR)}
        \longrightarrow
    S^3(T^\ast_{I_n}\mathcal{N}_0)^{O(n)},
    \qquad
    C \longmapsto C|_{I_n},
\]
is a linear isomorphism.
Moreover, by using the $O(n)$-equivariant linear isomorphism $\Xi$
(see~\eqref{eq:def-of-Xi}), we obtain a linear isomorphism
\[
    \gamma :
    S^3(T^\ast_{I_n}\mathcal{N}_0)^{O(n)}
        \longrightarrow
    S^3(\Sym(n,\RR)^\ast)^{O(n)}.
\]
In particular, from the definitions of $C_1$, $C_2$, and $C_3$
(see Equations~\eqref{eq:def-C_1}--\eqref{eq:def-C_3}), we obtain
\begin{align*}
    \gamma(C_1|_{I_n}) (X,Y,Z) &= \tr(X Y Z), \\
    \gamma(C_2|_{I_n}) (X,Y,Z) &= \tfrac{1}{3} \left(\tr( X Y) \tr(Z) + \tr(Y  Z) \tr( X) + \tr( Z  X) \tr( Y) \right), ~ \text{and} \\
    \gamma(C_3|_{I_n}) (X,Y,Z) &= \tr(X) \tr( Y) \tr( Z).
\end{align*}
By Lemma~\ref{lem:Chevally-thorem_1}, the restriction map
\[
    \rest_D :
    S^3(\Sym(n,\RR)^\ast)^{O(n)}
        \longrightarrow
    S^3(D^\ast)^{\mathfrak{S}_n}
\]
is a linear isomorphism.
For $C \in S^3(D^\ast)$, define
\[
    q(C) : \Sym(n,\RR) \longrightarrow \RR,
    \qquad
    q(C)(X) := C(X,X,X).
\]
Then, for any $\mathrm{diag}(\lambda_1,\dots,\lambda_n) \in D$, we have
\begin{align*}
    (q \circ \rest_D \circ \gamma)(C_1|_{I_n})
        (\mathrm{diag}(\lambda_1,\dots,\lambda_n))
        &= \sum_{i=1}^n \lambda_i^3
            = q(f_1)(\mathrm{diag}(\lambda_1,\dots,\lambda_n)), \\
    (q \circ \rest_D \circ \gamma)(C_2|_{I_n})
        (\mathrm{diag}(\lambda_1,\dots,\lambda_n))
        &= \sum_{i,j=1}^n \lambda_i^2 \lambda_j
            = q(f_2)(\mathrm{diag}(\lambda_1,\dots,\lambda_n)), ~ \text{and} \\
    (q \circ \rest_D \circ \gamma)(C_3|_{I_n})
        (\mathrm{diag}(\lambda_1,\dots,\lambda_n))
        &= \sum_{i,j,k=1}^n \lambda_i \lambda_j \lambda_k
            = q(f_3)(\mathrm{diag}(\lambda_1,\dots,\lambda_n)).
\end{align*}
Here we note that $f_1, f_2, f_3 \in \mathcal{SP}_n^3 \simeq S^3(D^\ast)^{\mathfrak{S}_n}$ are the polynomials defined in~\eqref{eq:generating-set_SP^3_n} of Lemma~\ref{lem:SP3n-generator}.
In general, a symmetric $(0, k)$-tensor $T$ on a vector space is uniquely determined by its diagonal values $T(X, \dots, X)$.
Therefore, we obtain 
\[
    \rest_D( \gamma( {C_1}|_{I_n}) ) = f_1, \quad \rest_D( \gamma( {C_2}|_{I_n}) ) = f_2, \quad \rest_D( \gamma( {C_3}|_{I_n}) ) = f_3.
\]
Thus, if we define $\Psi := \rest_D \circ \gamma \circ |_{I_n}$,
then $\Psi$ is a linear isomorphism and satisfies \eqref{eq:Psi_C-f}.
Finally, we show Equation~\eqref{eq:S3-GL_classify}.
By Lemma~\ref{lem:SP3n-generator}, the set
$\{f_1, f_2, f_3\}$ forms a basis of $\mathcal{SP}_n^3$ for $n \ge 3$, the set $\{f_1, f_3\}$ forms a basis for $n = 2$, and the set $\{f_1\}$ forms a basis for $n = 1$.
Since the linear isomorphism
\[
    \Psi : \Gamma( S^3 T^\ast \mathcal{N}_0)^{GL(n,\RR)}
        \longrightarrow \mathcal{SP}_n^3
\]
satisfies \eqref{eq:Psi_C-f}, the Equation~\eqref{eq:S3-GL_classify} follows.
\end{proof}

We now turn to \ref{thm:Main-thm-G=GL_Hesse} in
Theorem~\ref{thm:Main-thm-G=GL+}.
By the definition of the Amari--Chentsov $\alpha$-connection (see Section~\ref{subsec:N0}), one can see that \ref{thm:Main-thm-G=GL_Hesse} in Theorem~\ref{thm:Main-thm-G=GL+} follows from the following proposition.

\begin{Prop}\label{prop:Hesse-GL+}
Let \(C\) be a \(GL(n,\mathbb{R})\)-invariant symmetric \((0,3)\)-tensor field on \(\mathcal{N}_0^n\).
Then the following are equivalent.
\begin{enumerate}[label=(\roman*)]
    \item\label{thm:hessian-complete-classify_1}
    \((g^F, C)\) is dually flat; that is, \(\nabla^{(g, C)}\) is flat.

    \item\label{thm:hessian-complete-classify_2}
    When \(n \geq 3\), \(C\) is equal to one of
    \[
        C^{A(+1)}, \qquad C^{A(-1)} = - C^{A(+1)}, \qquad C', \qquad -C'.
    \]
    When \(n = 2\), \(C\) is equal to one of
    \[
        C^{A(+1)}, \qquad C^{A(-1)}.
    \]
    When \(n = 1\), \(C\) is equal to one of the Amari--Chentsov \(\alpha\)-tensor fields \(C^{A(\alpha)}\).
\end{enumerate}
\end{Prop}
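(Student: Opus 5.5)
Since $C$, $g^F$ and hence $\nabla^{(g^F,C)}$ are $GL(n,\RR)$-invariant, the curvature tensor $R$ of $\nabla^{(g^F,C)}$ is $GL(n,\RR)$-invariant. By transitivity, $(g^F,C)$ is dually flat if and only if $R_{I_n}=0$. The case $n=1$ is immediate: $\No^1$ is one-dimensional, so every torsion-free connection is flat, and by \eqref{eq:S3-GL_classify} every invariant $C$ is some $C^{A(\alpha)}$. For $n\ge 2$ I would write $C=aC_1+bC_2+cC_3$ (with $b=0$ when $n=2$), using Theorem~\ref{thm:Main-thm-G=GL+}~\ref{thm:Main-thm-G=GL_Stat}. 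I would then compute $R_{I_n}$ explicitly as a polynomial in $(a,b,c)$ via the Opozda formula used in the proof of Proposition~\ref{prop:curvature-formula}:
\[
R(X,Y)Z=R^F(X,Y)Z+K(X,K(Y,Z))-K(Y,K(X,Z)),\qquad -2g^F(K(X,Y),Z)=C(X,Y,Z).
\]

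The first step is to make everything explicit on $\Sym(n,\RR)$ via $\Xi$. There $g^F_{I_n}(X,Y)=\tfrac12\tr(XY)$ by Proposition~\ref{prop:Fisher-on-N0}, and Lemma~\ref{prop:Fisher-curvature} gives $R^F(X,Y)Z=-\tfrac14[[X,Y],Z]$. Solving for $K$ yields
\begin{align*}
K_1(X,Y)&=-\tfrac12(XY+YX),\\
K_2(X,Y)&=-\tfrac13\bigl(\tr(X)Y+\tr(Y)X+\tr(XY)I_n\bigr),\\
K_3(X,Y)&=-\tr(X)\tr(Y)I_n,
\end{align*}
and $K=aK_1+bK_2+cK_3$. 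Next I would expand $[K_X,K_Y]Z$. This bilinear-in-$K$ expression is a sum of $GL$-covariant terms such as
\[
[[X,Y],Z],\quad \tr(X)(YZ+ZY)-\tr(Y)(XZ+ZX),\quad \tr(XZ)Y-\tr(YZ)X,\quad \tr(X)\tr(Z)Y-\tr(Y)\tr(Z)X,\quad \tr(X)Z\,\tr\text{-terms},\ \ldots
\]
All of these are antisymmetric in $(X,Y)$. For instance, the $a^2$ part gives exactly $\tfrac14 a^2[[X,Y],Z]$, since $\{X,\{Y,Z\}\}-\{Y,\{X,Z\}\}=[[X,Y],Z]$. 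The flatness condition then becomes the vanishing of a linear combination of these covariants. Its coefficients are quadratic polynomials in $(a,b,c)$ with $n$-dependent coefficients, for example $a^2-1$, $ab+\tfrac{n}{3}b^2\cdot(\ldots)+ac\cdots$, and so on.

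The step I expect to be the main obstacle is justifying that these covariant expressions are linearly independent for $n\ge3$, so that flatness is equivalent to the coefficient equations. I would handle this by evaluating on well-chosen test triples: diagonal matrices together with off-diagonal elementary symmetric matrices $E_{ij}+E_{ji}$. Each covariant can then be isolated, for example $[[X,Y],Z]$ vanishes on diagonal triples while the trace terms do not. For $n=2$ there is a linear relation among the covariants, reflecting that $C_2$ is a combination of $C_1$ and $C_3$, so I would redo the independence check with $b=0$.

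Solving the system, I expect $a^2=1$ and then either $b=c=0$ or $(b,c)=(-6a/n,\,4a/n^2)$, the latter appearing only when $n\ge3$. For $n=2$ only $c=0$ survives. This yields $\pm C_1=C^{A(\pm1)}$ and $\pm C'$, with $C'$ as in \eqref{eq:def-C'}. As a consistency check on the nontrivial solution, I would verify that $\pm C'$ arises as $\sigma_2^\ast(\pm C_1)$, using $(d\sigma_2)_{I_n}(X)=X-\tfrac2n\tr(X)I_n$ from Proposition~\ref{prop:diff_sigma_1-2}. Since $\sigma_2$ is an isometry normalizing $\Gamma(GL(n,\RR))$, it preserves both flatness and invariance, which independently confirms the flatness of $\nabla'$ and $(\nabla')^\ast$.
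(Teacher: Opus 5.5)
Your plan is correct and leads to exactly the paper's system of equations, but it is organized differently. The paper never computes the whole tensor $R_{I_n}$. Instead it evaluates $\Sect^C_{g^F}$, via Proposition~\ref{prop:curvature-formula}, only on the planes spanned by $(\sqrt{2}\,E_{11},E_{12}+E_{21})$, $(\sqrt{2}\,E_{11},E_{23}+E_{32})$ and $(E_{12}+E_{21},E_{23}+E_{32})$ (two planes when $n=2$). This gives only necessary conditions, and the paper proves the converse separately from $\sigma_2^\ast C^{A(\pm1)}=\pm C'$ (Lemma~\ref{lem:involutive-isometry}).

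Your expansion does go through. With
\[
Q:=\tr(Z)\bigl(\tr(Y)X-\tr(X)Y\bigr)+\bigl(\tr(X)\tr(YZ)-\tr(Y)\tr(XZ)\bigr)I_n,
\]
it gives
\[
R_{I_n}(X,Y)Z=\tfrac{a^2-1}{4}\,[[X,Y],Z]+\tfrac{b}{9}(6a+nb)\bigl(\tr(YZ)X-\tr(XZ)Y\bigr)+\bigl(ac+\tfrac{b^2}{9}+\tfrac{n}{3}bc\bigr)Q .
\]
The anticommutator terms you listed cancel. For $n=2$ one has $[[X,Y],Z]-2\bigl(\tr(YZ)X-\tr(XZ)Y\bigr)+Q=0$, as you anticipated.

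The paper's three equations are an invertible linear recombination of the vanishing of these three coefficients. So your independence check by test triples is essentially the paper's choice of planes, and solving gives your solutions.

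Your route buys a closed curvature formula. Flatness of $\pm C'$ is then read off directly, and $\sigma_2$ becomes only a check; for instance, the curvature of $\nabla^{A(\alpha)}$ is $(1-\alpha^2)R^F$. The paper's route buys economy: three scalar evaluations plus one symmetry.

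One prerequisite is left implicit. In general $R=R^{g}+[K_X,K_Y]$ also contains $(\nabla^{g}_XK)(Y,Z)-(\nabla^{g}_YK)(X,Z)$. These terms vanish here only because $(g^F,C)$ is conjugate symmetric. You should cite Proposition~\ref{prop:GL+-stat_CS} for this, as the paper does in Lemmas~\ref{lem:sec-curve_C_n=2} and~\ref{lem:sec-curve_C_n-geq3}.
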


To prove Proposition~\ref{prop:Hesse-GL+}, we prepare two lemmas
(Lemmas~\ref{lem:sec-curve_C_n=2} and \ref{lem:sec-curve_C_n-geq3}).
Let $\mathcal{B}_F$ denote the basis of $\Sym(n,\RR)$ defined by
\[
    \mathcal{B}_F :=
    \left\{
        \sqrt{2}\,E_{ii}, \;
        E_{jk} + E_{kj}
        \;\middle|\;
        1 \le i \le n,\;
        1 \le j < k \le n
    \right\}.
\]
Here, \( E_{ij} \) denotes the $n \times n$ matrix whose \((i,j)\)-entry is \(1\) and all other entries are \(0\).
When the inner product $g^F_{I_n}$ on $T_{I_n}\mathcal{N}_0$ is transferred to $\Sym(n,\RR)$ via the linear isomorphism $\Xi$ (see~\eqref{eq:def-of-Xi}),
it is straightforward to verify from Proposition~\ref{prop:Fisher-on-N0}
that $\mathcal{B}_F$ forms an orthonormal basis with respect to $g^F_{I_n}$.
In the following two lemmas, we identify $\Sym(n,\RR)$ and 
$T_{I_n}\mathcal{N}_0$ through the isomorphism~$\Xi$.
The following lemmas follow from a direct computation using Definition~\ref{def:CS-sect_basis} and Proposition~\ref{prop:Fisher-curvature}.

\begin{Lem}\label{lem:sec-curve_C_n=2}
Let $a_1, a_3 \in \RR$, and define
\[
    C := a_1 C_1 + a_3 C_3 \in \Gamma( S^3 T^\ast \mathcal{N}_0^2)^{GL(n,\RR)}.
\]
Then the $GL(n,\RR)$-invariant statistical structure $(g^F, C)$ is
conjugate symmetric by Proposition~\ref{prop:GL+-stat_CS}.
Moreover, we have
\begin{align}
    \Sect_{g^F}^{C}(\sqrt{2}\,E_{11}, \sqrt{2}\,E_{22})_{\mathcal{B}_F}
        &= 4 a_1 a_3 \text{ and} \\
    \Sect_{g^F}^{C}(\sqrt{2}\,E_{11}, E_{12} + E_{21})_{\mathcal{B}_F}
        &= -\tfrac{1}{2} + \tfrac{1}{2} a_1^2 + 2 a_1 a_3.
\end{align}
\end{Lem}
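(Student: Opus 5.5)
The plan is to evaluate the right-hand side of Definition~\ref{def:CS-sect_basis} directly at $I_n$, with $n=2$. By Proposition~\ref{prop:curvature-formula} this gives $\Sect^C_{g^F}$. Conjugate symmetry is already supplied by Proposition~\ref{prop:GL+-stat_CS}, so nothing needs to be checked there.

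Write $e_1=\sqrt2E_{11}$, $e_2=\sqrt2E_{22}$, $e_3=E_{12}+E_{21}$ for the orthonormal basis $\mathcal{B}_F$. Two ingredients are needed for each of the two pairs:
\begin{itemize}
\item the Riemannian term $g^F(R^F(e_i,e_j)e_j,e_i)$, computed from Lemma~\ref{prop:Fisher-curvature};
\item the cubic term $\tfrac14[C^i,C^j]_{ij}=\tfrac14\sum_k\bigl(C(e_i,e_i,e_k)C(e_j,e_j,e_k)-C(e_i,e_j,e_k)^2\bigr)$, computed from the explicit formulas for $C_1|_{I_2}$ and $C_3|_{I_2}$ (namely $\tr(XYZ)$ and $\tr X\,\tr Y\,\tr Z$).
\end{itemize}

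\emph{Riemannian part.} For the pair $(e_1,e_2)$ the matrices are diagonal and commute, so both traces in Lemma~\ref{prop:Fisher-curvature} agree and the term is $0$. For the pair $(e_1,e_3)$ we use $e_1e_3=\sqrt2E_{12}$ and $e_3e_1=\sqrt2E_{21}$. These give $\tr(e_3e_1e_3e_1)=0$ and $\tr(e_1e_3e_3e_1)=\tr(2E_{11})=2$, so the term is $\tfrac14(0-2)=-\tfrac12$. This accounts for the constant $-\tfrac12$ in the second formula.

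\emph{Cubic part.} First tabulate the components of $C_1$ and $C_3$ on $\mathcal{B}_F$, using $\tr e_1=\tr e_2=\sqrt2$ and $\tr e_3=0$.
\begin{itemize}
\item Components of $C_1$: $C_1(e_1,e_1,e_1)=C_1(e_2,e_2,e_2)=2\sqrt2$ and $C_1(e_1,e_3,e_3)=C_1(e_2,e_3,e_3)=\sqrt2$, because $e_3^2=I_2$. All other components of $C_1$ vanish; for instance $\tr(e_3^3)=0$, and mixed diagonal products are $0$.
\item Components of $C_3$: $C_3$ equals $2\sqrt2$ on any triple drawn from $\{e_1,e_2\}$, and vanishes whenever $e_3$ appears.
\end{itemize}
Then sum over $k$ for each pair.
\begin{itemize}
\item Pair $(1,2)$: the $k=1,2$ terms give $2\cdot 2\sqrt2(a_1+a_3)\cdot2\sqrt2a_3-2(2\sqrt2a_3)^2=16a_1a_3$, and the $k=3$ terms vanish. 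Hence the cubic part is $4a_1a_3$.
\item Pair $(1,3)$: the relevant components are $C(e_1,e_1,e_1)=2\sqrt2(a_1+a_3)$, $C(e_1,e_1,e_2)=2\sqrt2a_3$, $C(e_3,e_3,e_1)=C(e_3,e_3,e_2)=\sqrt2a_1$, and $C(e_1,e_3,e_3)=\sqrt2a_1$.
\end{itemize}

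The main obstacle is bookkeeping rather than ideas: keeping track of which components survive, especially the mixed ones involving $e_3$. I would organise this as a small table of the components before summing, and double-check the pair $(1,3)$ sum against the claimed $\tfrac12a_1^2+2a_1a_3$ before adding the Riemannian term $-\tfrac12$.
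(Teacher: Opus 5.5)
Your proposal is correct and follows the paper's own route: a direct evaluation of Definition~\ref{def:CS-sect_basis} at $I_2$, using Lemma~\ref{prop:Fisher-curvature} for the Riemannian term and the explicit forms $\tr(XYZ)$ and $\tr X\,\tr Y\,\tr Z$ for the cubic term; your component table and both Riemannian values ($0$ and $-\tfrac12$) are right. The $(1,3)$ sum that you left to double-check does close: the $k=1,2,3$ terms give $\tfrac14\bigl(4a_1(a_1+a_3)+4a_1a_3-2a_1^2\bigr)=\tfrac12a_1^2+2a_1a_3$, which matches the claimed value.
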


\begin{Lem}\label{lem:sec-curve_C_n-geq3}
Assume $n \ge 3$.
Let $a_1, a_2, a_3 \in \RR$, and define
\[
    C := a_1 C_1 + a_2 C_2 + a_3 C_3
        \in \Gamma (S^3 T^\ast \mathcal{N}_0)^{GL(n,\RR)}.
\]
Then the $GL(n,\RR)$-invariant statistical structure $(g^F, C)$ is
conjugate symmetric by Proposition~\ref{prop:GL+-stat_CS}.
Moreover, we have
\begin{align}
4\,\Sect_{g^F}^C(\sqrt{2}\,E_{11},\,E_{12}+E_{21})_{\mathcal{B}_F}
&=
\begin{aligned}[t]
    &-2+2a_1^2
    +\frac{8}{3}a_1(2a_2+3a_3)
    \\
    &\quad
    +\frac{8}{9}a_2
    \bigl(a_2(n+1)+3a_3n\bigr),
\end{aligned}
\\
4\,\Sect_{g^F}^C(\sqrt{2}\,E_{11},\,E_{23}+E_{32})_{\mathcal{B}_F}
&=
\frac{8}{9}
\bigl(
6a_1a_2+a_2^2+9a_1a_3
+a_2(a_2+3a_3)n
\bigr), \text{ and}
\\
4\,\Sect_{g^F}^C(E_{12}+E_{21},\,E_{23}+E_{32})_{\mathcal{B}_F}
&=
-1+a_1^2
+\frac{16}{3}a_1a_2
+\frac{8}{9}a_2^2n.
\end{align}
\end{Lem}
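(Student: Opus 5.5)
The plan is a direct computation via Definition~\ref{def:CS-sect_basis}, which Proposition~\ref{prop:curvature-formula} identifies with the true sectional curvature. Conjugate symmetry is already given by Proposition~\ref{prop:GL+-stat_CS}. Throughout, $T_{I_n}\mathcal{N}_0$ is identified with $\Sym(n,\RR)$ via $\Xi$, and $\mathcal{B}_F$ is used as the $g^F_{I_n}$-orthonormal basis. For a pair $e_i,e_j$ of basis elements, the quantity to compute is
\[
g^F_{I_n}\bigl(R^F(e_i,e_j)e_j,e_i\bigr)+\tfrac14\sum_{e_k\in\mathcal{B}_F}\bigl(C(e_i,e_i,e_k)C(e_j,e_j,e_k)-C(e_i,e_j,e_k)^2\bigr).
\]

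\textbf{Riemannian term.} By Lemma~\ref{prop:Fisher-curvature} this equals $\tfrac14(\tr(e_je_ie_je_i)-\tr(e_ie_je_je_i))$.
\begin{itemize}
\item For $e_i=\sqrt2E_{11}$ and $e_j=E_{12}+E_{21}$: $\tr(e_je_ie_je_i)=0$ and $\tr(e_ie_j^2e_i)=2$, so the term is $-\tfrac12$.
\item For $e_i=\sqrt2E_{11}$ and $e_j=E_{23}+E_{32}$: the matrices have disjoint supports, so the term is $0$.
\item For $e_i=E_{12}+E_{21}$ and $e_j=E_{23}+E_{32}$: $\tr(e_je_ie_je_i)=0$ and $\tr(e_i^2e_j^2)=\tr(\mathrm{diag}(1,1,0)\,\mathrm{diag}(0,1,1))=1$, so the term is $-\tfrac14$.
\end{itemize}
After multiplying by $4$, these give the constants $-2$, $0$, $-1$ in the three formulas.

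\textbf{Cubic term.} At $I_n$ the tensors are
\[
C_1(X,Y,Z)=\tr(XYZ),\qquad
C_2=\tfrac13\bigl(\tr(XY)\tr Z+\text{cyclic}\bigr),\qquad
C_3=\tr X\,\tr Y\,\tr Z.
\]
The needed facts are:
\begin{itemize}
\item Off-diagonal basis elements $E_{jk}+E_{kj}$ have trace $0$.
\item $\sqrt2E_{ii}$ has trace $\sqrt2$.
\item $\tr(XY)$ equals $2\delta$ on $\mathcal{B}_F$, where $\delta=1$ if $X=Y$ and $\delta=0$ otherwise.
\end{itemize}
Thus $C_3(e_i,e_j,e_k)$ vanishes unless all three arguments are diagonal. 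Likewise $C_2(e_i,e_j,e_k)$ vanishes unless two arguments are equal and the third is diagonal, or all three are diagonal. For each pair one lists the finitely many basis vectors $e_k$ giving nonzero $C(e_i,e_i,e_k)$, $C(e_j,e_j,e_k)$ or $C(e_i,e_j,e_k)$.

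The $C_1$ part involves only the matrices spanned by $e_i,e_j$ and their products. For the diagonal contributions, $e_i^2$ and $e_j^2$ are diagonal, so summing over all $n$ diagonal $e_k=\sqrt2E_{ll}$ produces the sums over $l$. These sums are the source of the factor $n$, through $\sum_l(\tr e_k)^2=2n$ and the cross terms with $\tr(e_i^2E_{ll})$.

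Collecting the products then gives quadratic forms in $(a_1,a_2,a_3)$ that should match the stated formulas. For Lemma~\ref{lem:sec-curve_C_n=2} one specializes to $n=2$ with $a_2=0$; there the pair $(\sqrt2E_{11},\sqrt2E_{22})$ also needs the Riemannian term, which is $0$ because diagonal matrices commute.

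\textbf{Main obstacle.} The main obstacle is bookkeeping in the diagonal sums for the $C_2$ and $C_3$ contributions, especially the coefficients $\tfrac89a_2(a_2(n+1)+3a_3n)$. I would organize this by writing $C(e_i,e_i,\cdot)$ and $C(e_j,e_j,\cdot)$ as explicit vectors, namely $g^F$-duals, in $\Sym(n,\RR)$. For example, $C(X,X,\cdot)$ is dual to
\[
2\Bigl(a_1X^2+\tfrac{a_2}{3}\bigl(\tr(X^2)I+2\tr(X)X\bigr)+a_3(\tr X)^2I\Bigr),
\]
using $g^F(U,V)=\tfrac12\tr(UV)$. The sum $\sum_kC(e_i,e_i,e_k)C(e_j,e_j,e_k)$ then becomes a single inner product $g^F$ of two explicit matrices, and $\sum_kC(e_i,e_j,e_k)^2$ is the squared norm of the dual of $C(e_i,e_j,\cdot)$. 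This reduces everything to traces of small matrices, avoiding the basis enumeration.
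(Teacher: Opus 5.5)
Your proposal is correct and is the same direct computation the paper intends (Definition~\ref{def:CS-sect_basis} together with Lemma~\ref{prop:Fisher-curvature}); your Riemannian contributions $-2$, $0$, $-1$ and your formula for the $g^F$-dual of $C(X,X,\cdot)$ are right. Carrying out your dual-vector bookkeeping reproduces all three stated formulas, provided the dual of $C(X,Y,\cdot)$ for $X\ne Y$ uses the symmetrized product $\tfrac12(XY+YX)$ (for instance, it is $\sqrt2\,(a_1+\tfrac23a_2)(E_{12}+E_{21})$ in the first case and $a_1(E_{13}+E_{31})$ in the third).
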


Let us give the proof of Proposition~\ref{prop:Hesse-GL+}.

\begin{proof}[Proof of Proposition~\ref{prop:Hesse-GL+}]
First, we show that \ref{thm:hessian-complete-classify_2} implies \ref{thm:hessian-complete-classify_1}.
When $n = 1$, the manifold $\mathcal{N}_0$ is $1$-dimensional. Hence, the statistical structure $(g^F, C) = (g^F, C^{A(\alpha)})$ is clearly dually flat.
Let $n \geq 2$.
By Proposition~\ref{prop:Fisher-Amari_CS-DF}, the statistical structures \((g^F, C^{A(+1)})\) and \((g^F, C^{A(-1)})\) are dually flat. 
Therefore, it suffices to show that the statistical structure \((g^F, C')\) is dually flat.
Let us recall the diffeomorphism
\[
\sigma_2 : \mathcal{N}_0 \longrightarrow \mathcal{N}_0, 
\qquad
\Sigma \longmapsto (\det\Sigma)^{-2/n}\, \Sigma.
\]
The map $\sigma_2$ is an isometry with respect to $g^F$ (see Proposition~\ref{prop:full-isometry_N0}). Moreover, by Lemma~\ref{lem:involutive-isometry} (stated later), we have
\[
    \sigma_2^\ast C^{A(+1)} = C',
    \qquad
    \sigma_2^\ast C^{A(-1)} = - C'.
\]
Since $(g^F, C^{A(+1)})$ and $(g^F, C^{A(-1)})$ are dually flat, it follows that $(g^F, C')$ and $(g^F, -C')$ are also dually flat. 
This completes the proof of \ref{thm:hessian-complete-classify_2} \(\Rightarrow\) \ref{thm:hessian-complete-classify_1}.

Conversely, we show that \ref{thm:hessian-complete-classify_1} implies \ref{thm:hessian-complete-classify_2}.
Let $C$ be a $GL(n, \RR)$-invariant symmetric $(0,3)$-tensor field on $\mathcal{N}_0$ such that the pair $(g^F, C)$ is dually flat.
When $n = 1$, it follows from Equation~\eqref{eq:S3-GL_classify} in Theorem~\ref{thm:Main-thm-G=GL+}, that $C = C^{A(\alpha)}$ for some $\alpha \in \RR$.
We now prove the statement by considering the cases $n = 2$ and $n \geq 3$ separately.

\textbf{Case \(n=2\):}
By Equation~\eqref{eq:S3-GL_classify} in Theorem~\ref{thm:Main-thm-G=GL+}, there exist $a_1, a_3 \in \RR$ such that
\[
    C = a_1 C_1 + a_3 C_3.
\]
Let $R$ be the curvature tensor field of $\nabla^{(g^F, C)}$.  
Since $(g^F, C)$ is dually flat, we have $R \equiv 0$.  
Therefore, by Proposition~\ref{prop:curvature-formula} and Lemma~\ref{lem:sec-curve_C_n=2}, we obtain the following.
\begin{align}
0 &= 4 a_1 a_3, \label{eq:seccurv=0-1} \\
0 &= - \tfrac{1}{2} +  \tfrac{1}{2} a_1^2 + 2 a_1 a_3. \label{eq:seccurv=0-2}
\end{align} 
The solution to the above system of equations is
\[
    (a_1, a_3) = (1, 0) \text{ or } (-1, 0).
\]
Therefore, $C$ coincides with $C_1 = C^{A(+1)}$ or with $-C_1 = C^{A(-1)}$.
This completes the proof for the case $n=2$.

\textbf{Case \(n \ge 3\):}
By Equation~\eqref{eq:S3-GL_classify} in Theorem~\ref{thm:Main-thm-G=GL+}, there exist $a_1, a_2, a_3 \in \RR$ such that
\[
    C = a_1 C_1 + a_2 C_2 + a_3 C_3.
\]
As in the case \(n=2\), Proposition~\ref{prop:curvature-formula} together with Lemma~\ref{lem:sec-curve_C_n-geq3} yields the following equalities:
\begin{align}
0 &= -2 + 2 a_1^2 + \frac{8}{3} a_1 (2 a_2 + 3 a_3)
      + \frac{8}{9} a_2 \bigl(a_2 (n+1) + 3 a_3 n \bigr),
      \label{eq:seccurv=0-1} \\
0 &= \frac{8}{9} \bigl( 6 a_1 a_2 + a_2^2 + 9 a_1 a_3
      + a_2 (a_2 + 3 a_3) n \bigr),
      \label{eq:seccurv=0-2} \\
0 &= -1 + a_1^2 + \frac{16}{3} a_1 a_2 + \frac{8}{9} a_2^2 n.
      \label{eq:seccurv=0-3}
\end{align}
We now solve this system of equations, since these equalities are necessary for the cubic tensor \(C\) to define a dually flat structure.
By subtracting Equation~\eqref{eq:seccurv=0-2} from
Equation~\eqref{eq:seccurv=0-1}, we obtain
\[
   2(-1 + a_1^2) = 0.
\]
Hence \(a_1 = \pm 1\).
Substituting these values into Equation~\eqref{eq:seccurv=0-3} yields
\[
\begin{cases}
a_1 = 1: & a_2 (6 + a_2 n) = 0 \ \Longleftrightarrow\ a_2 = 0,\ -\frac{6}{n}, \\[2mm]
a_1 = -1: & a_2 (-6 + a_2 n) = 0 \ \Longleftrightarrow\ a_2 = 0,\ \frac{6}{n}.
\end{cases}
\]
Furthermore, substituting each resulting pair into
Equation~\eqref{eq:seccurv=0-2} yields the following possibilities:
\begin{enumerate}[label=(\arabic*)]
\item \( (a_1, a_2, a_3) = (1, 0, 0), \)
\item \( (a_1, a_2, a_3) = (-1, 0, 0), \)
\item \( (a_1, a_2, a_3) = \left(1, -\frac{6}{n}, \frac{4}{n^2}\right), \)
\item \( (a_1, a_2, a_3) = \left(-1, \frac{6}{n}, -\frac{4}{n^2}\right). \)
\end{enumerate}
This completes the proof for the case \(n \ge 3\).
\end{proof}

Next, we proceed to prove \ref{thm:Main-thm-G=GL_MStat} in Theorem~\ref{thm:Main-thm-G=GL+}.  
For the proof, we prepare the following two lemmas.

\begin{Lem}\label{lem:surj_Aut-Isom_In}
The group homomorphism
\[
    \Aut (GL(n, \RR), O(n), g^F_{I_n}) \longrightarrow \mathrm{Isom}(\mathcal{N}_0, g^F)_{I_n}, \qquad \phi \longmapsto f_\phi
\]
is surjective.
\end{Lem}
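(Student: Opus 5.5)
By Corollary~\ref{cor:full-isometry_N0_In}, the isotropy group $\Isom(\No,g^F)_{I_n}$ is generated by $\Gamma(O(n))$, $\sigma_1$ and (for $n\ge 3$) $\sigma_2$. Since $\phi\mapsto f_\phi$ is a group homomorphism, its image is a subgroup. It therefore suffices to exhibit, for each generator, an element $\phi\in\Aut(GL(n,\RR),O(n),g^F_{I_n})$ with $f_\phi$ equal to that generator.

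\textbf{Construction of the preimages.} For $\Gamma(k)$ with $k\in O(n)$, I take $\phi=\Inn_k$. This preserves $O(n)$, and $f_{\Inn_k}(h\cdot I_n)=khk^{-1}\cdot I_n=k h{}^t h{}^t k=\Gamma(k)(h{}^th)$, so $f_{\Inn_k}=\Gamma(k)$. For $\sigma_1$, I take $\phi_1(A)={}^tA^{-1}$. This is an automorphism fixing $O(n)$ pointwise, and
\[
f_{\phi_1}(h{}^th)={}^th^{-1}h^{-1}=(h{}^th)^{-1}=\sigma_1(h{}^th).
\]
For $\sigma_2$ (needed only when $n\ge3$, though the formula works for all $n$), I take $\phi_2(A)=|\det A|^{-2/n}A$. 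This is a homomorphism because $|\det|$ is multiplicative and scalars are central. It is smooth, and it is an involution: $\det\phi_2(A)=|\det A|^{-2}\det A$, so $|\det\phi_2(A)|=|\det A|^{-1}$ and hence $\phi_2(\phi_2(A))=A$. It is the identity on $O(n)$, since $|\det|=1$ there. Then
\[
f_{\phi_2}(h{}^th)=|\det h|^{-4/n}h{}^th=\det(h{}^th)^{-2/n}\,h{}^th=\sigma_2(h{}^th),
\]
consistent with \eqref{eq:H_normalizer_Gamma}.

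\textbf{Metric condition.} It remains to check that each $\phi$ lies in $\Aut(GL(n,\RR),O(n),g^F_{I_n})$, i.e.\ that $(\phi)_{\mathfrak g/\mathfrak k}$ preserves $g_0$. By Proposition~\ref{Proposition:AutGKg0AutHRiemp0}, this is equivalent to $f_\phi^*g^F=g^F$. That holds here because $f_\phi$ is in each case $\Gamma(k)$, $\sigma_1$ or $\sigma_2$, all of which are isometries by Proposition~\ref{prop:full-isometry_N0}. Alternatively, one can check it directly from Proposition~\ref{prop:diff_sigma_1-2}: the maps $X\mapsto -X$ and $X\mapsto X-\tfrac2n\tr(X)I_n$ preserve $\tfrac12\tr(XY)$.

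Every step is a direct computation, so I do not expect a serious obstacle. The points that need care are the well-definedness of $f_\phi$, which requires $\phi(O(n))=O(n)$, and the choice of $|\det|$ rather than $\det$ in $\phi_2$, so that the map is defined on all of $GL(n,\RR)$.
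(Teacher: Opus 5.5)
Your proposal is correct and follows essentially the paper's route. You reduce to the generators $\Gamma(O(n))$, $\sigma_1$, $\sigma_2$ from Corollary~\ref{cor:full-isometry_N0_In} and realize them as $f_{\Inn_k}$, $f_{\tilde{\sigma}_1}$ with $\tilde{\sigma}_1(A)={}^tA^{-1}$, and $f_{\tilde{\sigma}_2}$ with $\tilde{\sigma}_2(A)=((\det A)^2)^{-1/n}A=|\det A|^{-2/n}A$; the metric condition then follows from Proposition~\ref{Proposition:AutGKg0AutHRiemp0}, since these $f_\phi$ are isometries. You also carry out the checks the paper leaves to the reader: the homomorphism and involution properties, $\phi(O(n))=O(n)$, and the explicit computation of each $f_\phi$.
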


\begin{Lem}\label{lem:rest-intertwiner}
Let $L$ be a finite group, $V$ a finite-dimensional vector space over $\RR$, $\rho : L \to GL(V)$ a linear representation, and $V_0 \subset V$ a subrepresentation, that is, $V_0$ is a subspace of $V$ such that $(\rho(L))(V_0) \subset V_0$.
We also denote by $\iota : V_0 \to V$ the inclusion map.
Then, the restriction map
\[
    \rest : S^k(V^\ast) \longrightarrow S^k(V_0^\ast), \qquad T \longmapsto \iota^\ast T 
\]
is $L$-intertwiner, that is, $\rest (l \cdot C) = l \cdot \rest(C)$ holds for any $C \in S^k(V^\ast)$ and $l \in L$.
\end{Lem}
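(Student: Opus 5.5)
The plan is to verify the intertwining identity directly from the definitions, by unwinding the two $L$-actions and using that $\iota$ commutes with $\rho$. First I fix the conventions, consistent with the rest of the paper ($\phi \cdot C := (\phi^{-1})^\ast C$). For $l \in L$ and $T \in S^k(V^\ast)$, set
\[
    (l \cdot T)(v_1,\dots,v_k) := T\bigl(\rho(l)^{-1}v_1,\dots,\rho(l)^{-1}v_k\bigr).
\]
On $S^k(V_0^\ast)$ the action is the analogous one for the restricted representation $\rho_0(l) := \rho(l)|_{V_0} \in GL(V_0)$. Note that $\rho_0(l)$ is invertible on $V_0$: since $\rho(l)(V_0) \subset V_0$, dimensions are finite and $\rho(l)$ is injective, we get $\rho(l)(V_0) = V_0$. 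Hence $\rho(l)^{-1}(V_0) = V_0$ and $\rho_0(l)^{-1} = \rho(l)^{-1}|_{V_0}$. Finiteness of $L$ is not actually needed; one could equally use $\rho(l^{-1})(V_0)\subset V_0$.

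The key identity is
\[
    \rho(l)^{-1}\circ \iota = \iota \circ \rho_0(l)^{-1},
\]
which is just the statement that $\iota$ is $L$-equivariant. Then, for $w_1,\dots,w_k \in V_0$,
\[
    \rest(l\cdot T)(w_1,\dots,w_k) = T\bigl(\rho(l)^{-1}\iota w_1,\dots\bigr) = T\bigl(\iota\rho_0(l)^{-1}w_1,\dots\bigr) = \bigl(l\cdot \rest(T)\bigr)(w_1,\dots,w_k).
\]
In pullback notation this is $\iota^\ast(\rho(l)^{-1})^\ast = (\rho(l)^{-1}\circ\iota)^\ast = (\iota\circ\rho_0(l)^{-1})^\ast = (\rho_0(l)^{-1})^\ast\iota^\ast$. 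Finally, I note that $\rest$ preserves symmetry, so it does map $S^k(V^\ast)$ into $S^k(V_0^\ast)$.

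There is no real obstacle here. The only point needing care is the well-definedness of the action on $V_0$, namely that $\rho(l)^{-1}$ preserves $V_0$, which is handled by the dimension argument above.
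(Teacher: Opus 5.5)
Your proof is correct and is exactly the direct verification the paper has in mind; the paper omits the argument as straightforward. You unwind $l\cdot T=(\rho(l)^{-1})^\ast T$, use $\rho(l)^{-1}\circ\iota=\iota\circ\rho_0(l)^{-1}$, and check that $\rho(l)$ restricts to an automorphism of $V_0$ (needed for the action on $S^k(V_0^\ast)$ to be well defined), all of it correctly.
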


The proof of Lemma~\ref{lem:rest-intertwiner} is straightforward.  
We now give the proof of Lemma~\ref{lem:surj_Aut-Isom_In}.

\begin{proof}[Proof of Lemma~\ref{lem:surj_Aut-Isom_In}]
By Corollary~\ref{cor:full-isometry_N0_In}, the following holds:
\begin{align*}
    \mathrm{Isom}(\mathcal{N}^n_0, g^F)_{I_n} &=
    \left\{
    \begin{array}{ll}
    \left\{ \Gamma(g) \circ h ~\middle|~ g \in O(n), ~ h \in \langle \sigma_1, \sigma_2 \rangle \right\}  & (n \geq 3),\\
    \left\{ \Gamma(g) \circ h ~\middle|~ g \in O(n), ~ h \in \langle \sigma_1 \rangle \right\}  & (n = 1 \text{ or } 2).
    \end{array}\right.
\end{align*}
Let us put $G := GL(n, \RR)$.
Recall that $(\No^n, g^F, G, \Gamma)$ is a homogeneous Riemannian manifold.
Here, 
\[ 
    \Gamma(A, \Sigma) = A \Sigma {}^{t}A.
\]
Let us fix a point $I_n \in \No^n$ and put $K := GL(n, \RR)_{I_n} = O(n)$.
It is clear that for each $A \in O(n)$, $\Inn(A) \in \Aut(G, K)$ holds, and one can see that $f_{\Inn(A)} = \Gamma(A)$.
Therefore, for each $A \in O(n)$, $\Inn(A) \in \Aut(G, K, g^F_{I_n})$ and $f_{\Inn(A)} = \Gamma(A)$.
Therefore, it remains only to show that, for each $i = 1,2$, there exists $\phi \in \Aut(G, K, g^F_{I_n})$ such that $f_{\phi} = \sigma_i$.
Let 
\[
    \Tilde{\sigma}_1 (A) := {}^{t}A^{-1}, \qquad \Tilde{\sigma}_2 (A) := \left( (\det A \right)^2 )^{-1/n} A.
\]
Then, one can check that $\Tilde{\sigma}_1, \Tilde{\sigma}_2 \in \Aut(G, K)$,
\[
    f_{\tilde{\sigma}_1} = \sigma_1, \quad \text{and} \quad f_{\tilde{\sigma}_2} = \sigma_2.
\]
Thus, $\tilde{\sigma}_1, \tilde{\sigma}_2 \in \Aut(G, K, g^F_{I_n})$.
Therefore, the group homomorphism in Lemma~\ref{lem:surj_Aut-Isom_In} is surjective.
\end{proof}

By a similar argument, we obtain the following.

\begin{Prop}\label{prop:Aut-GL+_Isom_surj}
The following homomorphism is surjective.
\[
    \Aut (GL^+(n, \RR), SO(n), g^F_{I_n}) \longrightarrow \mathrm{Isom}(\mathcal{N}_0, g^F)_{I_n}, \qquad \phi \longmapsto f_\phi
\]
\end{Prop}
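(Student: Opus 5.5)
We mimic the proof of Lemma~\ref{lem:surj_Aut-Isom_In}, now with $G := GL^+(n,\RR)$ and isotropy subgroup $K := SO(n)$ at $I_n$. By Corollary~\ref{cor:full-isometry_N0_In}, $\Isom(\No,g^F)_{I_n}$ is generated by $\Gamma(O(n))$ together with $\sigma_1$ (and also $\sigma_2$ when $n\ge 3$). Since the map $\phi\mapsto f_\phi$ is a group homomorphism, it suffices to exhibit, for each generator, some $\phi\in\Aut(G,K,g^F_{I_n})$ realizing it. Throughout we use two facts. First, $f_\phi$ is determined by $f_\phi(h\cdot I_n)=\phi(h)\cdot I_n$. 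Second, by Proposition~\ref{Proposition:AutGKg0AutHRiemp0}, $\phi\in\Aut(G,K)$ lies in $\Aut(G,K,g^F_{I_n})$ as soon as $f_\phi$ is an isometry. Hence for each candidate $\phi$ we only need to check that $\phi$ is a Lie group automorphism of $G$ with $\phi(K)=K$, and that $f_\phi$ equals the desired isometry.

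\textbf{Generators $\Gamma(A)$ with $A\in SO(n)$.} Take $\phi=\Inn(A)$. It preserves $G$ and $K$. For $h\in G$,
\[
f_{\Inn(A)}(h\,{}^th)=Ah A^{-1}\,{}^t(AhA^{-1})=A h\,{}^th\,{}^tA,
\]
so $f_{\Inn(A)}=\Gamma(A)$.

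\textbf{Generators $\Gamma(A)$ with $A\in O(n)\setminus SO(n)$.} This is the only genuinely new point, since $A\notin G$. Still, conjugation by $A$ inside $GL(n,\RR)$ preserves $GL^+(n,\RR)$, because $\det(AhA^{-1})=\det h$, and it preserves $SO(n)$. So $\phi:=\Inn(A)|_{GL^+(n,\RR)}$ lies in $\Aut(G,K)$, and the computation above again gives $f_\phi=\Gamma(A)$. Alternatively, $\Gamma(A)=\Gamma(AJ)\circ\Gamma(J)$ with $J=\mathrm{diag}(-1,1,\dots,1)$, and $\Gamma(J)$ is handled in the same way.

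\textbf{Generators $\sigma_1,\sigma_2$.} Take $\tilde\sigma_1(h)={}^th^{-1}$ and $\tilde\sigma_2(h)=(\det h)^{-2/n}h$. Both preserve $\det>0$: indeed $\det({}^th^{-1})=(\det h)^{-1}>0$, and on $GL^+$ the scalar $(\det h)^{-2/n}$ is a well-defined positive number. Both also preserve $SO(n)$, since $\det=1$ there. Each is a smooth homomorphism: $\tilde\sigma_2$ is one because $\det$ is multiplicative and scalars are central. Each is an involution, since $\det\tilde\sigma_2(h)=(\det h)^{-1}$ gives $\tilde\sigma_2^2=\mathrm{id}$; hence both are automorphisms. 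Finally, using \eqref{eq:H_normalizer_Gamma}, or computing directly,
\[
f_{\tilde\sigma_1}(h\,{}^th)=(h\,{}^th)^{-1}=\sigma_1(h\,{}^th),\qquad f_{\tilde\sigma_2}(h\,{}^th)=(\det h)^{-4/n}h\,{}^th=\sigma_2(h\,{}^th).
\]
The last equality holds because $\det(h\,{}^th)^{-2/n}=(\det h)^{-4/n}$.

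The main obstacle is the orientation-reversing part of $O(n)$, since it does not come from an inner automorphism of $GL^+(n,\RR)$. We resolve it by restricting an outer conjugation from $GL(n,\RR)$, as described above. The rest is the routine verification already done in Lemma~\ref{lem:surj_Aut-Isom_In}. The cases $n=1,2$ need only $\sigma_1$; for $n=1$, $SO(1)$ is trivial and $O(1)=\{\pm1\}$ acts trivially, so those generators are realized by the identity.
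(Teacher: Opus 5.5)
Your proof is correct and follows the paper's route. The paper obtains this proposition ``by a similar argument'' to Lemma~\ref{lem:surj_Aut-Isom_In}, realizing the generators of $\Isom(\No,g^F)_{I_n}$ from Corollary~\ref{cor:full-isometry_N0_In} via conjugations and via $\tilde\sigma_1,\tilde\sigma_2$. Your extra step, realizing $\Gamma(A)$ for $A\in O(n)\setminus SO(n)$ as the restriction to $GL^+(n,\RR)$ of conjugation by $A$ (which is not an inner automorphism of $GL^+(n,\RR)$), is exactly the detail that the paper's ``similar argument'' leaves implicit.
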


We now give the proof of \ref{thm:Main-thm-G=GL_MStat} in Theorem~\ref{thm:Main-thm-G=GL+}.

\begin{proof}[Proof of \ref{thm:Main-thm-G=GL_MStat} in Theorem~\ref{thm:Main-thm-G=GL+}]
Let $G := GL(n, \RR)$ and $K := O(n)$.
By Theorem~\ref{theorem:orbitdescription}, we have
\[
    \MHSStat^G(\No, g^F) \simeq \Aut(G, K, g^F_{I_n}) \backslash S^3(T^\ast_{I_n} \No)^K.
\]
By Lemma~\ref{lem:surj_Aut-Isom_In}, the group homomorphism
\[
    \Aut(G, K, g^F_{I_n}) \longrightarrow \Isom(\No, g^F)_{I_n}, \qquad \phi \longmapsto f_\phi
\]
is surjective.
Therefore, 
\[
    \MHSStat^G(\No, g^F) \simeq \Isom(\No, g^F)_{I_n} \backslash S^3(T^\ast_{I_n} \No)^{K}.
\]
By Lemma~\ref{cor:full-isometry_N0_In}, we have
\begin{align*}
    \MHSStat^G(\No, g^F) &\simeq
    \left\{
    \begin{array}{ll}
    (\Gamma(O(n)) \rtimes \langle \sigma_1, \sigma_2 \rangle) \backslash S^3(T^\ast_{I_n} \No)^{K}  & (n \geq 3),\\
    (\Gamma(O(n)) \rtimes \langle \sigma_1 \rangle) \backslash S^3(T^\ast_{I_n} \No)^{K}  & (n = 1 \text{ or } 2).
    \end{array}\right.
\end{align*}
Thus,
\begin{align*}
    \MHSStat^G(\No, g^F) &\simeq
    \left\{
    \begin{array}{ll}
    \langle \sigma_1, \sigma_2 \rangle \backslash S^3(T^\ast_{I_n} \No)^{K}  & (n \geq 3),\\
    \langle \sigma_1 \rangle \backslash S^3(T^\ast_{I_n} \No)^{K}  & (n = 1 \text{ or } 2).
    \end{array}\right.
\end{align*}
Let us put $\tilde{\eta}_1, \tilde{\eta}_2 \in GL(\Sym(n, \RR))$ as $\tilde{\eta}_1 (X) := -X$ and $\tilde{\eta}_2 (X) := X - \tfrac{2}{n} \tr(X) I_n$.
Since $\Xi$ is $K$-equivariant, by Proposition~\ref{prop:diff_sigma_1-2},  we have
\begin{align*}
    \MHSStat^G(\No, g^F) &\simeq
    \left\{
    \begin{array}{ll}
    \langle \tilde{\eta}_1, \tilde{\eta}_2 \rangle \backslash S^3(\Sym(n, \RR)^\ast)^{K}  & (n \geq 3),\\
    \langle \tilde{\eta}_1 \rangle \backslash S^3(\Sym(n, \RR)^\ast)^{K}  & (n = 1 \text{ or } 2).
    \end{array}\right.
\end{align*}
It is clear that $\tilde{\eta}_1 (D) \subset D$ and $\tilde{\eta}_2 (D) \subset D$.
Thus, by Lemmas~\ref{lem:Chevally-thorem_1} and \ref{lem:rest-intertwiner}, we have
\begin{align*}
    \MHSStat^G(\No, g^F) &\simeq
    \left\{
    \begin{array}{ll}
    \langle \tilde{\eta}_1|_{D}, \tilde{\eta}_2|_{D} \rangle \backslash S^3(D^\ast)^{\mathfrak{S}_n}  & (n \geq 3),\\
    \langle \tilde{\eta}_1|_{D} \rangle \backslash S^3(D^\ast)^{\mathfrak{S}_n}  & (n = 1 \text{ or } 2).
    \end{array}\right.
\end{align*}
Since $\tilde{\eta}_1 = \eta_1$ and $\tilde{\eta}_2 = \eta_2$ on $D$, we also have
\begin{align*}
    \MHSStat^G(\No, g^F) &\simeq
    \left\{
    \begin{array}{ll}
    \langle \eta_1, \eta_2 \rangle \backslash S^3(D^\ast)^{\mathfrak{S}_n}  & (n \geq 3),\\
    \langle \eta_1 \rangle \backslash S^3(D^\ast)^{\mathfrak{S}_n}  & (n = 1 \text{ or } 2).
    \end{array}\right.
\end{align*}
This completes the proof.
\end{proof}

Finally, we turn to \ref{thm:Main-thm-G=GL_MHesse} in Theorem~\ref{thm:Main-thm-G=GL+}.  
The proof follows from the following lemma.

\begin{Lem}\label{lem:involutive-isometry}
Let 
\[
\begin{aligned}
\sigma_1:\mathcal{N}_0 &\longrightarrow \mathcal{N}_0,
&
\Sigma &\longmapsto \Sigma^{-1},
\\
\sigma_2:\mathcal{N}_0 &\longrightarrow \mathcal{N}_0,
&
\Sigma &\longmapsto (\det\Sigma)^{-2/n}\Sigma.
\end{aligned}
\]
Then, both $\sigma_1$ and $\sigma_2$ are isometries with respect to $g^F$ (see Proposition~\ref{prop:full-isometry_N0}). 
Moreover, the following holds.
\begin{gather*}
    \sigma_1^\ast C^{A(+1)} = -C^{A(+1)} = C^{A(-1)}, \qquad \sigma_1^\ast C' = -C',\\
    \sigma_2^\ast C^{A(+1)} = C'.
\end{gather*}
\end{Lem}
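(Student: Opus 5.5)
The plan is to reduce everything to the base point $I_n$. All tensors involved, $C^{A(+1)}=C_1$, $C'$, and their pullbacks, are $GL(n,\RR)$-invariant. By Lemma~\ref{lem:surj_Aut-Isom_In}, each $\sigma_i$ equals $f_{\tilde\sigma_i}$ for some $\tilde\sigma_i\in\Aut(GL(n,\RR),O(n))$. Proposition~\ref{Proposition:Aut_preserves_invtensor} then shows that $\sigma_i^\ast C_1$ and $\sigma_i^\ast C'$ are again $GL(n,\RR)$-invariant. By Proposition~\ref{proposition:StatG-S3-isom}, an invariant symmetric $3$-tensor is determined by its value at $I_n$. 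Since $\sigma_1(I_n)=\sigma_2(I_n)=I_n$, it therefore suffices to compare
\[
(\sigma_i^\ast C)_{I_n}(X,Y,Z)=C_{I_n}\bigl((d\sigma_i)_{I_n}X,(d\sigma_i)_{I_n}Y,(d\sigma_i)_{I_n}Z\bigr)
\]
with the claimed tensors at $I_n$. Here $(d\sigma_i)_{I_n}$ is taken from Proposition~\ref{prop:diff_sigma_1-2}, using the identification $\Xi$. A symmetric $3$-tensor is determined by its cubic form, so I only need to check the values on the diagonal $X=Y=Z$. By Lemma~\ref{lem:Chevally-thorem_1} together with the $O(n)$-invariance, it even suffices to take $X=\mathrm{diag}(\lambda_1,\dots,\lambda_n)\in D$.

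For $\sigma_1$ the differential is $-\mathrm{id}$. Since the tensors are cubic, $\sigma_1^\ast C=-C$ for every invariant $C$, which gives both $\sigma_1^\ast C_1=-C_1=C^{A(-1)}$ and $\sigma_1^\ast C'=-C'$. The identification $\alpha C_1=C^{A(\alpha)}$ comes from Remark~\ref{rem:C_1-C^A(+1)_coincide}.

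For $\sigma_2$, write $t=\sum_i\lambda_i=\tr X$. Then $(d\sigma_2)_{I_n}X=X-\tfrac{2}{n}tI_n$, which has diagonal entries $\lambda_i-\tfrac{2t}{n}$. Hence
\[
(\sigma_2^\ast C_1)_{I_n}(X,X,X)=\sum_i\Bigl(\lambda_i-\frac{2t}{n}\Bigr)^3=f_1-\frac{6t}{n}\sum_i\lambda_i^2+\frac{12t^2}{n^2}\,t-\frac{8t^3}{n^3}\,n .
\]
Under the correspondence $\Psi$ of Theorem~\ref{thm:Main-thm-G=GL+}, we have $f_2=t\sum\lambda_i^2$ and $f_3=t^3$. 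The expression therefore equals $f_1-\tfrac{6}{n}f_2+\tfrac{4}{n^2}f_3$, which is exactly $\Psi(C')$. Injectivity of $\Psi$ then yields $\sigma_2^\ast C^{A(+1)}=C'$.

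The only real checkpoint is bookkeeping. I need to make sure the cubic forms at $I_n$ of $C_2$ and $C_3$ are exactly $t\sum\lambda_i^2$ and $t^3$ (the factor $\tfrac13$ in $C_2$ is cancelled by the three symmetric terms), and that the pullback uses $d\sigma_2$ rather than its inverse. Since $\sigma_2$ is an involution, the inverse question is harmless. Isometry of $\sigma_1$ and $\sigma_2$ is already recorded in Proposition~\ref{prop:full-isometry_N0}.
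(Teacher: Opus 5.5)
Your proposal is correct and follows the paper's proof. Both reduce to $I_n$ via $GL(n,\RR)$-invariance, use $(d\sigma_1)_{I_n}=-\mathrm{id}$, and evaluate $C_1$ on $(d\sigma_2)_{I_n}$ from Proposition~\ref{prop:diff_sigma_1-2}. The only differences are these: for $\sigma_2$ you check the identity on the cubic form over $D$ and use the injectivity of $\Psi$, where the paper expands trilinearly in $X,Y,Z$; and you make explicit, via Lemma~\ref{lem:surj_Aut-Isom_In} and Proposition~\ref{Proposition:Aut_preserves_invtensor}, the invariance of $\sigma_i^\ast C$ that the paper's reduction to $I_n$ uses tacitly.
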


First, assuming Lemma~\ref{lem:involutive-isometry} above, we prove 
\ref{thm:Main-thm-G=GL_MHesse} in Theorem~\ref{thm:Main-thm-G=GL+}.

\begin{proof}[Proof of \ref{thm:Main-thm-G=GL_MHesse} in Theorem \ref{thm:Main-thm-G=GL+}]
Equations \eqref{eq:sigma_1-pullback} and \eqref{eq:sigma_2-pullback} follow from Lemma~\ref{lem:involutive-isometry}.

Let us prove Equation~\eqref{eq:Main-thm-G=GL+_MHesse}.
Let $G = GL(n, \RR)$ and $K := O(n)$.
By Theorem~\ref{theorem:orbitdescription_classC}, we have
\[
    \MHSStat_{\mathrm{DF}}^G(\No, g^F) \simeq \Aut(G, K, g^F_{I_n}) \backslash S^3(T^\ast_{I_n} \No)^{K}_{g^F\mathchar`-\mathrm{DF}}.
\]
By Lemma~\ref{lem:surj_Aut-Isom_In}, 
\[
    \MHSStat_{\mathrm{DF}}^G(\No, g^F) \simeq \Isom(\No, g^F)_{I_n} \backslash S^3(T^\ast_{I_n} \No)^{K}_{g^F\mathchar`-\mathrm{DF}}.
\]
Let $n \geq 2$.
By Corollary~\ref{cor:full-isometry_N0_In}, $\sigma_1, \sigma_2 \in \Isom(\No^n, g^F)_{I_n}$.
Thus, by Proposition~\ref{prop:Hesse-GL+} and Lemma~\ref{lem:involutive-isometry}, the action of $\Isom(\No^n, g^F)_{I_n}$ on $S^3(T^\ast_{I_n} \No^n)^{K}_{g^F\mathchar`-\mathrm{DF}}$ is transitive. 
Therefore, $\MHSStat_{\mathrm{DF}}^{G}(\No^n, g^F)$ is a singleton when $n \geq 2$.
Let $n = 1$.
Then, by Equation~\eqref{eq:S3-GL_classify}, 
\[
    S^3(T^\ast_{I_n} \No)^{K}_{g^F\mathchar`-\mathrm{DF}} = \left\{ \alpha \cdot C_1|_{I_n} ~\middle|~ \alpha \in \RR \right\}  \simeq \RR.
\]
Since the differential of $\sigma_1$ at $I_n$ satisfies $(d\sigma_1)_{I_n} = -\mathrm{id}$, 
the quotient 
\[
    \Isom(\mathcal{N}_0^n, g^F)_{I_n} 
    \backslash 
    S^3(T^\ast_{I_n}\mathcal{N}_0^n)^{K}_{g^F\mathchar`-\mathrm{DF}}
\]
is isomorphic to $\mathbb{R}_{\ge 0}$ when $n = 1$.
\end{proof}

Let us give the proof of Lemma~\ref{lem:involutive-isometry}.

\begin{proof}[Proof of Lemma~\ref{lem:involutive-isometry}]
Recall that $C^{A(+1)}, C' \in \Gamma( S^3 T^\ast \No)^{GL(n, \RR)}$.
By Proposition~\ref{proposition:StatG-S3-isom}, it suffices to carry out the discussion only on $T_{I_n}\mathcal{N}_0 \simeq \Sym(n, \RR)$.
By Proposition~\ref{prop:diff_sigma_1-2}, one can see that $((d \sigma_1)_{I_n})^\ast C = -C$ holds for every $C \in S^3(T^\ast_{I_n} \No)$.
Thus, it is enough to show that 
\begin{equation*}
    ((d \sigma_2)_{I_n})^\ast ((C_1)_{I_n}) = (C')_{I_n}.
\end{equation*}
By Equations~\eqref{eq:def-C_1}--\eqref{eq:def-C_3}
and~\eqref{eq:def-C'},
the tensors $(C_1)_{I_n}$ and $(C')_{I_n}$ are given on $\Sym(n,\RR)$ by
\begin{align*}
(C_1)_{I_n}(X,Y,Z)
&=
\tr(XYZ),
\\
(C')_{I_n}(X,Y,Z)
&=
(C_1)_{I_n}(X,Y,Z)
\\
&\quad
-\frac{2}{n}
\bigl(
\tr(XY)\tr(Z)
+\tr(YZ)\tr(X)
+\tr(ZX)\tr(Y)
\bigr)
\\
&\quad
+\frac{4}{n^2}
\tr(X)\tr(Y)\tr(Z).
\end{align*}
Moreover, by Proposition~\ref{prop:diff_sigma_1-2}, we have
\[
(d\sigma_2)_{I_n}(X)
=
X-\frac{2}{n}\tr(X)I_n.
\]
Therefore, writing $C_1$ and $C'$ for $(C_1)_{I_n}$ and $(C')_{I_n}$, respectively, we obtain
\begin{align*}
    ((d\sigma_2)_{I_n})^\ast C_1(X,Y,Z)
    &=
    C_1\bigl(
    (d\sigma_2)_{I_n}(X),
    (d\sigma_2)_{I_n}(Y),
    (d\sigma_2)_{I_n}(Z)
    \bigr)
    \\
    &=
    C_1\Bigl(
    X-\tfrac{2}{n}\tr(X)I_n,\,
    Y-\tfrac{2}{n}\tr(Y)I_n,\,
    Z-\tfrac{2}{n}\tr(Z)I_n
    \Bigr)
    \\
    &=
    C_1(X,Y,Z)
    \\
    &\quad
    -\frac{2}{n}
    \begin{aligned}[t]
    \Bigl(
    &\tr(X)C_1(I_n,Y,Z)
    \\
    &+\tr(Y)C_1(X,I_n,Z)
    \\
    &+\tr(Z)C_1(X,Y,I_n)
    \Bigr)
    \end{aligned}
    \\
    &\quad
    +\frac{4}{n^2}
    \begin{aligned}[t]
    \Bigl(
    &\tr(X)\tr(Y)C_1(I_n,I_n,Z)
    \\
    &+\tr(Y)\tr(Z)C_1(X,I_n,I_n)
    \\
    &+\tr(Z)\tr(X)C_1(I_n,Y,I_n)
    \Bigr)
    \end{aligned}
    \\
    &\quad
    -\frac{8}{n^3}
    \tr(X)\tr(Y)\tr(Z)C_1(I_n,I_n,I_n)
    \\
    &=
    C_1(X,Y,Z)
    \\
    &\quad
    -\frac{2}{n}
    \bigl(
    \tr(X)\tr(YZ)
    +\tr(Y)\tr(XZ)
    +\tr(Z)\tr(XY)
    \bigr)
    \\
    &\quad
    +\frac{4}{n^2}
    \tr(X)\tr(Y)\tr(Z)
    \\
    &=
    C'(X,Y,Z).
\end{align*}
Therefore, $((d \sigma_2)_{I_n})^\ast ((C_1)_{I_n}) = (C')_{I_n}$ holds.
This completes the proof.
\end{proof}

\subsection{Proof of Theorem~\ref{thm:Main-thm-G=Isom}}\label{subsec:proof-of-main-thm-Isom}
In this subsection, we provide the proof of Theorem~\ref{thm:Main-thm-G=Isom}.

For the proof, we prepare the following two lemmas.

\begin{Lem}\label{lem:Isom-inv-cubic}
Let $(M,g)$ be a Riemannian (globally) symmetric space in the sense of Kobayashi--Nomizu~\cite[Chapter XI, Section 6]{Kobayashi-Nomizu_II}, and let $\nabla$ be a statistical connection on $(M, g)$.
If the statistical connection $\nabla$ is invariant under the action of $\mathrm{Isom}(M,g)$, then $\nabla = \nabla^{g}$ holds.
\end{Lem}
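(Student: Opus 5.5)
The idea is to use the geodesic symmetry at a point, which for a Riemannian globally symmetric space is an isometry. By Theorem~\ref{theorem:nablaC}, $\nabla$ is determined by its cubic form $C := C^{\nabla}$, and $\nabla = \nabla^g$ if and only if $C = 0$. So it suffices to show that $C$ vanishes at every point $p \in M$.

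First we check that $C$ is $\Isom(M,g)$-invariant. For $f \in \Isom(M,g)$ we have $f^\ast g = g$, and $f^\ast \nabla^g = \nabla^g$ because an isometry preserves the Levi-Civita connection. By hypothesis $f^\ast \nabla = \nabla$. Since $C^\nabla(X,Y,Z) = -2g(\nabla_XY-\nabla^g_XY,Z)$ is built naturally from $g$, $\nabla$ and $\nabla^g$, it follows that $f^\ast C = C$.

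Next fix $p \in M$. Since $(M,g)$ is a Riemannian globally symmetric space in the sense of Kobayashi--Nomizu, there is an involutive isometry $s_p \in \Isom(M,g)$ with $s_p(p)=p$ and $(ds_p)_p = -\mathrm{id}_{T_pM}$. Evaluating the invariance $s_p^\ast C = C$ at $p$ gives, for $X,Y,Z \in T_pM$,
\[
C_p(X,Y,Z) = C_p\bigl((ds_p)_pX,(ds_p)_pY,(ds_p)_pZ\bigr) = (-1)^3 C_p(X,Y,Z).
\]
Hence $C_p = 0$. Because $p$ was arbitrary, $C \equiv 0$, and so $\nabla = \nabla^{(g,0)} = \nabla^g$.

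There is no real obstacle in this argument. The only point to state explicitly is that $f^\ast\nabla = \nabla$ for all isometries $f$ is what is meant by $\Isom(M,g)$-invariance of $\nabla$, and that the cubic form is natural under isometries; both follow directly from the definitions. The odd degree $3$ of $C$ is what forces it to vanish.
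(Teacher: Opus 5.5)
Your proof is correct and follows the paper's argument essentially verbatim. You reduce to $C^\nabla = 0$ via Theorem~\ref{theorem:nablaC}, note that $C$ is $\Isom(M,g)$-invariant, and use the geodesic symmetry $s_p$ with $(ds_p)_p = -\mathrm{id}_{T_pM}$ to get $C_p = -C_p$; your remark that isometries preserve $\nabla^g$, so that the cubic form is natural under them, spells out a step the paper states without comment.
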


\begin{proof}
Let $C$ be the cubic form associated with $(M,g,\nabla)$.
It suffices to show that $C = 0$.
For each point $p \in M$, denote by $s_p$ the symmetry at $p$ of the Riemannian symmetric space $(M,g)$.
Then $s_p \in \Isom(M,g)$.
At each point $p \in M$, the differential $(ds_p)_p$ coincides with $-\mathrm{id}_{T_p M}$.
Hence,
\[
((ds_p)_p)^\ast C_p = - C_p .
\]
Since $\nabla$ is invariant under the action of $\Isom(M,g)$, the cubic form $C$ is invariant as well.
Therefore, for every $p \in M$ we have
\[
C_p = ((ds_p)_p)^\ast C_p = - C_p,
\]
which implies $C_p = 0$.
Thus $C = 0$.
This completes the proof.
\end{proof}

The following lemma follows immediately from Proposition~\ref{prop:Fisher-curvature}.

\begin{Lem}
Let $n \geq 2$. 
Then, $\nabla^{g^F}$ is not flat.
\end{Lem}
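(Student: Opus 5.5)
The claim is that for $n\ge 2$ the Levi-Civita connection $\nabla^{g^F}$ of $(\mathcal{N}_0^n,g^F)$ is not flat. Since $\nabla^{g^F}$ is the statistical connection with $C=0$, its curvature is exactly $R^F$. It therefore suffices to exhibit one point and one pair of tangent vectors at which $R^F$ does not vanish. By $GL(n,\RR)$-homogeneity we may work at $I_n$. There Lemma~\ref{prop:Fisher-curvature} gives, under the identification $\Xi$,
\[
g^F_{I_n}(R^F_{I_n}(X,Y)Z,W)=\tfrac14\bigl(\tr(YXZW)-\tr(XYZW)\bigr).
\]

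The plan is to choose $X=\sqrt2 E_{11}$ and $Y=E_{12}+E_{21}$, which are orthonormal elements of $\mathcal{B}_F$ (this requires $n\ge2$). We then take $Z=Y$ and $W=X$ and compute the sectional curvature $\tfrac14(\tr(YXYX)-\tr(XYYX))$.

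First, $XY=\sqrt2 E_{12}$ and $YX=\sqrt2 E_{21}$. Hence $\tr(YXYX)=\tr((YX)^2)=2\tr(E_{21}E_{21})=0$. Also $\tr(XYYX)=\tr((XY)(YX))=2\tr(E_{12}E_{21})=2\tr(E_{11})=2$. The value is therefore $-\tfrac12\neq 0$. This agrees with Lemma~\ref{lem:sec-curve_C_n=2} at $a_1=a_3=0$, which gives $-\tfrac12$. Thus $R^F_{I_n}\neq0$, and $\nabla^{g^F}$ is not flat.

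There is no real obstacle here; the only point needing care is the sign and ordering convention in the curvature formula, and the conclusion does not depend on the sign. Alternatively, one can simply quote Corollary~\ref{cor:Fisher-flatness}.
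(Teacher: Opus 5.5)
Your argument is correct and follows the paper's route: the paper only says the lemma follows immediately from the curvature formula of Lemma~\ref{prop:Fisher-curvature}, and your evaluation $g^F_{I_n}(R^F_{I_n}(X,Y)Y,X)=-\tfrac12$ for $X=\sqrt2E_{11}$, $Y=E_{12}+E_{21}$ is exactly the check that remark leaves implicit. The appeal to homogeneity is unnecessary, since nonvanishing curvature at the single point $I_n$ already rules out flatness.
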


Note that $(\No, g^F)$ is a Riemannian (globally) symmetric space. 
Let us prove Theorem~\ref{thm:Main-thm-G=Isom}.

\begin{proof}[Proof of Theorem~\ref{thm:Main-thm-G=Isom}]
Equation~\eqref{eq:Main-thm-G=Isom_Stat} follows immediately from Lemma~\ref{lem:Isom-inv-cubic}.
Equation~\eqref{eq:Main-thm-G=Isom_Hesse} follows from Equation~\eqref{eq:Main-thm-G=Isom_Stat} and Corollary~\ref{cor:Fisher-flatness}.
Equations~\eqref{eq:Main-thm-G=Isom_MStat} and \eqref{eq:Main-thm-G=Isom_MHesse} are clear from
Equations~\eqref{eq:Main-thm-G=Isom_Stat} and \eqref{eq:Main-thm-G=Isom_Hesse}, respectively.
\end{proof}

\section*{Acknowledgements}
The authors are grateful to Michel Nguiffo Boyom for his valuable questions following the second author's presentation at the 7th International Conference on Geometric Science of Information (GSI 2025), which served as one of the motivations for the present work.
We also thank Ting-Kam Leonard Wong for his insightful comments and questions during GSI 2025.
The authors would also like to acknowledge Hajime Fujita, Hitoshi Furuhata, Hiroto Inoue, Kyosuke Ishimoto, Yu Ohno, Hiroshi Tamaru, Koichi Tojo, and Ryu Ueno for their valuable comments and suggestions.


\begin{thebibliography}{10}

\bibitem{Amari_1985}
S.-i. Amari.
\newblock {\em Differential-Geometrical Methods in Statistics}, volume~28 of {\em Lecture Notes in Statistics}.
\newblock Springer-Verlag, New York, 1985.
\newblock \doi{10.1007/978-1-4612-5056-2}

\bibitem{Amari_2014}
S.-i. Amari.
\newblock Information Geometry of Positive Measures and Positive-Definite Matrices: Decomposable Dually Flat Structure.
\newblock {\em Entropy}, 16(4):2131--2145, 2014.
\newblock \doi{10.3390/e16042131}

\bibitem{Amari-Nagaoka_2000}
S.-i. Amari and H.~Nagaoka.
\newblock {\em Methods of Information Geometry}, volume 191 of {\em Translations of Mathematical Monographs}.
\newblock American Mathematical Society, Providence, RI; Oxford University Press, Oxford, 2000.
\newblock \doi{10.1090/mmono/191}

\bibitem{Ay_2015}
N.~Ay, J.~Jost, H.~V. L\^e, and L.~Schwachh\"ofer.
\newblock Information geometry and sufficient statistics.
\newblock {\em Probab. Theory Related Fields}, 162(1-2):327--364, 2015.
\newblock \doi{10.1007/s00440-014-0574-8}

\bibitem{Ay_2017}
N.~Ay, J.~Jost, H.~V. L\^e, and L.~Schwachh\"ofer.
\newblock {\em Information Geometry}, volume~64 of {\em Ergebnisse der Mathematik und ihrer Grenzgebiete. 3. Folge. A Series of Modern Surveys in Mathematics}.
\newblock Springer, Cham, 2017.
\newblock \doi{10.1007/978-3-319-56478-4}

\bibitem{Ayadi_2023}
I.~Ayadi, F.~Bouchard, and F.~Pascal.
\newblock Elliptical Wishart Distribution: Maximum Likelihood Estimator from Information Geometry.
\newblock In {\em IEEE International Conference on Acoustics, Speech and Signal Processing (ICASSP)}, pages 1--5, 2023.
\newblock \doi{10.1109/ICASSP49357.2023.10096222}

\bibitem{Burbea_1986}
J.~Burbea.
\newblock Informative Geometry of Probability Spaces.
\newblock {\em Exposition. Math.}, 4(4):347--378, 1986.

\bibitem{CCR_2025}
I.~Cardoso, A.~Cosgaya, and S.~Reggiani.
\newblock The moduli space of left-invariant metrics on six-dimensional characteristically solvable nilmanifolds.
\newblock {\em Math. Nachr.}, 298(5):1496--1520, 2025.
\newblock \doi{10.1002/mana.202400213}

\bibitem{Cosgaya-Reggiani_2022}
A.~Cosgaya and S.~Reggiani.
\newblock Isometry groups of three-dimensional Lie groups.
\newblock {\em Ann. Global Anal. Geom.}, 61(4):831--845, 2022.
\newblock \doi{10.1007/s10455-022-09835-3}

\bibitem{Dekic-Babic-Vukmirovic_2022}
A.~Deki\'c, M.~Babi\'c, and S.~Vukmirovi\'c.
\newblock Classification of Left Invariant Riemannian Metrics on Complex Hyperbolic Space.
\newblock {\em Mediterr. J. Math.}, 19(5):Paper No. 232, 18, 2022.
\newblock \doi{10.1007/s00009-022-02152-w}

\bibitem{Dolcetti-Pertici_2019}
A.~Dolcetti and D.~Pertici.
\newblock Differential properties of spaces of symmetric real matrices.
\newblock {\em Rend. Semin. Mat. Univ. Politec. Torino}, 77(1):25--43, 2019.

\bibitem{Furuhata_2009}
H.~Furuhata.
\newblock Hypersurfaces in statistical manifolds.
\newblock {\em Differential Geom. Appl.}, 27(3):420--429, 2009.
\newblock \doi{10.1016/j.difgeo.2008.10.019}

\bibitem{Furuhata-Hasegawa_2016}
H.~Furuhata and I.~Hasegawa.
\newblock Submanifold Theory in Holomorphic Statistical Manifolds.
\newblock In {\em Geometry of {C}auchy-{R}iemann submanifolds}, pages 179--215. Springer, Singapore, 2016.
\newblock \doi{10.1007/978-981-10-0916-7_7}

\bibitem{FIK}
H.~Furuhata, J.-i.~Inoguchi, and S.-P.~Kobayashi.
\newblock A characterization of the alpha-connections on the statistical manifold of normal distributions.
\newblock {\em Inf. Geom.}, 4(1):177--188, 2021.
\newblock \doi{10.1007/s41884-020-00037-z}

\bibitem{Hashinaga_2014}
T.~Hashinaga.
\newblock On the minimality of the corresponding submanifolds to four-dimensional solvsolitons.
\newblock {\em Hiroshima Math. J.}, 44(2):173--191, 2014.
\newblock \doi{10.32917/hmj/1408972906}

\bibitem{Hashinaga-Tamaru_2017}
T.~Hashinaga and H.~Tamaru.
\newblock Three-dimensional solvsolitons and the minimality of the corresponding submanifolds.
\newblock {\em Internat. J. Math.}, 28(6):1750048, 31, 2017.
\newblock \doi{10.1142/S0129167X17500483}

\bibitem{HTT_2016}
T.~Hashinaga, H.~Tamaru, and K.~Terada.
\newblock Milnor-type theorems for left-invariant {R}iemannian metrics on {L}ie groups.
\newblock {\em J. Math. Soc. Japan}, 68(2):669--684, 2016.
\newblock \doi{10.2969/jmsj/06820669}

\bibitem{Inoguchi_2026}
J.-i. Inoguchi.
\newblock Holomorphic statistical 2-manifolds and almost contact statistical 3-manifolds.
\newblock {\em Differential Geom. Appl.}, 103:Paper No. 102366, 2026.
\newblock \doi{10.1016/j.difgeo.2026.102366}

\bibitem{IO-2024}
J.-i. Inoguchi and Y.~Ohno.
\newblock Homogeneous statistical manifolds.
\newblock {\em Inf. Geom.}, 8(2):285--341, 2025.
\newblock \doi{10.1007/s41884-025-00172-5}

\bibitem{Ivanov_1995}
S.~Ivanov.
\newblock On dual-projectively flat affine connections.
\newblock {\em J. Geom.}, 53(1-2):89--99, 1995.
\newblock \doi{10.1007/BF01224043}

\bibitem{KOOT_2025}
H.~Kobayashi, Y.~Ohno, T.~Okuda, and H.~Tamaru.
\newblock The moduli spaces of left-invariant statistical structures on {L}ie groups.
\newblock {\em Inf. Geom.}, 9(1):235--278, 2026.
\newblock \doi{10.1007/s41884-026-00200-y}

\bibitem{Kobayashi-Okuda_2025}
H.~Kobayashi and T.~Okuda.
\newblock On Invariant Conjugate Symmetric Statistical Structures on the Space of Zero-Mean Multivariate Normal Distributions.
\newblock In {\em Geometric Science of Information. Part I}, volume~16033 of {\em Lecture Notes in Comput. Sci.}, pages 65--72. Springer, Cham, 2026.
\newblock \doi{10.1007/978-3-032-03918-7\_7}

\bibitem{Kobayashi-Nomizu_I}
S.~Kobayashi and K.~Nomizu.
\newblock {\em Foundations of Differential Geometry. {V}ol {I}}.
\newblock Interscience Publishers (a division of John Wiley \& Sons, Inc.), New York-London, 1963.

\bibitem{Kobayashi-Nomizu_II}
S.~Kobayashi and K.~Nomizu.
\newblock {\em Foundations of Differential Geometry. {V}ol. {II}}, volume No. 15 of {\em Interscience Tracts in Pure and Applied Mathematics}.
\newblock Interscience Publishers John Wiley \& Sons, Inc., New York-London-Sydney, 1969.

\bibitem{KO}
S.-P.~Kobayashi and Y.~Ohno.
\newblock A characterization of the alpha-connections on the statistical manifold of multivariate normal distributions.
\newblock {\em Osaka J. Math.}, 62(2):329--349, 2025.
\newblock \doi{10.18910/101132}

\bibitem{KTT}
H.~Kodama, A.~Takahara, and H.~Tamaru.
\newblock The space of left-invariant metrics on a {L}ie group up to isometry and scaling.
\newblock {\em Manuscripta Math.}, 135(1-2):229--243, 2011.
\newblock \doi{10.1007/s00229-010-0419-4}

\bibitem{KONDO}
Y.~Kondo.
\newblock A classification of left-invariant pseudo-{R}iemannian metrics on some nilpotent {L}ie groups.
\newblock {\em Hiroshima Math. J.}, 52(3):333--356, 2022.
\newblock \doi{10.32917/h2021054}

\bibitem{KT_2023}
Y.~Kondo and H.~Tamaru.
\newblock A classification of left-invariant {L}orentzian metrics on some nilpotent {L}ie groups.
\newblock {\em Tohoku Math. J. (2)}, 75(1):89--117, 2023.
\newblock \doi{10.2748/tmj.20211122}

\bibitem{KOTT}
A.~Kubo, K.~Onda, Y.~Taketomi, and H.~Tamaru.
\newblock On the moduli spaces of left-invariant pseudo-{R}iemannian metrics on {L}ie groups.
\newblock {\em Hiroshima Math. J.}, 46(3):357--374, 2016.
\newblock \doi{10.32917/hmj/1487991627}

\bibitem{Kurose-1990}
T.~Kurose.
\newblock Dual connections and affine geometry.
\newblock {\em Math. Z.}, 203(1):115--121, 1990.
\newblock \doi{10.1007/BF02570725}

\bibitem{Kurose_1994}
T.~Kurose.
\newblock On the divergences of {$1$}-conformally flat statistical manifolds.
\newblock {\em Tohoku Math. J. (2)}, 46(3):427--433, 1994.
\newblock \doi{10.2748/tmj/1178225722}

\bibitem{L-SM}
S.~L. Lauritzen.
\newblock Statistical Manifolds.
\newblock In {\em Differential geometry in statistical inference}, volume~10, pages 163--216. Institute of Mathematical Statistics, Hayward, 1987.
\newblock \doi{10.1214/lnms/1215467061}

\bibitem{Lee_Manifold_2013}
J.~M. Lee.
\newblock {\em Introduction to Smooth Manifolds}, volume 218 of {\em Graduate Texts in Mathematics}.
\newblock Springer, New York, second edition, 2013.
\newblock \doi{10.1007/978-1-4419-9982-5}

\bibitem{Matsuzoe_2010}
H.~Matsuzoe.
\newblock Statistical manifolds and affine differential geometry.
\newblock In {\em Probabilistic approach to geometry}, volume~57 of {\em Adv. Stud. Pure Math.}, pages 303--321. Math. Soc. Japan, Tokyo, 2010.
\newblock \doi{10.2969/aspm/05710303}

\bibitem{Mitchell_1989}
A.~F.~S. Mitchell.
\newblock The information matrix, skewness tensor and {$\alpha$}-connections for the general multivariate elliptic distribution.
\newblock {\em Ann. Inst. Statist. Math.}, 41(2):289--304, 1989.
\newblock \doi{10.1007/BF00049397}

\bibitem{Opozda_2015}
B.~Opozda.
\newblock Bochner's technique for statistical structures.
\newblock {\em Ann. Global Anal. Geom.}, 48(4):357--395, 2015.
\newblock \doi{10.1007/s10455-015-9475-z}

\bibitem{O-2016}
B.~Opozda.
\newblock A sectional curvature for statistical structures.
\newblock {\em Linear Algebra Appl.}, 497:134--161, 2016.
\newblock \doi{10.1016/j.laa.2016.02.021}

\bibitem{Opozda_2019}
B.~Opozda.
\newblock Curvature bounded conjugate symmetric statistical structures with complete metric.
\newblock {\em Ann. Global Anal. Geom.}, 55(4):687--702, 2019.
\newblock \doi{10.1007/s10455-019-09647-y}

\bibitem{Luis_2022}
L.~Pedro Castellanos~Moscoso.
\newblock Left-invariant symplectic structures on diagonal almost abelian {L}ie groups.
\newblock {\em Hiroshima Math. J.}, 52(3):357--378, 2022.
\newblock \doi{10.32917/h2021055}

\bibitem{Luis-T}
L.~Pedro Castellanos~Moscoso and H.~Tamaru.
\newblock A classification of left-invariant symplectic structures on some {L}ie groups.
\newblock {\em Beitr. Algebra Geom.}, 64(2):471--491, 2023.
\newblock \doi{10.1007/s13366-022-00643-1}

\bibitem{Salamon_2001}
S.~M. Salamon.
\newblock Complex structures on nilpotent {L}ie algebras.
\newblock {\em J. Pure Appl. Algebra}, 157(2-3):311--333, 2001.
\newblock \doi{10.1016/S0022-4049(00)00033-5}

\bibitem{Shima}
H.~Shima.
\newblock {\em The Geometry of Hessian Structures}.
\newblock World Scientific Publishing Co. Pte. Ltd., Hackensack, NJ, 2007.
\newblock \doi{10.1142/9789812707536}

\bibitem{Shima-Hao_2000}
H.~Shima and J.~H. Hao.
\newblock Geometry associated with normal distributions.
\newblock {\em Osaka J. Math.}, 37(2):509--517, 2000.

\bibitem{Skovgaard_1984}
L.~T. Skovgaard.
\newblock A Riemannian Geometry of the Multivariate Normal Model.
\newblock {\em Scand. J. Statist.}, 11(4):211--223, 1984.

\bibitem{Sukilovic-Vukmirovic_2023}
T.~\v Sukilovi\'c and S.~Vukmirovi\'c.
\newblock Geometry of cotangent bundle of {H}eisenberg group.
\newblock {\em Differential Geom. Appl.}, 88:Paper No. 101997, 11, 2023.
\newblock \doi{10.1016/j.difgeo.2023.101997}

\bibitem{Sukilovic-et.al_2025}
T.~\v Sukilovi\'c, S.~Vukmirovi\'c, and N.~Bokan.
\newblock On the moduli space of left-invariant metrics on the cotangent bundle of the {H}eisenberg group.
\newblock {\em Rev. Un. Mat. Argentina}, 68(2):485--518, 2025.
\newblock \doi{10.33044/revuma.3426}

\bibitem{Takano-2006}
K.~Takano.
\newblock Geodesics on statistical models of the multivariate normal distribution.
\newblock {\em Tensor (N.S.)}, 67(2):162--169, 2006.

\bibitem{Taketomi_2015}
Y.~Taketomi.
\newblock Examples of hyperpolar actions of the automorphism groups of {L}ie algebras.
\newblock {\em Topology Appl.}, 196:904--910, 2015.
\newblock \doi{10.1016/j.topol.2015.05.057}

\bibitem{Taketomi_2018}
Y.~Taketomi.
\newblock On a {R}iemannian submanifold whose slice representation has no nonzero fixed points.
\newblock {\em Hiroshima Math. J.}, 48(1):1--20, 2018.
\newblock \doi{10.32917/hmj/1520478020}

\bibitem{Taketomi-Tamaru_2018}
Y.~Taketomi and H.~Tamaru.
\newblock On the nonexistence of left-invariant {R}icci solitons---a conjecture and examples.
\newblock {\em Transform. Groups}, 23(1):257--270, 2018.
\newblock \doi{10.1007/s00031-017-9439-4}

\bibitem{Scott_2017}
S.~V. Thuong.
\newblock Metrics on 4-dimensional unimodular {L}ie groups.
\newblock {\em Ann. Global Anal. Geom.}, 51(2):109--128, 2017.
\newblock \doi{10.1007/s10455-016-9527-z}

\bibitem{Torres_2024}
J.~F. Torres.
\newblock Moduli space of bi-invariant metrics.
\newblock {\em Bol. Soc. Mat. Mex. (3)}, 30(2):Paper No. 49, 17, 2024.
\newblock \doi{10.1007/s40590-024-00622-7}

\bibitem{Warner_1983}
F.~W. Warner.
\newblock {\em Foundations of Differentiable Manifolds and Lie Groups}, volume~94 of {\em Graduate Texts in Mathematics}.
\newblock Springer-Verlag, New York-Berlin, 1983.
\newblock \doi{10.1007/978-1-4757-1799-0}

\bibitem{Warner_1972}
G.~Warner.
\newblock {\em Harmonic Analysis on Semi-Simple Lie Groups I}, volume~188 of {\em Die Grundlehren der mathematischen Wissenschaften}.
\newblock Springer-Verlag, New York-Heidelberg, 1972.
\newblock \doi{10.1007/978-3-642-50275-0}

\end{thebibliography}
\end{document}